\definecolor{bleu1}{RGB}{0,57,128}
\def\bleu1{\color{bleu1}}
\patchcmd{\section}{\normalfont}{\normalfont \bleu1}{}{}
\patchcmd{\subsection}{\normalfont}{\normalfont \bleu1}{}{}
\patchcmd{\subsubsection}{\normalfont}{\normalfont \bleu1}{}{}
\newtheorem{PProp}{\bleu1 Proposition}
\newtheorem{Main}{\bleu1 Theorem}
\def\cT{\mathcal T}
\def \etan{{\mu}}
\def \sigman{{\sigma}}
\def \rn{{\bar r}}
\def \hA{{\widehat A}}
\def \id{{\rm Id}}
\def \hB{{\widehat  B}}
\def\tp{\tilde p}
\def\tq{\tilde q}
\def\SL{\text{SL}}
\def\cO{{\mathcal{O}}}
\def \cM1{{\mathcal M_1}}
\def\cK{{\mathcal{K}}}
\def\cM{{\mathcal{M}}}
\def\cV{{\mathcal{V}}}
\def\cN{{\mathcal{N}}}
\def \bA{{\bar A}}
\def \bB{{\bar B}}
\def \hA{{\widehat A}}
\def \hB{{\widehat  B}}
\def \bm{{\bar m}}
\def\<{{\left<}}
\def\>{{\right>}} 
\def \Id{{\text{Id} }}
\def\hA{\widehat{A}}
\def\hB{\widehat{B}}
\def\bk{\bar k}
\def\tA{\widetilde{A}}
\def\tB{\widetilde{B}}
\def\bff {{\bf f}}
\def\bfg {{\bf g}}
\def \bfh {{\bf h}}
\def\bfL {{\bf L}}
\definecolor{dgreen}{rgb}{0.1,0.6,0.1}
\definecolor{bluegreen}{rgb}{0.1,0.5,0.2}%{0.6,0.3,0.4}
\definecolor{bpurple}{rgb}{0.74,0.2,0.64}%{0.6,0.3,0.4}
\def\black{\color{black}}
\def\pelta{\partial}
\definecolor{orange}{rgb}{0.8, 0.33, 0.0}
\def\cM{\mathcal M}
\def\cN{\mathcal N}
\def\cO{\mathcal O}
\definecolor{orange}{rgb}{1,0.5,0}
\definecolor{orange}{rgb}{1,0.5,0}
\definecolor{bluegray}{rgb}{0.6, 0.6, 0.8}
\definecolor{bluered}{rgb}{0.6,0.3,0.4}
\newtheorem{sublemma}{\bleu1 Sublemma}%[section]
\newtheorem{lemma}{\bleu1 Lemma}%[section]
\newtheorem*{theorem*}{\bleu1 Theorem}
\newtheorem{corollary}[lemma]{\bleu1 Corollary}
\newtheorem{proposition}[lemma]{\bleu1 Proposition}
\newtheorem*{proposition*}{\bleu1 Proposition}
\newtheorem{remark}{\bleu1 Remark}
\newtheorem{definition}{\bleu1 Definition}
\newtheorem{example}{\bleu1 Example}
\newtheorem{question}{\bleu1 Question}
\newtheorem{remarki}{\bleu1 Observation}
\newtheorem*{Atheorem*}{Theorem}
\def\<{{\langle}}
\def\>{{\rangle}}
\def\bfp{{\bf p}}
\def\bfq{{\bf q}}
\def\bfV{{\bf V}}
\def\bfW{{\bf W}}
\def\al{{\alpha}}
\def\a{{\alpha}}
\def\la{{\lambda}}
\def\be{{\beta}}
\def\ga{{\gamma}}
\def\gax{{\xi}}
\def\de{{\delta}}
\def\eps{{\varepsilon}}
\def\Si{{\Sigma}}
\def\ett{{\delta}}
\def\({{\langle}}
\def\){{\rangle}}
\def\NN{{\mathbb{N}}}
\def\RR{{\mathbb{R}}}
\def\ZZ{{\mathbb{Z}}}
\def\TT{{\mathbb{T}}}
\def\N{{\mathbb{N}}}
\def\R{{\mathbb{R}}}
\def\Z{{\mathbb{Z}}}
\def\T{{\mathbb{T}}}
\def\cO{{\mathcal{O}}}
\def\cC{{\mathcal{C}}}
\def \cM1{{\mathcal M_1}}
\def\cK{{\mathcal{K}}}
\def\cM{{\mathcal{M}}}
\def\cV{{\mathcal{V}}}
\def\cN{{\mathcal{N}}}
\def \bA{{\bar A}}
\def \bB{{\bar B}}
\def \hA{{\widehat A}}
\def \hB{{\widehat  B}}
\def \tA{{\widetilde A}}
\def \tB{{\widetilde  B}}
\def \res{{\mathcal Q}}
\def \bm{{\bar m}}
\def \ve{{\varepsilon}}
\def \tf{{\tilde f}}
\def \tg{{\tilde g}}
\def\<{{\left<}}
\def\>{{\right>}} 
\def \Id{{\text{Id} }}
\def \akl{{\a_{k,l}}}
\title[]{KAM-rigidity for parabolic affine abelian actions}
\author{Danijela Damjanovi\'c}
\address[Danijela Damjanovi\'c]{Department of Mathematics, Kungliga Tekniska H\"{o}gskolan, Lindstedtsv\"{a}gen 25, SE-100 44  Stockholm, Sweden}
\email{ddam@kth.se }
\author{Bassam Fayad}
\address[Bassam Fayad]{Department of Mathematics, University of Maryland, 2307 Kirwan Hall, College Park, USA}
\email{bassam@umd.edu}
\author{Maria Saprykina}
\address[Maria Saprykina]{Department of Mathematics, Kungliga Tekniska H\"{o}gskolan, Lindstedtsv\"{a}gen 25, SE-100 44  Stockholm, Sweden}
\email{masha@kth.se}
\subjclass[2010]{37C15, 37C85, 37D30}
\keywords{Local rigidity, group actions, parabolic maps, KAM method}
\begin{document} 
\begin{abstract} We show the following dichotomy for a linear parabolic $\Z^2$-action $\rho_L$ on the torus with at least one step-2 generator:
\begin{itemize}
\item[$(i)$] Any affine $\Z^2$-action with  linear part $\rho_L$ has  a $\Z$-factor that is either identity or genuinely parabolic, and is thus not KAM-rigid, or
\item[$(ii)$] Almost every affine $\Z^2$-action with linear part $\rho_L$ is KAM-rigid  under volume preserving perturbations. 
\end{itemize} 
%{\orange The assumption that one element of the action is step-2 is necessary for the validity of the above dichotomy. }
\end{abstract}
\maketitle

\setcounter{tocdepth}{2}

\color{black} 
{
  \hypersetup{hidelinks}
  \tableofcontents
}
%%%%%%%%%%%%%%%%%%%%%%%%%%%%%%%%%%%%%
%%%%%%%%%%%%%%%%%%%%%%%%%%%%%%%%%%%%%
%%%%%%%%%%%%%%%%%%%%%%%%%%%%%%%%%%%%%

\newpage 

\section{Introduction, statements and overview of the main proofs}

\subsection{Background and context}

A smooth (by which we mean $C^\infty$)  $\Z^k$-action $\rho$ on a smooth manifold $M$ is said to be {\it locally rigid} if there exists a neighborhood $\mathcal U$ of $\rho$ in the space of smooth  $\Z^k$-actions on $M$, such that for every $\eta\in \mathcal U$ there is a smooth diffeomorphism $h$ of $M$ such that $h\circ \rho(g)\circ h^{-1}=\eta(g)$, for all $g\in \Z^k$. 

When the rank of the acting group is $k=1$,  we are in the realm of classical dynamics ($\Z$-actions), where local rigidity in this strong form is not known to occur. Moreover, it is known that  for affine maps on the torus local rigidity does not occur.

The only known situation
in classical dynamics where a weaker form of local rigidity is proved,
is the case of  toral translations  $T_\a$ on $\T^d$ with  Diophantine frequency vectors $\a$ (we exclude rigidity modulo infinite moduli from this discussion). Indeed, it follows from Arnold's normal form for perturbations of toral translations, that a volume preserving  perturbation of $T_\a$  with a  Diophantine average translation vector $\a$ is smoothly conjugated to $T_\a$ \cite{A}.  We will call this phenomenon {\it KAM-rigidity}.  

The situation is dramatically different for $\Z^k$-actions with $k\geq 2$, where local rigidity is more common. 

For Anosov (hyperbolic) actions, an important breakthrough was the proof of  local rigidity by Katok and Spatzier \cite{KS}. The main tool in this context is the use  of the action's invariant geometric structures \cite{GK, KS}, which after that proved useful in obtaining local rigidity for more general classes of partially hyperbolic actions with such geometric structures  \cite{NT, DK3, W1, W2, VW}. 
{We note that there are many other local and global rigidity results for abelian partially hyperbolic actions than the ones mentioned above; we refrain from citing them all as our focus in this paper  will be on local rigidity in the absence of any form of hyperbolicity and where there are no robust invariant geometric structures. }

%There are many more results in the direction of obtaining some form of rigidity (measure rigidity or global rigidity) for abelian actions with some hyperbolicity Since in this work we discuss  %In the last two decades, a lot of work has been dedicated to the local rigidity of higher rank abelian actions, but most of the known results assume some form of hyperbolicity. In particular, the question of local rigidity of is still open:
%\begin{itemize}
%\item[]
%{\it In which ways can one perturb an action on the torus of a commuting pair of affine maps?} 
%\end{itemize}

In this context two famous manifestations of rigidity for $\Z^k$-actions, are: KAM-rigidity of {\it simultaneously} Diophantine torus translations  (see Definition \ref{definition sim})
 \cite{M,DF,WX,P}, and local rigidity for higher rank linear (and affine) partially hyperbolic actions on the torus \cite{DK}.  

Simultaneously Diophantine torus translations generate $\Z^k$-actions which may have no Diophantine elements at all, so the result for single Diophantine translations does not apply,  and one is forced to use commutativity of different action generators in a crucial way in order to obtain KAM rigidity. 

For linear partially hyperbolic actions considered in \cite{DK}, the crucial assumption which leads to smooth rigidity is that such $\Z^k$-actions are of {\it higher rank}. A $\Z^k$-action by toral automorphisms is {\it higher rank} if there is a $\Z^2$ subgroup such that all of its non-zero elements act by ergodic automorphisms. This condition is equivalent to the absence of $\Z$-factors of the action, namely: a higher rank $\Z^k$-action does not factor (possibly up to a finite index subgroup) to an action generated by a single automorphism of a (possibly different) torus. The condition is equivalent also to the exponential mixing for the action, which plays a crucial role in the proof of local rigidity in \cite{DK}. Recently, it was announced in \cite{W3} that exponential mixing leads to local rigidity for large classes of partially hyperbolic affine actions.

The above mentioned two classes of actions on the torus, elliptic ones generated by translations on one hand, and partially hyperbolic ones on the other hand, lie in the general class of {\it affine} $\Z^k$-actions on the torus. Affine actions are actions generated by affine maps, and an affine map is a composition of a linear map and a translation. Such actions can be dynamically very different: they can be elliptic (the linear part of the action is the identity), or partially hyperbolic (the linear part contains a partially hyperbolic map, i.e., a map with some eigenvalues outside unit circle), or parabolic (the linear part acts by parabolic maps i.e. maps which have all eigenvalues 1), or can combine all these features. 
If the linear part contains a root of the identity, we take a finite index subgroup in the acting group which brings us to the  general description above. In what follows we always assume that roots of the identity have been eliminated by passing to a finite index subgroup.  

In this paper we focus on the most intricate and most surprising case of {\it parabolic actions}.  In this case, single elements of the action (even under Diophantine conditions) are {\it not} KAM rigid, nor is there any mixing for the linear part of the action. 

%What came about from our study is that for parabolic actions, the step of parabolic elements in the affine action plays a crucial role in the classification.  (The step of the parabolic linear map $A$ is the smallest positive integer $S$ such that $(A-I)^{S}=0$. ) For any discrete group action by affine maps on the torus there exists a maximal parabolic factor, that is: there exists a subtorus, maximal such, on which the action is by parabolic affine maps \cite{CG}. 

%\color{red} (Do we have to say everywhere: up to finite index subgroup, since it could be that one has to take a power to make all this general statements...)\color{violet}

First we define a property of linear actions which will distinguish between actions that can be linear parts of  KAM rigid affine actions,  and those which cannot. 

A linear $\Z^2$-action $\rho_L$ on $\mathbb T^d$ is {\it unlocked} if there is an affine $\Z^2$-action $\rho$ with linear part $\rho_L$ such that every $\mathbb Z$-factor of $\rho$, if it exists, is generated by a non-trivial translation. 
Thereby, we say a linear $\Z^2$-action $\rho_L$ on $\mathbb T^d$ is {\it locked}  if for any affine action $\rho$ with linear part $\rho_L$, there exists a $\mathbb Z$-factor which is {\it not} a translation $T_\alpha$, $\alpha\neq 0$,  see Definition \ref{def_stiff}.

Our analysis leads us to ask the following classification question for general affine $\Z^2$-actions on the torus: 

\begin{question} \label{qqq}  Is it true that for any linear $\Z^2$-action $\rho_L$ on $\mathbb T^d$ there is the following dichotomy: 
\begin{itemize}
\item[(i)] The action $\rho_L$ is locked, or
%\item[(ii)] Largest parabolic factor of $\rho_L$ contains non trivial parabolic elements but contains no step-2 element, or

\item[(ii)] The action $\rho_L$ is unlocked, and almost every affine action $\rho$ with the linear part $\rho_L$ is KAM-rigid. 

\end{itemize}
\end{question}

%\begin{conj} For any linear $\Z^2$ action $\rho_L$ on $\mathbb T^d$ there is the following trichotomy: 
%\begin{itemize}
%\item[(i)] The action $\rho_L$ is stiff, or

%\item[(ii)] Largest parabolic factor of $\rho_L$ contains non trivial parabolic elements but contains no step-2 element, or

%\item[(iii)] The action $\rho_L$ does not fall in the two categories above and almost every affine action $\rho$ with linear part $\rho_L$, $\rho$ is KAM-rigid. 

%\end{itemize}
%\end{conj} 

The previously mentioned classification results provide the positive  answer to the above question for large classes of actions. For simultaneously Diophantine translation actions the linear part is the identity and thus is unlocked, so it fits into the case $(ii)$ of the question. Since simultaneously Diophantine condition is  a full measure condition, the works \cite{M,DF,WX,P} 
give the positive answer to the question in this case.  
For affine actions with higher rank partially hyperbolic linear part the linear part is unlocked, since it has no $\mathbb Z$-factors at all. 
%Note that higher rank condition implies that for a $Z^2$ subaction all elements are ergodic, so there can be no parabolic factor action at all. 
Hence, these actions also fall in category $(ii)$. By \cite{DK}, such actions have the local rigidity property (and, therefore, KAM rigidity property as well; in fact, KAM rigidity holds for all affine actions of this kind, not just for almost all). 
Moreover, these two classes combined also give the positive answer to the question:  \cite{DF} proves KAM rigidity for a full measure set of affine $\Z^2$-actions on the torus whose linear part is  a direct product of a higher rank action 
and the identity.

The main goal of this paper is to develop new tools to study this question for  {\it parabolic} actions. Our main result states that the answer to the question is affirmative for parabolic actions containing a step-2 element (see Definition \ref{def_step}). We hope that the tools developed in this paper will allow to address the classification question \ref{qqq} in full generality. %To start with, our methods combined with the approach in \cite{DF} should lead us to affirmative answer to the question above for all affine parabolic $\mathbb Z^2$ actions which contain a step-2 element. 

In this work the step-2 assumption on at least one element of the action is important in the proof, as we will discuss in detail in \S \ref{s_overview}. In a nutshell, the reason is that if the cohomological equation above a step-2 affine map with a Diophantine translation part has a solution, then the solution is tame. This is not the case for higher step affine maps \cite{DFS_nontame}. %Although the absence of tameness for the higher step generator  still raises major challenges in the proof of KAM-rigidity, the fact that one generator is step-2 is of great help. To start with, it is crucial in defining a candidate conjugacy at each step of the inductive KAM conjugacy scheme.   

\color{black}

%A $\Z^2$ affine action on the torus is higher rank if there is no subgroup of finite index in $\Z^2$, such that the action of the subgroup factors to a  $\Z$ action on a (possibly different) torus.  

%{\orange In the direction of completing the local rigidity picture for affine actions on the torus, \cite{DF} proved KAM local rigidity for affine $\Z^k$-actions on the torus whose linear part is  a direct product of a higher rank action and the identity.  However,  the paper \cite{DF} overlooked the most intricate case, which is the case of higher rank abelian actions with parabolic generators. The aim of this paper is to deal with this remaining case to open a clear path for a complete classification of the affine abelian actions in terms of local rigidity.}

\subsection{Main results and representative examples}

%We define the main notions and state the main results here for $\Z^2$-affine actions, in order to improve transparency of defintions, statements and proofs. For $\Z^k$-affine actions with $k\ge 3$ most of what we write directly generalises, we comment about this in \S \ref{any-k}. 
%\color{violet}

 We will  be interested in the problem of local rigidity of volume preserving perturbations  of affine parabolic $\Z^2$-actions on the torus $\T^d$, where $d\in \N$. 
%The terms linear or affine actions will exclusively refer to such actions. 
By $\lambda$ we will denote the Haar measure on the torus $\T^d$. 

%Indeed, we do not use that $\lambda$ is invariant by $a$ and $b$ in the proof of linearization. 
%Moreover, since $a^kb^l$ is uniquely ergodic for some $k$ and $l$, 
%linearisation implies in fact that there is a unique invariant measure by the action $(f,g)$ 
%and that this measure is the pullback of Haar measure by the conjugacy.

We begin by defining some basic notions.

\begin{definition}\label{def_step} We say that  $A\in \SL(d,\Z)$ is step-$S$ parabolic if 
$
(A-\Id)^S=0,
$ and $
(A-\Id)^{S-1}\ne 0.
$
An affine map of $\mathbb T^d$ with the linear part $A$ is defined via 
$$
a(x)=Ax +\al \mod 1, \quad x\in \mathbb T^d,
$$ \color{black}
where $\al\in \mathbb R^d$. An affine map $a$ is said to be  step-$S$ if its linear part $A$ is step-$S$.  We denote by  $\rm Aff_S(\T^d)$  the space of all parabolic affine maps of step at most $S$.
%\footnote{\color{red} We are not being precise about maps vs lifts of the maps, how whoudl we handle this? Pass to lifts here? or later? {\bp later is better I guess.} \color{black}}

A step-$S$ affine parabolic $\Z^2$-action on $\T^d$ is a homomorphism $\rho:\Z^2\to \rm Aff_S(\T^d)$. We denote by $\rho_L$ the action generated by the linear part of $\rho$. 
%We say that a $\Z^k$ affine action is of step $n$  if $\rho(\Z^k)\subset \rm Aff_n(\T^d)$, that is all of elements of the action are of step at most $n$. 
\end{definition}

Note that every affine map whose linear part is a  unipotent matrix $A\in \SL(d,\Z)$ (i.e., a matrix with all eigenvalues 1) is step-$S$ parabolic for some $S\le d$.

 %From this point on, we formulate and state things for $\Z^2$ affine actions, because we think it improves the clarity of presentation, and the proofs. In Section.......we comment on $\Z^k$ actions. 

Let $\rho$ be an affine parabolic $\Z^2$-action on the torus $\T^d$. Then $\rho$ is generated by  two commuting affine maps $a$ and $b$, and we will denote $\rho$ simply by its generators as $\(a,b\)$. The  linear parts $A$ and $B$ of  $a$ and $b$, respectively, also commute. But the commutativity of the linear parts is not enough to guarantee the commutativity of  $a$ and $b$.  If $a(x)=Ax+ \alpha \mod 1$ and $b(x)=Bx+ \beta \mod 1$, then the commutator $[a, b]$ is a translation on $\mathbb T^d$ by the vector $(A-\Id)\beta-(B-\Id)\a$, which needs to be an integer in order to have  $[a, b]=\Id$. We may choose the lifts of $a$ and $b$ to $\mathbb R^d$ so that the integer vector $(A-\Id)\beta-(B-\Id)\a$ is trivial, and so without loss of generality we will always consider the lifts  $Ax+ \alpha$ and $Bx+ \beta$ of $a$ and $b$, respectively,  such that $(A-\Id)\beta=(B-\Id)\a$. We will work with these lifts the whole time, and we denote them by the same letters $a$ and $b$, respectively. 

We will also use the shorthand notation $A+\alpha$ to denote the affine map on $\T^d$ with the linear part $A$ and the translation part $\alpha$. 

\color{black}
\begin{definition} We denote by $\cT(A,B)$ the set of possible translation parts $(\a,\beta)$ in the affine actions with linear part $\(A, B\)$, that is
$$
\cT(A,B):=\{\alpha, \beta\in \mathbb R^d \mid (A-\Id)\beta=(B-\Id)\a\}.
$$
\end{definition}
Given an affine map $a(x)=Ax +\al \mod 1$, a perturbation of $a$ is a diffeomorphism of the torus which can be lifted to $\mathbb R^d$, where it has the form 
$$
F(x)= A(x)+\a+f(x)
$$ 
for some small $\Z^d$-periodic vector-valued function $f$. Therefore we will simply write $a+f$ for a small perturbation of $a$. We will be interested in the set of smooth volume preserving perturbations, which we denote by  $ \text{Diff}^\infty_\lambda (\T^d)$.

We define now the notion of {\it KAM rigidity} which is central in this work. Note that an affine  parabolic action always has a translation factor. Therefore, one cannot hope that  any form of local rigidity, stronger than the one available for translations, can hold. Only local rigidity of KAM type can be expected for these actions. 

\begin{definition} \label{KAM local rigidity}
We say that an affine $\Z^2$-action $\(a,b\)$ is {\it KAM-rigid under $\lambda$-preserving perturbations}, if there exists  $\sigma \in \N$, $r_0 \in \N$, $r_0\geq \sigma$, and  $\eps>0$ satisfying the following:

 If $r\geq r_0$ and $\(F,G\)=\(a+f,b+g\)$ is a smooth $\la$-preserving $\Z^2$-action such that  
 \begin{equation}\label{eq_System0}
 \|f\|_r\leq \eps, \quad \|g\|_r\leq \eps, 
 \quad \widehat f:= \int_{\T^d} f d\lambda=0, \quad \widehat g:=\int_{\T^d} g d\lambda=0,
\end{equation}    
 then there exists $H=\Id +h \in \text{Diff}^\infty_\lambda (\T^d)$ such that $\|h\|_{r-\sigma}\leq C(a,b)\, \eps$ and 
\begin{equation*}\label{eq_System} 
 H \circ (a+f) \circ H^{-1} = a, \quad H \circ (b+g) \circ H^{-1} = b,
\end{equation*} 
 where $C(a,b)$ is a constant depending only on the action $\(a,b\)$.
\end{definition} 

In this paper,  we will use the term {\it KAM-rigid} for short reference to  {\it KAM-rigid under $\lambda$-preserving perturbations} since this will be the only context in which we place ourselves. 

It is not difficult to find examples of parabolic commuting actions $\(A,B\)$ such that all the affine actions with this linear part are not KAM-rigid. Indeed, the commutation condition  may force the affine action to have, for any choice of $(\a,\beta)\in \cT(A,B)$, a rank-one factor, to which Arnold's KAM-rigidity cannot be applied. As we will see in the following examples, this happens if the 
rank-one factor of the affine action is either identity or genuinely parabolic (i.e., has a non-trivial linear part). We call such pairs $\(A,B\)$ locked (Definition \ref{def_stiff} below). 

\color{black}
%{\orange 
% In the next section, in Proposition \ref{prop dioph}, we will see that besides these two classes of examples, typical parabolic affine  actions are Diophantine (in some sense that will be made precise later), which guarantees their KAM-rigidity in the case where at least one generator is step-2.  The assumption that one generator of the action is step-2 is not a technical one because as we show in a separate work [...], KAM-rigidity does not hold in general for higher step unlocked parabolic actions. 
%}

Let $E_{ij}$ denote the integer matrix which has $1$ in the position $(i,j)$, the rest of the elements being 0. 

\begin{example}[Affine actions with identity as a rank-one factor] \label{ex.id} 
Consider the linear action on $\T^3$ generated by $A=\Id+E_{21}$, $B=\Id+E_{31}$. The commutation condition  implies that for all $(\a,\beta)\in \cT(A,B)$ we have $\a_1=\beta_1=0$. Hence, any affine action $\(A+\a,B+\beta\)$, restricted to a sub-torus corresponding to the variable $x_1$, equals identity (we say that the action projects to identity on the torus $\mathbb T_1$ spanned by variable $x_1$, i.e., the action has the identity factor).  Consequently, such affine action is not KAM-rigid, as explained below. 
\end{example} 

To see why the action in Example \ref{ex.id} is not KAM-rigid, we can use the following general fact.

\begin{proposition} \label{propid} If a parabolic commuting action $\(A,B\)$ is lower triangular, and $\(A+\a,B+\beta\)$ is such that $\a_1=\beta_1=0$ then  $\(A+\a,B+\beta\)$ is not KAM-rigid. \end{proposition}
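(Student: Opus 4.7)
The plan is to construct an explicit small commuting volume-preserving zero-mean perturbation $(F,G)$ of $(a,b)$ admitting no close-to-identity smooth conjugacy to $(a,b)$; this directly contradicts Definition~\ref{KAM local rigidity}. Using that $A,B$ are lower triangular unipotent, so $Be_d=e_d$ automatically (with $e_d$ the last standard basis vector of $\R^d$), I consider
\[
F(x)=a(x)+\epsilon\sin(2\pi x_1)\,e_d,\qquad G(x)=b(x),
\]
for a small $\epsilon\ne 0$. I check the required properties: zero mean follows from $\int\sin(2\pi x_1)\,dx_1=0$; volume preservation holds because $Df$ has only one nonzero entry, in position $(d,1)$ strictly below the diagonal, so $DF$ remains unipotent lower triangular with $\det DF=1$; smoothness and smallness are clear; and commutativity $[F,b]=0$ reduces, using $b_1=x_1$ and $Be_d=e_d$, to the identity $f\circ b=Bf=f$.

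Suppose for contradiction that $H=\Id+h$ is a smooth $\lambda$-preserving conjugacy between $(F,G)$ and $(a,b)$. The first component of each conjugacy equation, combined with $\alpha_1=\beta_1=0$ and $F_1=G_1=x_1$, yields $h_1\circ F=h_1$ and $h_1\circ b=h_1$. Each fiber $\{x_1=c\}$ is invariant under both $F$ and $b$, and the restricted pair $(\tilde F_c,\tilde b_c)$ is an affine $\Z^2$-action on $\T^{d-1}$ whose translation parts depend affinely on $c$; for a dense (in fact full measure) set of $c$ this action is ergodic, forcing $h_1$ to be constant on such fibers and, by continuity, $h_1=h_1(x_1)$. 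Consequently $H$ preserves the fibration $\{x_1=\text{const}\}$, sending the fiber at $c$ onto the fiber at $\sigma(c):=c+h_1(c)$, and restricts to a fiberwise diffeomorphism $\tilde H_c=\Id+\tilde h_c$ of $\T^{d-1}$ conjugating $\tilde F_c$ to $\tilde a_{\sigma(c)}$ and $\tilde b_c$ to $\tilde b_{\sigma(c)}$.

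The maps in each fiberwise conjugacy share their linear parts, so the conjugacy equation on each fiber reduces to a cohomological equation on $\T^{d-1}$ with constant right-hand side. Integrating against Lebesgue yields $(I-B')\bar h_c=(\sigma(c)-c)\,v_B$ and $(I-A')\bar h_c=(\sigma(c)-c)\,v_A-\epsilon\sin(2\pi c)\,\bar e_d$, where $\bar h_c:=\int\tilde h_c$, $A',B'$ are the lower diagonal blocks of $A,B$, $v_A,v_B$ are their first sub-columns, and $\bar e_d\in\R^{d-1}$ is the image of $e_d$ after dropping the first coordinate. In the representative case $A'=B'=\Id$ (which covers Example~\ref{ex.id}), these become $(\sigma(c)-c)\,v_B=0$ and $(\sigma(c)-c)\,v_A=\epsilon\sin(2\pi c)\,\bar e_d$; assuming $v_B\ne 0$ the first forces $\sigma(c)=c$, and the second then reads $\epsilon\sin(2\pi c)\,\bar e_d=0$ for every $c$, a contradiction. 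The main obstacle is extending the scheme beyond this representative case: when $A'$ or $B'$ is nontrivially unipotent, one must argue that $\bar e_d\notin\text{Im}(I-A')$ (or else replace $e_d$ by a better-chosen $B$-invariant basis vector), and when the fiberwise action fails to be ergodic on a positive measure set of $c$, the ergodicity step is replaced by a direct Fourier-side iteration using $F$- and $b$-invariance to eliminate the $x_2,\dots,x_d$ dependence of $h_1$.
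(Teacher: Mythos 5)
Your choice of perturbation is exactly the paper's ($F=a+\eps\sin(2\pi x_1)e_d$, $G=b$), and your fiber-averaging analysis in the representative case $A'=B'=\Id$, $v_B\neq 0$ is correct and close in spirit to the paper's two-dimensional model. However, the ``main obstacle'' you flag at the end is a genuine gap, and for this perturbation it cannot be closed: when $A'$ is nontrivially unipotent the non-conjugacy statement you need is simply false. Concretely, take $d=3$, $A=\Id+E_{32}$, $B=\Id+E_{31}$ (lower triangular, commuting, parabolic), $\alpha=(0,\alpha_2,\alpha_3)$, $\beta=(0,0,\beta_3)$ (the constraint $(A-\Id)\beta=(B-\Id)\alpha$ forces $\beta_2=0$). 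Then $F=a+\eps\sin(2\pi x_1)e_3$ and $G=b$ commute, have zero mean and preserve volume, yet the small volume-preserving shear $H(x)=(x_1,\,x_2+\eps\sin(2\pi x_1),\,x_3)$ satisfies $H\circ F=a\circ H$ and $H\circ G=b\circ H$, as a direct computation shows. So this admissible perturbation \emph{is} conjugate to $\langle a,b\rangle$ by an $O(\eps)$-small element of $\text{Diff}^\infty_\lambda(\T^3)$, and no contradiction argument can succeed. Note that here $\bar e_d\in\mathrm{Im}(I-A')$, exactly the situation your proposed patch ``argue that $\bar e_d\notin\mathrm{Im}(I-A')$'' excludes; your alternative (``replace $e_d$ by a better-chosen $B$-invariant vector'') is where the real work lies and is left undeveloped. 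The repair is to adapt the perturbed direction to $(A,B)$: perturb along some $v\in\ker(B-\Id)$ (needed for commutation with $b$) for which there is a row functional $w$ with $wA=w$ and $\langle w,v\rangle\neq 0$; applying $w$ to the exact conjugacy identity $h\circ F-Ah=-f$ and averaging over the $F$-invariant fibers $\{x_1=c\}$ then gives $\eps\sin(2\pi c)\langle w,v\rangle=0$, a contradiction requiring no volume assumption (in the example above $v=e_2$, $w=e_2^{tr}$ works); the degenerate configurations where no such pair exists must be treated separately, essentially by the paper's volume argument.

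A second, smaller gap: even inside your representative case the contradiction uses $v_B\neq 0$, so the case $B=\Id$ (e.g.\ $d=2$, $A=\Id+E_{21}$, $B=\Id$, which is essentially the paper's model situation) is not covered. There a conjugacy of the pair genuinely exists, namely $H(x_1,x_2)=(g(x_1),x_2)$ with $g(x_1)=x_1+\eps\sin(2\pi x_1)$, so fiber-averaging alone cannot conclude; one must actually use the $\lambda$-preservation of $H$, which your argument assumes but never exploits. The paper does this by showing the first component of any conjugacy is forced to equal $g(x_1)$, whence $\det DH=g'(x_1)\partial_2H_2$ together with $\int_0^1\partial_2H_2\,dx_2=1$ forces $g'\equiv 1$, a contradiction. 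Finally, your claim that the fiber actions are ergodic for a.e.\ $c$ is not automatic (all relevant translation parts may be rational), and the substitute ``Fourier-side iteration'' is again not carried out. For comparison, the paper's proof uses the same perturbation, reduces to the two-dimensional model and combines forced form of the conjugacy with the volume argument; it is admittedly terse about the general lower-triangular case, but your write-up commits explicitly to the unmodified direction $e_d$, and that is precisely where the proof, as proposed, breaks.
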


\begin{proof} Keep $b=B+\beta$ unchanged and perturb $a=A+\a$ to $F(x_1,\ldots,x_d)=Ax+\a+(0,\ldots,0,\eps \sin(2\pi x_1))$. The maps $b$ and $F$ commute, and 
$F$ satisfies \eqref{eq_System0}. To see that $A+\a$ is not conjugated to $F$ by a volume preserving conjugacy, consider the special two-dimensional case: $a(x_1,x_2)=A(x_1,x_2)=(x_1,x_2+x_1)$ and $F(x_1,x_2)=(x_1,x_2+x_1+\eps \sin(2\pi x_1))$
(the general case is not different from it). 
Define the circle diffeomorphism $g: x_1\mapsto x_1+\eps \sin(2\pi x_1)$ and the conjugacy $H(x_1,x_2)=(g(x_1),x_2)$. Clearly, 
$H$ does not preserve area, and it is easy to see that, up to translation, $H$ is the only conjugacy between $a$ and $F$. 
\end{proof}

\begin{remark} Note that if $\langle a,b\rangle$ has identity as a rank-one factor of dimension at least $2$, then it is straightforward to perturb the action as in \eqref{eq_System0} so that there is no 
conjugacy at all with the affine action. To do this, it is enough to perturb the identity factor itself in a volume preserving way such that \eqref{eq_System0} holds. \end{remark} 

\begin{example}[Affine actions with a genuinely parabolic rank-one factor] \label{ex.rankone}
Let us assume that $$B=\Id+E_{21}+E_{32}.$$ If 
$A$ is lower triangular and $(A-\Id)$ does not contain neither $E_{2j}$ nor $E_{3j}$ for any $j$, 
then for any $(\a,\beta)\in \cT(A,B)$ we have that $\a_1=\a_2=0$, and $A+\a$ acts as identity on the two-torus obtained by the projection on $(x_1,x_2)$.
 Then the affine action $\(A+\a,B+\beta\)$ has the skew shift $(x_1,x_2)\mapsto (x_1+\beta_1,x_2+x_1+\beta_2)$ of the two-torus as a rank-one factor, and is thus not KAM-rigid, as explained below. 
\end{example} 

To see why the action in Example \ref{ex.rankone} is not KAM-rigid, we can use the following general fact. 

\begin{proposition} \label{prop.rankone} For any $r\in \N$, for any parabolic affine map $A+\a$ on $\T^d$ with $A\neq \Id$, for any $\eps>0$ there exists $f$ such that 
$$
 \|f\|_r\leq \eps,  
 \quad \widehat f:= \int_{\T^d} f d\lambda=0,
 $$   
and $A+\a+f \in \text{Diff}^\infty_\lambda(\T^d)$ is  not conjugated to $A+\a$. 
\end{proposition}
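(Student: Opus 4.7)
My plan is to construct an explicit area-preserving, zero-mean perturbation whose effect cannot be removed by any smooth conjugacy. I would first reduce to a two-dimensional skew-shift model, then exploit the non-trivial parabolic structure to rule out conjugacy via an appropriate invariant, with the Diophantine case being the main difficulty.

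Since $A\ne\Id$ is unipotent, there exist primitive integer vectors $u,v\in\Z^d$ with $(A-\Id)u=v\ne 0$ and $(A-\Id)v=0$; a suitable rational change of coordinates then exhibits the skew-shift matrix as a $2\times 2$ block of $A$. Following the convention of Proposition \ref{propid}, it suffices to treat the two-dimensional case $\bar a(x_1,x_2)=(x_1+\bar\alpha_1,\,x_1+x_2+\bar\alpha_2)$ on $\T^2$, the extension to arbitrary $d$ being formal. For this $\bar a$ I would first try the perturbation $\bar F(x_1,x_2)=(x_1+\bar\alpha_1,\,x_1+x_2+\bar\alpha_2+\eps\phi(x_1))$ with $\phi\in C^\infty(\T)$ mean-zero; a routine Jacobian computation confirms area preservation and $\widehat{\bar f}=0$. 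When $\bar\alpha_1\in\Q$, some iterate of $\bar a$ preserves each horizontal circle $\{x_1\}\times\T$ as a rigid rotation, and any smooth conjugacy must preserve this invariant foliation and restrict to a translation on each circle, which forces $\eps\phi(x_1)\in\Z$ and hence $\phi\equiv 0$. When $\bar\alpha_1$ is Liouville, I choose $\phi$ with Fourier support on the Liouville denominators of $\bar\alpha_1$ so that the natural fiber-preserving conjugacy ansatz $H(x_1,x_2)=(x_1,x_2+\psi(x_1))$ demands a smooth solution $\psi$ of $\psi(x_1+\bar\alpha_1)-\psi(x_1)=\phi(x_1)$, which does not exist.

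The main obstacle is the Diophantine regime, since the $\phi$-perturbation above is then smoothly conjugate to $\bar a$ by the explicit diffeomorphism $H$ just named, as the cohomological equation admits a tame solution. For this case I would replace $\bar f$ by one that breaks the rank-one translation factor structure of $\bar a$ rather than merely shifting within it, namely $\bar F(x_1,x_2)=(x_1+\bar\alpha_1+\eps g(x_2),\,x_1+x_2+\bar\alpha_2+\eps g(x_2))$ with $g\in C^\infty(\T)$ mean-zero (area preserving by a direct Jacobian check, with $\widehat{\bar f}=0$), and rule out conjugacy via a centralizer argument. The centralizer of $\bar a$ in $\text{Diff}^\infty_\lambda(\T^2)$ contains the one-parameter group $T_c(x_1,x_2)=(x_1,x_2+c)$, whereas for $g$ in sufficiently general position no non-trivial smooth diffeomorphism commutes with $\bar F$, since any such element must satisfy coupled cohomological equations over $\bar F$ whose only smooth solutions are powers of $\bar F$. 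Since the smooth centralizer is a conjugacy invariant, this yields the required non-conjugacy. The delicate step is precisely this collapse of the centralizer under the explicit small perturbation, which exploits that the $g(x_2)$ term destroys the decoupling between fiber and base coordinates responsible for the large centralizer of $\bar a$.
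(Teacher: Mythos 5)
Your proposal has a genuine gap in its main case, and an error in one of the side cases. The heart of your argument in the Diophantine regime is the assertion that for $g$ ``in sufficiently general position'' the smooth centralizer of $\bar F(x_1,x_2)=(x_1+\bar\alpha_1+\eps g(x_2),\,x_1+x_2+\bar\alpha_2+\eps g(x_2))$ collapses to the powers of $\bar F$, while the centralizer of $\bar a$ contains the translation flow $T_c$. That would indeed imply non-conjugacy, but you offer no proof of the collapse, no choice of $g$, and no mechanism; establishing triviality of smooth centralizers for an explicit small perturbation is a hard problem in its own right (this is exactly the step you flag as ``delicate''), so as written the Diophantine case — which is the case of full measure and the only one your first construction cannot handle — is unproved. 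Separately, your rational sub-case is incorrect: for $\bar\alpha_1=p/q$ the perturbation $\eps\phi(x_1)$ in the fiber \emph{is} removable by a smooth (non fiber-preserving) conjugacy. Indeed, taking $H(x_1,x_2)=(x_1+u(x_1),\,x_2+\psi(x_1))$ with $u=\tfrac{\eps}{q}\sum_{j=0}^{q-1}\phi(\cdot+j\bar\alpha_1)$ (which is invariant under translation by $\bar\alpha_1$), the conjugacy equations reduce to $\psi(x_1+\bar\alpha_1)-\psi(x_1)=u(x_1)-\eps\phi(x_1)$, whose Birkhoff sums over each periodic orbit vanish by the choice of $u$, so $\psi$ exists and $H\circ\bar F=\bar a\circ H$; your conclusion that a conjugacy must fix each circle and act by translations is what fails. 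The same loophole (conjugacies that move the base) is not addressed in your Liouville case either, where you only rule out the fiber-preserving ansatz.

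The paper's proof avoids all of this with a single perturbation valid for every $\alpha$: it perturbs the \emph{base} coordinate of the two-dimensional skew shift by $\eps\sin\bigl(2\pi(x_1+x_2)\bigr)$, producing a shifted classical standard map. Non-conjugacy to the skew shift is then a classical fact (the standard-map family exhibits hyperbolic periodic orbits/destroys minimality and unique ergodicity for any $\eps\neq0$, whereas the skew shift has none of this), so no case distinction on the arithmetic of $\bar\alpha_1$ and no centralizer analysis is needed. If you want to salvage your route, you would have to either prove the centralizer collapse for an explicit $g$, or replace it by a robust dynamical invariant of the perturbed map — which is effectively what the paper's choice of perturbation accomplishes.
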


\begin{proof} Without loss of generality, we can consider the case of the two-dimensional skew shift $(x_1,x_2)\mapsto (x_1+\beta_1,x_2+x_1+\beta_2)$. This map can be perturbed into $(x_1,x_2)\mapsto (x_1+\beta_1+\eps \sin(2\pi(x_1+x_2)),x_2+x_1+\beta_2)$, which is a shifted classical standard map that is not conjugate to a skew shift.    
\end{proof}

The phenomena in Examples \ref{ex.id} and \ref{ex.rankone} can be subsumed under the existence, for any choice of $(\a,\beta)\in \cT(A,B)$, of a rank-one factor for the affine action $\(A+\a,B+\beta\)$ that is either identity or a genuinely parabolic action, which overrules KAM-rigidity. This motivates the following definition.

\begin{definition}[Locked actions] \label{def_stiff}
When the commuting linear action $\(A,B\)$ is such that 
 for any choice of $(\a,\beta)\in \cT(A,B)$,  the affine action $\(A+\a,B+\beta\)$ has  a rank-one factor that is either identity or a genuinely parabolic action (i.e., has a non-trivial linear part), we say that $\(A,B\)$ is locked. We call the action unlocked if it is not locked.
\end{definition}

An immediate corollary of Propositions \ref{propid} and \ref{prop.rankone} is the following. 
\begin{corollary} \label{cor.stiff} 
If $\(A,B\)$ is locked, then for any choice of $(\a,\beta) \in \cT(A,B)$, the action of $\(a,b\)$ is not KAM-rigid.
\end{corollary}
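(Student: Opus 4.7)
The plan is to reduce each of the two cases in the locked hypothesis to the corresponding proposition cited above, via appropriate changes of basis in $\Z^d$ and $\Z^2$.

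Fix an arbitrary $(\alpha,\beta)\in \cT(A,B)$, let $a=A+\alpha$, $b=B+\beta$, and let $\pi:\T^d\to\T^{d'}$ be the $\Z$-factor provided by the locked hypothesis, with factor action $\bar\rho$ either identity or genuinely parabolic. I first change basis in $\Z^d$ (adapted to $\ker\pi_*$) so that $\pi$ becomes the projection onto the first $d'$ coordinates; then $A,B$ are block lower triangular with factor blocks $\bar A,\bar B$, fiber blocks $A'',B''$, off-diagonal blocks $C_A,C_B$, and translations split as $(\bar\alpha,\alpha''),(\bar\beta,\beta'')$. Next, since $\bar\rho$ factors through $\Z^2\to\Z$, its kernel in $\Z^2$ is generated by a primitive pair $(p,q)$; I change basis in $\Z^2$ by replacing the generators with $a^p b^q$ (which projects to $\Id$ on the factor) and a complementary generator (which projects to a generator of $\bar\rho$). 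After renaming back to $a,b$, we have $\bar a=\Id$ and $\bar b$ generates the factor $\Z$-action.

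In the identity factor case, $\bar a=\bar b=\Id$, so both $A,B$ have identity in the factor block and $\bar\alpha=\bar\beta=0$; after a further integer triangularization if necessary, this fits the hypothesis of Proposition \ref{propid}, which directly supplies a commuting, volume-preserving perturbation of $a$ that is not conjugate to $a$ via any volume-preserving diffeomorphism. In the genuinely parabolic factor case, $\bar b$ is a parabolic affine map on $\T^{d'}$ with nontrivial linear part. I apply Proposition \ref{prop.rankone} to $\bar b$ to obtain a small mean-zero $\bar f$ with $\bar b+\bar f\in\mathrm{Diff}^\infty_\lambda(\T^{d'})$ not conjugate to $\bar b$, and lift it to $\tilde b=b+g$ on $\T^d$, where the factor part of $g$ equals $\bar f\circ\pi$ and the fiber part $\hat g$ is chosen to enforce $\tilde b\circ a=a\circ\tilde b$. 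Since $\bar a=\Id$, the commutation condition reduces to the twisted cohomological equation $\hat g\circ a-A''\hat g=C_A\cdot(\bar f\circ\pi)$ on the fiber, which is solvable by exploiting the freedom in $\bar f$ within Proposition \ref{prop.rankone} (for instance by choosing $\bar f$ valued in $\ker C_A$ when possible, or otherwise via standard parabolic cohomology over $a$). The perturbed action $(a,\tilde b)$ projects onto the factor action $(\Id,\bar b+\bar f)$, which is not conjugate to $(\Id,\bar b)$; as any conjugacy of the full actions descends to a conjugacy of the factor actions, the full actions cannot be conjugate, and $(a,b)$ is therefore not KAM-rigid.

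The main obstacle is the lift in the genuinely parabolic case: one must solve the twisted cohomological equation for $\hat g$ while simultaneously keeping $\tilde b$ volume preserving and $g$ small with zero mean. The flexibility afforded by Proposition \ref{prop.rankone}'s choice of $\bar f$ (many distinct small perturbations destroy conjugacy) should make this feasible, but it is the technical heart of the corollary. When the off-diagonal block $C_A$ vanishes or has a rich kernel --- as in the examples preceding the statement --- the lift is immediate and no fiber correction is needed.
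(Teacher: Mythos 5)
Your overall route is the paper's: Corollary \ref{cor.stiff} is obtained there as an immediate consequence of Propositions \ref{propid} and \ref{prop.rankone}, and your identity-factor case (change basis so that both generators act trivially, with zero translation, on a circle factor of the identity factor, then quote Proposition \ref{propid}) is exactly that reduction; the change of generators of $\Z^2$ so that one generator projects to the identity on the rank-one factor is likewise in the spirit of Lemma \ref{lemma.stiff}.

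The genuinely parabolic case, however, is not closed, and the missing step is the one you yourself call the technical heart. First, the fiber correction must solve $\hat g\circ a-A''\hat g=C_A(\bar f\circ\pi)$, a twisted cohomological equation above a parabolic affine map; such equations carry infinitely many obstructions (this is the central difficulty the whole paper is organized around, cf.\ the discussion of \cite{KatokRobinson} and \cite{FF} in \S\ref{s_overview}), so there is no ``standard parabolic cohomology over $a$'' to invoke. Second, the fallback of taking $\bar f$ with values in $\ker C_A$ is not compatible with Proposition \ref{prop.rankone} as stated: its proof produces one specific perturbation (the standard-map term $\eps\sin(2\pi(x_1+x_2))$ added to the first component), not a perturbation in a prescribed direction, and once you restrict the direction and also impose volume preservation of the lifted map, the surviving factor perturbations can become trivial. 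For instance, for $A=\Id+E_{41}$, $B=\Id+E_{21}+E_{32}$ on $\T^4$ one is forced to $\bar f=(0,\bar f_2(x_1))$, and the perturbed factor $(x_1,x_2)\mapsto(x_1,x_2+x_1+\beta_2+\bar f_2(x_1))$ is conjugate to the unperturbed skew shift via $(x_1,x_2)\mapsto(x_1+\bar f_2(x_1),x_2)$, so the non-conjugacy you want to import from Proposition \ref{prop.rankone} disappears (that particular pair is also locked through an identity factor, so the corollary survives there via Proposition \ref{propid}, but your parabolic-case argument as written does not go through). Third, the assertion that any conjugacy of the full actions descends to a conjugacy of the factor actions is unjustified: a conjugacy of the $\T^d$-actions need not preserve the fibration by the fibers of $\pi$, so non-conjugacy of the factors does not by itself exclude a volume-preserving, near-identity conjugacy upstairs; one needs either to show that the fibration is dynamically canonical for the perturbed action, or to argue directly on $\T^d$ in the style of the proof of Proposition \ref{propid}. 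To be fair, the paper records no more than the one-line reduction, but as a proof your proposal leaves precisely the commuting lift, and the descent of conjugacies, open.
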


The main result of this paper is to show that besides the locked actions, for actions having (at least) one step-2 generator,  KAM-rigidity under $\lambda$-preserving perturbations holds almost surely in the choice of the translation part. We formulate this dichotomy as follows.

\begin{Main} \label{th.main} 
Given a commuting action $\(A,B\)$ of parabolic matrices, where $A$ is step-2, we have the following dichotomy. 
\begin{itemize}
\item[(i)] Action $\(A,B\)$ is locked, thus for any choice of $(\a,\beta) \in \cT(A,B)$, the action of $\(a,b\)$ is not KAM-rigid.\item[(ii)] Action $\(A,B\)$ is unlocked, and for almost every choice of $(\a,\beta)\in \cT(A,B)$, the action of $\(a,b\)$ is ergodic and KAM-rigid under volume preserving perturbations. 
\end{itemize}
\end{Main}

In the case of step-2 actions, we have a more stringent alternative. 
\begin{Main} \label{almostsure} 
Given a commuting pair $\(A,B\)$ of step-2 parabolic matrices, we have the following dichotomy. 
\begin{itemize}
\item[(i)] For any choice of $(\a,\beta)\in \cT(A,B)$, the action of $\(a,b\)$ has a rank-one factor that is identity, and is therefore not ergodic and not locally rigid.
\item[(ii)] For almost every choice of $(\a,\beta)\in \cT(A,B)$, the action of $\(a,b\)$ is ergodic and KAM-rigid under volume preserving perturbations. 
\end{itemize}
\end{Main} 

Corollary \ref{cor.stiff} states that $(i)$ impedes KAM-rigidity.  The proof of the dichotomy between $(i)$ and $(ii)$ is the main result of this paper that we formulate more precisely {\it via} Proposition \ref{prop dioph} and  Theorem \ref{mainC} of the next section.
%\sout{We also show that there are step-3 examples which are not rigid, and we prove a rigidity result for certain special class of step-3 actions.This shows that in higher steps a result similar to Theorem  \ref{main}.} 

We note that the statements remain true if we replace the preservation of the volume  $\lambda$ (and also averages with respect to the volume)
by preservation of any common invariant measure. 

\color{black}

In the following  observations we discuss the relevance of the assumptions of the main theorems.

\begin{remarki}[KAM-rigidity {\it vs.} local rigidity: why do we need Diophantine conditions?] {\rm When a linear $\Z^2$-action is not higher rank, it has a rank-one factor that we can represent by a pair $\({\rm Id},C\)$. The absence of rigidity of a single linear map $C$ implies that the local rigidity in this case can only be considered for affine actions. This work 
 treats rigidity of unlocked affine parabolic actions under Diophantine conditions on the translation vectors of the action. Roughly speaking, we put the Diophantine conditions on all the frequency vectors associated to sub-tori on which some element of the action acts as a translation. As we will see in the next section, there may be a finite or infinite number of such conditions. However, there will unavoidably be some Diophantine conditions that must be satisfied, thus only KAM type rigidity can be considered. The necessary set of Diophantine conditions comes from the fact that the affine action always has a translation part (due to the existence of a common eigenspace of eigenvalue 1 for the commuting linear pair, see Definition \ref{def_translation} and \S \ref{s_case1}). The other Diophantine conditions (cf. Definition \ref{definition resonance})  that are used are natural conditions that play a crucial role in the proof (cf. \S \ref{s-case2}), although we do not see for the moment how to show that they are necessary for the result to hold (see Question \ref{qq1}).}
\end{remarki}

\begin{remarki}[On higher step parabolic actions: why do we assume that one generator is step-2?]
{\rm The assumption that one element of the action is step-$2$ plays a heavy role in the proof.  In particular, it is crucial in defining a tame candidate for a conjugacy at each step of the inductive KAM conjugacy scheme.  
More details about the use of this assumption will be given in \S \ref{s_overview}.  We note that working under this, probably restrictive, assumption 
requires introducing some new ideas and techniques. Moreover, some phenomena, like resonances, appear only along sub-tori where both generators are step-2 (see \S \ref{Diophantine conditions}). This is why we prefer to focus on the special case described by Theorem \ref{th.main} and keep the study of the general case for a future work. }

%{\bp As it will appear from \S \ref{s_overview} and all through the proofs, this will require introducing some new ideas and techniques. Moreover, it must be complementary to the work done here
% because some phenomena like resonances appear only along sub-tori where both generators are step-2 (see \S \ref{Diophantine conditions}). We thus 
 %preferred to focus on the special case of Theorem \ref{th.main} and keep the study of the general case for a future work.}

 %We  , but in a nutshell, the reason is that if the cohomological equation above a step-2 affine map with a Diophantine translation part has a solution, then the solution is tame. This is not the case for higher step affine maps. Although the absence of tameness for the higher step generator  still raise  major challenges in the proof of KAM-rigidity, the fact that one generator is step-2 is of great help.}

%We think that KAM-rigidity does not hold in general for higher step unlocked parabolic actions. Counterexamples will be provided in a forthcoming work on absence of tameness of the cohomological equations above higher step nilactions.}
\end{remarki}

\begin{remarki}[From parabolic actions to general affine actions: why do we focus on parabolic actions?] {\rm  If a non higher rank linear action $\(A,B\)$ has a rank-one factor $\(\Id,C\)$ where $C$ does not have  $1$ as an eigenvalue, then, due to the commutation constraint, $\Id$ is still a factor for any affine action with the linear part $\(A,B\)$. As a consequence, local rigidity would fail for the affine actions the same way it fails for the linear one. Based on this argument and on the fact that higher rank actions (i.e., those for which all the elements of the linear part are ergodic, thus partially hyperbolic) are locally rigid \cite{DK}, the case of parabolic actions naturally appears as the main problem to settle in order to  give a general classification of affine abelian actions on the torus in terms of KAM local rigidity.  }
\end{remarki}

\begin{remarki}[The case of $\mathbb Z^k$ actions for  $k\ge 3$] {\rm The same methods we use here provide the KAM-rigidity result for certain classes of affine  $\mathbb Z^k$-actions with $k\ge 3$, as well, see  Remark \ref{z-k} for more details.
It is useful to note that the higher the rank $k$ of the acting group is, the "more locked" the action can become. An example of maximal rank parabolic abelian linear action on $\mathbb T^4$ is the $\mathbb Z^4$-action  generated by $\Id+E_{12}, \Id+E_{14},\Id+E_{32}$ and $\Id+E_{34}$. This action is completely locked in the strongest possible sense: there are no affine non-linear   $\mathbb Z^4$-actions which have this action as a linear part, at all.  We expect that the same holds for any maximal rank parabolic abelian linear action on any $\mathbb T^d$, they are locked. In these cases there is no KAM-rigidity. This points to the fact that  parabolic   $\mathbb Z^2$-actions are the most common situation in which we could expect to have KAM-rigidity.  }
 
 \color{black}

\end{remarki}

\begin{remarki}[The role of commutativity in the KAM-rigidity. The parabolic higher rank trick] 

{\rm As in \cite{M} and \cite{DK}, our proof of local rigidity relies on a KAM inductive conjugacy scheme (see \S \ref{s_overview} for an outline of the scheme). At each step of the scheme, a system of cohomological equations must be solved up to a quadratic error. 

In \cite{M}, each equation of the system is a cohomological equation above a circle rotation. Hence each individual equation has a formal solution provided vanishing of averages, but this solution may not be tame because each individual angle is not necessarily  Diophantine. 
The main observation by Moser is that a cocycle relation forces the formal solutions to coincide and to be tame. This implies that the commutation relation allows one to find a tame solution to the system up to a quadratic error.

In \cite{DK}, the system of cohomological equations consists of individual equations that have tame solutions modulo a countable set of obstrcutions. The commutation, or the higher rank trick, is used to show that these obstructions can be removed up to a  quadratically small error. 

As it will be explained in detail in \S \ref{s_overview}, our approach to proving KAM-rigidity for parabolic actions combines these two mechanisms of local rigidity. 

The main challenge in our work is to replace the partially hyperbolic higher rank trick by a parabolic one. To explain this a little better, we risk a technical description that may look obscure now but that will become much clearer from the detailed overview of the proof in  \S \ref{s_overview} as well as from the introduction of Section \ref{plan_of_proof}. 

The higher rank trick usually relies on the exponential growth of integer vectors (Fourier frequencies) under the dual action of partially hyperbolic matrices. Also, the mechanism that lies at the heart of the higher rank trick of \cite{DK} is the following consequence of the partial hyperbolicity of the action: for an integer vector $\bar{m}$ that is lowest (with smallest norm) on its orbit under the dual action $\bA$ of the first generator of the action, it is possible to iterate by $\bB$ 
in one of the two directions (future or past) so that $\|\bA^k\bB^lm\|$ be always larger than $c\|m\|$.

A main difficulty in our work is to replace the above argument by the fact that parabolic actions only grow at a polynomial rate. Much more annoying is the fact that for some integer vectors $\bar m$ that are lowest on their $\bA$ orbit, it is possible that the iterates by $\bB^l$ be decreasing in norm during a long time in both directions of $l$, before starting to increase. The challenge is to make sure that by choosing one direction of iteration for $\bB$, the double iterates $\|\bA^k\bB^lm\|$ remain always larger than $\|m\|^{\delta}$ for some $\delta>0$ independent of $m$ ($\delta$ is comparable to $1/S$ where $S$ is the step of the action). This parabolic version of the higher rank trick is done in \S \ref{double-buble}), where "being unlocked" property of the action again plays a major role. 

%The fact that $\eps_0$ cannot be equal to $1$ will have as a consequence that the estimates on the quadratic error terms at each step are not tame. Namely, the loss of derivatives is not a fixed number as usual, but a proportion (that goes to $1$ when the second generator's step goes to infinity) of the number of derivatives. However, this does not affect the convergence of the KAM scheme. A similar observation was used recently in \cite{ZW}. 

 }

\end{remarki}

\medskip

There has been very few local rigidity results for parabolic actions. One example are actions by left multiplication on nilmanifolds. These are parabolic, and a form of local rigidity for such $\mathbb R^2$-actions on 2-step nilmanifolds was obtained in \cite{D}, under Diophantine conditions. Results of similar type  were obtained for $\mathbb Z^2$-actions on Heisenberg nilmanifolds in \cite{DT}. More recently, in \cite{ZW}, it is proved that certain large abelian parabolic actions on homogeneous spaces of semisimple Lie groups have strong local rigidity properties.  

In the remainder of the introduction we give precise definition and precise formulation of the main rigidity result, as well as the overview of the proofs, examples and comments on possible applications. 

%\section{Precise statement of the main rigidity result}
%\subsection{KAM-rigidity of Diophantine affine parabolic actions with at least one step-2 generator.}

\subsection{Diophantine affine parabolic actions}\label{Diophantine conditions}  In this section we define the full measure Diophantine conditions required on the pair $(\a,\beta)\in \cT(A,B)$ in order to guarantee KAM-rigidity. It will be a combination of two types of conditions: simultaneously Diophantine condition for the maximal translation factor of the action, and Diophantine conditions for the translation parts of special elements of the action that we refer to as resonances.

\subsubsection{The maximal translation factor.}

%Denote 
%\begin{equation}\label{eq_form_of_AB}
%\begin{aligned}
%A(x)= x+\tA x \quad  B(x)= x+\tB x.
%\end{aligned} 
%\end{equation} 

\begin{definition}[Maximal translation factor] \label{def_translation} We say that the action $\(A,B\)$ has a maximal identity factor if there is a torus $\T_1$ of dimension $d_1$ such that $\(A,B\)$, restricted to this torus, equals identity. The action of $\(a,b\)$ restricted to this factor is called the maximal translation factor of $\(a,b\)$.
\end{definition}
 
%\begin{definition}[Translation part of the affine action] \label{translation part} 
%Given a $\Z^2$ affine action $\(a,b\)$, we denote the restriction of  $\a,\be$ to $\T_1$ by $\a^{(1)},\beta^{(1)}$, and we call the pair $%(\a^{(1)},\beta^{(1)})$ the {\it translation part} of the action.
%\end{definition}
\begin{definition}[Simultaneously Diophantine vectors]\label{definition sim}
We say that a pair of vectors $(\a,\beta) \in \T^{d} \times \T^{d}$ is  simultaneously Diophantine %with respect to a pair of numbers $(\lambda,\mu)$ 
if there exists $\gamma, \tau>0$ such that 
$$
\max\{ |1 -e(k,\a) |, |1-e (k,\beta) |\}> \frac{\gamma}{|k|^\tau},
$$
where $e(m,x)=e^{2\pi i (m,x)}$. We denote this property by $(\a,\beta)\in \text{SDC}(\gamma,\tau)$. 
\end{definition}

  Observe that SDC-pairs of vectors  form a set of full Haar measure in   $\T^{d} \times \T^{d}$.

\subsubsection{Resonant vectors.}

In what follows we will use the dual action corresponding to the linear part $\(A, B\)$, induced on $\Z^d$. For a matrix $A$, the dual action on $\Z^d$ is denoted by  
$$
\bA:=(A^{tr})^{-1}, \quad \bA=\Id+\hA.
$$
For a general $m$ and $(k,l)\in \Z\times \Z$,  $\bar A^k\bar B^l m$ has a polynomial expression (see Lemma \ref{l_form_of_AkBl}).
However, 
if $m$ is such that there exists a pair $(k,l)\in \Z\times \Z\setminus \{(0,0)\}$ satisfying $\bar A^k\bar B^l m=m$, then, since this implies that  $\bar A^{ik}\bar B^{il} m=m$ for all $i \in \Z$, we necessarily have
%\footnote{See Lemma \ref{l_form_of_AkBl} for the polynomial expression of $\bar A^k\bar B^l m$ for a general $m$.} 
(even if $A$ and $B$ are higher step):
$$
\bar A^k\bar B^l m-m=k\hA m+l\hB m=0.
$$
Hence, if $\bar A m \neq m$ or $\bar B m\neq m$, we can associate to such an $m$ a unique  pair  
$(k,l)\in \N\times \Z$ such that either $(k,l)=(1,0)$, or $(k,l)=(0,1)$, or  $k$ and $l$ are mutually prime and $k>0$. For all these cases, we use the same notation $k \wedge l=1$ and say that $m$ is resonant and that $(k,l)$ is its associated resonance pair.  Notice that, due to commutativity,  if $m$ is resonant, then any other integer vector on the $\(\bA, \bB\)$-orbit of $m$ is also resonant with {\it the same resonance pair}. So resonance pairs are attached to orbits, rather than individual vectors.   
We summarise the above discussion in the following 

\begin{definition}[Resonant vectors and resonance pairs]  Any vector $m \in \Z^d\setminus\{0\}$  such that  $\bar A^k\bar B^l m=m$ for some $(k, l)$, while either $\bar A m \neq m$ or $\bar B m\neq m$,  is called a resonant vector.  We will use the following notations:

$\cC_2(k,l)$ denotes the set of all resonant $m$ associated to the resonance pair $(k,l)$,  

$\cC_2=\cC_2(A, B)=\bigcup_{k\wedge l=1} \cC_2(k,l)$ denotes the set of all resonant vectors, 

$\res(A,B)$ denotes the set of all resonance pairs $(k,l)\in \Z^2$. 
\end{definition}

The following lemma shows that the norm of the resonant pair is bounded by the norm of any of the corresponding resonances. It is therefore bounded by the smallest one of them on the    $\(\bA, \bB\)$-orbit.

\begin{lemma}\label{l_tech_reson}
Let $a=A+\a$ and $b=B+\be$  be commuting affine parabolic maps. If $(k,l)\in \Z^2$ is the (unique) pair associated to the resonance $m$ as in Definition \ref{definition resonance}, then there exists $C=C(A,B)>0$ such that 
$$
 C (|k|+|l|) \leq |m| .
$$
\end{lemma}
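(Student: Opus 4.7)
The plan is to exploit the defining algebraic identity $k\hA m+l\hB m=0$ for a resonance, combined with the coprimality condition $k\wedge l=1$, to pass from mere proportionality of the integer vectors $\hA m$ and $\hB m$ to coordinatewise divisibility, which then yields individual bounds on $|k|$ and $|l|$ in terms of $|m|$.

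First I would dispose of the two boundary cases $(k,l)=(1,0)$ and $(k,l)=(0,1)$: there $|k|+|l|=1$ and, since $m\in\Z^d\setminus\{0\}$, one has $|m|\ge 1$, so the conclusion is immediate with $C=1$.

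The main case is when both $k$ and $l$ are nonzero with $\gcd(k,l)=1$. Since $m$ is resonant, it is not simultaneously fixed by $\bA$ and $\bB$; together with the relation $k\hA m=-l\hB m$ and $kl\neq 0$, this forces $\hA m$ and $\hB m$ to be \emph{both} nonzero integer vectors (neither can vanish alone). The key step is coprimality applied componentwise: writing $\hB m=(b_1,\ldots,b_d)\in\Z^d$, the identity $k b_i=-l(\hA m)_i$ together with $\gcd(k,l)=1$ forces $k\mid b_i$ for every $i$, so there is a nonzero integer vector $w$ with
\[
\hB m = k w, \qquad \hA m = -l w.
\]
Using $|w|\ge 1$, the bounds $|k|\le|kw|=|\hB m|\le\|\hB\|\,|m|$ and $|l|\le|-lw|=|\hA m|\le\|\hA\|\,|m|$ combine to give $|k|+|l|\le (\|\hA\|+\|\hB\|)\,|m|$, and taking $C=\min\{1,(\|\hA\|+\|\hB\|)^{-1}\}$ handles all cases uniformly.

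I do not foresee any serious obstacle; the only substantive ingredient is the coprimality extraction producing $w$. Without it, the identity $k\hA m=-l\hB m$ would only recover the ratio $|k|/|l|=|\hB m|/|\hA m|$, insufficient to control $|k|$ and $|l|$ separately. Note that the constant $C$ depends on $A$ and $B$ only through the operator norms of $\hA=\bA-\Id$ and $\hB=\bB-\Id$, consistent with the $C=C(A,B)$ in the statement.
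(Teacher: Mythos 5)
Your proof is correct and follows essentially the same route as the paper: the coprimality of $(k,l)$ applied to $k\hA m+l\hB m=0$ yields componentwise divisibility of the integer vectors $\hA m$, $\hB m$ by $l$ and $k$ respectively, and the operator norms of $\hA,\hB$ then bound $|k|+|l|$ by a multiple of $|m|$. Only a harmless slip: the componentwise identity should read $k(\hA m)_i=-l(\hB m)_i$ (you wrote it with the roles of $\hA m$ and $\hB m$ swapped), from which the divisibilities $k\mid(\hB m)_i$ and $l\mid(\hA m)_i$ follow exactly as you then use them.
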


\begin{proof} 
Let $m$ be a resonant vector, i.e., 
let $\bA^k\bB^lm = m$. As explained earlier, this implies that $k \hA m = - l \hB m$. Consider now the two integer vectors: $x=\hA m$ and $y=\hB m$. By assumption, $k$ and $l$ are mutually prime. 
This implies, in particular, that each component $x_j$ of the vector $x$ is divisible by $l$. Hence, $x_j\geq |l|$. Therefore, there exists a constant $C(A)$ (depending on $\hA $) such that  $|m| \geq C(A)  |l|$. 
In the same way,  $|m | \geq C(B) | k| $, which implies the statement. 
\end{proof}

%For the dual  action the decomposition given by Proposition \ref{prop.reduction} is:
%\begin{align*}
%\bA(m_1,m_2)&=(m_1+\hA m_2, m_2)\\
%\bB(m_1,m_2)&=(m_1+\hB m_2,m_2).
%\end{align*} 

%{We assume that $d_1$ is maximal with the above property and observe that for $v_2$ integer vector not zero then either $\hA v_2 \neq 0$ or $\hB v_2 \neq 0$.}

%Transversally to the identity factor of the dual action,  there may also be some vectors $m=(m_1, m_2)\in \Z^d$ which are fixed by some %elements of the action. It is for these elements of the action that we require additional arithmetic conditions on the corresponding translation %vectors. 
%In addition, in the study of rigidity of parabolic actions it will be important to detect, if they exist, the elements of the action  that have a linear part equal to identity in some direction and require some arithmetic conditions on the translation vectors along these directions that may also play a role as moduli of KAM-rigidity.
%\color{black}

\begin{definition}[Diophantine resonances] \label{definition resonance} Let $\(a, b\)$ be an affine  parabolic $\Z^2$-action. The number 
$\akl=  a^kb^l - A^k  B^l$ will be called the translation part of the element $(k, l)$ of the action.  
%Any vector $m \in \Z^d\setminus\{0\}$  such that $B^2 m=0$ and $\bar A^k\bar B^l m=m$ for some $(k, l)$ is called resonant vector.  The set of all resonant vectors corresponding to a resonant pair $(k, l)$ we denote by $\cC_2(k,l)$, and the set of all resonant vectors is $\cC_2=\bigcup_{m \in \Z^d\setminus\{0\}} \cC_2(k,l)$.  

%To any resonant vector $m$ we can assign a unique pair $(k_m, l_m)$, called the resonant pair,  such that $k_m, l_m$ are mutually prime and $k_m$ is positive. Let $\res(A,B)$ be the set of all resonant pairs $(k,l)\in \Z^2$. \color{black}

%Define 
%\color{blue} 
%$$
%\akl=  a^kb^l - A^k  B^l. %= k\a+l\beta+  kl\tA \beta_1 + c_k\tA \a_1+ c_l\tB \beta_1,
%$$
%where $c_k=\frac12k(k-1)$, $c_l=\frac12l(l-1)$.
%By Lemma \ref{l_tech_reson}, for $m$ such that $a^kb^lm=m$ we have :
%$$
%(m,\akl)=(m,\, k\a+l\beta+(0_{d_1}, kl\tA \beta_1 )).
%$$

We say that a resonance $m\in \cC_2(k, l)$   is $(\gamma,\tau)$-{Diophantine}, if \begin{equation}\label{admissible} |1-e(m,\akl)|>\frac{\gamma}{|m|^{\tau}}.\end{equation}
\end{definition}

\begin{remark} The set of resonant vectors and resonance pairs for a given action may be empty, finite non empty, or infinite, as we will see in the examples at the end of this section. \end{remark}

%We do not need an assumption on its finiteness in the proof. \color{violet} Also, the set of resonances is empty under the condition that the affine action $(A(\cdot)+ (\alpha^{(1)}, 0), B(\cdot)+(\beta^{(1)}, 0))$ with simultaneously Diophantine translation data is totally ergodic, meaning that every non-trivial action element is ergodic. In this case, simultaneously Diophantine condition on translation vectors suffices.

\subsubsection{Diophantine property for actions.}
We are ready to define the {\it Diophantine} parabolic affine actions, for which the main local rigidity result holds.

\begin{definition}[Diophantine actions] \label{rotation vectors} 
 Given $\gamma,\tau>0$ and a   parabolic  affine $\Z^2$-action $\(a,b\)$, where $a$ is step-2, we say that $\(a,b\)$ is  $(\gamma,\tau)$-Diophantine  if: 
 \begin{enumerate}
 \item  the maximal translation factor  %$(\a ,\beta)$ 
 of $\(a,b\)$ is $(\gamma,\tau)$-simultaneously Diophantine (as in Definition \ref{definition sim}), and
 \item every resonance $m \in \cC_2(A,B)$ is $(\gamma,\tau)$-Diophantine (as in Definition \ref{definition resonance}).
 \end{enumerate} 
\end{definition}

\begin{example} \label{ex3} Let $\(A,B\)$ be the action on $\T^2$ generated by $A=\Id+E_{21}$ and $B=\Id$. The affine action $\(A+(\a,0),B+(0,\beta)\)$ is $(\gamma,\tau)$-Diophantine if and only if both $\a$ and $\beta$ are $(\gamma,\tau)$-Diophantine. 
\end{example}
\begin{proof} Indeed, in this case $\a_1=\a, \beta_1=0$ is the translation 
part of the affine action, and the SDC condition reduces to the Diophantine  condition on $\a$. 

In this example all the vectors $(m_1,m_2)$ with $m_2\neq 0$ are resonant with the same resonance pair $(0,1)$, and the Diophantine condition on the resonance reduces to the Diophantine condition on $\beta$. 
\end{proof}

The following simple observation is an important step in establishing the dichotomies in Theorems \ref{th.main} and \ref{almostsure}. 
\begin{proposition}\label{prop dioph} Fix $\tau>d$. Let $\(A, B\)$ be a linear  parabolic $\Z^2$-action.
 We have the following alternative: 
\begin{itemize}
\item[(i)] $\(A,B\)$ is locked as in Definition \ref{def_stiff}.
%For any choice of $(\a,\beta)\in \cT(A,B)$, the action of $\(a,b\)$ has a rank-one factor that is not a non trivial translation.
\item[(ii)] $\(A,B\)$ is unlocked, and  for almost every $(\a,\beta)\in \cT(A,B)$, $\(A+\a,B+\beta\)$ is $(\gamma,\tau)$-Diophantine for some $\gamma>0$.  
\end{itemize}
In case $A$ and $B$ are step-2, alternative $(i)$ can be reduced to the existence of a rank-one factor that is identity. 
\end{proposition}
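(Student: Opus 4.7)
The plan is to prove the nontrivial direction of the dichotomy---namely \emph{unlocked $\Rightarrow$ a.e.\ $(\gamma,\tau)$-Diophantine}---by establishing the simultaneously Diophantine condition on the maximal translation factor and the Diophantine conditions on all resonances separately via Borel--Cantelli arguments. The contrapositive, that a pointwise obstruction to Diophantinicity forces $\langle A,B\rangle$ to be locked, will follow from a structural claim identifying the identically vanishing integer-linear functionals $L_{k,l,m}(\alpha,\beta):=(m,\alpha_{k,l})$ on $\cT(A,B)$ with the existence of non-translation $\Z$-factors of the affine action.

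The SDC part is routine: $\cT(A,B)\subset\R^{2d}$ is the rational subspace cut out by the commutation $(A-\Id)\beta=(B-\Id)\alpha$, so it carries a Haar measure descending to the quotient torus $\cT(A,B)/\Z^{2d}_\cap$, and the linear projection $\pi_1:\cT(A,B)\to\T_1\times\T_1$ onto the maximal identity factor is surjective since the commutation condition is vacuous on $\T_1$; it therefore pushes Haar to Haar, and $\tau>d\ge d_1$ transfers the classical full-measure SDC statement. For the resonances, each $L_{k,l,m}$ descends to a circle homomorphism on $\cT(A,B)/\Z^{2d}_\cap$ (it takes integer values on the lattice), and when non-trivial the Haar measure of the bad arc $\{\|L_{k,l,m}\|_{\R/\Z}<\gamma/|m|^\tau\}$ is $\le 2\gamma/|m|^\tau$. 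Using Lemma~\ref{l_tech_reson} to bound $|k|+|l|\lesssim|m|$ and $\tau>d$ to secure $\sum_{m\in\Z^d\setminus 0}|m|^{-\tau}<\infty$, Borel--Cantelli yields Diophantine-on-resonances for a.e.\ $(\alpha,\beta)$, provided the key claim: if $\langle A,B\rangle$ is unlocked, then $L_{k,l,m}\not\equiv 0$ for every resonant $m$.

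To prove the claim by contrapositive, suppose $L_{k,l,m}\equiv 0$. A short Fourier computation using $(a^ib^j)^*e_m\propto e_{\bar A^{-i}\bar B^{-j}m}$ shows that every character in the $\langle\bar A,\bar B\rangle$-orbit of $m$ is an eigenfunction of $(a^kb^l)^*$ with the common eigenvalue $e(m,\alpha_{k,l})$, so $L_{k,l,m'-m}$ takes integer values for every $m'$ in the orbit, which for a linear form forces $L_{k,l,m'}\equiv 0$ throughout the orbit. Let $L\subset\Z^d$ be the $\langle\bar A,\bar B\rangle$-invariant sublattice spanned by the orbit and let $\T^r$ be the factor torus with character lattice $L$: on $\T^r$, $a^kb^l$ acts trivially---linearly by the resonance, and with zero translation by the propagated vanishing---so the affine action descends to a $\Z$-factor on $\T^r$. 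A case split finishes: either $A|_{\T^r}$ or $B|_{\T^r}$ is non-identity, in which case the generator of the $\Z$-factor has non-trivial linear part, is not a translation, and $\langle A,B\rangle$ is locked; or both restrictions are identity, $\T^r$ is a quotient of $\T_1$, and the vanishing reduces to $k\alpha|_{\T^r}+l\beta|_{\T^r}\equiv 0$ on $\cT(A,B)$, which by the surjectivity of $\cT(A,B)\to\T^r\times\T^r$ (as in the SDC step) forces $k=l=0$, contradicting $(k,l)$ being a resonance pair. For the step-$2$ reduction, when $A,B$ are step-$2$ the generator $C$ of the $\Z$-factor satisfies $C^l=A|_{\T^r}$ with $A$ step-$2$, forcing $C$ to be step-$\le 2$; writing $C=\Id+N_C$ with $N_C^2=0$ and passing to the sub-quotient $\T^r/\mathrm{im}(N_C)$ collapses the linear part to the identity, and a further surjectivity argument (using that $\cT(A,B)$ still surjects onto the corresponding product factor) forces the residual translation to vanish identically on $\cT(A,B)$, yielding an identity rank-one factor.

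The main obstacle is the propagation step in the proof of the claim: converting the pointwise vanishing $L_{k,l,m}\equiv 0$ into the lattice-wide vanishing $L_{k,l,m'}\equiv 0$ on the full $\langle\bar A,\bar B\rangle$-orbit is precisely what bridges the analytic failure of the Diophantine condition with the algebraic structure (a $\Z$-factor on which $a^kb^l=\Id$) that diagnoses locking. A subsidiary difficulty in the step-$2$ reduction is ensuring that the translation after quotienting by $\mathrm{im}(N_C)$ vanishes for \emph{every} $(\alpha,\beta)\in\cT(A,B)$ rather than merely generically---this is where the step-$2$ hypothesis, and the fact that the parabolic filtration has only one level, is indispensable.
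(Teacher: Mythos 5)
Your handling of the resonances is fundamentally fine, but the proposal has a genuine gap at its base: the repeated appeal to surjectivity of $\pi_1:\cT(A,B)\to\T_1\times\T_1$, justified by the assertion that ``the commutation condition is vacuous on $\T_1$''. That assertion is false: the relation $(A-\Id)\beta=(B-\Id)\alpha$ typically constrains precisely the translation coordinates of the maximal identity factor. In Example \ref{ex.id} ($A=\Id+E_{21}$, $B=\Id+E_{31}$, both step-2) it forces $\alpha_1=\beta_1=0$, so the image of $\pi_1$ is a point and the SDC condition fails for \emph{every} $(\alpha,\beta)$; taken literally, your SDC argument would ``prove'' that this locked action is almost surely Diophantine, and it would also empty the step-2 addendum of content. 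Unlockedness does not rescue surjectivity either: in Proposition \ref{ex5} the action is unlocked, yet $\beta^{(1)}_2=0$ and $\beta^{(1)}_3=\alpha^{(1)}_3=-\alpha^{(1)}_2$ identically, so the pushforward of Haar measure from $\cT(A,B)$ is carried by a proper subtorus of $\T_1\times\T_1$ and the classical full-measure SDC statement on $\T_1\times\T_1$ transfers no information to it. What is actually needed (and what the paper does) is a frequency-by-frequency dichotomy: either some $\bar k\neq 0$ annihilates both $(\bar k,\alpha^{(1)})$ and $(\bar k,\beta^{(1)})$ identically on $\cT(A,B)$ --- in which case $X_1=(\bar k,x^{(1)})$ is an identity rank-one factor and the action is locked --- or, for every $k$, at least one of the two forms is a nontrivial character of the parameter torus and Borel--Cantelli applies to that form. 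This dichotomy is, moreover, the only mechanism in the proof that produces identity factors, which the step-2 statement requires.

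On the resonance side, your key claim and its proof (eigenvalue propagation of $e(m,\alpha_{k,l})$ along the dual orbit via commutativity, then the invariant sublattice and the cyclic factor) is correct in substance and is a cleaner, more structural route to the paper's Lemma \ref{lemma.stiff}, which reaches the same conclusion by explicit coordinate manipulations with elementary matrices; the remaining analysis (nontriviality of the character $L_{k,l,m}$, measure of the bad arc, $\tau>d$ for summability) matches the paper's Case 2. Note only that your second branch (both $A$ and $B$ restricting to the identity on $\T^r$) is vacuous for a reason you miss --- it would force $\bA m=\bB m=m$, contradicting that $m$ is resonant --- and your dismissal of it again leans on the false surjectivity. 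The step-2 reduction, however, does not work as written: after collapsing by $\mathrm{im}(N_C)$ the only identity you control is the vanishing of the translation of $a^kb^l$ on the sub-quotient, i.e.\ $k\alpha''+l\beta''\equiv 0$, which does not force $\alpha''\equiv\beta''\equiv 0$ (take $\alpha''=l\gamma$, $\beta''=-k\gamma$), and ``surjectivity forces the residual translation to vanish'' points the wrong way: surjectivity would \emph{preclude} identical vanishing. The paper proves the addendum differently: it observes that the resonance degeneracy (your $L_{k,l,m}\equiv 0$) cannot occur at all when both generators are step-2, so that for step-2 actions the only possible source of locking in this dichotomy is the translation-factor degeneracy above, which yields an identity rank-one factor; your proposal contains no argument playing that role.
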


\begin{remark} Notice that no step-2 assumption is made on any generator in the first part of the Proposition.
\end{remark} 
\begin{proof}[Proof of Proposition \ref{prop dioph}] 
Consider the maximal translation factor of $\(a,b\)$ generated by the pair of translation vectors  denoted by  $(\a^{(1)},\beta^{(1)})$. The condition  $(\a,\beta)\in \cT(A,B)$ imposes some relations over $\Z$ between the coordinates of the vectors $\a^{(1)}$ and $\beta^{(1)}$. Then we have two possible scenarios.  

The first one is that  there exists a vector $\bk\in \Z^{d_1}\setminus \{0\} $ such that $(\bk,\a^{(1)})=(\bk,\beta^{(1)})=0$ for all $(\a,\beta)\in \cT(A,B)$, in which case a change of coordinates with $X_1:=(\bk,x^{(1)})$ will exhibit a one-dimensional rank-one factor on which the action is identity. 

If the first scenario does not hold, then for all $k\in \Z^{d_1}\setminus \{0\} $ either $(k,\a^{(1)})$ or $(k,\beta^{(1)})$ is not identically zero on $ \cT(A,B)$. In this case we can split the set of integers, $\Z^{d_1}\setminus \{0\} = \mathcal Z_1+ \mathcal Z_2$, in  such a way that for $k\in \mathcal Z_1$,  $(k,\a^{(1)})$  is not identically zero on $ \cT(A,B)$, and for  $k\in \mathcal Z_2$,  $(k,\beta^{(1)})$  is not identically zero on $ \cT(A,B)$. 
Now, for any $k \in \mathcal Z_1$, for any $\de >0$, we have that 
$\lambda\left\{\a^{(1)} \in \T^{d_1} : \|(k,\a^{(1)})\|\leq \de |k|^{-d_1-1}\right\}\leq  c \de |k|^{-d_1-1}$ for a constant $c=c(d)$. 
Summing over all  $k\in \mathcal Z_1$ and then over all $k\in \mathcal Z_2$, and using Arcela-Ascoli theorem, we get that for almost every $(\a,\beta)\in \cT(A,B)$, there exists $\gamma>0$ such that for each $k\in \mathcal Z_1$, it holds that 
$\|(k,\a^{(1)})\|\geq \gamma |k|^{-d_1-1}$, and 
for each $k\in \mathcal Z_2$ it holds that $\|(k,\beta^{(1)})\|\geq \gamma |k|^{-d_1-1}$.
This implies that  for almost every 
$(\a,\beta)\in \cT(A,B)$,  the maximal translation factor  %$(\a ,\beta)$ 
 of $\(a,b\)$ is $(\gamma,\tau)$-simultaneously Diophantine.

Next, we consider a resonance $m \in \cC_2(A,B)$ and let $k\wedge l=1$ be its unique corresponding vector such that 
$k\hA m+l\hB m=0$. We then have two possible cases.

\medskip 

\noindent {\sc Case $1$.} There exists a resonance $m$ such that for every $(\a,\beta)\in \cT(A,B)$ it holds that $e(m,\akl)=1$. Then we prove the following.
\begin{lemma} \label{lemma.stiff} In the assumptions of Case 1, the action $\langle a, b\rangle$ has a rank-one factor that is genuinely parabolic. 
\end{lemma}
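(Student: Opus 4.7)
The plan is to construct the desired rank-one factor directly from the resonance $m$ and its $\langle \bA, \bB\rangle$-orbit. First I would set $\cO := \{\bA^i \bB^j m : (i,j)\in \Z^2\}$ and let $M \subset \Z^d$ denote the subgroup it generates. Since $\bA, \bB$ commute and preserve $\cO$, the group $M$ is $\langle \bA, \bB\rangle$-invariant, so its annihilator $M^\perp := \{x \in \T^d : (v,x)\in\Z \text{ for all } v\in M\}$ is a closed subgroup of $\T^d$ invariant under the linear part $\(A,B\)$, and the quotient map $\pi: \T^d \to \T^d/M^\perp$ is a factor map for the full affine action $\(a,b\)$.

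Next I would show that $a^k b^l$ descends to the identity on this factor. For the linear part, $\bA^k\bB^l m = m$ together with the commutativity of $\bA$ and $\bB$ gives $\bA^k\bB^l(\bA^i \bB^j m)=\bA^i\bB^j m$ for every $(i,j)$, so $\bA^k\bB^l$ fixes $M$ pointwise and $A^kB^l$ becomes the identity on $\T^d/M^\perp$. For the translation part, the commutation of the chosen lifts of $a^kb^l$ and $a^i b^j$ yields the identity
\[
(A^kB^l - \Id)\,\alpha_{i,j} = (A^iB^j - \Id)\,\akl.
\]
Pairing both sides with $m$ and using $(A^kB^l)^T m = m$, the left-hand side vanishes, and one obtains $(\bA^{-i}\bB^{-j} m, \akl) = (m, \akl)$ for every $(i,j)\in \Z^2$. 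The hypothesis of Case 1 gives $(m,\akl)\in\Z$, and hence $(v,\akl)\in\Z$ for every $v \in \cO$, and so for every $v\in M$. This proves $\akl \in M^\perp$, and the translation component of $a^k b^l$ also vanishes on the factor.

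Having established that $(k,l)$ acts trivially, I would then identify the factor as a rank-one, genuinely parabolic action. Since $k\wedge l = 1$, the element $(k,l)$ is primitive in $\Z^2$ and $\Z^2/\langle(k,l)\rangle \cong \Z$, so $\(a,b\)$ descends on $\T^d/M^\perp$ to a $\Z$-action, i.e.\ a rank-one factor. To check this $\Z$-action is genuinely parabolic, pick $(p,q)\in\Z^2$ with $kq - lp = \pm 1$, so that $(p,q)$ represents a generator of $\Z^2/\langle(k,l)\rangle$. If $A^pB^q$ also acted as the identity on the factor, then $\bA^p\bB^q$ would fix $M$ pointwise; combined with $\bA^k\bB^l = \Id$ on $M$ this would force $\bA$ and $\bB$ themselves to fix $M$, hence $\bA m = \bB m = m$, contradicting the resonance assumption. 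A parallel unipotency argument (any $\bA$-invariant integer line is pointwise fixed, since the only integer eigenvalue of a unipotent matrix is $1$) also rules out $\Z$-rank $1$ for $M$, so the factor is a torus of dimension at least two on which the $\Z$-generator acts with a non-trivial parabolic linear part.

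The delicate step is the translation bookkeeping in the second paragraph: one must pair the commutation identity between powers of $a$ and $b$ against the character $m$, and then propagate the Case 1 integrality from the single vector $m$ to the whole orbit $\cO$. Once this is in place, the descent to the factor and the genuine parabolicity of the resulting $\Z$-action follow formally from Pontryagin duality and the unipotency of the linear part.
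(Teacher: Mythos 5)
Your proof is correct, and it takes a genuinely different route from the paper's. The paper argues by explicit coordinate manipulations: it changes variables so that the resonance $m$ becomes a basis vector, tracks which elementary matrices $E_{ij}$ occur in $B$ and in $A'=A^kB^l$, uses commutativity and $e(m,\akl)=1$ to force successive translation coordinates of $a^kb^l$ to vanish, and builds inductively a chain of indices spanning a sub-torus that is a factor of the finite-index subaction generated by $a^kb^l$ and $b$, on which $a^kb^l$ is the identity and $b$ is genuinely parabolic. You instead work dually and coordinate-free: you quotient $\T^d$ by the annihilator $M^\perp$ of the subgroup $M\subset\Z^d$ generated by the full $\langle\bA,\bB\rangle$-orbit of $m$, check that the affine action descends, and show that $a^kb^l$ descends to the identity — the linear part because $\bA^k\bB^l$ fixes the orbit pointwise, the translation part via the exact commutation identity $(A^kB^l-\Id)\a_{i,j}=(A^iB^j-\Id)\akl$ paired against $m$, which propagates the Case 1 integrality $(m,\akl)\in\Z$ along the whole orbit; primitivity of $(k,l)$ then makes the factor action cyclic, and your $kq-lp=\pm1$ argument (plus unipotency) shows its generator has non-trivial linear part. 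Your version buys a cleaner statement — the rank-one factor is a factor of the full $\Z^2$-action, not only of a finite-index subaction, and the role of the hypothesis $e(m,\akl)=1$ is completely transparent — at the cost of losing the explicit structural by-product of the paper's construction: the paper later appeals to the details of its inductive chain to conclude that Case 1 cannot occur when both generators are step-2, a remark that your abstract argument does not immediately reproduce and would have to be re-derived separately.
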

\begin{proof} %In the following proof we suppose that $A$ and $B$ are lower triangular. The elements $E_{i_ji_{j'}}$ that we consider are all such that $j'<j$.
Let $m \in \cC_2(k,l)$ be a resonance, in which case for $A':=A^kB^l$ we have $\bar{A'}m=m$, while either 
$\bA m \neq m$ or $\bB m\neq m$. For definiteness, assume that $\bB m\neq m$. We also have that for $\a':=\akl$, $e(m,\a')=1$.

After a change of variables we can assume that $m$ is one of the basis vectors, that is,  $m_{i}=0$ for $i\neq i_1$ and $m_{i_1}=0$. We also assume that both matrices $B$ and $A'$ have 1-s on the main diagonal. 

Since $\bB m\neq m$ and $\bar{A'} m=m$,  we have that  $B$ contains some $E_{i_1i_2}$ while 
$A'$ does not contain any $E_{i_1*}$ (where $*$ ranges through possible indices). 
By another change of coordinates, we can assume that $B$ does not contain any other $E_{i_1*}$ besides $E_{i_1i_2}$. (Indeed, if $B$ contains $\sum_{j=2}^s k_jE_{i_1i_j}$, we can use the coordinate change 
$x_{i_2}\mapsto \sum_{j=2}^s k_jx_{i_j}, x_k\mapsto x_k$ for $k\neq i_2$). 

By the commutativity of $a'$ and $b$ 
%$$(A'+\alpha_{k,l})(B+\beta)=(B+\beta)(A'+\alpha_{k,l})\ \Leftrightarrow \ (A'-\id)\beta=(B-\Id)\alpha.$$ 
%This expression and the properties of $A'$ and $B$ discussed above imply 
we get that $\a'_{i_2}=0$, 
and that $A'$ contains no $E_{i_2*}$ (otherwise $\hA'\hB$ would contain no $E_{i_1*}$ while $\hB\hA'$ would contain some).  Also, the hypothesis $e(m,\akl)=1$ translates into $\a'_{i_1}=0$. 
%For this constraint to hold, it means that $B$ must contain some $E_{i_0i_1}$.

If $B$ has no element of the type $E_{i_2*}$, we conclude that the action $\langle a', b\rangle$ where $a'=a^kb^l$ factors on the torus 
$\TT_{i_2,i_1}$ on which 
$a'$ acts as identity, while $b$ is genuinely parabolic.

If $B$ had an element $E_{i_2i_3}$, then again after a change of coordinates, we can assume that 
 $B$ does not contain any other $E_{i_2*}$ besides $E_{i_2i_3}$.

As before, we have two consequences: 1) $\a'_{i_3}=0$, and 2) $A'$ contains no $E_{i_3*}$ (otherwise $\hA'\hB$ would contain no $E_{i_2*}$ while $\hB\hA'$ would contain some).  

If $B$ has no element of the type $E_{i_3*}$ we conclude that the torus 
$\T_{i_3,i_2,i_1}$ is a factor of the action 
$\langle a', b\rangle$ on which 
$a'=a^kb^l$ acts as identity, while $b$ is genuinely parabolic.

Arguing inductively, we obtain the proof of the lemma. \end{proof}

\medskip 

If Case $1$ does not hold, then we must be in the following case:

\noindent {\sc Case $2$.}  For every resonance $m$, there exists $(\a,\beta)\in \cT(A,B)$, such that $e(m,\akl)\neq 1$, then by linearity of $\akl$ in the variables of $(\a,\beta)$, we see that the measure of $(\a,\beta)\in \cT(A,B)$, such that 
$$
|1-e(m,\akl)|\leq \frac{\gamma}{|m|^{d+1}}
$$
is less than $c \frac{\gamma}{|m|^{d+1}}$ for a constant $c=c(d)$.
% The sum "over all possible resonances" for a fixed $|m|$ is like $1/|m|^2$, and 
%$\sum_k 1/k^2<\infty $, so we can use Arcela-Ascoli). 
Summing up over all possible resonances and using Arcela-Ascoli theorem, we get that for almost every  $(\a,\beta)\in \cT(A,B)$, there exists $\gamma>0$ such that every resonance is $(\gamma,d+1)$ Diophantine. 

%\footnote{We did not need in this proof the fact that  \begin{equation} \label{eqm} |m|\geq C(A,B)(|k|+|l|).\end{equation}}

From the proof of Lemma \ref{lemma.stiff}, we see that Case 1 cannot happen if the action is step-2.

\end{proof}

\color{black}

\subsection{KAM-rigidity}

%\color{violet}
Now we are ready to formulate precisely part $(ii)$ of Theorems \ref{th.main} and \ref{almostsure}. The following is our main rigidity result. 

\begin{Main} \label{mainC} Let $\(A,B\)$ be an unlocked linear parabolic $\Z^2$-action with (at least) one step-2 generator. If $(\a,\beta)\in \cT(A,B)$ are such that  $\(a,b\)$ is $(\gamma,\tau)$-Diophantine for some $\gamma>0$ and $\tau>0$, then $\(a,b\)$ is KAM-rigid. 
\end{Main}

Theorems \ref{th.main} and \ref{almostsure} follow directly from Theorem \ref{mainC} and Proposition \ref{prop dioph}.

\begin{remark}\label{z-k} If an affine $\mathbb Z^k$-action,  $k\ge 3$, contains a Diophantine affine $\mathbb Z^2$-action with at least one step-2 generator, then Theorem \ref{mainC}  directly implies KAM-rigidity for the  $\mathbb Z^k$-action. This is because the smooth conjugacy provided by Theorem \ref{mainC} for the $\mathbb Z^2$-action would then conjugate the whole $\mathbb Z^k$-action perturbation. This simple observation is a consequence of the commutation and  the ergodicity of the Diophantine $\mathbb Z^2$-action.  This argument has been already used in  \cite{DK} (see Lemma 3.2  in \cite{DK}) to draw the conclusion about local rigidity for a $\mathbb Z^k$-action from that of its $\mathbb Z^2$-subaction. 
\end{remark}
\color{black}

\subsection{Examples of KAM-rigid actions} $ \ $ 

To begin with, let us return to the simple Example \ref{ex3}.

\begin{proposition} \label{ex33} Let $\(A,B\)$ be the action on $\T^2$ given by $A=\Id+E_{21}$ and $B=\Id$. If $\a$ and $\beta$ are Diophantine numbers, then the affine action $\(A+(\a,0),B+(0,\beta)\)$ is KAM-rigid. 
\end{proposition}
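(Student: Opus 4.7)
The plan is to apply Theorem \ref{mainC} (equivalently, the $(ii)$ branch of Theorems \ref{th.main}--\ref{almostsure}). For this, three things must be checked: that $A$ has a step-$2$ generator, that $\langle A,B\rangle$ is unlocked, and that the affine action $\(a,b\)=\(A+(\alpha,0),B+(0,\beta)\)$ is $(\gamma,\tau)$-Diophantine in the sense of Definition \ref{rotation vectors}. The first point is immediate: $(A-\Id)=E_{21}$ satisfies $(A-\Id)^2=0$, so $A$ is step-$2$. For the unlocking condition, I would remark that the commutation condition $(A-\Id)\beta=(B-\Id)\alpha$ is automatic since $B=\Id$, so $(\alpha,0)$ and $(0,\beta)$ lie in $\cT(A,B)$. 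The only $\Z$-factor of $\(A,B\)$ is the $x_1$-circle, on which $a$ acts as the non-trivial translation $T_\alpha$ (non-trivial because $\alpha$ is Diophantine, hence non-zero) and $b$ acts as identity; this $\Z$-factor is generated by a non-trivial translation, so $\(A,B\)$ is unlocked.

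Next I would identify the maximal translation factor as the $x_1$-circle, with associated pair $(\alpha^{(1)},\beta^{(1)})=(\alpha,0)$. The SDC condition from Definition \ref{definition sim} becomes
\[
\max\{|1-e(k\alpha)|,|1-e(k\cdot 0)|\}=|1-e(k\alpha)|>\gamma|k|^{-\tau}
\]
for all $k\in\Z\setminus\{0\}$, which is just the Diophantine condition on $\alpha$.

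It remains to verify the Diophantine condition on resonances. A direct computation gives $\bar A=\Id-E_{12}$ and $\bar B=\Id$, so for $m=(m_1,m_2)$ one has $\widehat A m=(-m_2,0)$ and $\widehat B m=0$. A vector $m$ is resonant precisely when $\bar A m\ne m$ or $\bar B m\ne m$, i.e.\ when $m_2\ne 0$, and in that case $\bar A^{k}\bar B^{l}m=m$ forces $km_2=0$, hence the unique normalised resonance pair is $(k,l)=(0,1)$. So $\cC_2(A,B)=\{m: m_2\ne 0\}$, all with pair $(0,1)$, and the translation part is $\alpha_{0,1}=b-B=(0,\beta)$. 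The admissibility condition \eqref{admissible} becomes
\[
|1-e(m_2\beta)|>\gamma|m|^{-\tau},
\]
which for $|m|\ge|m_2|$ is implied by the Diophantine property of $\beta$.

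All hypotheses of Theorem \ref{mainC} being in place, I would conclude that $\(a,b\)$ is KAM-rigid. There is no genuine obstacle here beyond the routine verifications; the content of the statement is really that this example falls squarely within the framework of Theorem \ref{mainC}, with the SDC condition degenerating to ``$\alpha$ Diophantine'' and the single resonance family $(0,1)$ giving ``$\beta$ Diophantine''.
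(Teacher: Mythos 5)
Your proposal is correct and follows essentially the same route as the paper: identify the translation factor (SDC reduces to $\alpha$ Diophantine), identify the resonant modes $m$ with $m_2\neq 0$ all carrying the resonance pair $(0,1)$ with $\alpha_{0,1}=(0,\beta)$ (so the resonance condition reduces to $\beta$ Diophantine), and then invoke Theorem \ref{mainC}. Your verification is in fact slightly more explicit than the paper's (which defers to Example \ref{ex3}), and your description of the resonant set as all $(m_1,m_2)$ with $m_2\neq 0$ matches the definition precisely.
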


\begin{proof} As explained earlier, when $\a_1$ is Diophantine, the translation factor of the action is SDC. On the other hand, all the resonances are of the form $(m_1,m_2)=(0,m_2)$, $m_2\neq 0$, with the corresponding resonance pair $(0,1)$. Since $\a_{0,1}=(0,\beta)$, we have the following. When $\beta$ is $(\gamma,\tau)$-Diophantine, condition \eqref{admissible} holds with the constants $(\gamma,\tau)$. Hence, Theorem \ref{mainC}  implies the KAM-rigidity of $\(A+(\a,0),B+(0,\beta)\)$.
\end{proof}

It is clear that if $\a$ is Liouville, the corresponding action will not be KAM-rigid. To see this, just perturb 
$A+(\a,0)= (x_1,x_2)\mapsto (x_1 + \a, x_2+x_1)$ to $(x_1,x_2)\mapsto (x_1+\a,x_2+x_1+\eps \varphi(x))$, where $\varphi(x)$ is a smooth function with the zero mean that is not a coboundary above the rotation of angle $\a$. However, although our proof of KAM-rigidity heavily uses the Diophantine property of resonances, we are not able to settle whether $\beta$ Diophantine is a necessary condition for KAM-rigidity. 

\begin{question} \label{qq1} Is the action $\(A+(\a,0),B+(0,\beta)\)$ KAM-rigid when $\a$ is Diophantine and $\beta$ is Liouville? 
\end{question}

%A slightly more complicated example of the same nature but with a Step-4 action is given as follows.
%\begin{proposition} \label{ex4} Take $\(A,B\)$ the action on $\T^4$ given by $A=\Id+E_{42}+E_{31}$, $B=\Id+E_{21}+E_{31}+E_{43}$. If $(\a,\beta)\in \cT(A,B)$ are such that $\a_1$ is Diophantine, then the affine action $\(A+\a,B+\beta\)$ is KAM-rigid. 
%\end{proposition}

%\begin{proof} Again $R_{\a_1}\times Id_{\T^1}$ is the translation factor of this action. The resonances of the action are given by $m=(m_1,m_2,0,0)$ with $m_2 \neq 0$ and resonance pair $(k_m,l_m)=(0,1)$. Since $(\a,\beta)\in \cT(A,B)$ implies that $\beta_2=\a_1$, the fact that $\a_1$ is Diophantine implies  \eqref{admissible} holds for $m_2\neq 0$. \end{proof}

The following example provides a KAM-rigid action having infinitely many resonances with infinitely many resonance pairs. 

\begin{proposition} \label{ex5} Let $\(A,B\)$ be the action on $\T^7$ given by $A=\Id+E_{52}+E_{61}+E_{73}$ and $B=\Id+E_{42}+E_{43}+E_{64}+E_{73}$. For almost every $(\a,\beta)\in \cT(A,B)$, the affine action $\(A+\a,B+\beta\)$ is KAM-rigid. 
\end{proposition}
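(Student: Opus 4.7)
The plan is to verify the two structural hypotheses of Theorem~\ref{mainC}, namely that $A$ is step-$2$ and that the linear action $\langle A, B\rangle$ is unlocked. Proposition~\ref{prop dioph} will then furnish $(\gamma,\tau)$-Diophantine conditions for almost every $(\a,\beta) \in \cT(A,B)$, and Theorem~\ref{mainC} will deliver KAM-rigidity. The step-$2$ identity $(A-\Id)^2 = 0$ and the commutations $(A-\Id)(B-\Id) = 0 = (B-\Id)(A-\Id)$ are all immediate from the observation that the row indices of $A-\Id$ lie in $\{5,6,7\}$ while the column indices of both $A-\Id$ and $B-\Id$ lie in $\{1,2,3,4\}$; one also reads off $(B-\Id)^2 = E_{62} + E_{63}$ and $(B-\Id)^3 = 0$, so $B$ is step-$3$.

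For unlockedness, the commutation relation $(A-\Id)\beta = (B-\Id)\a$ reads $\a_2 = -\a_3$, $\beta_1 = \a_4$, $\beta_2 = 0$, $\beta_3 = \a_3$, identifying $\cT(A,B)$ with a $10$-dimensional linear subspace. The common fixed subspace $\ker(A-\Id) \cap \ker(B-\Id)$ equals $\operatorname{span}(e_5, e_6, e_7)$, so the maximal identity factor is the $3$-dimensional sub-torus on coordinates $x_5, x_6, x_7$, carrying the free translations $\a^{(1)} = (\a_5, \a_6, \a_7)$ and $\beta^{(1)} = (\beta_5, \beta_6, \beta_7)$. Since these six coordinates are algebraically independent parameters on $\cT(A,B)$, no nonzero $k \in \Z^3$ can annihilate both $\a^{(1)}$ and $\beta^{(1)}$ simultaneously; by scenario~$1$ in the proof of Proposition~\ref{prop dioph}, no locked identity $\Z$-sub-factor can arise.

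To rule out a locked genuinely parabolic rank-one factor I use the identity $(A-\Id)(B-\Id) = 0$, which collapses the composition law for $a^k b^l$ to
\[
\a_{k,l} \;=\; k\a + \binom{k}{2}(A-\Id)\a + l\beta + kl\,(A-\Id)\beta + \binom{l}{2}(B-\Id)\beta + \binom{l}{3}(B-\Id)^2\beta.
\]
Solving $k\hat A m + l\hat B m = 0$ yields the elementary resonance pairs $(1,0)$ and $(0,1)$, and for $kl \ne 0$ the resonance vectors of the form $m = (m_1,m_2,m_3,\, k(k{+}l)t,\, -l(k{+}l)t,\, 0,\, -klt)$ with $t \in \Z\setminus\{0\}$. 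For each pair one must verify that $(\a,\beta) \mapsto \langle m, \a_{k,l}\rangle$ is not identically zero on $\cT(A,B)$: for the elementary pairs this is immediate from inspecting the coefficients of $\a_4$ and $\beta_5$; for a generic pair with $kl(k+l) \ne 0$, the free coordinate $\a_5$ enters with coefficient $-kl(k+l)t \ne 0$; and for the borderline case $(k,l) = (1,-1)$, where $k+l = 0$, the free coordinate $\beta_7$ enters with coefficient $-t \ne 0$. Hence the action is unlocked, and combining Proposition~\ref{prop dioph} with Theorem~\ref{mainC} completes the proof.

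The main technical obstacle is the uniform treatment of the infinite family of resonance pairs; this is made tractable by the explicit closed form for $\a_{k,l}$ above and by the linear parametrization of the resonance vectors in the single integer $t$.
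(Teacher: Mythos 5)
There is a genuine error in your verification of the Diophantine condition on the translation part: you have mis-identified the maximal translation factor. The common fixed subspace $\ker(A-\Id)\cap\ker(B-\Id)=\mathrm{span}(e_5,e_6,e_7)$ is not the relevant object; the sub-torus in the coordinates $x_5,x_6,x_7$ is not even invariant under the affine action (e.g.\ $a$ moves the coordinate $x_4$ off $0$ whenever $\a_4\neq 0$), and projection onto those coordinates does not intertwine the dynamics, so it is not a factor at all. The maximal translation factor is the \emph{quotient} onto the coordinates fixed by $A$ and $B$, i.e.\ onto $(x_1,x_2,x_3)$ (equivalently, it corresponds to the common kernel of $(A-\Id)^{tr}$ and $(B-\Id)^{tr}$, which is supported on the first three coordinates). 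On this factor the translation vectors are \emph{not} free: by the commutation relations they are $\a^{(1)}=(\a_1,\a_2,-\a_2)$ and $\beta^{(1)}=(\a_4,0,-\a_2)$. Your "six algebraically independent parameters, hence no common annihilator" argument is therefore vacuous for the wrong object and does not establish condition (1) of Definition \ref{rotation vectors} (equivalently, that scenario 1 in the proof of Proposition \ref{prop dioph} fails). The needed check is short but not automatic: for $m=(m_1,m_2,m_3)\neq 0$ one has $(m,\a^{(1)})=m_1\a_1+(m_2-m_3)\a_2$ and $(m,\beta^{(1)})=m_1\a_4-m_3\a_2$, and these vanish identically on $\cT(A,B)$ only if $m_1=m_3=0$ and then $m_2=0$; note that for $m=(0,1,1)$ the first expression \emph{does} vanish identically, so only the simultaneous (SDC) condition survives, exactly as the paper verifies via the pairs $(\a_1,\a_2)$ and $(\a_4,-\a_2)$.

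The rest of your argument is sound and matches the paper's route: $A$ is step-$2$, the commutation relations on $\cT(A,B)$ are computed correctly, and your resonance analysis (the family $m_6=0$, $(m_4,m_5,m_7)=t\,(k(k+l),-l(k+l),-kl)$, plus the pairs $(1,0)$, $(0,1)$ and the borderline pair $(1,-1)$, with nonvanishing coefficients of the free coordinates $\a_5$, $\beta_7$, $\a_4$, $\beta_5$ in $(m,\akl)$) is correct and in fact slightly more careful than the paper's, which disposes of all resonances by appealing to the freedom of $\a_5,\a_7$ and the claim that $m_5$ or $m_7$ is nonzero — a claim that fails for the $(1,0)$-resonances $m=(m_1,m_2,m_3,m_4,0,0,0)$, where one must instead use the coefficient $m_4$ of the free coordinate $\a_4$, as you do. Once the translation-factor step is repaired as above, your proof is complete and follows essentially the same strategy as the paper's.
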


\begin{proof} The commutation condition $(\a,\beta)\in \cT(A,B)$ is satisfied if and only if $\beta_1=\alpha_4$, $\beta_2=0$, and $\beta_3=\alpha_3=-\alpha_2$. The translation factor of the action is the three-torus corresponding to the first three coordinates, and the translations are $\a^{(1)}=(\alpha_1,\alpha_2,-\alpha_2)$ and $\beta^{(1)}=(\alpha_4,0,-\alpha_2)$. It is easy to see that if the vectors $(\alpha_1,\alpha_2)$ and $(\alpha_4,-\alpha_2)$ are Diophantine, then the pair $(\a^{(1)},\beta^{(1)})$ is SDC. Indeed, denoting  
by $\|x\|$ the closest distance from $x\in \RR$ to the integers, for any $m=(m_1,m_2,m_3)$ we have:
\begin{align*}
\max(\|(m,\a^{(1)})\|,\|(m,\beta^{(1)})\|)&=\max(\|m_1\a_1+(m_2-m_3)\alpha_2\|,\|m_1\alpha_4-m_3\alpha_2\|)\\ &\geq \gamma (|m_1|+|m_2-m_3|)^{-\tau}+ \gamma (|m_1|+|m_3|)^{-\tau}\\ &\geq \gamma' |m|^{-\tau'}.
\end{align*}

Let us turn to the resonances. They are the set of $m$ such that $m_6=0$, and the two vectors, $v_m=(m_5,m_7)$ and $w_m=(m_4,m_4+m_7)$, are collinear and not both zero at the same time. Hence, at least one of $m_5$ or $m_7$ does not vanish.  The resonance pairs are the pairs 
$k_m\wedge l_m=1$ such that $k_mv_m+l_mw_m=0$. For example, 
$$
m=(m_1,m_2,m_3,n+1,n,0,n(n+1)), \quad (m_1,m_2,m_3,n)\in \Z^3\times \Z^*
$$
is a resonant vector with the resonance pair $(n+1,-n)$. Finally, fix any resonant $m$ and observe that, since there are no constraints on $\a_5,\a_7$ and since at least one of $m_5$ or $m_7$ does not vanish,  the Diophantine condition on $\a_{k_m,l_m}$ is satisfied for  almost every $(\a,\beta)\in \cT(A,B)$. 
\end{proof}

\subsection{Overview of the proof of Theorem \ref{mainC}}  
\label{s_overview}
The proof is based on an inductive scheme of successive conjugations of the perturbed action $\(F, G\)$, where $F=a+f$ and $G=b+g$ to the affine action  $\(a, b\)$.  
As usually in the KAM approach, the linearized conjugacy equations are solved at each step of the induction 
with a loss of derivatives, which can be caused, for example, by small divisors or by other reasons. The {\it a priori} 
damaging effect of this loss is  tamed out by the quadratic speed of convergence of the scheme. 

In our context, the linearized conjugacy equations, often called the cohomological equations, are essentially of the following form:   
  \begin{equation}\label{linearisation22}
\begin{aligned}
h\circ a - Ah &=f,\\
h\circ b - Bh &=g.
\end{aligned}
\end{equation}

Two main differences with the classical KAM schemes that appear in our context are the following:

\begin{itemize} \item[$(i)$] Cohomological equations \eqref{linearisation22} above each individual generator of the action are, in general,  not solvable because of the existence of an infinite countable set of obstructions. These were first evidenced in the step-2 example in the  work of Katok and Robinson \cite{KatokRobinson}. (Large set of distributional obstructions was likewise found for any step nilflows in \cite{FF}. Also see \cite{CF} for a study of certain cases of cohomological equations above abelian actions). 

\item[$(ii)$] In the case of parabolic affine maps of step 3 and higher, and with Diophantine translation part, if the solution to 
one of the equations of \eqref{linearisation22} exists, then it is smooth if the right-hand side is smooth. However, the loss of the number of derivatives is not fixed (in other words, the linearized cohomological equation is stable, but the solutions are not tame).

In a separate work we show that, for the simplest  $C^r$ step-3 map 
$(x_1,x_2,x_3)\mapsto (x_1+\a_1,x_2+x_1,x_3+x_2)$, the loss of derivatives is roughly $r/2$, 
even for the nicest Diophantine angles $\a_1$. This constitutes a notable  difference with the step-2 case, for which \cite{KatokRobinson} showed tameness of the solutions when they did exist.  
\end{itemize} 

To address $(i)$, the usual path is to exploit the  commutation relation to find approximate solutions to the cohomological equations. This was done in two related problems in the past. First, by Moser \cite{M}, who showed that SDC commuting circle rotations are locally rigid under the condition of preserving the rotation number. This was extended to higher dimension in \cite{DF,WX,P}. Second, by Damjanovi\'c and Katok who proved in  \cite{DK} the local rigidity of higher rank partially hyperbolic affine abelian actions on the torus (i.e., actions, all of whose elements are ergodic automorphisms or affine maps with such linear parts). 

In Moser's case the objective is to linearize a commuting pair $R_{\a_i}+f_i$, $i=1,2$. The cohomological equations take the form $h(x+\a_i)-h(x)=f_i(x)-\int f_i$. They have formal solutions above the generators $R_{\a_i}$, and the commutation relation allows to upgrade the formal solutions into the approximate tame solutions. 
In fact, Moser's trick is to define, for each Fourier mode $n$, the corresponding coefficient $h_n$ of the conjugacy map, 
using either one or the other of the linearized conjugacy equations, according to which $\|n\a_i\|$, for $i=1$ or for $i=2$, is "not too small" as granted by the  SDC-condition. As a result, one gets a candidate conjugacy $h$ that is {\it tame}, i.e., of the same order as the nonlinearities $f$ and $g$ with a fixed loss of the number of derivatives. Moreover, the commutation relation plus the SDC-condition insure that the constructed $h$ solves the cohomological equations with a {\it quadratic} error (with a small abuse of notations, by {\it quadratic} we will mean  that the error  is  of order of a power $k>1$ in the nonlinearities $f$ and $g$, with a fixed loss of the number of derivatives).

The above procedure allows to implement the classical KAM quadratic scheme, 
with the issue of the constant terms $\int f_i$ being resolved due to the condition of the preservation of the rotation numbers. 

In \cite{DK} the individual equations as in \eqref{linearisation22} have a tame solution provided a countable set of obstructions vanish, each one being formally computed as weighted sums \color{black} along the dual orbit of Fourier coefficients of the nonlinearities $f$ and $g$. 
The commutation relation in this case allows to get quadratic approximations of the nonlinearities by functions whose obstructions vanish. This was labelled "highr-rank trick". Here again, the approximation is quadratic with a finite loss of the number of derivatives.

\medskip 

Our proof of KAM-rigidity for parabolic actions combines  two mechanisms of local rigidity: "Moser's trick" and "higher rank trick". 
The translation part of the action and the resonances (Fourier modes that are invariant along some element of the dual action) are treated using a mechanism, similar to Moser's trick. This is where  the Diophantine conditions of Definition \ref{rotation vectors} on the action play a crucial role. 
 It has to be noted that the use of Moser's trick for the resonances brings some technical challenges that affect the whole proof. Indeed, for a resonant Fourier mode $m$, we need to use the element $F^kG^l$ of the action, where $(k,l)$ is the resonance pair associated to $m$. This forces us to work out the linearization KAM scheme  at each step for {\it a large number of elements} of the action, and not only for the two generators. Of course, we cannot control {\it all} of the nonlinearities in $F^kG^l$ for all resonance pairs $(k,l)$ at each step, because $k$ and $l$ can be arbitrarily large. Fortunately, the resonance pairs associated to a  
resonant mode $m$ are of the order of $m$ (see Lemma \ref{l_tech_reson}). This means that if, at a given step of the KAM scheme, we truncate the nonlinearities up to order $N$ before finding an approximative solution of the  linearized conjugacy equation, we will only need to control $F^kG^l$ for $k$ and $l$ of order $N$. This can easily be included in the induction due to the parabolic nature
of $a$ and $b$. 
\color{black} 

For "non-resonant" Fourier modes, it is a higher rank trick approach similar to \cite{DK} that is invoked. 
Indeed, for a non-resonant mode $m$ we can define $h_m$ {\it via} the sum of the Fourier coefficients of the nonlinearity $f$ along the dual orbit of the step-2 generator of the affine action, taken in the "good direction": either in the future or in the past (in a similar way to what is done in Livschits theory). The fact that the generator is step-2 implies that the Fourier modes, involved in these partial sums, grow either for the past or the future sum, which allows us to define a tame candidate conjugacy $h$, as observed in \cite{KatokRobinson}. {Observe that difficulty $(ii)$ mentioned above shows that the mere definition of a candidate tame conjugacy when no element of the action is step-2 is already a challenge for the general higher step case. Other difficulties appear in relation with the applicability of the parabolic higher rank trick that will be explained in the next paragraph. }

Once $h$ is constructed, we see that it is only at special modes $\bar m$ that are lowest (in norm) on their dual orbit along $\bA$ that the constructed $h$ does not solve the cohomological equation above $a$ (at $\bar m$, the good direction switches from past to future). The error in solving the equation at $\bar m$ is indeed the full sum along $\bar m$ 
of the Fourier coefficients of the nonlinearity along $\bA$. These sums, having the form $\Si_m^{A} (f)=
\sum_{k\in \Z} f_{\bar A^{k} m}\la^{(k)}_{ m}$ (where $\lambda^{(k)}_m$ are "innocuous" multipliers of 
modulus one related to the translation part of the action, see \S \ref{s_notations} for the exact definitions), 
are the obstructions to solving the cohomological equations above $\bA$. 

%%%Since $A$ and $B$ are commuting unipotent matrices, 
%%%there exists a basis in $\mathbb R^d$ in which both of them are upper triangular.
 The higher rank trick uses commutativity to show that this full sum is equal to a double sum of a quadratic function $\phi$ measuring the error of the pair $(f,g)$ in \eqref{linearisation22} from forming a cocycle above the action $\(a,b\)$ \color{black}
  (see  Section  \ref{s_linearisation} and  Section  \ref{s_case3} for more explanations). It is appears to be fruitful to express the obstructions as the following double sums:
 \begin{equation} \label{eqdouble}
\Sigma_m^A (f) =
\sum_{l\geq 0}  \sum_{k\in \Z} \phi_{\bar A^{k}\bar B^l m}\la^{(k)}_{m}\mu^{(l)}_{ m} = 
-\sum_{l\leq -1}  \sum_{k\in \Z}  \phi_{\bar A^{k}\bar B^l m}\la^{(k)}_{ m}\mu^{(l)}_{ m} .
\end{equation}

 There is an important difference between the phenomenon that lies behind the control of the double sums in our case, compared to the partially hyperbolic case.  
In the {\it partially hyperbolic} higher rank case treated in \cite{DK}, the Fourier modes that appear in the double sums in one of the two directions (future or past for $\bB$) are essentially increasing due to the partial hyperbolicity of the action, and this immediately leads to approximate solutions of \eqref{linearisation22} with quadratic errors with finite loss of the number of derivatives.

In our case, due to the presence of a higher step generator in the action, there may be no growth in either direction along the dual orbits that appear in the double sums. In fact, it always happens for some modes $m$ that the double orbits 
appearing in \eqref{eqdouble} decay in both directions from $|m|$ to $|m|^{1/(S-1)}$, where $S$ is the step of the action. This is the difficulty $(ii)$ mentioned above.

One of the  key ingredients of our argument is the proof of the fact that
%in our proof  some algebraic and geometric lemmas that allow us to show that 
for a {\it unlocked} parabolic linear action with at least one step-2 generator, the fall from $|m|$ to $|m|^{1/(S-1)}$ is the worst that can happen. {Our proof uses the presence of a step-2 element, and its extension to higher step actions is another challenge in the study of the general case.}

This means that the error in solving the first equation in \eqref{linearisation22} with the conjugating transformation $h$ we constructed is quadratic, but with a loss of a certain {\it proportion} of the number of the derivatives that are considered (a proportion $(S-2)/(S-1)$ for step-$S$ maps), even under the nicest Diophantine conditions. 

The good news is that this loss of derivatives appears only in the quadratic error and not in the estimate of the conjugating map 
(for this, the step-2 assumption on one generator is crucial). As a consequence, this important loss of derivatives does not affect the convergence of the KAM scheme, for which it suffices to have a quadratic control of $C^0$ norms of the error (in fact $L^2$ would be sufficient).

Once it is shown that $h$ solves the first equation of \eqref{linearisation22} up to a quadratic error (in $C^0$ norm), the commutation relation and the fact that $A$ is step-2 can be used again to show that $h$ also solves the second equation 
with a quadratic error (see  Section  \ref{Fourier}).

Finally, we point out to the fact that equations \eqref{linearisation22} can be solved as usual up to a set of $2d$ constant terms that account for the averages of $f$ and $g$. Unlike in Moser's case of commuting circle diffeomorphisms, these  constants  are not all related to some dynamical invariants. However, we can use the volume preservation of the perturbed action and the zero average of the nonlinearities to fix the averages of the conjugating diffeomorphisms at each step of the KAM scheme, so that the constant terms become absorbed in the quadratic error. This third difference with the usual KAM scheme is explained in detail at the end of \S \ref{iteration set-up}.  

As remarked before, our arguments remain true if we replace the preservation of the volume $\lambda$
by that of any common invariant measure for $F$ and $G$. 
It suffices to  replace $\lambda$  by an arbitrary common invariant measure in all the text. 
Indeed, we do not use that $\lambda$ is invariant by $a$ and $b$ in the proof of the linearization. 
Moreover, since $a^kb^l$ is uniquely ergodic for some $k$ and $l$, 
the linearisation implies, in fact, that there is a unique invariant measure for the action $\(F, G\)$, 
and that this measure is the pullback of the Haar measure by the conjugacy.

%\color{red} Have to decide how to denote sections, by \S  \ref{iteration set-up} or Section  \ref{iteration set-up}.--DD \color{black}

\subsection{Comments on extensions and applications}  

There are natural questions raised by our result as to what extent the method developed here is applicable to more general situations. We comment on this below.

%$\diamond$ {\bf On $\mathbb Z^k$ actions for  $k\ge 3$.} The same methods we use here, would give KAM rigidity result for affine  $\mathbb Z^k$ ($k\ge 3$) actions as well, as long as they are not stiff, and have at least one step 2 generator. The simultaneously Diophantine condition on the translation factor would be the one for $k$ vectors, instead of 2.  We note however that the higher the rank $k$ of the acting group, the more stiff the action can become. So in that sense, rank 2 actions are most common and most difficult cases. An example demonstrating very well how "stiffness" becomes dominant as the rank grows, is the $\mathbb Z^4$  parabolic action on $\mathbb T^4$ generated by $\id + E_{12}, \id +E_{14}, \id +E_{32}$ and $\id +E_{34}$. This is in fact maximal rank linear abelian action on $\mathbb T^4$.  It is easy to check that this action is completely stiff in the strongest possible sense: there are no affine non-linear $\mathbb Z^4$ actions which have this action as a linear part, at all. Moreover, this action has an identity factor on the torus $\mathbb T^2$. 

\medskip

%$\diamond$ {\bf On higher step actions.} As we saw, the fact that one generator of the action is step-2 is crucial in many parts of the proof. The question is what can one say for general affine parabolic actions with all generators being of step $3$ or more. 

%Dealing with abelian actions "rigidifies" the cohomological equation. Therefore we believe that the solutions to system \eqref{linearisation22},  when they exist, are tame with a finite loss of regularity under some Diophantine conditions on the translation parts, even without step-2 assumption on one of the generators. However, for general parabolic actions without a step-2 generator we do not see how to define a tame conjugacy, or how to deal with the double sums \eqref{eqdouble} appearing in the higher rank trick, and that is the reason why we cannot handle the general case.% Although we expect our apploach to work, technically, higher step actions present a big challenge. 

%\medskip

$\diamond$ {\bf On applications to non-abelian actions.} We note that there are classes of solvable affine actions to which our result in Theorem \ref{mainC} can be directly applied.
An {\it abelian-by-cyclic} group $G$ is a finitely presented torsion free group  admitting a short exact sequence $0\to \mathbb Z^k\to G \to \mathbb Z\to 0$ (see \cite{WX} for detailed discussion on ABC groups). In this context, we call the subgroup $ \mathbb Z^k$ the abelian part of $G$.  Let $\rho: G\to \rm Aff (\mathbb T^d)$ be an affine action of $G$ such that $\rho( \mathbb Z^k)$ is parabolic. Then from the KAM rigidity result for the action  $\rho( \mathbb Z^k)$ one may derive KAM rigidity for the $G$ action. This way of obtaining KAM rigidity for an ABC action from KAM rigidity of its abelian part has been used before in \cite{WX} but in the special case where the abelian part  $\rho( \mathbb Z^k)$ is generated by translations. More recently, in \cite{P2}, actions on $\mathbb T^3$ of the following particular ABC group: $\Gamma=\(U, V, F: UV=VU, FU=U^2VF, FV=UVF\)$ have been studied. An example of a $\Gamma$ action on $\mathbb T^3$ is when $U=\id +E_{12}$ and $V=\id +E_{13}$ and  $$F= \begin{pmatrix}
1&0&0\\
0& 1&-1\\
0&-1&2
\end{pmatrix}. $$ 
The abelian action $\(U, V\)$ is unlocked and thus Diophantine affine actions with such linear part are KAM rigid by Theorem \ref{mainC}, which in turn implies KAM rigidity for an affine $\Gamma$ action with such abelian part.
%define the automorphism of  $\mathbb T^3$ which we also define by $F$  Then $U$, $V$ and $F$ satisfy relations $UV=VU$, $FU=U^2VF$ and $FV=UVF$ and the group $\Gamma$ they generate is an example of an $ABC$-group ("ABC" stands for {\it "abelian-by-cyclic"}). 

%Abelian-by-cyclic groups $G$ are finitely presented torsion free groups  admitting a short exact sequence $0\to \mathbb Z^k\to G \to \mathbb Z\to 0$. 

Similar to our dichotomy result, we expect to use the method we developed in this paper to obtain a classification result for linear ABC actions $\rho_L: G\to \rm Aut (\mathbb T^d)$ having a parabolic abelian part.  There are roughly 3 main cases: 

 (i) $\rho_L$ is locked: for every affine $G$ action $\rho$ with linear part $\rho_L$, $\rho$ has a rank-one factor that is either identity or a genuinely parabolic action.
 
 (ii)  $\rho_L$  is unlocked but {\it has a locked abelian part}: for every affine $G$ action $\rho$ with linear part $\rho_L$, $\rho (\mathbb Z^k)$ has a rank-one factor that is either identity or a genuinely parabolic action, but $\rho_L$ is unlocked.  
 
 (iii) $\rho_L$ {\it has an unlocked abelian part}. 
 
 It is in the case (iii) where Theorem \ref{mainC} applies. In the case (ii),  even though Theorem \ref{mainC} does not apply directly, we expect our method and even the constructions of solutions from our proofs, to apply.

%If $\rho_L$ has a {\it non-stiff abelian part} this means that the group relations in $G$ allow for affine $G$ actions with linear part $\rho_L$, whose abelian part will have the necessary Diophantine properties required in Theorem \ref{mainC}.  Simple calculation shows that the group $\Gamma$  generated by $U, V$ and $F$ has a non stiff abelian part $\(U, V\)$.  Then Theorem \ref{mainC} applies to $\(U, V\)$, which gives KAM rigidity for the  $\(U, V\)$ action. Further by checking that the conjugacy works for $F$ as well, one gets KAM rigidity for the $\Gamma$ action. This is explained in detail in \cite{P2}.  

%Similar approach to showing KAM rigidity for ABC actions from  KAM rigidity for the abelian part, has been used in \cite{WX}, but only for the specific situation where the abelian part of the ABC action is generated by translations.  

% \color{violet} If an action of group $G$ is non-stiff while it has a stiff abelian part, then Theorem \ref{mainC} does not apply directly, but our method (and even the constructions of solutions from our proofs) may apply.  \color{black}

It is a curious algebraic question to determine which solvable groups acting on the torus by automorphisms can have unlocked abelian part.  The group $\Gamma$  described above allows on $\mathbb T^3$ both a locked action (case (i)) and an action with unlocked abelian part (case (iii)), but does not allow case (ii)  \cite{P2}. We remark that the 3 dimensional discrete Heisenberg group $H_3$ generated by three elementary matrices (i.e. matrices of the form $\id+E_{ij}$) on any $\mathbb T^d$ is locked (in particular it has a locked abelian part) and it is not clear if $H_3$ linear actions by toral automorphisms are always locked.\color{black}

\medskip  

$\diamond$  {\bf On connection to nilflows.} Given a nilpotent Lie group $N$ of step $k$, and a lattice $\Gamma$ in $N$, the quotient $N/\Gamma$ is a nilmanifold of step $k$. Any one-parameter subgroup of $N$ defines, via left-multiplication on $N/\Gamma$, a smooth nilflow. Similarily, a subgroup $A$ of $N$ isomorphic to $\mathbb R^k$ defines an $\mathbb R^k$ {\it nilaction} on  $N/\Gamma$.  While it was proved by Flaminio and Forni \cite{FF} that nilflows have infinite dimensional cohomology, nilactions can have finite dimensional cohomology as in \cite{CF} or in \cite{D}. In \cite{D} this was used for proving a KAM type of local rigidity result for a class of nilactions with strong Diophantine properties, on 2-step nilmanifolds. There is a close connection between the actions which we consider in this paper and nilactions. Namely, one gets a parabolic affine $\mathbb Z^k$ action if one considers return maps of an $\mathbb R^k$ nilaction to a certain section, and $\mathbb R^k$ nilactions can be viewed as suspensions over such $\mathbb Z^k$ actions on the torus.   We hope that some of the ideas developed here to study the KAM-rigidity of parabolic actions on the torus could be useful in the local rigidity study of nilactions, that is a more general problem where, besides the works cited above,  there has been yet no progress.\color{black}

%KAM rigidity on nilmanifolds is a more general problem, and besides \cite{D}, there has been yet no progress in that direction. 

% { In fact, in the rigidity theory for Anosov actions (started by Katok and Spatzier in \cite{KS}) the natural approach is to derive local rigidity for the  $\mathbb Z^k$ actions from  local rigidity for the  $\mathbb R^k$ actions. So in our case here a KAM-type result for $\mathbb R^k$  nilactions (even if only for a very restricted class of perturbations) would imply KAM rigidity for the  $\mathbb Z^k$ actions. However working on nilmanifolds and using the representation theory to solve cohomological equations becomes increasingly  more difficult as the step of the nilmanifold increases. In particular, the  $\mathbb Z^2$ actions for which our Theorem  \ref{mainC} holds (appart from the step 2 actions) have interpretations as sections of  $\mathbb R^k$ nilactions, but these nilactions would live on higher step nilmanifolds, and there are no KAM results in this direction. Moreover it is not at all clear what should be the moduli  for maps on the nilmanifold.  Even the KAM type result on a 2-step nilmanifold in \cite{D} is a perturbative result for families of actions and does not describe any moduli of smooth conjugacy.}
\medskip

\color{black}

\subsection{Plan of the paper} The rest of the paper is devoted to the proof of Theorem  \ref{mainC}. The proof is divided into three parts. In \S \ref{sec.proof.step2} we state the main inductive KAM conjugacy step, Proposition \ref{Main iteration step}, and then show how to deduce Theorem  \ref{mainC} from it. In \S \ref{sec3} we give some necessary estimates on 
sums and double sums along the dual orbits of $A$ and $B$ that serve for constructing the approximate solutions to the cohomological equations that appear in the linearized conjugacy equations. In \S \ref{plan_of_proof} we use the latter estimates to prove Proposition \ref{Main iteration step}.
Each part will start with a detailed introduction of its content and of the ideas that are involved in the proofs. 

%\S \ref{sec.proof.step2}
%\S \ref{sec3}
%\S \ref{plan_of_proof}

\section{Proof of Theorem \ref{mainC}- the iteration part} \label{sec.proof.step2}
%{\bf Part I}:   

%\subsection{Outline of the KAM scheme}\label{s_plan}
The proof is based on a KAM scheme, with three peculiarities which distinguish it from the usual way KAM schemes are applied to proving local rigidity.

The usual KAM iteration goes as follows: we start with an $\eps$-perturbation $\(F, G\)$ of $\(a,b\)$. By linearising the conjugacy problem and by solving the linear equation approximately, we produce a conjugacy $\mathcal H_1= (\Id+\bfh_1)$ which conjugates $\(F, G\)$ to an action $\(F_1, G_1\)$ which is an $\ve^k$-perturbation of $\(a, b\)$, where $k>1$. Then we say that  $\(F_1, G_1\)$ is a {\it quadratically small} perturbation of $\(a, b\)$, with respect to how far $\(F, G\)$ was from $\(a, b\)$.  This process is repeated, and at the $n$-th step  of iteration we build conjugacies $\mathcal H_{n}=(\Id+\bfh_1)\circ \ldots \circ (\Id+\bfh_n)$ that satisfy 
  \begin{equation}\label{hh1'}
 \begin{cases} 
\mathcal H_{n}^{-1} \circ F \circ  \mathcal H_{n}&=a+\bff_{n+1},\\
\mathcal H_{n}^{-1} \circ {G}\circ  \mathcal H_{n}^{-1}&=b+\bfg_{n+1}, 
\end{cases}
\end{equation}
where $\bff_{n+1}$ and $\bfg_{n+1}$ are of order $\eps_{n}^k= \eps_{n+1}$, while $\bfh_n$ is of order $\eps_{n}$.

Truncation (or more generally, applying smoothing operators) is typically used only to remedy a fixed loss of regularity at each step of iteration while solving the linearized problem.  In our case here, due to the (possible) presence of infinitely many resonances, without truncation we might not have any quadratic estimates for the error. 
This is the first peculiarity of the proof,  the corresponding details are contained in \S \ref{Fourier}.

The other one is that at every step of the KAM procedure we solve  the linearised equations  approximately {\it only} up to a constant term. This constant term can be large, it makes the error at the $n$-th step  of order $\eps_{n}$ instead of $\eps_{n+1}$ (as we would like), so {\it a priori} there need not be any convergence of the sequence $\mathcal H_n$. This is where we use the volume preservation assumption. Namely, the volume preservation assumption allows us  to adjust the average of  ${\bfh}_n$ at step $n$, so that  the total new error $(\bff_{n+1}, \bfg_{n+1})$ becomes of order $\eps_{n+1}$. The same approach was used by Herman for Diophantine torus translations \cite{Herman}. Application of this approach in the context of group actions meets certain difficulties. %where not all the constants,  appearing as obstructions to the solution of the cohomological equations, have a dynamical meaning. 
This is explained in \S \ref{iteration set-up}. 

The third feature of the proof, which has not appeared much in similar problems, is that, even though the estimates for ${\bfh}_n$ at each step are tame, the estimates for the error at each step are not tame. Namely, the loss of the number of derivatives is not a fixed constant as usually, but a proportion (that goes to $1$ when the second generator's step goes to infinity) of the number of derivatives. However, this does not affect the convergence of the scheme. Similar observation was used recently in \cite{ZW}. 

\subsection{Linearisation of the problem and the main iterative step: Proposition \ref{Main iteration step}}\label{s_linearisation}

Given small perturbations $a+\bff$ and $b+\bfg$ of the two action generators $a$ and $b$, and the commutativity condition among them:   
\begin{equation}\label{commuting}
(a+\bff)\circ(b+\bfg)=(b+\bfg)\circ(a+\bff), 
\end{equation}
we wish to solve for $H=id+\bfh$ the conjugacy problem 
\begin{equation}\label{conjugacy}
H\circ (a+\bff) = a\circ H, \quad H\circ (b+\bfg) = b\circ H.
\end{equation}
The commutativity condition \eqref{commuting} can be
rewritten as
$$
\bff(b+\bfg) -B\bff - (\bfg(a+\bff)- A \bfg)=0,
$$
which permits to see condition \eqref{commuting}  as a sum of a linear operator applied to $\bff, \bfg$ plus a non-linear part which is quadratic in $\bff, \bfg$:
\begin{equation}\label{linearisation}
[\bff\circ b -B\bff - (\bfg\circ a- A \bfg)] + [\bff(b+\bfg)-\bff\circ b-(\bfg(a+\bff)- \bfg \circ a)]=0.
\end{equation}
Now we introduce some notations.
For any given $\bfh$, let 
\begin{equation*}\label{definition of D's}
\begin{aligned}
D_{1,0}\bfh&:=\bfh\circ a-A\bfh,\\
D_{0,1}\bfh&:=\bfh\circ b-B\bfh.
\end{aligned}
\end{equation*}

%$D_a\bfh=\bfh\circ a-A\bfh$ and let $D_b\bfh=\bfh\circ b-B\bfh$.
%We also give a name for the linear operator in the first bracket in \eqref{linearisation}: let
%$$ L(\bff, \bfg):=D_b\bff-D_a\bfg.$$
With this notations, equation \eqref{linearisation} gets the form 
\begin{equation*}\label{L} 
D_{0,1}\bff-D_{1,0}\bfg= -\bff(b+\bfg)+\bff\circ b+\bfg(a+\bff)- \bfg \circ a.
\end{equation*}

Similarily, since we are looking for the conjugating map $H$ in a neighborhood of the identity, i.e., in the form $H=\id+\bfh$ with $\bfh$ small,  our conjugacy problem \eqref{conjugacy} is linearised as
\begin{equation*}\label{linearisation2}
\begin{aligned}
D_{1,0}\bfh&=\bff+[\bfh(a+\bff)-\bfh\circ a] ,\\
D_{0,1}\bfh&=\bfg+ [\bfh(b+\bfg)-\bfh\circ b].
\end{aligned}
\end{equation*}

%\begin{equation}\label{linear2}
%\begin{aligned}
%D_a\bfh&=\bff \\
%D_b\bfh&=\bfg
%\end{aligned}
%\end{equation}

At the $n$-th step  of the iteration process, for given $\bff_n, \bfg_n$ we show that we can find $\tilde \bff_n, \tilde \bfg_n$, $\bfh_n$ and vectors ${\bf V}_n$ and  ${\bf W}_n$ such that  
 \begin{equation*}\label{approximate linearisation}
\begin{aligned}
D_{1,0}\bfh_n&=\bff_n + \tilde \bff_n + \bf V_n ,\\
D_{0,1}\bfh_n&=\bfg_n +\tilde \bfg_n+ \bf W_n,
\end{aligned}
\end{equation*}
where $\bf V_n$ and $\bf W_n$ are of the same order as $\bff_n, \bfg_n$,  and the new functions $\tilde \bff_n$, $\tilde \bfg_n$ are  quadratic. In later sections, when we set up the iteration process, we will see that the volume preservation assumption will force the constant terms $\bf V_n$ and $\bf W_n$ to be of quadratic order as well. 

Let us formulate the main iterative step as a proposition. In later sections Proposition \ref{Main iteration step} will be used to  perform iterations,  show their convergence and prove the main result Theorem \ref{mainC}. 
%\subsection{Iterative step- statement of the main proposition \ref{Main iteration step}}

In what follows, we say that an affine action $\(a, b\)$ is {\it unlocked} if its linear part $\(A, B\)$ is unlocked. If $\(a, b\)$ is a Diophantine affine action, then its linear part is automatically unlocked, 
but we stress this in the statements since the property of $\(A, B\)$ being unlocked will play a crucial role in the proofs. 
\color{black}

\begin{PProp}\label{Main iteration step}
Let $\(a, b\)$ be an unlocked $(\ga,\tau)$-Diophantine parabolic affine $\mathbb Z^2$ action, where $a$ is step-2.  
Let $F=a+\bff $ and $G=b+\bfg $ be $C^\infty$ commuting diffeomorphisms generating a perturbation $\(F, G\)$ of $\(a, b\)$.  For $r\ge 0$, let $\Delta_r=\max\{\|\bff\|_r, \|\bfg\|_r\}$. 
% For any fixed $N\in \mathbb N$ let $=T_Np$ denote the truncated co-chain of $p$. 

There exist  constants $C$, $C_r$, $C_{r'}$  and $D=D(a,b, \gamma,\tau, d)$ such that   for any $N\in \mathbb N$ there exist vector fields ${\tilde \bff}_N$, ${\tilde \bfg}_N$, ${\bfh}_N$,  and vectors $\bf V$ and  and $\bf W$  
  such that
\begin{align*}
D_{1,0}{\bfh}_N+{\bf \tf}_N&= \bff+ \bfV,\\
D_{0,1}{\bfh}_N+{\bf \tg}_N&= \bfg+\bfW,\\
\end{align*}
 and the following estimates hold whenever $0\le r$, $D< r'$:
 \begin{equation}\label{main-est}
 \begin{aligned}
\|{\bf h}_N\|_{r}&\leq  C_r  \, N^{D}\Delta_{r},\\
\|{\bf \tf}_N\|_{0}, \,  \|{\bf \tg}_N\|_{0}
&\le CN^{D} \Delta_0\Delta_1+ C_{r'}N^{-r'+D}\Delta_{r'},\\
\|{\bf \tf}_N\|_{r}, \,  \|{\bf \tg}_N\|_{r}
&\le C_r N^{D}\Delta_{r}, \\
|{\bf V}|, |{\bf W}| &\leq  C \Delta_{0}.\\
\end{aligned}
\end{equation}
\end{PProp}

The proof of Proposition \ref{Main iteration step} is postponed to \S \ref{sec3}.

\subsection{Iteration set-up}\label{iteration set-up}

In this section we set up the iteration which we use to prove Theorem \ref{mainC}. The iterative step consists of three sub-steps:  linearization,  application of Proposition \ref{Main iteration step} and adjusting the average of the conjugating diffeomorphism by using the volume preservation of the perturbation.

\begin{proposition}\label{iterative_step}
Let $\(a, b\)$ be an unlocked $(\gamma,\tau)$-Diophantine  parabolic affine $\mathbb Z^2$ action, where $a$ is step-2.  
There exists a constant  $D>0$  only depending on the action $\(a, b\)$, for which the following holds.

Let $\(a+\bff ,b+\bfg \)$ be a $C^\infty$  volume preserving perturbation such that 
$$ave(\bff)=ave(\bfg)=0.
$$ 

Assume that we have constructed a conjugation up to the $n$-th step, $\mathcal H_{n-1}= (\id + \bfh_{n-1}) \circ\dots\circ (\id + \bfh_1)$, such that $ave((\mathcal H_{n-1})-\id)=0$ and
$$
\mathcal H_{n-1}\circ \(a+\bff, b+\bfg \) \circ \mathcal H_{n-1}^{-1}=\(a+\bff_n, b+\bfg_n\).
$$
Denote $\Delta_{r, n}:= \max\{\|\bff_n\|_r, \|\bfg_n\|_r\}$.

Then for any $N\in \mathbb N$ there exists $\bfh_n$  (which depends on $N$) such that for $0\le r$, $D\le r'$  and certain constants $C$, $C_r$, $C_{r'}$,  if $N^D\Delta_{1,n}<1$, then we have:   

\begin{enumerate}

\item[(i)] $\|\bfh_{n}\|_r \le C_rN^D \Delta_{r,n}$;

\item[(ii)] For  
$$
\bff_{n+1}:= (\id+\bfh_{n})\circ (a+\bff_n)\circ (\id+\bfh_{n})^{-1}-a,
$$ 
$$\bfg_{n+1}:= (\id+\bfh_{n})\circ (b+\bfg_n)\circ (\id+\bfh_{n})^{-1}-b,
$$  
$$\Delta_{r,n+1}:= \max\{\|\bff_{n+1}\|_r, \|\bfg_{n+1}\|_r\},$$
the following estimates hold:
\begin{equation}\label{new_error}
\begin{aligned}
\Delta_{0,n+1}&\le C_rN^D  \Delta_{0, n}\Delta_{1, n}+ C_{r'} N^{-r'+D}\Delta_{r', n}, \\
\Delta_{r,n+1}&\le C_{r}N^D \Delta_{r, n}; \\
\end{aligned}
\end{equation}

\item[(iii)] For $\mathcal H_{n}:=  (\id+ \bfh_{n}) \circ\mathcal H_{n-1}$ we have: $ave(\mathcal H_{n}-\id)=0$. 

\end{enumerate}
\end{proposition}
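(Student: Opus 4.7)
I plan to perform a single KAM iteration step. The strategy has two parts: apply Proposition \ref{Main iteration step} to $(\bff_n, \bfg_n)$ to obtain a candidate $\bfh_N$ that approximately solves the cohomological equations up to constants $\bfV, \bfW$ of order $\Delta_{0,n}$; then add a suitable constant vector $c_n\in \R^d$ to this $\bfh_N$ both to enforce the zero-average condition $(iii)$ and to absorb the linear part of $\bfV, \bfW$ into the quadratic error, exploiting the volume-preservation hypothesis. The key observation is that a single $c_n$ is capable of doing both jobs, because the integrated cohomological equation relates $\bfV$ directly to $\int \bfh_N$ (and similarly $\bfW$ to $\int\bfh_N$).

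Concretely, I set $\bfh_n := \bfh_N + c_n$ with $c_n$ determined by $ave(\mathcal H_n - \id) = 0$. Using the inductive hypothesis $ave(\mathcal H_{n-1}-\id)=0$ and $\mathcal H_n = (\id+\bfh_n)\circ\mathcal H_{n-1}$, this reduces to $\int_{\T^d} \bfh_n \circ \mathcal H_{n-1}\, dx = 0$, uniquely fixing $c_n = -\int \bfh_N \circ \mathcal H_{n-1}\, dx$, with $|c_n|\leq \|\bfh_N\|_0\leq C\Delta_{0,n}$. A constant shift does not spoil the tame norm bound from Proposition \ref{Main iteration step}, giving $(i)$, and $(iii)$ holds by construction. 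For $(ii)$, a standard Taylor expansion of $(\id+\bfh_n)\circ(a+\bff_n)\circ(\id+\bfh_n)^{-1}-a$ gives, up to the sign convention of Proposition \ref{Main iteration step},
$$\bff_{n+1} = \tilde\bff_N - \bigl(\bfV - (A-\Id)c_n\bigr) + Q_1,$$
where $Q_1$ is a genuinely bilinear remainder estimated by $\|\bff_n\|_0\|\bfh_n\|_1 + \|\bfh_n\|_0\|\bff_n\|_1 \lesssim N^D\Delta_{0,n}\Delta_{1,n}$, and analogously for $\bfg_{n+1}$. The $C^r$ bound in \eqref{new_error} is then immediate from \eqref{main-est} and standard interpolation; the $C^0$ bound requires the quadratic smallness of the residual constant $\bfV - (A-\Id)c_n$ (and its $\bfg$-counterpart), which is the main obstacle of the proof.

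To establish this residual quadratic smallness, I would integrate the cohomological equation from Proposition \ref{Main iteration step} over $\T^d$: since $a$ preserves Lebesgue, $\int D_{1,0}\bfh_N = -(A-\Id)\int\bfh_N$, and so
$$\bfV = -(A-\Id)\int\bfh_N - \int\bff_n + \int\tilde\bff_N,$$
with a parallel identity for $\bfW$ involving $B, \bfg_n, \tilde\bfg_N$. The $\tilde\bff_N$-integral is quadratic by \eqref{main-est}. The volume preservation of $(F,G)$ propagated through $\mathcal H_{n-1}$ implies that $\|\det D\mathcal H_{n-1}-1\|_0$ is inductively quadratic in the initial error, so $c_n = -\int\bfh_N + O(\Delta_{0,n}^2)$, whence $\bfV - (A-\Id)c_n = -\int\bff_n + \int\tilde\bff_N + O(\Delta_{0,n}^2)$. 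A parallel inductive argument, starting from $\int\bff = 0$ (the zero-average hypothesis) and using at each previous step $k<n$ the identity $\int\bff_{k+1} - \int\bff_k = O(\Delta_{0,k}^2)$ (itself obtained by integrating the same cohomological equation at step $k$), yields $\int\bff_n = O(\Delta_{0,n-1}^2)$, which is the required quadratic smallness. The same $c_n$ absorbs $\bfW$ because the integrated identity for $\bfW$ has the same structure with $(A,\bff)$ replaced by $(B,\bfg)$; compatibility of the two equations across a single $c_n$ ultimately rests on the commutativity of $F, G$ combined with the volume preservation, both inductively preserved through the iteration.
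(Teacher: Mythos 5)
Your first two moves coincide with the paper's proof: apply Proposition \ref{Main iteration step} to $(\bff_n,\bfg_n)$, and shift $\bfh_N$ by the constant $c_n=-\int_{\T^d}\bfh_N\circ\mathcal H_{n-1}$ so that $ave(\mathcal H_n-\id)=0$ (this is exactly Lemmas \ref{Cchange} and \ref{c}). The gap is in the last step, where you must show the residual constant is of the order of the new error. Your route relies on two claims that are not justified and, as stated, are false. First, you assert that volume preservation of $(F,G)$ makes $\|\det D\mathcal H_{n-1}-1\|_0$ quadratically small; but the intermediate conjugacies are built from approximate solutions of the cohomological equations and are not volume preserving at any finite stage (only the limit conjugacy is, and this is proved only at the end, in \S\ref{s_volume_pres} via unique ergodicity). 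One only has $\|\det D\mathcal H_{n-1}-1\|_0=O(\sum_{k<n}\|\bfh_k\|_1)$, a quantity of the size of the \emph{initial} error, so $c_n+\int\bfh_N$, and hence $(A-\Id)(c_n+\int\bfh_N)$, is not quadratic in $\Delta_{0,n}$. Second, the induction $\int\bff_{k+1}-\int\bff_k=O(\Delta_{0,k}^2)$ does not follow from integrating the step-$k$ conjugacy relation: that integration leaves the purely linear term $(A-\Id)\int\bfh_k$ (and $a+\bff_k$ itself need not preserve volume), so the claim presupposes exactly the control of the constant terms you are trying to establish -- the argument is circular. A lesser point: even if it worked, a bound $O(\Delta_{0,n-1}^2)$ is expressed in step-$(n-1)$ data and is not of the form required by \eqref{new_error}.

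The paper's mechanism avoids propagating any approximate volume-preservation or zero-average property through the intermediate steps. After the normalization $ave(\hat H_n)=0$, one writes the conjugation relation all the way back to the \emph{original} perturbation, $a+\hat\bff_{n+1}+\hat\bfV_n=(\Id+\hat H_n)\circ(a+\bff)\circ(\Id+\hat H_n)^{-1}$, composes on the right with $\Id+\hat H_n$ to get $A\hat H_n+\hat\bff_{n+1}\circ(\Id+\hat H_n)+\hat\bfV_n=\bff+\hat H_n\circ(a+\bff)$, and integrates against $\lambda$. Because $ave(\hat H_n)=0$, $ave(\bff)=0$, and $a+\bff$ (not $a+\bff_n$, not $\mathcal H_{n-1}$) is exactly volume preserving, every term drops out except $\hat\bfV_n=-\int\hat\bff_{n+1}\circ(\Id+\hat H_n)\,d\lambda$, so $|\hat\bfV_n|\le\|\hat\bff_{n+1}\|_0$ \emph{exactly}, and the constant is absorbed into the quadratic error; the same computation with $(B,\bfg)$ handles $\hat\bfW_n$ with the same $c_n$. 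This exact identity is the missing ingredient in your proposal; with it, no estimate on $\det D\mathcal H_{n-1}$ or on $\int\bff_n$ is needed at all.
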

\begin{proof}
The non-linear problem is to find $\bfh_{n}$ such that 
\begin{equation}\label{conj}  (\id+\bfh_{n})\circ\(a+\bff_n, b+\bfg_n\)=   \(a+\bff_{n+1}, b+\bfg_{n+1}\)\circ(\id+\bfh_{n})\end{equation}
with $\bfh_{n}$ and $\bff_{n+1}, \bfg_{n+1}$ satisfying the estimates of the proposition. 

Here is a brief outline of the proof. After linearizing the above non-linear problem, 
we will first apply Proposition \ref{Main iteration step} to determine $\bfh_n$ and vectors $\bfV_{n}$, $\bfW_{n}$ such that $|\bfV_n|+|\bfW_n| \leq C\Delta_{0,n}$ and
\begin{equation}\label{nonlinear}
(\id+\bfh_{n}) \circ \(a+\bff_n, b+\bfg_n\) =   \(a+\bff_{n+1}+\bfV_{n}, b+\bfg_{n+1}+\bfW_{n}\) \circ (\id+\bfh_{n}),
\end{equation}
where  $\bfh_n$ and $\bff_{n+1}, \bfg_{n+1}$ satisfy the estimates in $(i)$ and $(ii)$. We observe that if we change $\bfh_n$ by adding to it a translation vector of order $\Delta_{0,n}$, then equation \eqref{nonlinear} will still hold with some new $\bff_{n+1}, \bfg_{n+1}$,  $\bfV_{n},\bfW_{n}$ that satisfy the same estimates. By adequately choosing the translation vector, based on the volume preservation condition and the zero average condition on the initial perturbation (and the inductive condition $ave((\mathcal H_{n-1})-\id)=0$), we will be able to absorb the constants $\bfV_{n}$ and $\bfW_{n}$ into $\bff_{n+1}$ and $\bfg_{n+1}$.

\medskip

\noindent{\bf Linearization. \ } We begin by linearizing the non-linear conjugation problem. Equation \eqref{nonlinear} is rewritten in a way that expresses the error in the new perturbation $(\bff_{n+1}, \bfg_{n+1})$ in terms of the linearization of the non-linear conjugation problem above and additional errors:
\begin{equation}\label{linearization}
\begin{aligned}
\bff_{n+1}(\id+\bfh_{n})&=  (\bfh_n\circ a- A\bfh_n) +\bff_n -\bfV_{n} +  ( \bfh_n\circ(a+\bff_{n})-\bfh_n\circ a),\\
 \bfg_{n+1}(\id+\bfh_{n})&= (\bfh_n\circ b-B\bfh_n)+ \bfg_n-\bfW_{n} + ( \bfh_n\circ(b+\bfg_{n})-\bfh_n\circ b). \\
\end{aligned}
\end{equation}

To estimate the left-hand side in the equations above,  we need to estimate the following two terms: 
\begin{equation}\label{E1E2}
\begin{aligned}
E_1:=& \bfh_n\circ a-A\bfh_n+\bff_n-\bfV_{n},\, \bfh_n\circ b-B\bfh_n+\bfg_n-\bfW_{n},\\
E_2:=& \bfh_n\circ(a+\bff_n)-\bfh_n\circ a,  \, \bfh_n\circ(b+\bfg_n)-\bfh_n\circ b.\\
%E_3:=& (\bff_n-\bff_n(\id+\bfh_n), \bfg_n-\bfg_n(\id+\bfh_n)). \\
\end{aligned}
\end{equation}
 Bellow we estimate both terms, $E_1$ and $E_2$, in $C^0$ norm for the transformation  $\bfh_n$ provided by in Proposition \ref{Main iteration step}. This will imply the estimate for the $C^0$ norm of $\bff_{n+1}$ and  $\bfg_{n+1}$.
\medskip

\medskip

\noindent{\bf Applying Proposition \ref{Main iteration step}.}

Apply now Proposition \ref{Main iteration step} to $\bff_n, \bfg_n$.  Fix $N\in \mathbb N$. For  the fixed  $N$, from Proposition \ref{Main iteration step} we obtain  ${\bfh_n}:=({\bfh_n})_N$, $\widetilde{(\bff_n)}_N$, $\widetilde{(\bfg_n)}_N$  and vectors $\bfV_{n}$ and $\bfW_{n}$.  The first estimate in Proposition \ref{Main iteration step} gives directly:
\begin{equation}\label{hhhh}
\|\bfh_{n}\|_r \le C_rN^D \Delta_{r,n}.
\end{equation}

Observe that the assumption that $N^D \Delta_{1,n}$ is bounded by a constant implies that $\|\bfh_{n}\|_1$ is bounded by a constant. It is a common fact (see for example  \cite[Lemma AII.26]{L}) that the inverse map  $(\Id+\bfh_{n})^{-1}=\Id+ \bfh'_{n}$ is such that  $\bfh'_{n}$ also satisfies the estimate:
\begin{equation}\label{h'}
\|\bfh'_{n}\|_r\le C_r \|\bfh_{n}\|_r.
\end{equation}

The error $E_1$ is precisely $(\widetilde{(\bff_n)}_N,\widetilde{(\bfg_n)}_N)$, so from the second estimate in Proposition \ref{Main iteration step} we get for any $r'>0$:

\begin{equation}\label{E1}
\begin{aligned}
\|E_1\|_0&\le   CN^{D} \Delta_{0,n}\Delta_{1,n}+ C_{r'}N^{-r'+D}\Delta_{r',n} .\\
%\|E_1\|_r&\le C_r N^{D}\Delta_{r,n} .
\end{aligned}
\end{equation}

The estimate for $E_2$ follows by using the standard estimates  (see for example Appendix in \cite{DF}) and estimate \eqref{hhhh}:
\begin{equation}\label{E2}
%\begin{aligned}
\|E_2\|_0\le C\|\bfh_n\|_{1}\Delta_{0, n}\\
\le  CN^D\Delta_{1,n}\Delta_{0, n}.\\
%\end{aligned}
\end{equation}

Putting the  two errors together, we have that the new error satisfies:

\begin{equation}\label{E}
\Delta_{0,n+1}\le   CN^{D} \Delta_{0,n}\Delta_{1,n}+ C_{r'}N^{-r'+D}\Delta_{r',n}.
\end{equation}

The estimate for $C^r$ norms of $\bff_{n+1}$ and $\bfg_{n+1}$ for any $r$ (the second estimate in \eqref{new_error}) follows from the definition \eqref{linearization} of these maps. We show how the estimate follows for $\bff_{n+1}$. For $\bfg_{n+1}$ the proof is the same.

From \eqref{linearization} we can write:
$$
\bff_{n+1}= \widetilde{\bff_{n}}(\id+\bfh'_{n}) +  ( \bfh_n\circ(a+\bff_{n})-\bfh_n\circ a)\circ (\id+\bfh'_{n}),
$$
where $\Id+\bfh'_{n}=(\id+\bfh_{n})^{-1}$, and $\bfh'_{n}$ satisfies estimate \eqref{h'}. 
Then by applying standard estimate for the composition of maps (see for example \cite[Theorem A.8]{Hormander}) 
%which we quote in Appendix as Lemma \ref{basic_estimates}) 
and the bound for $\|\widetilde{\bff_{n}}\|_r$ which we have from Proposition \ref{Main iteration step},  we get: 
$$
\begin{aligned}
\|\bff_{n+1}\|_r \le& \|\widetilde{\bff_{n}}(\id+\bfh'_{n})\|_r +  \| \bfh_n\circ(a+\bff_{n})-\bfh_n\circ a)\circ (\id+\bfh'_{n})\|_r\\
\le &C_r (\|\widetilde{\bff_{n}}\|_r+ \|\bfh'_{n}\|_r)\le C_rN^D\Delta_{r, n.}
\end{aligned}
$$

\medskip

\noindent{\bf Adjusting the average of the conjugating diffeomorphism}. Now we will adjust the average of $\bfh_n$ in such a way that the constant terms $\bfV_{n}$ and $\bfW_{n}$ in \eqref{linearization} are forced to be as small as $\|\bff_{n+1}\|_0$ and $\|\bfg_{n+1}\|_0$, respectively. The crucial role here is played by the assumptions on the volume preservation and zero averages of the initial errors. The adjustment of the average of $\bfh_n$ will not depend on the action elements, as will be seen in Lemma \ref{c}.  We will check that it works on one action generator, the other generator can be treated in the same way.

\begin{lemma}\label{Cchange}
Suppose that $ \bfh$ is such that 
\begin{equation} \label{inv}
\begin{aligned}
a+ \bff_{n+1}+\bfV_{n} &= (\Id+{ \bfh})\circ (a+\bff_n)  \circ (\Id+ { \bfh})^{-1},\\
%&= \mathcal H_n\circ(a+f)\circ\mathcal H_n^{-1}-a
\end{aligned}
\end{equation}
where $|\bfV_n|= O(\Delta_{0,n})$ and $\bff_{n+1}$ satisfies estimate \eqref{new_error}. 

For any vector $C$ such that $|C|= O(\Delta_{0,n})$, the function $\hat\bfh=\bfh+ C$ satisfies 
\begin{equation} 
a+\hat \bff_{n+1}+\hat \bfV_{n} = (\Id+{ \hat\bfh})\circ (a+\bff_n)  \circ (\Id+ {\hat \bfh})^{-1},
\end{equation}
where $|\hat \bfV_n |= O(\Delta_{0,n})$ and $\hat \bff_{n+1}$ satisfies  \eqref{new_error}. 
\end{lemma}
\begin{proof} Using \eqref{inv}, we can write:
\begin{equation*}
\begin{aligned}
(\id+\bfh+C)\circ(a+\bff_n)&= (\id+\bfh)\circ(a+\bff_n)+C= (a+\bff_{n+1}+\bfV_n+C)\circ (\id +\bfh)\\
&= (a+\bff_{n+1}+\bfV_n+C)\circ (\id-C)\circ  (\id +\bfh+C)\\
&=( a+ \bff_{n+1}\circ  (\id-C)+ (\bfV_n-(A-\id)C))\circ   (\id +\bfh+C)\\
&=( a+ \hat \bff_{n+1}+ \hat \bfV_n)\circ   (\id +\bfh+C),
\end{aligned}
\end{equation*}
where $\hat \bff_{n+1}:= \bff_{n+1}\circ  (\id-C)$ and $\hat \bfV_n:= \bfV_n-(A-\id)C$. Estimate \eqref{new_error} holds then for $\hat \bff_{n+1}$ since it holds for $\bff_{n+1}$ and $|C|= O(\Delta_{0,n})$.  Obviously, $|\hat \bfV_n |$ is of the same order of magnitude as $|C|= O(\Delta_{0,n})$. 
\end{proof}

In the previous part of the proof we constructed $\bfh_n$ such that 
$$a+ \bff_{n+1}-\bfV_{n} = (\Id+{ \bfh_n})^{-1}\circ (a+\bff_n)  \circ (\Id+ { \bfh_n}).
$$
If $\bfh_n$ satisfies equation \eqref{inv}, we can apply Lemma \ref{Cchange} to adjust the average of $ \bfh_n$. The following Lemma explains how the the constant vector $C$ is chosen at the $n$-th step of the iteration. Let $ {H}_{n-1}= \mathcal H_{n-1}-\id$, and recall that, by assumption, $ave ( {H}_{n-1})=0$.

\begin{lemma}\label{c}
Let $C= -\int_{\T^d} {\bfh}_n \circ \mathcal{H}_{n-1}$, and let $\hat \bfh_{n}= \bfh_n+C$. Let $\hat{\mathcal H}_n=(id +\hat  \bfh_{n}) \circ \mathcal H_{n-1} = \id +\hat { H}_n$. Then $ave(\hat {H}_n)=0$.

\end{lemma}

\begin{proof}
$\hat{\mathcal H}_{n}=(\Id+\hat\bfh_{n})\circ \mathcal H_{n-1}$ implies
$$\hat H_{n}= H_{n-1}+  \hat \bfh_{n}\circ \mathcal H_{n-1}.$$
By the inductive assumption, $ave(H_{n-1})=0$. 
Then, by taking averages of both sides of the equation above, we get that $ave ({\hat H_{n}})=0$.
\end{proof}

%Using the $h$ obtained from  Proposition \ref{Main iteration step} in the previous part of the proof, we will define $\bfh_{n}$ to be the same as $\bfh$ except that we modify its average. Here is how this modification is done. 

%Let $\mathcal H_{n}:={\mathcal H_{n-1}}\circ  (\Id + \bfh_{n})$, then $\mathcal H_{n}^{-1}:= (\Id + \bar \bfh_{n})\circ {\mathcal H_{n-1}}^{-1}$, where $\bar \bfh:=(\id+\bfh)^{-1}-\id$. Let $ {\bar H_{n}}=  \mathcal H_{n}^{-1}-\Id$. 

After choosing $C$ as in Lemma \ref{c}, by applying Lemma \ref{Cchange}, we get the equation: 
$$a+ \hat\bff_{n+1}+\hat\bfV_{n} = (\Id+\hat\bfh_n)\circ (a+\bff_n)  \circ (\Id+ {\hat \bfh_n})^{-1},$$
which implies 
$$a+ \hat\bff_{n+1}+\hat\bfV_{n} = (\Id+\hat H_n)\circ (a+\bff)  \circ (\Id+ {\hat H_n})^{-1}.$$
From this, by composing on the right with $\Id+\hat H_n$ we get
$$A\hat H_n + \hat\bff_{n+1}(\Id+\hat H_n)+\hat\bfV_{n} = \bff+ {\hat H_n}\circ (a+\bff).$$
By taking averages with respect to the volume of both sides of the equation above and using the assumptions that $\bff$ has zero average and that $a+f$ is volume preserving,  it follows that 
$\hat\bfV_{n}=- \hat\bff_{n+1}(\Id+\hat H_n)$. 
This implies that $\hat\bfV_{n}=O( \|\hat\bff_{n+1}\|_0)$, which means that the constant $\hat\bfV_{n}$ can be  absorbed by  $\hat\bff_{n+1}$. 

From Lemma \ref{Cchange} we have that estimates \eqref{new_error} hold for  $\hat\bff_{n+1}$. Finally, we proclaim the new $\bff_{n+1}$ to be $\hat\bff_{n+1}$.

 \end{proof}

\subsection{Convergence of the iterative scheme} 
%and end of the proof of Theorem \ref{mainC}}

Once we have the result of Proposition \ref{iterative_step}, the set-up of the KAM scheme and its convergence is essentially the same as in usual applications of KAM method (see for example Section 5.4 in \cite{DK}). 
%For the reader's convenience, since notations and constants are different,  we briefly remind it here. 

Assume that $\(a,b\)$ is a $(\gamma, \tau)$-Diophantine action. Let $\(a+\bff, b+\bfg\)$ be a small smooth perturbation of $\(a,b\)$. Since we are proving KAM rigidity, we also assume that $\(a+\bff, b+\bfg\)$ is volume preserving and that $\bff$ and $\bfg$ have zero average. 

Given the initial perturbation above, we let:
$$
\bff_1=\bff;\,\,\, \bfg_1=\bfg.
$$
Recall that we use the notation $\Delta_{r,1}:=\max\{\|\bff_1\|_r, \|\bfg_1\|_r\}.$

Let $D$ be the constant from Proposition  \ref{iterative_step} which depends only on $\(a,b\)$.

Fix $k=\frac{4}{3},$ and let $l= 8D+16$.

At the first step we assume that:
$$
\Delta_{0,1}<\varepsilon,\,\,\, \Delta_{l,1}<\varepsilon^{-1}
$$
for a small $\varepsilon>0$. We will show that $\varepsilon$ can be chosen so small that the iterative process converges.

 We describe now the iterative process. By Proposition  \ref{iterative_step}, there exists $\bfh_1$ with the estimates claimed in the proposition.
Then the transformation $\Id+ \bfh_1$ conjugates $\(a+\bff_1, b+\bfg_1\)$ to a new perturbation, which we call $\(a+\bff_2, b+\bfg_2\)$. 
This procedure is iterated. 

At this point we still have the freedom to choose the truncation at level $N$ when applying Proposition   \ref{iterative_step} at the $n$-th step of the iteration. If at step $n$ we choose the truncation   to be $$N_n=\varepsilon_n^{-\frac{1}{3(D+2)}},$$ where $\varepsilon _n=\varepsilon^{(k^n)}$,  then for sufficiently small $\varepsilon$ we can show inductively that the following estimates hold for all $n$: 
\begin{equation}\label{convergence_set_up}
\begin{aligned}
\Delta_{0,n}&<\varepsilon _n=\varepsilon^{(k^n)},\\
\Delta_{l, n}&<\varepsilon_n^{-1},\\
\end{aligned}
\end{equation}
and 
\begin{equation}\label{haha}
\|\bfh_{n}\|_1<\varepsilon_n^{\frac{1}{2}}.\\
\end{equation}
Here we use the letter C to denote any constant which depends only on the fixed $l$, $D$ and the unperturbed action $\(a,b\)$. 
  
Suppose that we are at the $n$-th step of iteration and that estimates \eqref{convergence_set_up} hold for $n$. First, we check that the condition $N^D\Delta_{1,n}<1$ holds by using the standard interpolation inequality 
 \begin{equation}\label{interpolation}
  \Delta_{1,n}\le C \Delta_{0, n} ^{1-\frac{1}{l}}\Delta_{l, n}^{\frac{1}{l}},
 \end{equation}
  and assumptions \eqref{convergence_set_up}:
  $$
  N_n^D\Delta_{1,n}\le C \ve_n^{\frac{-D}{3(D+2)}} \ve_n^{1-\frac{1}{l}}\ve_n^{-\frac{1}{l}}= C\ve^{\frac{-D}{3(D+2)}+1-\frac{2}{l}}.
  $$
  Since for the chosen value of $l$ the term $\frac{-D}{3(D+2)}+1-\frac{2}{l}$ is positive, by choosing initial $\ve$ sufficiently small, 
  we get that  the above expression is smaller than 1.

  Then the maps $\bfh_n$, $\bff_{n+1}$ and  $\bfg_{n+1}$ are constructed by applying Proposition  \ref{iterative_step}. 
 The same proposition, combined with  \eqref{convergence_set_up} and 
 the interpolation inequalities \eqref{interpolation}, together with  our choice of $l$, imply that \eqref{haha} holds for a sufficiently small $\varepsilon$: 
$$
\|h_{n}\|_1\le CN_n^{D}\Delta_{1, n}\le C N_n^D \Delta_{0, n}^{1-\frac{1}{l}} \Delta_{l, n}^{\frac{1}{l}}\le C \ve_n^{-\frac{D}{3(D+2)}+1-\frac{2}{l}}\le \ve_n^{\frac{1}{2}}.
$$

 Now we check that   \eqref{convergence_set_up} holds for $n$ replaced by  $n+1$. 
 
 First we compute the bounds for the $l$ norms by using the estimates of Proposition  \ref{iterative_step}: 
 $$
  \Delta_{l, n+1}\le CN_n^D\Delta_{l,n}\le C\ve_n^{\frac{-D}{3(D+2)}}(1+\ve_n^{-1})\le 2C\ve_n^{\frac{-D}{3(D+2)}-1}< \ve_n^{-\frac{1}{3}-1}=\ve_n^{-\frac{4}{3}}=\ve_{n+1}^{-1}.
  $$

 Finally, we estimate the 0-norms (by using again estimates in Proposition  \ref{iterative_step} and the interpolation inequality): 
  \begin{equation*}
  \begin{aligned}
   \Delta_{0, n+1}&\le CN_n^D \Delta_{0, n}^{1-\frac{1}{l}} \Delta_{l, n}^{\frac{1}{l}}\Delta_{0, n}+ CN_n^{-l+D}\Delta_{l, n}\\
   &\le C(\ve_n^{-\frac{D}{3(D+2)}+2-\frac{2}{l}}+\ve_n^{\frac{l-D}{3(D+2)}-1})\le \ve_n^{\frac{4}{3}}= \ve_{n+1},
  \end{aligned}
  \end{equation*}
 since, given our choice of $l$, both expressions $-\frac{D}{3(D+2)}+2-\frac{2}{l}$ and $\frac{l-D}{3(D+2)}-1$ are strictly larger than $\frac{4}{3}$.

 Therefore the estimates in \eqref{convergence_set_up} hold for all $n$. This implies the convergence of $\mathcal H_n$ in the $C^1$ norm to some $\mathcal H_\infty$, which conjugates the initial perturbation to $\(a, b\)$. The fact that the conjugation $\mathcal H_\infty$  is $C^m$ for every $m>0$ (i.e., that the process converges in any norm) is proved in a standard way by using interpolation estimates (see for example the end of Section 5.4 in \cite{DK}).

\color{black}

\subsection{Volume preservation of the conjugacy}\label{s_volume_pres} Now we have that $\mathcal H_\infty$ conjugates the perturbation $\(F, G\)$ to $\(a,b\)$. Since $\(F, G\)$ is assumed to be volume preserving, the conjugation relation implies that the pushforward of the volume by $\mathcal H_\infty$ is invariant under $\(a,b\)$. Since  $a^kb^l$ is uniquely ergodic for some $k$ and $l$, the map $\mathcal H_\infty$ is volume preserving.

{The proof of Theorem \ref{mainC} is now completed {\it modulo} the proof of Proposition \ref{Main iteration step}. The rest of the paper is dedicated to the proof of  Proposition \ref{Main iteration step}. \hfill $\Box$}
\color{black}

\section{ Estimates of sums and double sums along the dual orbits} \label{sec3}
%{\bf Part II}:

In this subsection we give the necessary estimates on 
sums and double sums along the dual orbits of $A$ and $B$ that will be crucial in solving the cohomological equations and proving Proposition \ref{Main iteration step}. The main results of this section are Propositions \ref{c_est_easy} and \ref{lemma.double.sums.case3}.

In Proposition \ref{c_est_easy} we deal with partial 
sums along the step-2 dual orbits.  These sums will be used in the proof of Proposition \ref{Main iteration step} for estimating the norms of the conjugacies. We also give a first estimation of full sums along the step-2 dual orbits that will be the key for estimating the error in solving the cohomological equations at the resonant Fourier modes. 

As explained  in  %Sections  \ref{s_linearisation} and  
Section  \ref{s_case3},  a full sum along the step-2 dual orbit can be reinterpreted,  {\it via} the higher rank trick, as a double sum of a quadratically small function $\phi$ measuring the error of the pair $(f,g)$ in \eqref{linearisation22} from forming a cocycle above the action $\(a,b\)$.  In Proposition \ref{lemma.double.sums.case3}, we deal with these  double sums. The estimates we obtain in this proposition will serve for estimating  the  error in solving the cohomological equations at non-resonant Fourier modes. 

%Apart from the notations of \S{s_notations}, 

\subsection{Notations}\label{s_notations} 
In this subsection we summarise the notations used in the rest of the paper. 

\begin{itemize} 
\item Assume that $\(A,B\)$ is unlocked commuting linear parabolic action.  We consider  two commuting affine maps $a(x)=Ax+\alpha$ and  $b(x)=Bx+\alpha$  on $\mathbb T^d$, where 
$a$ is step-2 and $b$ is  step-$S$ (see Definition \ref{def_step}). Elements of the step-$S$ action $\(a,b\):\ZZ^2 \to \text{Diff\,}^\infty_\lambda(\T^d)$ are denoted by $a^kb^l$, $(k, l)\in \ZZ^2$. 
%In this section we always assume that  the parabolic affine action $\(a,b\)$ is non-stiff (see Definition \ref{def_stiff}), and $a$ is step-2.

\item  Let 
$$A= \id +\tA;  \quad  B =\id +\tB.$$ 
In these notations, 
$a$ being step-2 and $b$ being  step-$S$  implies: $\tA^2=\tB^S=0$.

%In constructing solutions to coboundary equations, we will need to do that by looking at the induced action on the representations. For this purpose 
\item Let  $\bA=(A^{tr})^{-1}$, $\bB=(B^{tr})^{-1}$. The linear action of $\(\bA, \bB\)$ on $\ZZ^d$ is called the dual action of $\(A, B\)$.
Let 
$$\bA=\id +\hA; \quad  \bB =\id  +\hB.$$ 
Clearly, $\bA$ and $\bB$ are also step-2 and step-$S$, respectively, which implies $\hA^2=\hB^S=0$.

\item
For $S$ being the step of the action, let 
$$
\eta =0.99\frac1S;
$$

\item  
 To each $m\in \Z^d$ we associate $s=s(m)$, called the step of $m$, such that 
$$
\hB^{s} m = 0, \quad \hB^{s-1} m \neq 0.
$$
Denote 
\begin{equation}\label{not_ett}
\ett = \ett (m) = 0.99 \frac1{s} .
\end{equation}
 Clearly,  we have $s(m)\leq S$ for any $m\in \ZZ^d$, and hence, $\ett (m)\geq \eta$.

\item For each $(k,l)\in\ZZ^2$, let $\alpha_{k,l} $ stand for the translation part of $a^kb^l$:
$$
\alpha_{k,l} := a^kb^l -A^kB^l .
$$

%????? Recall that  $m\in \cR (A,B)$ ($m$ is resonant) if at least one of $\bA m\neq m$ and $\bB \neq m$ holds, $\bB^2m=m$, 
%and  for some $(k,l)\in \Z^2\setminus \{0\}$ we have 
%$$
%\bA^k\bB^l m = m .
%$$
%Clearly, $\cR (A,B)$ is invariant under the action $\bA$ and $ \bB$. 

\item  
Given a continuous function  $h: \T^d\to\R$, denote  its Fourier coefficients by $h_m$:
$$
h(x)=\sum_{m\in \Z^d} h_m e(m,x),    \quad  e(m,x):=e^{2\pi i (m,x)} . 
$$
In these notations, for $a(x)=Ax+\alpha$ we have: 
%$h\circ a =\sum_{m\in \Z^d} h_{ m}  e(m,\alpha) e(A^{tr} m, x)$. Denoting
%$M:=A^{tr} m$, we get: $m=\bA M$, and $h \circ a= \sum_{M\in \Z^d} h_{\bA M} e(\bA M, \alpha) e(M,x)$. Renaming the index, we %get
$$
h\circ a =\sum_{m\in \Z^d} h_{\bA m} e(\bA m,\alpha ) e(m,x), \quad (h\circ a)_m =h_{\bA m} e(\bA m,\alpha ) .
$$

\item For $(k,l)\in \Z^2$  denote by  $\partial_{k,l}$  the coboundary operator: for  $h\in  C^\infty(\mathbb T^d)$ let
$$
\partial_ {k,l}(h): = h(a^kb^l ) - h. 
$$
In particular, the expression for the $m$-th Fourier coefficient of a coboundary is
$$
(\partial_ {k,l}(h))_m =  h_{\bA^k\bB^l m} e(\bA^k\bB^l  m,\akl ) -h_m .%= h_m (e(\bA^k\bB^l  m,\akl )-1).
$$

%\item We will work with a map $p: \mathbb Z^2\to C^\infty(\mathbb T^d)$, where we will keep special names for the following two functions: 
%$$
%f= p(1,0), \quad g= p(0,1).
%$$

\item We will work with the maps of the type  $p: \ZZ^2 \to  C^\infty(\mathbb T^d)$, 
the usual notation being: $p(k,l)\in C^\infty$ for $(k,l)\in \ZZ^2$. 
For such maps we define the operator
$Lp: \mathbb Z^2\times \mathbb Z^2 \to  C^\infty(\mathbb T^d)$, by the following.  
For any $(k,l),(s,t) \in \ZZ^2\times \ZZ^2 $, denote
$$
Lp((k,l), (s,t)):= \partial_ {k,l} p(s,t)- \partial_ {s,t} p(k,l) .
$$ 
%In particular,
%$
%Lp((k,l), (1,0))= 
%f\circ (a^kb^l)-f -(p(k,l) \circ a - p(k,l)) .
%$ 

%\item We will keep the special name for the following function: 
%$$
%\phi:=Lp((1,0), (0,1))=f(a)-f -(g(b)-g).
%$$

\item  Let $m$ be such that $\bA m\neq m$ (hence, $\hA m\neq 0$). Define 
$$
\begin{aligned}
\cM(A)=\{ m\in \Z^d \mid \langle m, \hA m\rangle >  0\}, \\ 
\cN(A)=\{ m\in \Z^d \mid \langle m, \hA m\rangle< 0\}.
\end{aligned}
$$

\item Suppose that $A$ is step-2, and $m$ is such that  $\bA m\neq m$. Then we have $\bA^k m=m+k\hA m$. 
We say that  $\bm$ {\it is the lowest point on the $\bA$-orbit of $m$}  if 
$$
| \bar m | \leq  |\bm+k \hA \bm | \quad \text{for all }k\in \Z.
$$
Then we have a "switch": $\bm \in \cM (A)$ but $\bA^{-1} \bm \in \cN (A)$, or vise versa. 
Note that $\bm$ is the only point on the corresponding $\bA$-orbit in which the "switch"
between $\cN(A)$ and $\cM(A)$  happens.

We write $m= \bm$ to say that $m$ is the lowest point on its own $\bA$-orbit, and $m\neq \bm$ otherwise.

\item Let
$$
\la_m^{(-1)}:=\la_m:= e(m,\a), 
\quad 
\mu_m^{(-1)}:= \mu_m:= e(m,\be),
$$ 
$$
\begin{aligned}
&\la_m^{(k)}= 
\la_{\bar A m}\la_{\bar A^2 m} \dots \la_{\bar A^{k} m},
\quad k=1,2,\dots, \quad 
\la_m^{(0)}=1, \\
&\la_m^{(k)}=(\la_{m}\la_{\bar A^{-1} m} \dots \la_{\bar
 A^{k+1} m})^{-1}, \quad k=-2,-3,\dots , \\ 
&\mu_m^{(k)}= 
\mu_{\bar B m}\mu_{\bar B^2 m} \dots \mu_{\bar B^{k} m}, \quad k=1,2,\dots, \quad 
\mu_m^{(0)}=1, \\
&\mu_m^{(k)}=(\mu_{m}\mu_{\bar B^{-1} m} \dots 
\mu_{\bar  B^{k+1} m})^{-1}, \quad k=-2,-3,\dots.
\end{aligned}
$$ 

\item For $A$ of step-2 and $m\in \Z$, consider the following partial sums over the dual orbit of $A$: 
$$
\begin{aligned}
&\Si_m^{+ , A} (f):=
\sum_{k=0}^{\infty} f_{\bar A^{k} m}\la^{(k)}_{ m} ,\quad \Si_m^{- , A} (f):=
\sum_{k=-\infty}^{-1} f_{\bar A^{k} m}\la^{(k)}_{ m}, \\
& \Si_m^A (f):= \Si_m^{+ , A} (f)+\Si_m^{- , A} (f).
\end{aligned}
$$
The last two-sided sum defines the so-called  "obstruction operator".
\item Let $U\subset \Z^d$ be a set  that is {\it invariant under the action of} $\(\bA,\bB\)$, 
i.e., for any $m\in U$ we have $\bA^s\bB^t m\in U$. Consider a set of real numbers, indexed by $U$: $\xi=(\xi_m)=\{\xi_m\mid m\in U\}$.
With a little abuse of notation, we let the operator $\partial_{s,t}$ act on $\xi$.
Namely,  
$$
(\partial_{s,t} \xi)_m =   \xi_{\bA^s\bB^tm} e(\bA^s\bB^tm,\a_{s,t})-h_m.
$$
Since the set of indices is invariant under the action, this expression is well-defined.

\noindent Comment: This notation is needed because we will define the conjugating functions $h$ by Fourier coefficients in different ways for different (invariant) sets of indices: $\cC_1$, $\cC_2$ or $\cC_3$ (see below), and will need to solve equations in terms of Fourier coefficients before we have defined $h$ as a function. 

% 
%Notice that for a smooth function $H$ one can verify, using only the commutativity relation $ab=ba$, that $\partial_{s,t} \partial_{k,l} H =\partial_{k,l} \partial_{s,t} H $. 
%We see that, under the commutativity condition, the Fourier coefficients of a smooth function $H$ satisfy for each $m$: 
%$\partial_{s,t} (\partial_{k,l} H)_m=\partial_{k,l} (\partial_{s,t} H)_m$. 
%Therefore, for our sequence $\{ \xi_m\}$ and the notation above, we also formally have: 
%$$\partial_{s,t} (\partial_{k,l}  \xi)_m=\partial_{k,l} (\partial_{s,t}  \xi)_m;
%$$ 
%(this relation can be also verified directly).

\item 
The following splitting of $\Z^d\setminus \{0\}$ will be used in our analysis.
$$
\Z^d\setminus \{0\}=\cC_{1} \cup \cC_{2} \cup \cC_{3} ,
$$
where the sets $\cC_j$ are defined as follows.

  \begin{itemize}

\item[$\cC_{1} $.]   {\bf (Degenerate case).} $\cC_{1} $ is the set of $m$ for which  $\bA m =\bB m =m$. 

\item[$\cC_{2} $.]   {\bf (Resonant non-degenerate case).}   For $(k,l)\in \Z^2\setminus \{0\}$ we say that 
$m\in \cC_{2} (k,l)$ if the following holds:

- $m\notin \cC_{1} $; 

%- $ \hB^2m= 0$, %(hence we can assume  $\hA \hB m = 0$ by Lemma \ref{lemma.AB2}), 
 
- $\bA^k\bB^lm = m$.  

We define $\cC_{2} =\bigcup_{(k,l)\in \Z^2\setminus \{0\}} \cC_{2}(k,l)$.

\medskip

%\noindent Note that if one of $\hA m$ or $\hB m$ equals zero (but not both), then $m\in \cC_{2} $.

\medskip

\item[$\cC_{3} $.]  {\bf (Non-resonant case).} $\cC_{3} = \Z^2\setminus ( \{0\} \cup \cC_{1} \cup \cC_{2} )$.

  \end{itemize}

\end{itemize}

\subsection{Estimates of the sums along the dual orbits of a step-2 matrix}

In the constructions that follow we will work with vectors $m$ lying in certain subsets of $\Z^d$ that are invariant under the action of $\bA$ and $\bB$. 
Recall the notations $\cM(A)$ and $\cN(A)$ from Sec.~\ref{s_notations}.
\begin{PProp}\label{c_est_easy} Let $A$ be step-2 and suppose that $\bA m \neq m$.  
\begin{enumerate}
\item Consider a set $U\subset \ZZ^d $ that  is invariant under the action of $\bA$. 
Let $(\xi_m)=\{ \xi_m \in \RR \mid m\in U \}$,  and suppose that  for all $m\in U$ we have 
 $|\xi_m |\leq  c_0 | m |^{-r} $. Then there exists $c>0$ such that 

- If  $m\in \cM(A)\cap U$, then $\sum_{k=0}^{\infty} | \xi_{\bA^k m} | \leq  c | m |^{-r+1}$,

- If  $m\in \cN(A)\cap U$, then $\sum_{k=-\infty}^{-1}  | \xi_{\bA^k m} | \leq 
c | m |^{-r+1}$,

- If, moreover, $m=\bm$ (i.e.,  $m$ is the lowest point on its $\bA$-orbit), then 
$$
\sum_{k=-\infty}^{\infty} | \xi_{\bA^k m} | \leq  c | m |^{-r+1} .  
$$

\item Consider the sets $U$ and $(\xi_m)$ as in (1) and $\la_m^{(1)}$ as in Sec.~\ref{s_notations}. 
Suppose that for each $m\in U$, the set of numbers $(\zeta_m)$ satisfies
$$
{\zeta_{\bA m}} \la_m^{(1)}-\zeta_m = \xi_m.
$$
Then there exists $c>0$ such that for each $m\in U$ we have:
$$
|\zeta_m | \leq  c | m |^{-r+1}.
$$

\item There exists $c>0$ such that for any function $\xi \in C^{r}$, we have:

- If  $m\in \cM(A)$, then $| \Sigma^{+,A}_m (\xi )|\leq c \|\xi \|_{r} | m |^{-r+1},$

- If  $m\in \cN(A)$, then 
 $|\Sigma^{-,A}_m (\xi )|\leq c\|\xi  \|_{r}| m |^{-r+1} .$
 
- If, moreover, $m=\bm$ is the lowest point on its $\bA$-orbit, then 
$$
|\Sigma^{A}_m (\xi )|\leq c  | m |^{-r+1}  .  
$$
%\|\xi \|_{r}

\end{enumerate}

\end{PProp}
\begin{proof}
Item (1) follows directly from the   estimate below, that will be used several times in the paper.
\begin{sublemma}\label{l_est_easy} 
Let $A$ be step-2 and  $\bA m \neq m$.  Then there exists  $c=c(r,A)>0$ such that  we have:

%\begin{itemize}
- If  $m\in \cM(A)$, then $
\sum_{k=0}^{\infty}  |\bar A^k m |^{-r}  \leq c | m |^{-r+1} 
$;

- If  $m\in \cN(A)$, then $
\sum_{k=-\infty}^{-1}  |\bar A^k m |^{-r}  \leq c | m |^{-r+1} 
$.

- If, moreover, $m=\bm$ (i.e.,  $m$ is the lowest point on its $\bA$-orbit), then 
$$
\sum_{k=-\infty}^{\infty}  |\bar A^k m |^{-r}\leq c  | m |^{-r+1}  .  
$$
%\|\xi \|_{r}
%\end{itemize}
\end{sublemma}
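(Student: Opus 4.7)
The starting observation is that the step-$2$ hypothesis $\hA^2=0$ turns the dual orbit into an arithmetic progression: $\bA^k m = m + k\,\hA m$ for every $k\in\ZZ$. Consequently the square of the norm is the scalar quadratic
\[
|\bA^k m|^2 = |m|^2 + 2k\,\langle m,\hA m\rangle + k^2\,|\hA m|^2,
\]
and everything reduces to summing $|\bA^k m|^{-r}$ against a pointwise lower bound of the shape $\max(|m|,c|k|)$. Since $A\in\SL(d,\ZZ)$ forces $\bA=(A^{tr})^{-1}$ and hence $\hA$ to have integer entries, the hypothesis $\bA m\neq m$ (i.e.\ $\hA m\neq 0$) gives the crucial integrality bound $|\hA m|\geq 1$.

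\textbf{First two bullets.} If $m\in\cM(A)$, i.e.\ $\langle m,\hA m\rangle>0$, then for $k\geq 0$ both the linear and the quadratic terms in the identity above are non-negative, so
\[
|\bA^k m|\geq \sqrt{|m|^2 + k^2|\hA m|^2}\geq \max\bigl(|m|,\,k|\hA m|\bigr)\geq \max(|m|,k).
\]
Splitting the sum at $k=\lfloor|m|\rfloor$ yields
\[
\sum_{k=0}^{\infty}|\bA^k m|^{-r} \leq (|m|+1)|m|^{-r} + \sum_{k>|m|} k^{-r} \leq c\,|m|^{-r+1},
\]
where the tail converges provided $r>1$ (the regime used in the paper). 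The case $m\in\cN(A)$, summed over $k\leq -1$, is identical after the substitution $k\mapsto -k$.

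\textbf{The lowest-point bullet.} Completing the square in the identity gives
\[
|\bA^k m|^2 = |\hA m|^2(k-k^*)^2 + |m|^2 - \frac{\langle m,\hA m\rangle^2}{|\hA m|^2}, \qquad k^*:=-\frac{\langle m,\hA m\rangle}{|\hA m|^2}.
\]
The assumption $m=\bm$ implies in particular $|\bA^{\pm 1}m|\geq |m|$, which, after expansion, is equivalent to $|k^*|\leq 1/2$. Combining $|\bA^k m|\geq |m|$ (the lowest-point property itself) with $|\bA^k m|\geq |\hA m|\,|k-k^*|\geq |k|-\tfrac12$ for $|k|\geq 1$, one obtains $|\bA^k m|\geq c\max(|m|,|k|)$ uniformly in $k\in\ZZ$, and the same split-at-$|m|$ argument as above produces $\sum_{k\in\ZZ}|\bA^k m|^{-r}\leq c|m|^{-r+1}$.

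\textbf{Difficulties.} There is essentially no conceptual obstacle; the proof is a scalar computation organised around the quadratic $k\mapsto |\bA^k m|^2$. The two small points that must be verified with care are: $(i)$ the integrality of $\hA$, which gives $|\hA m|\geq 1$ and prevents the quadratic from being degenerate in the $k$ variable, and $(ii)$ the algebraic translation of the lowest-point property $m=\bm$ into the analytic estimate $|k^*|\leq 1/2$, which is precisely what lets the two-sided sum be controlled uniformly in the same way as the one-sided ones.
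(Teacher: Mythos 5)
Your proof is correct, and while it rests on the same starting point as the paper (the step-2 identity $\bA^k m=m+k\hA m$, so the orbit is an arithmetic progression), the execution is genuinely different and in one respect more robust. The paper bounds $|m+k\hA m|$ from below by $|m|+(k-1)|p|$, where $p$ is the projection of $\hA m$ onto $m+\hA m$, and then compares with the integral $\sum_{l\ge1}(x+ly)^{-r}\le \tfrac{c_1}{y(r-1)}(x+y)^{-r+1}$; taken literally this produces a constant proportional to $1/|p|$, and $|p|$ is \emph{not} bounded below uniformly in $m$ (e.g.\ for $\hA=E_{13}$ and $m=(0,M,1)$ one has $\langle m,\hA m\rangle=0$, $|\hA m|=1$, so $|p|\sim 1/M$), so to get $c=c(r,A)$ one must in any case combine the trivial bound $|\bA^km|\ge|m|$ for $|k|\lesssim|m|$ with linear growth for $|k|\gtrsim|m|$ — which is exactly what your argument does explicitly, via $|\bA^k m|\ge\max(|m|,|k|\,|\hA m|)\ge\max(|m|,|k|)$ (using integrality, $|\hA m|\ge1$) and the split of the sum at $|k|\sim|m|$. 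Your treatment of the two-sided case is also cleaner: completing the square and translating the lowest-point property into $|k^*|\le 1/2$ replaces the paper's geometric "triangle" observation (whose literal statement, that both $\langle m+\hA m,\hA m\rangle>0$ and $\langle m-\hA m,\hA m\rangle>0$, cannot hold as written and must be read as the two one-sided conditions) and gives the uniform bound $|\bA^km|\ge c\max(|m|,|k|)$ in one stroke. The only caveats, which you already flag, are the implicit assumption $r>1$ (equally present in the paper's integral comparison) and that the paper's case $\cM(A)$ in this proof means $\langle m,\hA m\rangle\ge0$ rather than $>0$; your argument uses only nonnegativity, so it covers this verbatim.
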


\begin{proof} Since $\bA$ is step-2 and  $\bA m \neq m$,  for any $m\in \Z^d$ and 
$k\in \Z$, we have 
$\bA^k m=m+k \hA m$. %; moreover, noting that $\hA m$ is an integer gives $|\hA m|\geq 1$.
%Since $\bA$ is invertible,  there is $c_0$ such that $|m+\hA m|=|\bA m|\geq c_0 |m|$ for any $m$.

Consider the case $m\in \cM(A)$, i.e., $\langle m, \hA m \rangle\geq 0$. Then we have: $|m+ \hA m|>|m|$ and $\langle m+ \hA m, \hA m \rangle> 0$ (notice the strict inequality). Let $p$ denote the projection of the vector  $\hA m$ onto the vector $m+ \hA m$. Clearly, $p$ is non-zero and has the same direction as $m+ \hA m$.
Therefore, $|m +k\hA m|=|(m +\hA m)+ (k-1)\hA m | \geq |m+ \hA m| + (k-1)|p|>|m|+ (k-1)|p|$. Hence, 
for an apprpriate constant $c=c(r,A)$ we have:
$$
%|\Sigma^{A}_m (f)|
 \sum_{k=0}^{\infty} | \bar A^k m |^{-r} \leq 
\sum_{k=0}^{\infty} | (m +k\hA m)|^{-r} \leq
c | m |^{-r+1} .
$$
To justify the last inequality, note that for any $x, y>0$,  and $r>1$ we have:
$$
\sum_{l=1}^{\infty} (x+ly)^{-r}\leq   \frac{c_1}{y(r-1)}\,(x+y)^{-r+1},
$$
which can be proved by comparison with the integral $ y^{-r} \int_{t=1}^{\infty} (\frac{x}{y}+t )^{-r} dt $. 

The case of $m\in \cN(A)$, i.e., $\langle m, \hA m \rangle< 0$, is similar. Indeed, the projection of the vector $(-\hA m)$ onto $m$ has the same direction as $m$, and the above calculation holds.

Now let $m=\bm$ be the lowest point on its $\bA$-orbit.  Then, in particular, $|m+ \hA m|> |m|$ and $|m- \hA m|> |m|$. This implies, for example by studying the triangle with two sides formed by vectors  $m+ \hA m$ and $m- \hA m$,  
that we have both $\langle m+ \hA m, \hA m \rangle >  0$ and $\langle m-  \hA m, \hA m \rangle >  0$. Then we can use the two estimates above to conclude that
$$
\sum_{k=-\infty}^{\infty} | \bA^k m  |^{-r} = \sum_{k=-\infty}^{-1} | \bA^k m |^{-r}  +\sum_{k=0}^{\infty} | \bA^k m |^{-r} \leq  c' | m |^{-r+1} .  
$$
\end{proof}

 To prove (2), for a fixed $m$ write the given equation at the points $\bA^k m$ either  for $k\geq 0$ or for $k\leq -1$, multiply by appropriate constants and add up, obtaining a telescopic sum on the left-hand side. Then we get that 
$|\zeta_m|\leq \sum_{k=0}^{\infty} | \xi_{\bA^k m} |^{-r} $ or $|\zeta_m|\leq  \sum_{k=-\infty}^{-1}  | \xi_{\bA^k m} |^{-r}$. The estimate follows from (1).

To prove prove (3), recall that 
the Fourier coefficients of any $\xi \in C^{r}$ satisfy for all $m\in \Z^d$:
$$
| {\xi_m}| \leq  \| \xi \|_{r}  | m |^{-r}.
$$
Then for each $k\in\Z $ we have: 
$| {\xi_{\bar A^{k}m}} |\leq     \| \xi \|_{r}  | {\bar A^{k} m} |^{-r}  $. The result reduces to that of item (1).
\end{proof}

%%%%%%%%%%%%%%%%%%%%%%%%%%%%%%%
%%%%%%%%%%%%%%%%%%.  Implications of non-stiffness
%%%%%%%%%%%%%%%%%%%%%%%%%%%%%%% 
%%%%%%%%%%%%%%%%%%%%%%%%%%%%%
%%%%%%%%%%%%%%%%%%%%%%%%%%%%%
%%%%%%%%%%%%%%%%%%%%%%%%%%%%%
%%%%%%%%%%%%%%%%%%%%%%%%%%%%%
%%%%%%%%%%%%%%%%%%%%%%%%%%%%%
%%%%%%%%%%%%%%%%%%%%%%%%%%%%%

\subsection{Estimates of the double sums. The parabolic higher rank trick}\label{double-buble}
The double sums will be used  for the case $m \in \cC_{3}$ (non-resonant case).
For each $m$, one of the double sums  is easier to estimate than the other.
The corresponding sign of $l$ will be called the "good sign" of $l$ for the given $m$.

\begin{PProp}[Estimate of the double sums] 
\label{lemma.double.sums.case3} 
Assume that $\(a,b\)$ is unlocked parabolic affine step-$S$ action, where $a$ is step-2. Suppose  that      
$m \in \cC_{3}$ is the lowest point on its $\bA$-orbit. 

For $r$ sufficiently large
there exists a constant $c=c(r, A,B)>0$ such that for $\eta=0.99/S$, at least one of the following holds:
$$
\sum_{k\in \Z} \sum_{l\geq 0} |\bA^k \bB^l m |^{-r} \leq c |m|^{-\eta r +  8},
\quad
\text{ } 
\quad
\sum_{k\in \Z} \sum_{l< 0} |\bA^k \bB^l m |^{-r} \leq c |m |^{-\eta r +  8}.
$$
\end{PProp} 

The proof of Proposition \ref{lemma.double.sums.case3} is crucial for our analysis, it is rather technical and takes up the rest of this section.

\subsubsection{Implications of being unlocked}\label{sec_C1_prelim}
Recall the notion of being unlocked from Definition \ref{def_stiff}.
Let us make two observations.
 
\begin{lemma} \label{lemma.hA2} 
Suppose that the action $\(a,b\)$ is  unlocked.   If $\hB^{2} m \neq 0$, then  
$\hA m \neq 0$. 
\end{lemma}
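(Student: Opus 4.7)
My approach is to prove the contrapositive: assume $\hA m = 0$; I will show $\hB^2 m = 0$ by deriving a contradiction with unlockedness in the case $\hB^2 m \neq 0$. So suppose $\hA m = 0$ and the $\bB$-step $s = s(m) \geq 3$. The plan is to exhibit, for every $(\alpha,\beta) \in \cT(A,B)$, a genuinely parabolic rank-one factor of $\langle a, b\rangle$, which proves $\langle A, B\rangle$ is locked and contradicts the hypothesis.

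First I would take $W \subseteq \Z^d$ to be the primitive hull of the $\Z$-span of the Jordan chain $m, \hB m, \hB^2 m, \ldots, \hB^{s-1} m$; these $s$ vectors are linearly independent by the definition of the step, so $W$ is a primitive rank-$s$ sublattice of $\Z^d$. Since $[A, B] = 0$ forces $[\hA, \hB] = 0$, the hypothesis $\hA m = 0$ yields $\bA \hB^k m = \hB^k \bA m = \hB^k m$ for every $k$, so $\bA$ fixes the generators of $W$ and hence, by unipotency, all of $W$ pointwise; $\bB$-invariance of $W$ is immediate from the Jordan-chain structure. Dually, $W$ therefore corresponds to a $\Z^2$-equivariant quotient projection $\pi\colon \T^d \to \T^s$ of the action $\langle a, b\rangle$. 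Denote the descended generators by $a_\pi, b_\pi$ and their linear parts by $A_\pi, B_\pi$. Because $\bA$ acts as identity on $W$, the linear part $A_\pi$ is the identity, so $a_\pi$ is a pure translation by some vector $\alpha_\pi \in \R^s$. In contrast, $B_\pi$ is a single unipotent Jordan block of size $s$, so $b_\pi$ is a genuinely parabolic step-$s$ affine map on $\T^s$.

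Next I would project the commutation identity $(A - I)\beta = (B - I)\alpha$ (the defining relation of $\cT(A,B)$) through $\pi$. The left-hand side vanishes since $A_\pi = I$, leaving $(B_\pi - I)\alpha_\pi = 0$ in $\R^s$. Because $B_\pi$ is a single Jordan block of size $s$, $\ker(B_\pi - I)$ is one-dimensional and rational, hence spanned by a primitive integer vector $v_* \in \Z^s$. The corresponding one-dimensional sub-torus $T_* \subseteq \T^s$ is preserved by $a_\pi$ (because $\alpha_\pi \in \R v_*$) and by $b_\pi$ (because $B_\pi v_* = v_*$ implies $B_\pi$ fixes $T_*$ pointwise). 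Therefore the $\Z^2$-action descends further to the quotient torus $\T^{s-1} := \T^s / T_*$.

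Finally I would examine the descended action on $\T^{s-1}$. The translation part $\alpha_\pi$ of $a_\pi$ lies entirely in $T_*$, so it becomes trivial in the quotient and $a_\pi$ descends to the identity. The linear part $B_\pi$ quotients to a single Jordan block of size $s - 1$; crucially $s - 1 \geq 2$ thanks to the hypothesis $\hB^2 m \neq 0$, so the descended linear part remains non-trivial, making the descended $b_\pi$ genuinely parabolic on $\T^{s-1}$. Hence on $\T^{s-1}$ the $\Z^2$-action factors through $\Z$ via $b_\pi$ and produces a genuinely parabolic rank-one factor. Since this construction used only the linear data of $\langle A, B\rangle$ and the commutation relation, never the specific translations, such a factor exists for every $(\alpha, \beta) \in \cT(A, B)$, which is precisely the assertion that $\langle A, B\rangle$ is locked — contradicting the unlocked hypothesis. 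The point that requires most care is the linear algebra on $\T^s$: identifying $\ker(B_\pi - I)$ as a rational one-dimensional line, selecting $v_*$ primitively so $T_*$ is a genuine sub-torus, and checking that quotienting by $T_*$ shortens the Jordan block by exactly one. This is where the strict inequality $s \geq 3$ is essential, since in the borderline case $s = 2$ the quotient linear part would become trivial and the resulting rank-one factor would only be a translation, not a genuine parabolic.
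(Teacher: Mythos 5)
Your proof is correct and rests on the same mechanism as the paper's: because $\hA$ commutes with $\hB$ and kills $m$, the $\hB$-chain of $m$ spans an invariant factor on which $a$ acts trivially (via the relation $(A-\Id)\beta=(B-\Id)\a$), while $b$ is genuinely parabolic there since $\hB^{2}m\neq 0$, so every affine action with this linear part has a genuinely parabolic rank-one factor, contradicting unlockedness. The paper packages this as a one-line character computation with $g(x)=e(m,\tB x)$ — in effect starting the chain at $\hB m$, so $a$ is already the identity on the factor and your extra quotient by the circle $T_*$ (needed only because you included $m$ itself in the sublattice) is avoided — but this is a presentational difference, not a different argument.
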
 

\begin{proof} Let $\hB^{2} m \neq 0$, and suppose by contradiction that 
$\hA m = 0$. 
Consider the function $g(x)=e(m ,\tB x)$. 
Observe that 
$$
g(ax)=g(Ax+\alpha) = e(m , \tB( x +\tA x+\alpha))= e(m ,\tB x) e(m, \tB \tA x)e(m ,\tB \a).
$$ 
Since 
$m \tA =(\hA m)^t=0$, and since (by commutativity $ab=ba$) we have $\tB \a = \tA \beta=0$, we conclude that $g(ax)=g(x)$. 

On the other hand, 
$$
g(bx)=e(m ,\tB x)e(m , \tB^2 x)e(m ,\tB \beta) = g(x)e(m ,\tB^2 x)e(m, \tB \beta).
$$ 
Since $m \tB^2=\hB^2 m \neq 0$,  we conclude that the action $(a,b)$ has a rank-one factor that is not a translation. 
\end{proof}

The second observation is
\begin{lemma} \label{lemma.AB2}
Suppose that the action $\(a,b\)$ is unlocked and $\hA^2 m = 0$. 
If $\hB^{s} m=0$ for some $s\geq 2$, then $\hA \hB^{s-1} m = 0$.
\end{lemma}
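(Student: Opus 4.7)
\textit{Plan.} The approach will be by contradiction, in the style of Lemma \ref{lemma.hA2}, but producing an identity rank-one factor rather than a genuinely parabolic one. Assume $\hA \hB^{s-1} m \neq 0$ (otherwise $\hB^{s-1}m = 0$ and the conclusion is trivial). I would first isolate the integer vector $w := (\tB^{tr})^{s-1}\tA^{tr} m$, and verify that $\hA\hB^{s-1}m = (-1)^s w$, so $w \neq 0$. This identity reduces, via the commutativity $\tA^{tr}\tB^{tr} = \tB^{tr}\tA^{tr}$, to the fact that the Neumann-type expansions $\hA = -\tA^{tr}(I+\tA^{tr})^{-1}$ and $\hB = -\tB^{tr}(I+\tB^{tr})^{-1}$ truncate on the relevant vectors, thanks to $\hA^2 m = 0 \Leftrightarrow (\tA^{tr})^2 m = 0$ and $\hB^s m = 0 \Leftrightarrow (\tB^{tr})^s m = 0$.

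I would then set $g(x) := e(w,x)$. Linear invariance of $w$ under $A^{tr}$ and $B^{tr}$ is immediate: $\tA^{tr} w = (\tB^{tr})^{s-1}(\tA^{tr})^2 m = 0$ and $\tB^{tr} w = \tA^{tr}(\tB^{tr})^s m = 0$, yielding $g\circ a = e(w,\alpha)\,g$ and $g\circ b = e(w,\beta)\,g$. The key step is then to check $(w,\alpha) = (w,\beta) = 0$, for which the affine commutation relation $\tB\alpha = \tA\beta$ is decisive; migrating $\tA$ through powers of $\tB$ one finds
$$
\tA\tB^{s-1}\alpha = \tA\tB^{s-2}(\tB\alpha) = \tA\tB^{s-2}(\tA\beta) = \tA^{2}\tB^{s-2}\beta,
\qquad
\tA\tB^{s-1}\beta = \tB^{s-1}(\tA\beta) = \tB^{s}\alpha,
$$
so that $(w,\alpha) = ((\tA^{tr})^{2}(\tB^{tr})^{s-2}m,\beta) = 0$ (using $\hA^2 m = 0$) and $(w,\beta) = ((\tB^{tr})^{s}m,\alpha) = 0$ (using $\hB^s m = 0$).

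Consequently $g\circ a = g$ and $g\circ b = g$, and the projection $x\mapsto (w,x) \bmod 1$ realises a non-trivial circle factor of $(\T^d,\langle a,b\rangle)$ on which both generators act as the identity. This is an identity rank-one factor, which contradicts the unlocked hypothesis on $(a,b)$. The main delicacy — and the step worth executing with care — is the reduction $\hA\hB^{s-1}m = (-1)^s(\tB^{tr})^{s-1}\tA^{tr}m$: one must verify that the higher-order terms in the Neumann expansions of $\hA$ and $\hB$ vanish on the specific vectors in play, a point where the hypotheses $\hA^{2}m = 0$ and $\hB^{s}m = 0$ are both used essentially.
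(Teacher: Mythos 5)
Your proposal is correct and follows essentially the same route as the paper: by contradiction you build the character $e(m,\tA\tB^{s-1}x)$ (your $e(w,x)$ with $w=(\tB^{tr})^{s-1}\tA^{tr}m$), verify its invariance under both $a$ and $b$ via the commutation relation $\tB\alpha=\tA\beta$ together with $(\tA^{tr})^2m=0$ and $(\tB^{tr})^sm=0$, and conclude that the action has an identity rank-one factor, contradicting unlockedness. The only difference is that you spell out the dual bookkeeping ($\hA=-\tA^{tr}(I+\tA^{tr})^{-1}$, etc.) relating $\hA\hB^{s-1}m\neq 0$ to $w\neq 0$, a point the paper leaves implicit; your reduction goes through since the invertible factors commute with $\tA^{tr}(\tB^{tr})^{s-1}$.
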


\begin{proof} Let $\hB^{s} m=0$,  and suppose by contradiction that $\hA \hB^{s-1} m \neq 0$. Define $f(x)=e(m ,\tA \tB^{s-1} x)$. One easily verifies that 
$f(ax)=f(x)$ and $f(bx)=f(x)$ (using relations $\tA \tB^{s-1}\a = \tA^2 \tB^{s-2}\beta$, $\tA \tB^{s-1}\beta =  \tB^{s}\a$). Hence the action $\(A+\a,B+\beta\)$ has a rank-one factor equal to identity, contradicting the assumption.
\end{proof}

%Lemma \ref{lemma.hA2}  
%shows that if  $\hB^{s} m\neq 0$ for some $s=s(m)\geq 3$, then
%we can assume that $\hA m \neq 0$.  
%Note that  the condition $\hA m \neq 0$ is invariant under the action by $\bA$. 

%Lemma \ref{lemma.AB2} shows that if   for some $s=s(m)$ we have $\hB^{s} m \neq 0$ and $\hB^{s+1} m = 0$, then
%we can assume that $\hA \hB^{s}m = 0$. 

%%%%%%%%%%%%%%%%%%%%%%%%%%%%%%%
%%%%%%%%%%%%%%%%%%.  Lemma l_est_easy   and Corollary
%%%%%%%%%%%%%%%%%%%%%%%%%%%%%%% 
%%%%%%%%%%%%%%%%%%%%%%%%%%%%%
%%%%%%%%%%%%%%%%%%%%%%%%%%%%%
%%%%%%%%%%%%%%%%%%%%%%%%%%%%%
%%%%%%%%%%%%%%%%%%%%%%%%%%%%%
%%%%%%%%%%%%%%%%%%%%%%%%%%%%%
%%%%%%%%%%%%%%%%%%%%%%%%%%%%%

%%%%%%%%%%%%%%%%%%%%%%%%%%%%%%%
%%%%%%%%%%%%%%%%%%.  CASE C2
%%%%%%%%%%%%%%%%%%%%%%%%%%%%%%% 
%%%%%%%%%%%%%%%%%%%%%%%%%%%%%
%%%%%%%%%%%%%%%%%%%%%%%%%%%%%
%%%%%%%%%%%%%%%%%%%%%%%%%%%%%
%%%%%%%%%%%%%%%%%%%%%%%%%%%%%
%%%%%%%%%%%%%%%%%%%%%%%%%%%%%
%%%%%%%%%%%%%%%%%%%%%%%%%%%%%

%%%%%%%%%%%%%%%%%%%%%%%%%%%%%%%

%%%%%%%%%%%%%%%%%%%%%%%%%%%%%%%
%%%%%%%%%%%%%%%%%%%%%%%%%%%%%%%
%%%%%%%%%%%%%%%%%%%%%%%%%%%%%%%
%%%%%%%%%%%%%%%%%%%%%%%%%%%%%%%
%%%%%%%%%%%%%%%%%%%%%%%%%%%%%%%
%%%%%%%%%%%%%%%%%%%%%%%%%%%%%%%
%%%%%%%%%%%%%%%%%%%%%%%%%%%%%%%
%%%%%%%%%%%%%%%%%%%%%%%%%%%%%%%

\subsubsection{Polynomial expansion of $\bA^k\bB^lm $}\label{sec_C3_prelim}

Recall the notations from \S \ref{s_notations}.

\begin{lemma}\label{l_form_of_AkBl}
Assume that $\(a,b\)$ is unlocked parabolic affine action, and $a$ is step-2.  For any $m\in\cC_3 $, $k,l\in\Z$, $s=s(m)$, there exists
$t=t(m,A,B)$, $1\leq t\leq s-1$, such that
 \begin{equation} \label{AkBl_s>2}
 \bA^k\bB^lm = m+k\hA m + \sum_{j=1}^{t-1} c_j  l^j\hB^j (m+k \hA m)+ \sum_{j=t}^{s-1} c_{j}   l^{j}  \hB^{j} m ,
 \end{equation}
where $c_1=1$, and all $c_j$ are positive constants that can be computed explicitly.
%\end{itemize}
\end{lemma}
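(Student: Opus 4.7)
My plan is to compute $\bA^k\bB^l m$ directly from the step structures of $A$ and $B$, and then use Lemma~\ref{lemma.AB2} to control the vanishing of $\hA\hB^j m$ for $j$ large.

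Since $A$ is step-$2$ we have $\hA^2=0$, so $\bA^k=\Id+k\hA$ for all $k\in\Z$. Since $\hB^s m=0$ with $s=s(m)$, the binomial expansion gives, for every $l\in\Z$,
$$
\bB^l m \;=\; \sum_{j=0}^{s-1}\binom{l}{j}\,\hB^j m ,
$$
a polynomial in $l$ of degree $s-1$ whose $j$-th coefficient $\binom{l}{j}$ has positive leading term $l^j/j!$ (with $\binom{l}{1}=l$). Applying $\bA^k$ and using that $\bA\bB=\bB\bA$ implies $\hA\hB^j=\hB^j\hA$, I obtain
$$
\bA^k\bB^l m \;=\; \sum_{j=0}^{s-1}\binom{l}{j}\hB^j m \;+\; k\sum_{j=0}^{s-1}\binom{l}{j}\hB^j \hA m .
$$

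The crucial step is to define $t$ as the smallest nonnegative integer with $\hB^t\hA m=0$, and to show $t\in\{1,\dots,s-1\}$. The lower bound $t\ge 1$ uses that $m\in\cC_3$: if $\hA m=0$ then $\bA m=m$, so either $m\in\cC_1$ (when also $\bB m=m$) or $m$ is resonant with pair $(1,0)$ and thus lies in $\cC_2$, contradicting $m\in\cC_3$. The upper bound $t\le s-1$ is precisely where the hypotheses enter: since $A$ is step-$2$ we have $\hA^2 m=0$ automatically, and $\hB^s m=0$ by definition of $s$; applying Lemma~\ref{lemma.AB2}, which is where the unlocked hypothesis is used, then gives $\hA\hB^{s-1}m=\hB^{s-1}\hA m=0$. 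This is the only substantive obstacle in the proof.

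For $j\ge t$ we have $\hB^j\hA m=\hB^{j-t}(\hB^t\hA m)=0$, so the double sum above truncates at $j=t-1$. Separating the $j=0$ contribution and using $\hB^j(m+k\hA m)=\hB^j m+k\hA\hB^j m$ to regroup the terms with $1\le j\le t-1$, I obtain
$$
\bA^k\bB^l m \;=\; m+k\hA m + \sum_{j=1}^{t-1}\binom{l}{j}\hB^j(m+k\hA m) + \sum_{j=t}^{s-1}\binom{l}{j}\hB^j m ,
$$
which is the claimed expression, with the polynomials $\binom{l}{j}$ playing the role of the monomials ``$c_j l^j$'' and $c_j=1/j!$ providing the positive leading coefficients (in particular $c_1=1$). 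Everything apart from the bound $t\le s-1$ is a formal bookkeeping calculation; no further small-divisor or dynamical input is required.
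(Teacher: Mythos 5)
Your proof is correct and follows essentially the same route as the paper: expand $\bA^k=\Id+k\hA$ and $\bB^l m=\sum_{j}\binom{l}{j}\hB^j m$ by nilpotency, use commutativity of $\hA$ and $\hB$, take $t$ minimal with $\hB^t\hA m=0$, and get $t\ge 1$ from $m\in\cC_3$ and $t\le s-1$ from Lemma \ref{lemma.AB2}, which is exactly where the unlocked hypothesis enters in the paper as well. The only differences are cosmetic: the paper obtains $\hA m\neq 0$ via Lemma \ref{lemma.hA2} when $s\ge 3$ instead of your direct $\cC_3$ argument, and your observation that the exact coefficients are the binomial polynomials $\binom{l}{j}$ rather than literal monomials $c_j l^j$ is consistent with the level of precision of the paper's own statement and proof.
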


\noindent {\it Proof. }
\noindent {\bf Case $s=s(m)=2$.}  Here we have   $\hA^2 m=\hB^2 m=0$ (since $s(m)=2$), and $\hA m\neq 0$,  $\hB m\neq 0$ (since $m\in\cC_3 $). Therefore, 
$\bA^km=m+k \hA m$ and $\bB^lm=m+l \hB m$. By Lemma \ref{lemma.AB2},  $\hB^2m=0$
implies that $\hA\hB m=0$, which gives the result.

\noindent {\bf Case $s=s(m)\geq 3$.}  Here we have $\hB^2 m\neq 0$, so, by Lemma \ref{lemma.hA2}, $\hA m\neq 0$.
Since  $\hA^2m=\hB^sm=0$, we have
$\bA^km=m+k \hA m$ and $\bB^lm= m + \sum_{j=1}^{s-1} c_j  l^j\hB^j m$. Note that, by Lemma \ref{lemma.AB2},
 $\hA \hB^{s-1} m = 0$.
Composing the two expressions above and using the commutativity gives the result.
\qed

%%%%%%%%%%%%%%%%%%%%%%%%%%%%%%%%%%%%%%%
%%%%%%%%%%%%%%%%%%%%%%%%%%%%%%%%%%%%%%%
%%%%%%%%%%%%%%%%%%%%%%%%%%Linear Drift in $l$
%%%%%%%%%%%%%%%%%%%%%%%%%%%%%%%%%%%%%%%

\bigskip

Recall that for $m\in\cC_3$ we have the following two possibilities:
\begin{itemize}
\item $s(m)= 2$,  in which case  $\hB^2m= 0$, and for all $(k,l)\in \Z^2\setminus  \{0\}$ we have $k \hA m + l\hB m \neq 0$;

\item $s(m)\geq 3$, in which case $\hB^2m\neq 0$. 
\end{itemize}
We will use different ways of controlling the double sums for the two cases above.

\medskip
\subsubsection{Proof of Proposition \ref{lemma.double.sums.case3}, case $s(m)=2$} 
In this section we fix $m\in \cC_3$, $s(m)=2$,
and study the growth properties of 
\begin{equation}\label{v_kl_s=2}
v_{k,l}:=\bA^k\bB^lm = m+k\hA m +k\hB m .
\end{equation}
The following two lemmas  prove that there exists a constant $c=c(A)>0$ such that if
$m$ is the lowest point on its $\bA$-orbit, 
then for all $k\in \ZZ$ and either for all $l\in \NN$ or for all $l\in (-\NN)$ we have:
$$
| v_{k,l} |> c |m|.
$$

\begin{lemma}\label{l_growth_1}
Let $m\in \cC_3$, $s(m)=2$.
If $|\hA m|> |m|$, then for all $k,l \in \ZZ$
we have:
$$
| v_{k,l} | > \|\hA \|^{-1} |m|.
$$
\end{lemma}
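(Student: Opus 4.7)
The plan is to exploit the fact that $\hA$ annihilates both of the ``drift'' terms in the expansion of $v_{k,l}$, so that applying $\hA$ collapses $v_{k,l}$ back to $\hA m$ irrespective of $k,l$. The bound then follows from a one-line operator-norm estimate using the hypothesis $|\hA m| > |m|$.

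More concretely, I would first correct the formula for $v_{k,l}$ according to Lemma \ref{l_form_of_AkBl} in the case $s(m)=2$: since $\hA^2 m = 0$ (as $A$ is step-2), $\hB^2 m = 0$ (as $s(m)=2$), and $\hA\hB m = 0$ (Lemma \ref{lemma.AB2} applied with $s=2$, using that the action is unlocked), Lemma \ref{l_form_of_AkBl} gives
$$
v_{k,l} = m + k\hA m + l\hB m.
$$
Apply $\hA$ to both sides. Using $\hA^2 m = 0$ and $\hA\hB m = 0$, the $k$ and $l$ terms drop out, so
$$
\hA v_{k,l} = \hA m.
$$

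From here the bound is immediate: by the definition of the operator norm,
$$
|\hA m| = |\hA v_{k,l}| \leq \|\hA\|\cdot |v_{k,l}|,
$$
so $|v_{k,l}| \geq \|\hA\|^{-1}\, |\hA m| > \|\hA\|^{-1}\, |m|$ by the assumption $|\hA m| > |m|$. This is uniform in $k,l$, which is exactly the claim.

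The main (and really only) obstacle is purely bookkeeping: making sure the cancellations $\hA^2 m = 0$ and $\hA \hB m = 0$ are genuinely available in this regime, which requires invoking the unlocked hypothesis through Lemma \ref{lemma.AB2} to get $\hA\hB m = 0$ from $\hB^2 m = 0$. Once this is in hand, no case analysis on the signs of $k$ or $l$ is needed — the estimate is unconditional, which is stronger than the dichotomy formulated in Proposition \ref{lemma.double.sums.case3} and will feed easily into the double-sum estimates in the subsequent lemma.
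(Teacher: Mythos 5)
Your proof is correct and is essentially the paper's argument: the paper also applies $\hA$ to the expansion of $v_{k,l}$, uses $\hA^2 m=\hA\hB m=0$ to collapse it to $\hA m$, and concludes with the operator-norm bound (stated there as a contradiction, which is only a cosmetic difference). Your explicit appeal to Lemma \ref{lemma.AB2} for $\hA\hB m=0$ just makes visible a step the paper leaves implicit.
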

\begin{proof} 
Assume the contrary: $| v_{k,l} | \leq \|\hA \|^{-1} |m|$.
Apply $\hA$ to equality \eqref{v_kl_s=2}. Since $\hA^2m=\hA\hB m=0$, we have: 
$$
|\hA m| =|\hA v_{k,l}| \leq \|\hA \| \,  |v_{k,l}| \leq  |m|,
$$
contradicting the assumption of the lemma.
\end{proof} 

%%%%%%%%%%%%%%%%%%%%%%%%%%%%%%
%%%%%%%%%%%%%%%%%%%%%%%%%%%%%%
\begin{lemma}\label{l_growth_2}
Let $m\in \cC_3$ be the lowest point on its $\bA$-orbit, and $s(m)=2$. 

If $|\hA m|\leq  |m|$, then for all $k\in \ZZ$ and either for all $l\in \NN$ or for all $l\in (-\NN)$
we have:
$$
|v_{k,l}| \geq   |m|/2 .
$$ 
\end{lemma}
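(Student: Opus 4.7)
The plan is to eliminate the $k$-dependence of $v_{k,l}$ by projecting orthogonally to $\hA m$. Write $p=\hA m$ and $q=\hB m$. Since $A$ is step-2 we have $\hA^2 m=0$, so $\hA p=0$, and since $s(m)=2$ Lemma \ref{lemma.AB2} (applied inside the unlocked hypothesis) gives $\hA q=0$. Combined with $\hB^2 m=0$, we obtain the clean expression $v_{k,l}=m+kp+lq$ used in \eqref{v_kl_s=2}. Because $kp$ lies on the line spanned by $p$, the orthogonal projection of $v_{k,l}$ onto $p^{\perp}$ is independent of $k$: writing $m=m_\parallel+m_\perp$ and $q=q_\parallel+q_\perp$ with $m_\parallel,q_\parallel\in\mathbb{R}p$, one has
\[
\pi_{p^{\perp}}(v_{k,l})\;=\;m_\perp+lq_\perp,\qquad |v_{k,l}|\;\ge\;|m_\perp+lq_\perp|.
\]

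Next I would quantify $|m_\perp|$ using the two standing hypotheses. The fact that $m$ is the lowest point on its $\bA$-orbit, applied to $k=\pm1$, yields $|\langle m,p\rangle|\le |p|^2/2$, hence $|m_\parallel|=|\langle m,p\rangle|/|p|\le |p|/2$. Combined with $|p|\le |m|$, this gives $|m_\parallel|\le |m|/2$ and therefore
\[
|m_\perp|^2\;=\;|m|^2-|m_\parallel|^2\;\ge\;\tfrac{3}{4}|m|^2.
\]

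The final step chooses the sign of $l$. The scalar function $\varphi(l):=|m_\perp+lq_\perp|^2$ is a parabola opening upward, with its real vertex at $l^\star=-\langle m_\perp,q_\perp\rangle/|q_\perp|^2$ (or constant if $q_\perp=0$). Pick the sign $\varepsilon\in\{+1,-1\}$ with $\varepsilon\cdot l^\star\le 0$ (either sign if $q_\perp=0$). Then for every $l$ with $\mathrm{sign}(l)=\varepsilon$ we have $l$ on the opposite side of the vertex from $0$, whence $\varphi(l)\ge \varphi(0)=|m_\perp|^2\ge\tfrac{3}{4}|m|^2$. Combined with the previous display, this gives $|v_{k,l}|\ge |m_\perp+lq_\perp|\ge\tfrac{\sqrt 3}{2}|m|\ge |m|/2$ for all $k\in\mathbb Z$ and all $l$ in the chosen half-line, proving the lemma.

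The step I expect to require the most care is the verification that the algebraic identity $v_{k,l}=m+kp+lq$ actually holds under the stated hypotheses, because it rests on the unlocked assumption via Lemma \ref{lemma.AB2} (to rule out mixed cross-terms $\hA\hB m$). Once the three relations $\hA^2 m=\hB^2 m=\hA\hB m=0$ are in hand, the remaining argument is elementary planar geometry: the entire $(k,l)$-dependence reduces to a one-dimensional quadratic after projecting onto $p^\perp$, and the hypothesis $|p|\le|m|$ serves only to upgrade the trivial bound $|m_\parallel|\le|p|/2$ into an absolute bound in terms of $|m|$.
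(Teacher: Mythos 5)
Your proof is correct and takes essentially the same route as the paper's: project onto the orthogonal complement of $\hA m$, use the lowest-point condition together with $|\hA m|\le |m|$ to bound the projection of $m$ from below by $|m|/2$, and choose the sign of $l$ so that the projected norm does not decrease. The only cosmetic differences are that you obtain the slightly sharper bound $|m^\bot|\ge \tfrac{\sqrt{3}}{2}|m|$ via Pythagoras rather than the triangle inequality, and you phrase the good-sign choice through the vertex of the quadratic $l\mapsto |m^\bot+l(\hB m)^\bot|^2$ instead of the paper's acute-angle criterion, which is the same condition.
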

\begin{proof}  
Since $m\in \cC_3$, we have $\hA m \neq 0$. 
Denote $\cV_m=\text{span\,}\{ \hA m   \}$,   
and let  $m^\bot$ and $(\hB m)^\bot$ stand for the projections of $m$ and $\hB m$, respectively, onto the orthogonal complement of
$\cV_m$. 
Then
$$
| \bA^k \bB^l m| = |m + k\hA m + l\hB m| \geq | m^\bot + l (\hB m)^\bot | .
$$
To prove the lemma, it is enough to show that 
$| m^\bot | \geq |m|/2 $. When this is done, we choose the "good sign" of $l$ to be positive if the angle between the 
vectors $m^\bot $ and $ \hB m^\bot$ is acute, and negative otherwise.
If we choose $l$ of good sign, then $| \bA^k \bB^l m|  \geq   | m^\bot|  \geq |m|/2 $.

Let us estimate  $| m^\bot |$.  
Suppose that the angle $\theta$ between the vectors $m$ and $\hA m$ satisfies $0<\theta \leq \pi/2$ 
(otherwise, use $(-\hA m)$ instead of $\hA m$). %\sout{To visualise this part of the proof, consider a triangle, 
%whose sides are parallel to vectors $m$, $\hA m$ and $(m-\hA m)$, and have the corresponding lengths, 
%where the angle $\theta$ is variable.}\footnote{adding a picture  is better}
Since $m$ is the lowest point on its $\hA$-orbit, we have: $|m-\hA m|\geq |m|$.  
We see in this case that  the projection of $m$ onto $\cV_m$ 
satisfies
%is largest when $|m-\hA m|= |m|$ and get 
$$
\|\text{proj}_{\cV_m}m\| \leq \|\hA m\|/2.
$$ 
Hence, 
$$ 
|m^\bot|  \geq |m| - |\text{proj}_{\cV_m}m| \geq |m| - |\hA m|/2 \geq |m| - |m|/2 =  |m|/2.
$$
\end{proof}

\bigskip

%%%%%%%%%%%%%%%%%%%%%%%%%%%%%%
%%%%%%%%%%%%%%%%%%%%%%%%%%%%%%

\begin{lemma}[Estimate of the double sums,  $s(m)=2$]
\label{lemma.double.sums.case3_s=2} 
Assume that $\(a,b\)$ is a unlocked parabolic affine action, and $a$ is step-2. Suppose  that      
$m \in \cC_{3}$, $s(m)=2$, and $m$ is the lowest point on its $\bA$-orbit. 

For $r$ sufficiently large
there exists a constant $c=c(r,A,B)>0$ such that at least one of the following holds:
$$
\sum_{k\in \Z} \sum_{l\geq 0} |\bA^k \bB^l m |^{-r} \leq C |m|^{- r + 8},
\quad
\quad
\sum_{k\in \Z} \sum_{l< 0} |\bA^k \bB^l m |^{-r} \leq C |m |^{- r + 8}.
$$
\end{lemma}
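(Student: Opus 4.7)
The plan is to combine the uniform lower bound from Lemmas~\ref{l_growth_1} and~\ref{l_growth_2} with a two-dimensional lattice-point count on the orbit $\{v_{k,l}\}_{k,l\in\Z}$, where $v_{k,l}=m+ku+lw$ with $u:=\hA m$ and $w:=\hB m$.

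The first step is to verify that, for $m\in\cC_3$ with $s(m)=2$, the integer vectors $u$ and $w$ are $\R$-linearly independent. Both are nonzero because $m\notin\cC_1$. If they were parallel, integrality would force the ratio to be rational: writing $w=(p/q)u$ with coprime integers $p,q$, a direct computation gives $\bA^{p}\bB^{-q} m=m$, placing $m$ in $\cC_2$ and contradicting the hypothesis. I then invoke Lemmas~\ref{l_growth_1} and~\ref{l_growth_2} to select the ``good sign'' of $l$ appearing in the statement, together with a constant $c=c(A)>0$, so that $|v_{k,l}|\ge c|m|$ for every $k\in\Z$ and every $l$ of this sign. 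This reduces the task to bounding $\sum|v_{k,l}|^{-r}$ over the chosen half-plane.

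The key geometric ingredient is a lattice-point count. Since $u,w\in\Z^d$ are linearly independent, the map $(k,l)\mapsto v_{k,l}$ is injective with image a translate of the rank-$2$ sublattice $\Lambda=\Z u+\Z w\subset \Z^d$. Every nonzero element of $\Lambda$ has norm $\ge 1$, so both successive minima of $\Lambda$ are at least $1$; a standard Minkowski-type estimate then gives
$$
\#\{(k,l)\in\Z^2:\ |v_{k,l}|\le R\}\ \le\ C(1+R^2),
$$
with $C=C(d)$.

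Finally I perform a dyadic decomposition of the half-plane into shells $2^{j-1}c|m|<|v_{k,l}|\le 2^j c|m|$ for $j\ge 0$. Each shell contains at most $C(2^j c|m|)^2$ points and thus contributes at most $C|m|^{-r+2}\,2^{j(2-r)}$; summing the geometric series (convergent once $r>2$) yields $\sum|v_{k,l}|^{-r}\le C|m|^{-r+2}$, comfortably within the target $C|m|^{-r+8}$. The only delicate step is the linear independence of $\hA m$ and $\hB m$, which relies essentially on the $\cC_3$ hypothesis together with integrality; the counting and dyadic summation are then routine.
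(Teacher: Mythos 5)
Your proof is correct, and it reaches the conclusion by a genuinely different route than the paper for the tail of the sum. Both arguments share the same first two steps: the formula $v_{k,l}=m+k\hA m+l\hB m$ (valid since $A$ is step-2, $s(m)=2$ and, by Lemma \ref{lemma.AB2}, $\hA\hB m=0$), and the uniform lower bound $|v_{k,l}|\geq c|m|$ on the good half-plane coming from Lemmas \ref{l_growth_1} and \ref{l_growth_2}. Where you diverge is in how the non-resonance of $m$ is quantified. The paper turns it into a pointwise coercivity estimate: since $u=\hA m$ and $v=\hB m$ are non-parallel integer vectors of size $O(|m|)$, the angle between them satisfies $\sin\theta\gtrsim|m|^{-2}$, whence $|ku+lv|\gtrsim|m|^{-2}(k^2+l^2)^{1/2}$; the sum is then split at $k^2+l^2\sim|m|^{8}$, the inner region being handled by the $c|m|$ lower bound and crude counting of $O(|m|^{8})$ terms, the outer region by comparison with an integral. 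You instead observe that $(k,l)\mapsto v_{k,l}$ parametrizes a translate of the rank-2 lattice $\Z u+\Z w$, whose successive minima are at least $1$, and use a Minkowski-type counting bound in dyadic shells above the threshold $c|m|$. Both quantitative inputs (angle bound via the Gram determinant being a positive integer; successive minima $\geq 1$) express the same geometry-of-numbers fact, but your organization avoids the explicit angle computation and gives the sharper exponent $|m|^{-r+2}$ in place of $|m|^{-r+8}$ — more than enough for the statement, though the improvement is not needed downstream. Two small points to tighten: the nonvanishing of $\hA m$ and $\hB m$ follows from $m\notin\cC_1\cup\cC_2$ (not from $m\notin\cC_1$ alone: if exactly one of them vanished, $m$ would be resonant with pair $(1,0)$ or $(0,1)$); and the counting bound you quote is for a centered lattice, so for the translate $m+\Lambda$ one should add the routine difference trick (subtract one point of the translate inside the ball), which only doubles the radius and is absorbed in the constant.
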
 

\begin{proof} 
Assume without loss of generality that the good sign of $l$  is positive.

Denote $u:=\hA m$, $v:=\hB m$ and let $C_{k,l}= k\hA m +l \hB m$. 
Since $u$ and $v$  are non-parallel integer vectors whose  sizes satisfy, for some $c_0>0$,
$$
1\leq |u|, \, |v| \leq c_0|m|,
$$ 
one can show that the angle $\theta$ between $u$ and $v$  satisfies  
$\sin \theta \geq c_1|m|^{-2}$, and hence $\gamma:= |\cos \theta| \leq 1-c_2/|m|^{-4}$. 
Note that for all $k,l\in \ZZ$  we have the following two inequalities:
$$
\begin{aligned}
|C_{k,l}|^2= |k u +  l v|^2 = &k^2|u|^2+ l^2 |v|^2 +  2 kl |u||v| \gamma , \\
(k |u| +  l |v|)^2 = &k^2 |u|^2+ l^2 |v|^2 +2 kl |u||v| \geq 0.
\end{aligned}
$$
Hence,
$$
\begin{aligned}
|C_{k,l}|^2 &\geq |C_{k,l}|^2 - \gamma (k |u| +  l |v|)^2 =  (1-\gamma) (k^2 |u|^2+ l^2 |v|^2) \\
&\geq c_2|m|^{-4} (k^2 |u|^2 +  l^2 |v|^2)\geq  c_2 |m|^{-4} (k^2  +  l^2 ).
\end{aligned}
$$  
Now, if $k^2  +  l^2 \geq c_3 |m|^8$, then for  $|m|>1$ we have %.  c_3=4c_2^{-1}      $|C_{k,l}|^2 \geq   4 |m|^{4}$, 
$|m|\leq |C_{k,l}|/2$, and 
$$
|\bA^k \bB^l m | \geq |C_{k,l}| - |m|   \geq 
|C_{k,l}|/2 \geq  |m| .%\frac{1}{2}c_2 |m|^{-2} (k^2  +  l^2 )^{1/2}.
$$

Finally, we split the desired sum: 
$$
\sum_{k\in \Z} \sum_{l\geq 0} |\bA^k \bB^l m |^{-r} = \Sigma_1 +\Sigma_2,
$$
where $\Sigma_1$ contains the terms $|\bA^k \bB^l m |^{-r}$ corresponding to $k^2+l^2\leq c_3 |m|^8$, and $\Sigma_2$ contains those with
 $k^2+l^2> c_3 |m|^8$.
 
 The sum $\Sigma_1$ contains $\leq 4 c_3 |m|^8$ terms. 
 By Lemmas \ref{l_growth_1} and \ref{l_growth_2}, for a certain $c=c(r,A,B)$ we have for all $m$:  
$$
|v_{k,l} |^{-r} \leq c |m|^{- r}, 
$$  so 
 $$
 \Sigma_1\leq c_4 |m|^{- r + 8}  . 
 $$
 We estimate $ \Sigma_2$ by comparison with an integral:
 $$
\begin{aligned}
 \Sigma_2 = &\sum_{k^2+l^2\geq |m|^8 } |v_{k,l} |^{-r}% \leq  |m|^{-1/2r}  
  \leq c_5 |m|^{2r} \sum_{k^2+l^2\geq |m|^8 } (k^2  +  l^2 )^{-r/2} \\
 \leq &c_5 |m|^{2r} \int_{x^2+y^2\geq |m|^8 } (x^2  +  y^2 )^{-r/2}\, dxdy \leq c_6 |m|^{-2r+8}  \leq c_6  |m|^{- r + 8}. 
 \end{aligned}
 $$ 
 The combination of the estimates for $ \Sigma_1$ and $ \Sigma_2$ provides the desired result.
\end{proof}

%%%%%%%%%%%%%%%%%%%%%%%%%%%%%%%%%%%%%%%
%%%%%%%%%%%%%%%%%%%%%%%%%%%%%%%%%%%%%%%

\bigskip

The following subsections contain the proof of Proposition \ref{lemma.double.sums.case3} for the case $s(m)\geq 3$.
We assume that $m\in \cC_3$, $s=s(m)\geq 3$,
and study the growth properties of $|\bA^k\bB^lm|$, given by formula \eqref{AkBl_s>2}:
$$
v_{k,l}:=\bA^k\bB^lm = m+k\hA m + \sum_{j=1}^{t} c_j  l^j\hB^j (m+k \hA m)+ \sum_{j=t+1}^{s-1} c_j  l^j  \hB^j m .
$$

\medskip

%%%%%%%%%%%%%%%%%%%%%%%%%%%%%%%%%%%%%%%
%%%%%%%%%%%%%%%%%%%%%%%%%%%%%%%%%%%%%%%
%%%%%%%%%%%%%%%%%%%%%%%%%%Estimate for "small" $l$
%%%%%%%%%%%%%%%%%%%%%%%%%%%%%%%%%%%%%%%
%%%%%%%%%%%%%%%%%%%%%%%%%%%%%%%%%%%%%%%
%%%%%%%%%%%%%%%%%%%%%%%%%%%%%%%%%%%%%%%

\subsubsection{The case $s(m)\geq 3$: Estimate for small $l$.} Recall the notation $\ett = \ett (m) = 0.99 \frac1{s} $ from Sec.~\ref{s_notations}.

\begin{lemma}%[Estimate for small $l$] 
\label{lemma.small.l} 
Assume that $\(a,b\)$ is unlocked parabolic affine action, and $a$ is step-2.  
For any $\gax >0$, there exists $c=c(A,B,\gax)>0$ such that for any  
$m\in \cC_3$ with $s(m)\geq 3$,  being  the lowest point on its $\bA$-orbit,  we have for any $k\in \ZZ$ and for any $ | l |<|\gax m|^{\ett}$:
\begin{equation} \label{eq.double}
 |\bA^k \bB^l m|\geq c |m|^{\ett}. 
\end{equation}

\end{lemma}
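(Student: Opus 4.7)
The strategy is to reduce to a one-parameter problem in $l$ by using the commutativity of $\bA$ and $\bB$, and then exploit the polynomial (step-$s$) nature of $\bB$ together with the lowest-point property on the $\bA$-orbit.

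First, I would rewrite the vector $v_{k,l}=\bA^k\bB^l m$ in a more convenient form. Since $A$ and $B$ commute (hence so do $\bA$ and $\bB$) and since $A$ is step-$2$, we have
$$
v_{k,l} \;=\; \bA^k\bB^l m \;=\; \bB^l \bA^k m \;=\; \bB^l w, \qquad w:=m+k\hA m.
$$
The assumption that $m$ is the lowest point on its $\bA$-orbit gives $|w|\ge |m|$ uniformly in $k$. This reduces everything to studying $|\bB^l w|$ for $|l|<|\gax m|^{\ett}$.

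The second step is a polynomial bound on $\|\bB^{-l}\|$. Since $\hB^s m=0$ and $\hA,\hB$ commute, we get $\hB^s w = \hB^s m+k\,\hA\hB^s m=0$, so the $\bB$-orbit of $w$ lies in a $\hB$-invariant subspace on which $\hB$ is nilpotent of order $\le s$. Consequently
$$
\bB^{-l}\big|_{\langle w\rangle_{\bB}} \;=\; \sum_{j=0}^{s-1}\binom{-l}{j}\hB^j,
$$
which yields $\|\bB^{-l}w\|\le C(A,B)(1+|l|)^{s-1}\,|\bB^l w|$ after reading the identity $w=\bB^{-l}(\bB^l w)$ the other way. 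Therefore
$$
|v_{k,l}| \;=\; |\bB^l w| \;\ge\; \frac{|w|}{C(A,B)(1+|l|)^{s-1}} \;\ge\; \frac{|m|}{C(A,B)(1+|l|)^{s-1}}.
$$

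The third step is to substitute $|l|<|\gax m|^{\ett}$ and check that the resulting exponent of $|m|$ is at least $\ett$. For $|m|$ large enough that $|\gax m|^{\ett}\ge 1$, we have $(1+|l|)^{s-1}\le C\gax^{\ett(s-1)}|m|^{\ett(s-1)}$, whence
$$
|v_{k,l}| \;\ge\; c(\gax,A,B)\,|m|^{1-\ett(s-1)}.
$$
A direct computation gives $1-\ett(s-1)=1-\tfrac{0.99(s-1)}{s}=\tfrac{0.01s+0.99}{s}\ge \tfrac{0.99}{s}=\ett$, which is precisely the required bound. The remaining finitely many small values of $|m|$ (for which $|\gax m|^{\ett}<1$) are handled by the fact that $v_{k,l}$ is a nonzero integer vector, hence of norm $\ge 1$, after adjusting the constant $c$.

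The main conceptual point (hence the only possible obstacle) is the simultaneous use of two facts: the lowest-point property guarantees $|w|\ge |m|$ \emph{uniformly} in $k$, so the $k$-variable disappears and the estimate becomes purely an estimate on how much $\bB^{-l}$ can stretch. The step-$s$ nilpotency of $\hB$ then gives the polynomial control on $\|\bB^{-l}\|$, and the choice $\ett=0.99/s$ is precisely tuned so that $1-\ett(s-1)\ge\ett$. Note that neither the polynomial expansion of Lemma~\ref{l_form_of_AkBl} nor the unlocked assumption is used in the argument beyond what is already needed to ensure $\hA m\neq 0$ (which holds since $m\in\cC_3$).
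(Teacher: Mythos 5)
Your argument is correct, and it reaches the estimate by a route that is related to but cleaner than the paper's. The paper proves the lemma by contradiction: it invokes the polynomial expansion of $\bA^k\bB^l m$ from Lemma~\ref{l_form_of_AkBl} (whose derivation uses the unlocked hypothesis through Lemmas~\ref{lemma.hA2} and \ref{lemma.AB2}), assumes $|\bA^k\bB^l m|<|m|^{\ett}$, and applies powers $\hB^{s-j}$ inductively to conclude $|\bA^k\bB^lm-(m+k\hA m)|\le C|m|^{s\ett}\le|m|/2$, contradicting the lowest-point bound $|m+k\hA m|\ge|m|$. You instead factor $v_{k,l}=\bB^l w$ with $w=\bA^k m=m+k\hA m$, use the lowest-point property to get $|w|\ge|m|$ uniformly in $k$, check $\hB^s w=0$, and bound $\|\bB^{-l}\|$ on $\ker\hB^s$ by $C(1+|l|)^{s-1}$ via the binomial expansion of $(\Id+\hB)^{-l}$; this gives the unconditional bound $|v_{k,l}|\ge |m|\,/\,(C(1+|l|)^{s-1})$ for \emph{all} $l$, from which the stated inequality follows since $1-\ett(s-1)\ge\ett$ by the choice $\ett=0.99/s$. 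The underlying mechanism (nilpotency of $\hB$ of order $s$ limits the contraction under $\bB^{\pm l}$ to a polynomial of degree $s-1$ in $l$, while the lowest-point property removes the $k$-dependence) is the same as in the paper, but your packaging is direct and quantitative rather than by contradiction, it bypasses Lemma~\ref{l_form_of_AkBl} entirely, and — as you note — it does not use the unlocked hypothesis for this particular lemma, which is a genuine (if minor) strengthening. One cosmetic point: the line ``$\|\bB^{-l}w\|\le C(A,B)(1+|l|)^{s-1}|\bB^l w|$'' should read $|w|=|\bB^{-l}(\bB^l w)|\le C(A,B)(1+|l|)^{s-1}|\bB^l w|$, which is clearly what you use in the next display; also note that when $|\gax m|^{\ett}<1$ the hypothesis forces $l=0$, so that case is immediate from $|w|\ge|m|$, consistent with your adjustment of the constant.
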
 

\begin{proof} 
Assume that $\gax=1$; the same proof holds for any for any $\gax >0$. Denote $v_{k,l}:=\bA^k\bB^lm$ for brevity. 
First consider ''large'' $m$, such that  $|m|>C_0$ for an appropriate constant $C_0$.
For this $m$, suppose by contradiction that $|v_{k,l}|< |m|^{\ett}$ for some $|l|\leq |m|^{\delta}$.
By assumption, $m$ is the lowest point on its $\bA$-orbit, so $|m+k\hA m|\geq |m|$ for any $k\in \ZZ$.
Applying inductively $\hB^{s-j}$, $j=1,\dots s-1$ to equation \eqref{AkBl_s>2}, we get for a certain constant $C=C(A,B)$:
$$
|v_{k,l}-(m+k\hA m)|\leq C |m|^{s\ett}\leq  C |m|^{0.99}.   % \leq |m|/2;
$$ 
If $m$ satisfies $|m|\geq (2C)^{100}:=C_0$, then the latter implies 
$$
|v_{k,l}-(m+k\hA m)| \leq |m|/2.
$$ 
Since $|m+k\hA m|\geq |m|$, we conclude that 
$|v_{k,l}|\geq |m|/2$ which is in contradiction with our assumption that $|v_{k,l}|\leq |m|^\ett$.  
Thus, we have proved the desired estimate for all $|m|\geq (2C)^{100}$. 
If $|m|< (2C)^{100}$, the estimate is achieved by the choice of a sufficiently small constant $c(A,B)$.
\end{proof}

\bigskip

%%%%%%%%%%%%%%%%%%%%%%%%%%%%%%%%%%%%%%%
%%%%%%%%%%%%%%%%%%%%%%%%%%%%%%%%%%%%%%%
%%%%%%%%%%%%%%%%%%%%%%%%%%Linear Drift in $l$
%%%%%%%%%%%%%%%%%%%%%%%%%%%%%%%%%%%%%%%
\subsubsection{The case $s(m)\geq 3$: Linear Drift in $l$} This is the section where the "good sign of $l$" for the given $m\in\Z^d$
plays the crucial role.

\begin{lemma}[Linear Drift in $l$, $s(m)\geq 3$] \label{lemma.drift.l} Assume that $\(a,b\)$ is unlocked parabolic affine action, and $a$ is step-2.   %  $m \in \cC_{3}$, $s(m)\geq 3$.%, and $m$ is the lowest point on its $\bA$-orbit. 
There exists a constant $c=c(A,B)>0$ such that for any  $m \in \cC_3$, $s(m)\geq 3$, the following holds:
for all $k\in \Z$ and either for all $l\geq 0$  or for all $l<0$ we have: 
$$
|\bA^k \bB^l m|\geq c|l|.
$$
\end{lemma}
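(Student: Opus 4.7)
The starting point is the polynomial expansion from Lemma \ref{l_form_of_AkBl}: write
$$
\bA^k\bB^l m \;=\; P(l) + k\,Q(l),
$$
where
$$
P(l) = m + \sum_{j=1}^{s-1} c_j\, l^j \hB^j m, \qquad
Q(l) = \hA m + \sum_{j=1}^{t-1} c_j\, l^j \hB^j \hA m,
$$
with $s=s(m)\geq 3$ and $1\leq t\leq s-1$. The crucial structural feature is that $P$ has degree $s-1$ in $l$ while $Q$ has degree at most $t-1\leq s-2$, and that $\hB^j \hA m = 0$ for $j\geq t$, so that $Q(l)$ takes values in the fixed subspace $V:=\mathrm{span}\{\hA m, \hB\hA m, \ldots, \hB^{t-1}\hA m\}$ of dimension exactly $t$. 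I will handle the regime of small $|l|$ separately using Lemma \ref{lemma.small.l}: if $|l|\leq c_1|m|^\eta$ then $|\bA^k\bB^l m|\geq c_2|m|^\eta\geq c_3|l|$, so the interesting regime is $|l|$ large.

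For large $|l|$, the idea is that since $kQ(l)\in V$ for every $k\in\Z$, orthogonal projection onto $V^\perp$ gives the $k$-independent bound
$$
|\bA^k\bB^l m| \;\geq\; \mathrm{dist}(P(l),V) \;=\; \Bigl|\,m_{V^\perp}+\sum_{j=1}^{s-1}c_j\, l^j\,(\hB^j m)_{V^\perp}\,\Bigr|.
$$
If some $j_0\in\{1,\ldots,s-1\}$ satisfies $\hB^{j_0}m\notin V$, then the right-hand side is a vector-valued polynomial in $l$ of degree $\geq 1$; choosing $l$ of the sign for which the component along the dominant direction $(\hB^{j_1}m)_{V^\perp}$ (with $j_1$ the largest index with nonzero projection) has positive inner product gives $|P(l)_{V^\perp}|\geq c|l|^{j_1}\geq c|l|$ for $|l|$ past a threshold absorbable into the small-$|l|$ regime. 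The choice of good sign is thus dictated by the sign of the leading projected coefficient.

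\textbf{Main obstacle.} The serious difficulty is the degenerate possibility that \emph{all} of $\hB m,\hB^2m,\ldots,\hB^{s-1}m$ lie in $V$. Since these are $s-1$ linearly independent vectors, this forces $\dim V = s-1 = t$ and $V=\mathrm{span}\{\hB m,\ldots,\hB^{s-1}m\}$. Tracking the kernel of $\hB|_V$ in both descriptions of $V$ then forces $\hB^{s-1}m$ to be parallel to $\hB^{t-1}\hA m$, which is automatically compatible with $\hA\hB^{s-1}m=0$ coming from Lemma \ref{lemma.AB2} and so does not yield a contradiction for free. The plan for this case is to invoke the unlocked hypothesis from \S\ref{sec_C1_prelim}: if $V$ were to absorb the entire $\hB$-orbit of $m$ uniformly in $l$, the resulting rigid alignment of the iterates of $m$ with iterates of $\hA m$ under $\hB$ produces a nontrivial $\Z$-invariant factor of the action that is either identity or genuinely parabolic, contradicting unlockedness. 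Alternatively, replacing $V$ by a slightly larger subspace still of dimension $\leq s-1$ and transverse to a lower-order term of $P(l)$ of positive $l$-degree, one recovers the linear growth in $|l|$ with a constant depending only on $A$ and $B$ (since the integer lattice structure bounds nonzero projections from below in terms of $A,B$ alone). Combining the resulting linear lower bound for large $|l|$ of good sign with the small-$|l|$ estimate from Lemma \ref{lemma.small.l} then yields the claimed uniform bound $|\bA^k\bB^lm|\geq c(A,B)|l|$.
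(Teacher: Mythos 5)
Your reduction to the projection of $P(l)=m+\sum_j c_j l^j\hB^j m$ onto $V^\perp$, where $V=\mathrm{span}\{\hA m,\hB\hA m,\dots,\hB^{t-1}\hA m\}$, has a genuine gap at its quantitative core. After killing the $k$-dependence you are left with a vector polynomial in $l$ of degree up to $s-1$, and your linear lower bound rests on the claim that the nonzero projections $(\hB^{j}m)_{V^\perp}$ are bounded below by a constant $c(A,B)$ ``by the integer lattice structure.'' That is false: the distance from an integer vector to a rational subspace spanned by integer vectors of norm $\sim|m|$ can be as small as a negative power of $|m|$ without vanishing, so the constant in the lemma would not be uniform in $m$. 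Moreover, even with nonzero leading projection, the sign-choice trick only controls an expression that is \emph{affine} in $l$; for a degree $\geq 2$ polynomial there can be near-cancellations at moderate $|l|$ of either sign, so ``$\geq c|l|$ past a threshold'' forces a threshold that grows with $|m|$, and your fallback for smaller $|l|$ via Lemma \ref{lemma.small.l} both requires the lowest-point-on-the-$\bA$-orbit hypothesis (not present in the statement of Lemma \ref{lemma.drift.l}) and only covers $|l|\lesssim|m|^{\eta}$, leaving an uncovered intermediate range. Finally, your treatment of the degenerate case (all $\hB^j m\in V$) is not an argument: you yourself note that unlockedness gives no contradiction ``for free,'' and the ``slightly larger subspace'' alternative is left unspecified.

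The paper's proof fixes exactly these points by choosing the larger subspace $\cV_m=\mathrm{span}\{\hB^{j}m,\ \hA\hB^{l'}m \mid 2\leq j\leq s-1,\ 0\leq l'\leq s-2\}$, which absorbs \emph{both} the $k$-terms and all $l$-terms of degree $\geq 2$, so that $|\bA^k\bB^l m|\geq |m^\bot+l(\hB m)^\bot|$ with the right-hand side affine in $l$; then the good sign of $l$ gives $\geq |l|\,|(\hB m)^\bot|$ for all $l$ of that sign, with no threshold and no lowest-point hypothesis. The decisive step you are missing is the uniform bound $|(\hB m)^\bot|\geq c(A,B)>0$: one applies $\hB^{s-2}$ (if $\hA\hB^{s-2}m=0$) or $\hA\hB^{s-3}$ (if $\hA\hB^{s-2}m\neq 0$), which annihilate $\cV_m$ thanks to $\hA^2=0$, $\hB^{s}m=0$ and $\hA\hB^{s-1}m=0$ (Lemma \ref{lemma.AB2}, where unlockedness actually enters), and which send $(\hB m)^\bot$ to the nonzero \emph{integer} vector $\hB^{s-1}m$ or $\hA\hB^{s-2}m$; since these operators have norm bounded in terms of $A,B$ only, integrality yields the uniform lower bound. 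Without an ingredient of this kind your projection argument cannot deliver a constant independent of $m$, so the proposal as written does not prove the lemma.
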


\begin{proof}%[Proof of Lemma \ref{lemma.drift.l}] $\ $
   Recall that $s=s(m)\geq 3$, which means that $\hB^s m = 0$, $\hB^{s-1}m\neq 0$. In particular, $\hB^{2}m\neq 0$.
%and  Lemma \ref{lemma.hA2} implies $\hA m \neq 0$. 
By Lemma \ref{lemma.AB2}, the assumption on being unlocked implies $\hA \hB^{s-1}m=0$.  
Denote 
$$
\cV_m=\text{span\,}\{ \hB^{l} m,\  \hA \hB^{l'} m \mid l\in [2,s-1], \ l'\in [0,s-2]  \},
$$
where the terms $\hA \hB^{l'} m$ may vanish starting from some $l'=t$, $t\geq 1$. 
Let $m^\bot$ and $(\hB m)^\bot$ denote the orthogonal  projections of $m$ and $\hB m$, respectively, onto the orthogonal complement of $\cV_m$. 
%Thus, $\hB_m = c_1\hA m + c_2 \hB^2 m + \dots + (\hB m)^\bot$ 
Let us show that for some constant $c=c (A,B)>0$ we have
$$|(\hB m)^\bot|\geq c >0.
$$
We consider two subcases. 

\noindent {\bf Case $\hA\hB^{s-2}m=0$. \ } 
Here we have: 
$$
0\neq \hB^{s-1} m = \hB^{s-2}  (\hB m)^\bot.
$$ 
Since $\hB^{s-1}m\neq 0$ is an integer, we have $|\hB^{s-1}m |\geq 1$. Since the norm of $\hB^{s-2} $ is bounded away from zero, we have 
$|(\hB m)^\bot | \geq c_0 (A,B)>0$. 

\noindent {\bf Case $\hA\hB^{s-2} m \neq 0$. \ }  
In this case we have
$$
0\neq \hA \hB^{s-2} m = \hA \hB^{s-3} (\hB m) = \hA  \hB^{s-3} (\hB m)^\bot.
$$
Since $\hA\hB^{s-2} m \neq 0$ is an integer, we have $|\hA \hB^{s-2} m |\geq 1$. Since the norm of $\hA  \hB^{s-3}$ is bounded away from zero, this implies $| (\hB m)^\bot|\geq c_1 (A,B) > 0$. 

To complete the proof, recall that, by \eqref{AkBl_s>2},
$$
\begin{aligned}
| v_{k,l}|=&|\bA^k\bB^lm| = |m+k\hA m + \sum_{j=1}^{t-1} c_j  l^j\hB^j (m+k \hA m)+ \sum_{j=t}^{s-1} c_{j}   l^{t}  \hB^{j} m|  \\
\geq &|m^\bot+l(\hB m)^\bot| .
\end{aligned}
$$
Choose the "good sign of $l$" to be positive if the vectors $m^\bot$ and $l(\hB m)^\bot$ form an acute angle, and negative otherwise.
For this sign of $l$ we get the desired result.
\end{proof}
%%%%%%%%%%%%%%%%%%%%%%%%%%%%%%%%%%%%%%%
%%%%%%%%%%%%%%%%%%%%%%%%%%%%%%%%%%%%%%%
%%%%%%%%%%%%%%%%%%%%%%%%%%.  Drift in $k$
%%%%%%%%%%%%%%%%%%%%%%%%%%%%%%%%%%%%%%%
%%%%%%%%%%%%%%%%%%%%%%%%%%%%%%%%%%%%%%%
%%%%%%%%%%%%%%%%%%%%%%%%%%%%%%%%%%%%%%%

\subsubsection{The case $s(m)\geq 3$: Drift in $k$}

Recall the notations $s=s(m)$ and  $\ett=\ett (m)$ from Section~\ref{s_notations}.%as in \eqref{not_ett}.

\begin{lemma}[Drift in $k$] \label{lemma.drift.k} 
Assume that $\(a,b\)$ is unlocked parabolic affine action, and $a$ is step-2. 
There exist positive constants $\gax=\gax(A,B)$ and $C=C(A,B)$ such that  for any  $m \in \cC_3$,
 for any $k,l$  satisfying $|k|\geq \gax |m| $, $|l|\leq |k|^{\ett }$ with $\ett=\ett (m)$ defined in Section~\ref{s_notations}, we have:
$$
|\bA^k \bB^{l} m| \geq C |k|^{\ett }.
$$ 
\end{lemma}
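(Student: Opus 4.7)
The strategy is to use the step-$2$ assumption on $A$ to split $v_{k,l}:=\bA^k\bB^l m$ into a $k$-independent ``core'' acted on by $\bB^l$. Since $\hA^2=0$ gives $\bA^k=I+k\hA$, and since $\hA$ commutes with $\bB$ (as $A$ and $B$ commute), this produces the factorization
$$
v_{k,l}=\bB^l\bigl(m+k\hA m\bigr),
$$
whence $|v_{k,l}|\ge|m+k\hA m|/\|\bB^{-l}\|$. The proof then reduces to a lower bound for the core $m+k\hA m$ together with a standard polynomial-in-$l$ upper bound for $\|\bB^{-l}\|$.

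For the core, I first note that $m\in\cC_3$ forces $\hA m\ne 0$: otherwise $\bA m=m$, and since $m\notin\cC_1$ this requires $\hB m\ne 0$, but then $\bA^1\bB^0 m=m$ places $m$ in $\cC_2(1,0)$, contradicting $m\in\cC_3$. Hence $\hA m$ is a nonzero integer vector, so $|\hA m|\ge 1$. Setting $\gax=2$, the hypothesis $|k|\ge\gax|m|$ yields
$$
|m+k\hA m|\ge|k|\,|\hA m|-|m|\ge|k|-|k|/2=|k|/2.
$$

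For the denominator, since $\bB$ is step-$s$ unipotent we have the standard estimate $\|\bB^{-l}\|\le C_B(1+|l|)^{s-1}$ for some $C_B=C_B(B)>0$; combined with $|l|\le|k|^{\ett}$ and $|k|\ge 2$ this gives $\|\bB^{-l}\|\le C_B'|k|^{\ett(s-1)}$. Putting the two bounds together yields
$$
|v_{k,l}|\ge\frac{|k|/2}{C_B'|k|^{\ett(s-1)}}=\frac{|k|^{1-\ett(s-1)}}{2C_B'}.
$$
A direct computation gives $1-\ett(s-1)=1-0.99\,(s-1)/s=0.01+0.99/s=0.01+\ett>\ett$, and since $|k|\ge 2$ one has $|k|^{0.01}\ge 1$. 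This yields the desired bound $|v_{k,l}|\ge C|k|^{\ett}$ with $C=1/(2C_B')$.

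The argument is essentially mechanical once the factorization $v_{k,l}=\bB^l(m+k\hA m)$ is in hand; this factorization is specific to the step-$2$ case for $A$ and bypasses the more delicate polynomial expansion in Lemma \ref{l_form_of_AkBl}. The only conceptual point is the observation that $m\in\cC_3$ excludes $\hA m=0$, a quick unpacking of the definition of $\cC_2(1,0)$ as a ``resonance'' with pair $(1,0)$. Notably, the argument does not need the stronger ``lowest point on the $\bA$-orbit'' hypothesis used in the ambient proposition: the choice $\gax=2$ alone ensures that $|k|\,|\hA m|$ dominates $|m|$ by a fixed factor, which is all that is needed.
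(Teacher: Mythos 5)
Your factorization $\bA^k\bB^l m=\bB^l(m+k\hA m)$ is a genuinely different and more elementary route than the paper's: the paper starts from the polynomial expansion of Lemma \ref{l_form_of_AkBl}, picks the largest $p$ with $\hA\hB^{p}m\neq 0$, and applies $\hB^{p}$ to isolate the linear-in-$k$ term, which in particular routes through the unlockedness hypothesis (Lemmas \ref{lemma.hA2} and \ref{lemma.AB2}); your argument needs neither the expansion nor unlockedness. Your observation that $m\in\cC_3$ alone forces $\hA m\neq 0$ (otherwise $m\in\cC_1$ or $m\in\cC_2(1,0)$) is correct, the core bound $|m+k\hA m|\geq |k|/2$ under $|k|\geq 2|m|$ is fine, and the explicit choice $\gax=2$ is an improvement over the paper's ``$\gax(A,B)$ sufficiently large''.

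There is, however, one step that is false as written and whose repair is needed rather than cosmetic: the claim that ``$\bB$ is step-$s$ unipotent'', giving $\|\bB^{-l}\|\leq C_B(1+|l|)^{s-1}$. Here $s=s(m)$ is the step of the vector $m$, whereas $\bB$ itself is step-$S$ with possibly $S>s(m)$, so the operator norm of $\bB^{-l}$ only satisfies the bound with exponent $S-1$. This matters: with exponent $S-1$ your final estimate becomes $|k|^{1-\ett(S-1)}$ with $\ett=0.99/s(m)$, and $1-\ett(S-1)$ can be smaller than $\ett$, or even negative, whenever $s(m)<S$, so the lemma would not follow from the coarse bound. The fix is one line and keeps your structure intact: since $\hB^{s}m=0$ and $\hB^{s}\hA m=\hA\hB^{s}m=0$ by commutativity, the vector $w:=m+k\hA m$ satisfies $\hB^{s}w=0$, hence so does $\bB^{l}w$; expanding
$$
w=\bB^{-l}\bigl(\bB^{l}w\bigr)=\sum_{j=0}^{s-1}\binom{-l}{j}\hB^{j}\bigl(\bB^{l}w\bigr)
$$
gives $|w|\leq C_B(1+|l|)^{s-1}\,|\bB^{l}w|$, which is the inequality you actually use. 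With this substitution your computation $1-\ett(s-1)=0.01+\ett>\ett$ is correct and the proof goes through, with constants depending only on $A,B$.
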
 

\begin{proof}
Let $s=s(m)$ be the step of $m$, defined in Section~\ref{s_notations}. Since $\ett=\ett(m)=0.99/s$, condition  $|l|\leq |k|^{\ett }$ implies $|k|\geq |l|^s$. Let $p\in [0,s-1]$ be the largest integer such that $\hA \hB^p m\neq 0$, and observe that from \eqref{AkBl_s>2} 
and 
\begin{equation} 
\hB^p \bA^k \bB^l m=\hB^p m+k\hA \hB^p m+\cO(l^{s-1}), \label{aabb} 
\end{equation} 
where $\cO(l^{s-1})$ denotes the terms free from $k$ with the maximal power of $l$ being $s-1$. If we assume that $|k|\geq \gax (A,B)|m|$ with $\gax(A,B)$ sufficiently large, then the linear term in $k$ is dominant in \eqref{aabb} so that $|\hB^p \bA^k \bB^l m|\geq |k|/2$, thus  $|\bA^k \bB^l m|\geq C|k|$ for a certain positive constant $C$.% \geq C'(l+|k|)$. 
\end{proof}
%%%%%%%%%%%%%%%%%%%%%%%%%%%%%%%%%%%%

\subsubsection{Proof of Proposition \ref{lemma.double.sums.case3}, case $s(m)\geq 3$}\label{sec_C3_proof}
We now turn to the effective control of the double sums.

\begin{proof}%[Proof of Proposition \ref{lemma.double.sums.case3}] 
Assume without loss of generality  that the good sign of $l$ is positive. Let $\gax=\gax(A,B)>0$ be the constant from Lemma \ref{lemma.drift.k}, and let $\ett=\ett(m)=0.99/s(m)$, as before.
We split the sum into the following five partial sums, each of which will be estimated separately:
$$
\begin{aligned}
\sum_{k\in \Z} \sum_{l\geq 0} |\bA^k \bB^l m|^{-r}  = &\left(  \sum_{|k|\leq |\gax m|,} \sum_{l\leq | \gax m|^{\ett }} +
 \sum_{|k| \leq |\gax m|,} \sum_{l >  | \gax m|^{\ett }} +    \right.\\
& \left.  \sum_{|k| > | \gax m|,} \sum_{l > | k|^{\ett }}+
    \sum_{|k| > |\gax m|,} \sum_{l < |\gax m|^{\ett } }   +
   \sum_{|k|> | \gax m|,} \sum_{|\gax m|^{\ett }\leq l\leq |k|^{\ett }}     \right)  |\bA^k \bB^l m|^{-r} \\
&:= \Sigma_1+\Sigma_2+\Sigma_3+\Sigma_4+\Sigma_5 .
\end{aligned}
$$

The following elementary estimate is used several times below: for any $p, r>0$, we have:
$$
 \sum_{j \geq p} j^{-r} \leq c_0(r) p^{-r+1}.
$$
 
 \noindent {\bf Estimate of $\Sigma_1$. \ }
By Lemma \ref{lemma.small.l}, there exists $c=c(A,B)$ such that  for all  
$l\leq  |\gax m|^\ett$ and for all $k\in \ZZ$
%$|k|\leq \gax |m|$ 
we have $ |\bA^k \bB^l m |\geq c |m |^{\ett}$. The sum $\Sigma_1$ contains $\leq 3 |\gax m |^{\ett +1}$.
Hence, 
 $$
 \Sigma_1  \leq  3  |\gax m|^{\ett + 1}  (c |m|^{\ett})^{-r}  <  c_1 |m|^{- r \ett +2}.
 $$

 \noindent {\bf Estimate of $\Sigma_2$ and $\Sigma_3$. \ }
 By Lemma  \ref{lemma.drift.l},  there exists $c=c(A,B)$ such that 
 for all $k\in \Z$ and for all $l\geq 0$ we have:   $|\bA^k \bB^l m | \geq c l$.
 In the case of $\Sigma_2$ we have $|k|\leq \gax |m|$, so 
 $$
\Sigma_2 \leq   3 \gax |m| \, c^{-r} \sum_{l> |\gax m|^{\ett}}    l^{-r}  \leq c_2 |m|^{- r\ett +2} .
 $$

 In the case of $\Sigma_3$ we have: 
 $$
 \Sigma_3 \leq   3 c^{-r} \sum_{k> \gax |m |, }  \sum_{l > k^{\ett}}    l^{-r} \leq 
 \tilde c_3 
 \sum_{k> \gax |m | } (k^{\ett})^{-r+1}  \leq c_3 |m |^{-r\ett+2}.
 $$

 \noindent {\bf Estimate of $\Sigma_4$ and $\Sigma_5$. \ }
By Lemma \ref{lemma.drift.k}, if $|k| > \gax |m |$ and $0\leq l\leq |k |^{\ett}$, then $|\bA^k \bB^l m |\geq C|k|^{\ett}$. 
Therefore, 
$$
\Sigma_4 \leq   C |\gax m |^{\ett}  \sum_{k > \gax | m |}   k ^{-\ett r} \leq c_4 |m |^{-\ett r +2},
$$ 
and  
$$
\Sigma_5 \leq    \sum_{l \geq |\gax  m |^{\ett},}    \sum_{k\geq l^{1/\ett}} k^{-r\ett} 
\leq  c_5    \sum_{l \geq | \gax m |^{\ett}}  ( l^{1/\ett} )^{-r\ett +1} \leq c_5 |m |^{-r\ett +2}.
$$
Recall that for any $m\in\ZZ^d$ we have $\ett(m)=0.99/s(m)\geq  \eta=0.99/S$, where $S$ is the step of $B$. Therefore,
$|m |^{-r\ett +2}\leq |m |^{-r\eta +2}$. Summing up the above estimates, we obtain the desired result.
\end{proof}

\section{Solution of the linearized problem. Proof of Proposition \ref{Main iteration step}
}\label{plan_of_proof}
%{\bf Part III}: 
%\color{violet}
%%%%%%%%%%%%%

A way of interpreting the statement of  Proposition \ref{Main iteration step} is the following. 
A perturbation $\(F, G\)$ of the action $\(a, b\)$ defines a map $\bfp: \mathbb Z^2\to \rm Vect^{\infty}(\mathbb T^d)$ 
by $\bfp(k, l):= F^kG^l-a^kb^l$, $(k, l)\in \mathbb Z^2$.  Proposition \ref{Main iteration step} 
(in fact) claims that there exists a tame map which projects $\bfp$ to the space of (twisted) 
coboundaries over $\(a, b\)$ in such a way that the complement of this projection has quadratic estimates with respect to $\bfp$. 
How is commutativity going to give us that the error we make while projecting is quadratically small? 
Commutativity relations for all action elements  tell us that certain linear operator $\bfL\bfp$ (see \S \ref{vf}) defined on $\bfp$ is bounded (roughly) by the size of the square of $\bfp$. So the core of the problem is to produce a projection of $\bfp$ to the space of (twisted) coboundaries over $\(a, b\)$ so that the complement of this projection (the error we are making) can be bounded by the size of $\bfL\bfp$. This is done in Sections \ref{Fourier} and \ref{s_end_prop}. The final \S \ref{proof_of_main_iteration_step} contains the proof of Proposition \ref{Main iteration step}. It is in this proof that we use the fact that  the commutativity assumption implies that $\bfL\bfp$ is quadratically small with respect to $\bfp$. 

This interpretation of the statement of  Proposition \ref{Main iteration step} is useful for understanding its proof. 
Namely, even though the statement of  Proposition \ref{Main iteration step} contains only $\bff$ and $\bfg$ 
(in the notations of this section it means that $\bff=\bfp(1,0)$ and $\bfg=\bfp(0,1)$), in order to produce the estimates, 
{\it we need to use the whole map} $\bfp: \mathbb Z^2\to \rm Vect^{\infty}(\mathbb T^d)$, not just $\bff$ and $\bfg$.  We explain this point  more in \S \ref{p} after the statement of Proposition \ref{prop_main_estimate} that contains the main estimates on the conjugacy and the error. 

The plan of the proof of Proposition \ref{Main iteration step} is the following. We start by constructing projections to coboundaries for  {\it function-valued} maps  $p: \mathbb Z^2\to C^{\infty}(\mathbb T^d)$. The main result that leads to Proposition \ref{Main iteration step}  is Proposition \ref{prop_main_estimate} which we state in \S \ref{p} and prove in \S \ref{Fourier}.  Proposition \ref{prop_main_estimate} contains the crucial estimates for the convergence of the iteration process.    In \S \ref{s_end_prop}
 we use Proposition \ref{prop_main_estimate} 
 to deduce the corresponding statement,
 %\footnote{\bp In this explanation, there seems to be no difference at all between Prop D and Prop E}  
 Proposition \ref{...}, for truncations of $p: \mathbb Z^2\to C^{\infty}(\mathbb T^d)$, which we then inductively apply to obtain Proposition \ref{vector fields} for the truncated {\it vector field-valued} map $\bfp: \mathbb Z^2\to \rm Vect^{\infty}(\mathbb T^d)$.This passage from a function-valued map $p$ to a vector field-valued map $\bfp$ is quite direct due to the fact that our action $\(a, b\)$ has a {\it parabolic} linear part. 
 Similar inductive argument has been used in all the other works which use KAM method for parabolic actions (\cite{D}, \cite{DK2}, \cite{ZW}, \cite{DT}. Finally, the main result for vector field-valued maps $\bfp$ (Proposition \ref{vector fields}) is used in \S \ref{proof_of_main_iteration_step} to prove the main iterative step,  Proposition \ref{Main iteration step}.
\color{black}

 %The main technical result 
%Proposition \ref{prop_main_estimate} is formulated for maps $p: \mathbb Z^2\to C^{\infty}(\mathbb T^d)$, and its main corollary,  Proposition \ref{...},  defines a projection to the space of coboundaries for a {\it truncation} of $p$. However, as mentioned above, we need to project a {\it vector field} valued map $\bfp: \mathbb Z^2\to \rm Vect^{\infty}(\mathbb T^d)$ to the space of (twisted) coboundaries. In \S{vf},  Proposition \ref{...} is applied inductively to construct the projection of $\bfp$ and to obtain the corresponding estimates.

 %This inductive argument works due to the fact that the action is parabolic (similar inductive argument has been used in all the other works which use KAM method for parabolic actions (\cite{D}, \cite{DK2}, \cite{ZW}, \cite{DT}). The main result for vector fields valued maps $\bfp$ (Proposition \ref{vector fields}) is then used in \S{proof_of_main_iteration_step} to prove the main iterative step  Proposition \ref{Main iteration step}.

\color{black}

\subsection{Approximating $p: \mathbb Z^2\to C^{\infty}(\mathbb T^d)$ by a coboundary}\label{p}

We start with a set of functions, $p:\ZZ^2 \to  C^\infty(\mathbb T^d)$. Recall the definitions of 
$
\partial_ {k,l}(h)
$
and 
$
Lp((k,l), (s,t))
$
from \S \ref{s_notations}. Here we introduce some extra notations.
For a fixed natural number $N$ we define  $\res_N$ to be  the set consisting of all the resonant pairs corresponding to the resonant vectors of norm less than $N$ (see Lemma \ref{l_tech_reson} for the bound on the norm of resonant pairs with respect to the resonant vector, and for the definition of constant $C$ which appears in the definition below). In other words,
\begin{equation}\label{R'}
\res_N:= \{(k, l)\in \mathcal R(A, B):\, \,  C(|k|+|l|)<N\}\cup \{(1,0), (0, 1)\}.
\end{equation}
For the simplicity of notations, we introduce the following norms, for any $r\ge 0$:
\begin{equation}\label{maxnorms}
\begin{aligned}
\|p\|_r &:= \max\{ \|p(1,0)\|_r,  \|p(0,1)\|_r\},\\
 \|Lp \|_{r, N}&:=\max\{  \|Lp((1,0), (k, l)\|_{r},  \|Lp((0,1), (k, l)\|_{r}:\, (k,l)\in \res_N\}.
 \end{aligned}
\end{equation}
From this point on, $\kappa$ will denote a constant which depends only on the action $\(a,b\)$ and the regularity $r$, but along the way it will absorb other constants which appear in the estimates.

The main result we prove here is: 

\begin{PProp}\label{prop_main_estimate}
Let $\(a,b\)$ be an unlocked  $(\ga,\tau)$-Diophantine step-$S$ parabolic  affine action, where $a$ is step-2, and let $r>0$.  
There exist constants $\etan=\etan (S)>0$,  $\sigman=\sigman(\(a,b\))$ and  $\kappa=\kappa(r,\(a,b\))$ such that 
 for any map $p: \ZZ^2 \mapsto C^\infty(\mathbb T^d)$ there exists 
 a $C^\infty$ function  $h$ such that 
\begin{equation}\label{hm}
\|h\|_r\leq 
%\begin{cases}
\kappa \|p\|_{r+\sigman},
%\kappa \|p(k_m, l_m)\|_r |m |^{-r+\tau} ,  & m \,  \text{resonant},\\
%\end{cases}
\end{equation}
and the map $\tp: \ZZ^2 \mapsto C^\infty(\mathbb T^d)$ defined by 
\begin{equation}\label{def_p_tilde_global}
\tilde p(s,t):= \partial_{s,t} h- p(s,t)+ave(p(s,t)), \,\,\,\, (s,t)\in \ZZ^2,
\end{equation}
 satisfies the following estimate:
\begin{equation}\label{linest}
\|\tp(s,t)\|_r\leq \|p(s,t)\|_r+  \kappa(|s|+|t|)^{dr}\|p\|_{r+\sigman}.
\end{equation}
Moreover, if $p$ is truncated up to $N$, then $\tilde p$ satisfies even the following estimate for any $(s, t)\in \res_N$: 
\begin{equation}\label{tildepst}
\|\tp(s,t)\|_r\le \kappa (|s|+|t|)^{d(\etan r+\sigman)} \|Lp\|_{\etan r+\sigman,N}.
\end{equation}
%\begin{equation}
%\begin{aligned}
%\|\tp(s,t)\|_r\le \kappa (|s|+|t|)^{d r} \max \{ &\|Lp((1, 0),*)\|_{\etan r+\sigman,N}, \|Lp((0,1),*)\|_{\etan r+\sigman, N}, \\
%& \| Lp((1,0),(s,t))\|_{\etan r+\sigman, N}, \| Lp((0,1),(s,t))\|_{\etan r+\sigman, N} \}
%\end{aligned}
%\end{equation}

\end{PProp}

The proof of the proposition is lengthy and takes up all of the next section. Here is a short overview of the proof. 

We will define $h$ via its Fourier coefficients $h_m$, in different ways depending on $m$.  We need to apply different arguments 
in the following three cases: when the orbit of $m$ under the dual linear action $\(\bar A, \bar B\)$ is a single point,  when it is finite under one element of the action (but not under all elements), or when it is infinite. If the orbit is a single point we are in the degenerate case. The second case is when $m$ is resonant, otherwise $m$ is non-resonant. As explained in \S \ref{Diophantine conditions},  to each resonance $m$ we can attach a unique resonance pair $(k, l)$ for which  $\bar A^{k}\bar B^{l}m=m$.  

The special (degenerate) case when the $\(\bar A, \bar B\)$-orbit of $m$ is a single point, that is when  $\bar  Am=\bar  Bm=m$, is dealt with in the same way as in the original proof of Moser in \cite{M} (see \S \ref{s_case1}). 

Next, we have the situation when $\bar Am =m$ and $\bar Bm\ne m$. This is a $(1,0)$ resonance. In this case the fact that $\(A, B\)$ is unlocked implies that $\hat B^2m=0$ (so $m$ is step 2 for $\bar B$). Then we use the generator $b$ to construct $h_m$. To show that $\tp$ satisfies the needed estimate in this case, we will need the Diophantine condition on the translation vector $\al_{1,0}$. 

If  $\bar Am \ne m$, we use the generator $a$ to construct $h_m$, and use the fact that $\bar A$ is step 2 to estimate $h_m$.  To obtain the estimate for $\tilde p$ we use different strategies for resonant and non-resonant $m$.

When $m$ is resonant, we will use the corresponding resonant pair $(k,l)$ and the action element $a^kb^l$ to estimate the error. This is exactly where we need to use all the elements of the action, i.e., the map $\bfp: \mathbb Z^2\to \rm Vect^{\infty}(\mathbb T^d)$, and not just two generators $\bff$ and $\bfg$. Moreover, it is here that we will use the Diophantine assumptions on the translation parts $\alpha_{k, l}=a^kb^l-A^kB^l$. To control the number of action elements we use, we need to truncate the given data first. This is why the crucial error estimate in Proposition \ref{prop_main_estimate} is stated only for truncated maps. In our arguments, as explained in Lemma \ref{l_tech_reson}, the norm of the resonant pair will be bounded by the norm of the resonance.  Therefore, for the estimate of the $N$-truncated maps,  we only need to consider the resonant pairs for the resonances bounded by $N$. 
The treatment of all the resonant cases is done in \S \ref{s-case2}. 

Finally, if the $\(\bar A, \bar B\)$-orbit of $m$ is infinite, we use the double sums estimates. This part of the argument uses \S \ref{double-buble} and  is contained in \S \ref{s_case3}.

\subsubsection{Proof of  Proposition \ref{prop_main_estimate}  }\label{Fourier}

 Let us pass to defining and estimating the numbers $h_m$ and $(\tp(s,t))_m$. The arguments will strongly depend on $m$. 
Namely, we always set  $h_0=0$, and
for $m$ lying in each of the three subsets, $\cC_{1} $, $\cC_{2} $ and $\cC_{3} $, defined in \S \ref{s_notations}, we have to develop a special approach. A more precise statement of Proposition \ref{prop_main_estimate}  is the following.
\color{black} 
\begin{proposition}\label{prop_main_estimate_detail}
Let $\(a,b\)$ be an unlocked  $(\ga,\tau)$-Diophantine parabolic step-$S$ affine action, where $a$ is step-2, and let $r>0$.  There exist constants $\eta>0$ ($\eta=0.99/S)$), $\sigma=\sigma(\(a,b\))$ and  $\kappa=\kappa(r,\(a,b\))$ such that,  
for any  map $p: \ZZ^2 \mapsto C^\infty(\mathbb T^d)$ there exists
a set of numbers  $(h_m)$, $m\in \ZZ^d\setminus \{0\}$ such that 
\begin{equation}\label{hm_detail}
|h_m|\leq 
%\begin{cases}
\kappa \max \{\|p(1, 0)\|_r,  \|p(0, 1)\|_r\} |m |^{-r+1+\tau}
%\kappa \|p(k_m, l_m)\|_r |m |^{-r+\tau} ,  & m \,  \text{resonant},\\
%\end{cases}
\end{equation}
with the following property.  Define a new map $\tilde p$ from $\ZZ^2$ into the space of formal Fourier series 
as follows:
\begin{equation}\label{def_p_tilde}
(\tilde p(s,t))_m:=  h_{\bA^s\bB^tm} e(\bA^s\bB^tm,\a_{s,t})-h_m - (p(s,t))_m, \,\,\, (s,t)\in \ZZ^2.
\end{equation} 
Then it satisfies,  for any $ r>8/ \eta $:
\begin{equation}\label{tpst}
%| \tp(s,t)_m|\leq
\begin{aligned}
&| \tp(s,t)_m|\leq \\
&{
{\scriptstyle
\begin{cases}
\kappa (|s|+|t|)^{d r}    \max \{\|Lp((s,t),(1,0))\|_r,\|Lp((s,t),(0,1))\|_r \} |m|^{-\eta r+  9+ \tau}, & m\in \cC_1\cup\cC_3\\
\kappa  (|s|+|t|)^{d r }   
\max \{ \|Lp((1, 0),(k,l))\|_r, \|Lp((0,1),(k,l))\|_r, \\
\qquad\qquad\qquad\qquad \| Lp((1,0),(s,t))\|_r , \| Lp((0,1),(s,t))\|_r  \} |m|^{-r+\tau+2},  & m \in \cC_2(k,l). \\ 
\end{cases}}
}
\end{aligned}
\end{equation}
Moreover, for $m \in \cC_2(k,l)$ we have $|k|+|l|\le C(A,B) |m|$.
\end{proposition}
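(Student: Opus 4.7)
The plan is to define $h$ through its Fourier coefficients $h_m$ according to the partition $\ZZ^d\setminus\{0\}=\cC_1\cup\cC_2\cup\cC_3$ from \S\ref{s_notations}, and to estimate $(\tilde p(s,t))_m=(\partial_{s,t}h)_m-(p(s,t))_m$ in each case by exploiting the cocycle identities encoded in $Lp$ together with the sum estimates of Propositions \ref{c_est_easy} and \ref{lemma.double.sums.case3}. For a general $(s,t)\in\ZZ^2$, I would deduce the bound \eqref{tpst} from the two special cases $(s,t)\in\{(1,0),(0,1)\}$ by telescoping along $a^{s}b^{t}=a\cdot a^{s-1}b^{t}=\dots$; each telescoping step produces an error of type $Lp((1,0),\cdot)$ or $Lp((0,1),\cdot)$, with the polynomial growth $|\bar A^s\bar B^tm|\lesssim (|s|+|t|)^{d}|m|$ absorbed into the factor $(|s|+|t|)^{dr}$.

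For $m\in\cC_1$ (where $\bar A m=\bar B m=m$) I would follow Moser's trick: the two mode equations reduce to $h_m(\lambda_m-1)=(p(1,0))_m$ and $h_m(\mu_m-1)=(p(0,1))_m$, and the SDC condition on the maximal translation factor guarantees that at least one of the two divisors exceeds $\gamma|m|^{-\tau}$. Defining $h_m$ from the equation with the larger divisor, the residual error in the other equation equals $(Lp((1,0),(0,1)))_m$ divided by that divisor, yielding the desired estimate. For $m\in\cC_3$, the $\langle\bar A,\bar B\rangle$-orbit is infinite and since $a$ is step-2 each $\bar A$-orbit has a unique lowest point $\bar m$. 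I would set $h_m=\Sigma^{+,A}_m(p(1,0))$ when $m\in\cM(A)$ and $h_m=-\Sigma^{-,A}_m(p(1,0))$ when $m\in\cN(A)$; this solves the $(1,0)$-equation exactly on every mode except $\bar m$, where the defect equals the full obstruction $\Sigma^{A}_{\bar m}(p(1,0))$. Here the parabolic higher rank trick of \S\ref{double-buble} enters: using the cocycle identity I would rewrite that single sum as a double sum $\sum_{k\in\ZZ}\sum_{l\geq 0}(Lp((1,0),(0,1)))_{\bar A^k\bar B^l\bar m}\lambda^{(k)}_{\bar m}\mu^{(l)}_{\bar m}$ in the good direction provided by Proposition \ref{lemma.double.sums.case3}, obtaining the factor $|m|^{-\eta r+9}$ with $\eta=0.99/S$; this reflects the announced loss of an $(S-2)/(S-1)$ proportion of derivatives in the error while $h$ itself stays tame by Proposition \ref{c_est_easy}.

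For $m\in\cC_2(k,l)$ I would first record, via Lemma \ref{l_tech_reson}, that $|k|+|l|\leq C|m|$. When $(k,l)=(0,1)$ (so $\bar Am\neq m$) I define $h_m$ exactly as in the $\cC_3$ case via a partial sum along the step-2 $\bar A$-orbit, gaining the tame bound \eqref{hm_detail} from Proposition \ref{c_est_easy}; the residual defect at $(0,1)$ cannot be treated by the double sum trick because $\bar Bm=m$, but instead $b$ acts on mode $m$ by pure multiplication by $e(m,\beta)$, and the resonance Diophantine condition \eqref{admissible} furnishes the bound after dividing by $e(m,\beta)-1$. The case $(k,l)=(1,0)$ is symmetric; by the unlocked assumption and Lemma \ref{lemma.hA2}, $\hat B^2m=0$ so that the step-2 partial-sum machinery applies to $\bar B$. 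For a general coprime $(k,l)$ with $\bar A^k\bar B^lm=m$ but $\bar Am\neq m\neq\bar Bm$, I again define $h_m$ by a partial $\bar A$-sum, then express the resulting defect at $(k,l)$ by iterating the cocycle identities $|k|+|l|$ times so as to rewrite $(p(k,l))_m$ via $Lp((1,0),(k',l'))$ and $Lp((0,1),(k',l'))$ for $(k',l')$ within the truncation scale, after which the division by $e(m,\alpha_{k,l})-1\geq\gamma|m|^{-\tau}$ closes the estimate.

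The main obstacle I expect is this last subcase of $\cC_2$: one must control the growth of the iterated cocycle expansion of $(p(k,l))_m$ so that the powers of $(|k|+|l|)$ produced by the iteration are absorbed by the bound $|k|+|l|\lesssim|m|$ of Lemma \ref{l_tech_reson} into the final $|m|^{-r+\tau+2}$ exponent, without spoiling either the tameness of $h$ or the polynomial-in-$(|s|+|t|)$ character needed for the telescoping step that passes from $(1,0),(0,1)$ to a general $(s,t)$. The inequality $|k|+|l|\lesssim|m|$ is the crucial algebraic input without which division by the Diophantine denominator $\gamma|m|^{-\tau}$ would be uncontrollable.
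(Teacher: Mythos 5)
Your overall architecture coincides with the paper's: the same partition $\cC_1\cup\cC_2\cup\cC_3$, the same definitions of $h_m$ (Moser's trick on $\cC_1$, one-sided sums $\Sigma^{\pm,A}_m(p(1,0))$ along the step-2 dual orbit on $\cC_2'$ and $\cC_3$, the $\bB$-sums on $\cC_2''$ via Lemma \ref{lemma.hA2}), and the double-sum rewriting of the obstruction via Proposition \ref{lemma.double.sums.case3} on $\cC_3$. However, there is a genuine gap in your treatment of the resonant case $\cC_2(k,l)$ for a general coprime pair $(k,l)$. The quantity that must be controlled there is not "the defect at $(k,l)$" but the defect of the $(1,0)$-equation at a lowest mode $\bar m$, which is the \emph{full infinite obstruction sum} $\Sigma^{A}_{\bar m}(p(1,0))$; your plan to rewrite $(p(k,l))_m$ by iterating cocycle identities $|k|+|l|$ times does not touch this sum, and moreover it produces $Lp((1,0),(k',l'))$, $Lp((0,1),(k',l'))$ at many intermediate pairs $(k',l')$ that do not appear in the stated bound (and are not covered later by $\|Lp\|_{r,N}$, since intermediate pairs need not be resonance pairs). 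The paper's mechanism (Lemma \ref{l_formal_f_tildeC2}, identity \eqref{eq_sum_C2}) is different and essential: apply the weighted sum $\Sigma^{A}_{\bar m}$ to $Lp((1,0),(k,l))=\partial_{1,0}p(k,l)-\partial_{k,l}p(1,0)$; the operator $\Sigma^{A}_{\bar m}$ annihilates the $\partial_{1,0}$-coboundary, and since $A\akl=\akl$ the multiplier $e(\cdot,\akl)$ is constant along the $\bA$-orbit, so the second term becomes exactly $(e(\bar m,\akl)-1)\,\Sigma^{A}_{\bar m}(p(1,0))$. Only after this identity does the Diophantine lower bound on $|e(\bar m,\akl)-1|$, combined with the lowest-point estimate of Proposition \ref{c_est_easy}(3) for $\Sigma^{A}_{\bar m}(Lp((1,0),(k,l)))$, close the estimate with the single term $Lp((1,0),(k,l))$ required by the statement. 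Without exhibiting this identity, "dividing by $e(m,\akl)-1$" has nothing to act on; the same remark applies, in milder form, to your $(k,l)=(0,1)$ and $(1,0)$ subcases, where you invoke the division but do not say what finite quantity is being divided.

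A second, related weak point is your passage from the generators to general $(s,t)$ by telescoping $a^sb^t$ into elementary steps. It is not true that "each telescoping step produces an error of type $Lp((1,0),\cdot)$ or $Lp((0,1),\cdot)$": substituting $\partial_{1,0}h=p(1,0)+\tp(1,0)$ and $\partial_{0,1}h=p(0,1)+\tp(0,1)$ into the telescoped $\partial_{s,t}h$ leaves the cocycle defect of $p$ (the difference between $p(s,t)$ and the cocycle generated by $p(1,0),p(0,1)$ along a chosen path), which is not pointwise an algebraic combination of the $Lp$ terms in the statement; converting it costs the solution of another twisted equation mode by mode and brings in $Lp((1,0),(0,1))$-type terms absent from the bound for $\cC_1$ and $\cC_2$. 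The paper avoids this by Lemma \ref{l_formal_p_tilde}: one checks that $(\partial_{1,0}\tp(s,t))_m=(Lp((1,0),(s,t)))_m-(\partial_{s,t}\tf)_m$, bounds $(\partial_{s,t}\tf)_m$ by the already established estimates on $\tf$ with the polynomial factor $(|s|+|t|)^{dr}$, and then solves this twisted equation along the $\bA$-orbit using Proposition \ref{c_est_easy}(2), which is precisely where the additional power of $|m|$ in the exponents $-r+\tau+2$ and $-\eta r+9+\tau$ comes from. Your sketch would need to be replaced by (or reduced to) an argument of this type to yield the estimate \eqref{tpst} as stated.
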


\begin{proof}[\bf \bleu1 Proof of Proposition \ref{prop_main_estimate} from Proposition \ref{prop_main_estimate_detail}]

Recall that  in \eqref{maxnorms} we defined:
$$\|p\|_r:=\max \{\|p(1, 0)\|_r,  \|p(0, 1)\|_r\} .
$$ 
Then  estimate  \eqref{hm_detail} directly implies estimate \eqref{hm} for $h$ with a loss of $\sigman:=\tau +d+2$ derivatives. 

The map $\tilde p$, defined in \eqref{def_p_tilde_global}, satisfies the linear estimate \eqref{linest}, which follows from its 
definition and estimate \eqref{hm} for $h$:
$$\| \tilde p(s,t)\|_r\le  \|\partial_{s,t} h\|_r + \| p(s,t)\|_r\le \|p(s,t)\|_r+  \kappa(|s|+|t|)^{rd}\|p\|_{r+\sigman}.
$$

If $p$ is truncated up to $N$, then by taking the maximum on the right hand side of \eqref{tpst} over all resonant pairs  $(k, l)\in \res_N$ (which is a finite set), we get for any $(s,t)\in \res_N$ (see definition \eqref{maxnorms}):  
\begin{equation*}
\begin{aligned}| \tp(s,t)_m|&\le \kappa (|s|+|t|)^{d r }   \max_{(k, l)\in \res_N}\{ 
 \|Lp((1, 0),(k,l))\|_r, \|Lp((0,1),(k,l))\|_r,\\
 & \| Lp((1,0),(s,t))\|_r , \| Lp((0,1),(s,t))\|_r  \} |m|^{-\eta r+\tau+9}\\
& \le  \kappa (|s|+|t|)^{d r }   \|Lp\|_{r, N} |m|^{-\eta r+\tau+9}.
 \end{aligned}
 \end{equation*}

This implies that $$\sup | \tp(s,t)_m| |m|^{\eta r-\tau-9}\le  \kappa (|s|+|t|)^{d r }   \|Lp\|_{r, N},$$ which (by making a substitution $r:= \eta r-\tau-9$) gives:  
$$\sup_m \{| \tp(s,t)_m| |m|^{r}\}\le  \kappa (|s|+|t|)^{d (\eta^{-1} r+ \eta^{-1}(\tau +9) ) }   \|Lp\|_{\eta^{-1} r+ \eta^{-1}(\tau +9) , N}.$$
Because of the well known norm comparison: $\|\tp(s,t)\|_r\le C\sup_m \{| \tp(s,t)_m| |m|^{r+d+2}\}$, we have: 
$$
\|\tp(s,t)\|_r\le  \kappa (|s|+|t|)^{d (\eta^{-1} (r+d+2) + \eta^{-1}(\tau +9) ) }   \|Lp\|_{\eta^{-1}( r+d+2)+ \eta^{-1}(\tau +9) , N}.
$$
Now let  $\etan:=\eta^{-1}$ (recall that $\eta<1$), define the  new $\sigman:=\eta^{-1}(\tau +9+d+2)$ to obtain the final  estimate $\|\tp(s,t)\|_r\le  \kappa (|s|+|t|)^{d (\etan r+\sigman)}   \|Lp\|_{\etan r+ \sigman , N}.$

 \end{proof}

In the rest of this section we prove Proposition \ref{prop_main_estimate_detail}. We will split the proof into three subsections according to $m\in \cC_1$, $m\in \cC_2$ or $m\in \cC_3$.

%We begin by some notations and preliminaries. 

\subsubsection{Proof of Proposition \ref{prop_main_estimate_detail} in the case $m\in\cC_1$}\label{s_case1}   Let $m\in \cC_1$, i.e., we have $\bA m=\bB m=m$. 
%This implies that for any  set of numbers  $h_m$, $m\in \ZZ^d$, the system \eqref{case3} gets the form
%\begin{equation}\label{syst_case1}
%\begin{aligned}
%\phi_m= &f_{m } (e(m,\be ) - 1) - g_{m } (e(m,\alpha ) - 1) ,\\
%\tf_m=&h_{m } (e(m,\alpha ) - 1)- f_m,\\
%\tg_m=&h_{m } (e(m,\be ) - 1) - g_m.\\
%\end{aligned}
%\end{equation}
Since  the action is assumed to be $(\gamma,\tau)-$Diophantine, we have either  
$|e(m,\alpha ) -1|\ge \gamma \|m\|^{-\tau}$, or $|e(m,\be )-1|\ge \gamma \|m\|^{-\tau}$.
%To define $h_m$ we use the second or the third of the three equations above.
Let $f=p(1,0)$ and $g=p(0,1)$. 

Define $h_m$ as follows:

\begin{equation}\label{h_in_C1}
h_m: =
\begin{cases}
(e(\alpha, m)-1)^{-1}f_m, &  \text{ if } \ |1-e(\alpha, m)|\ge \gamma \|m\|^{-\tau},\\
(e(\beta, m)-1)^{-1}g_m, &    \text{ if } \ |1-e(\alpha, m)|< \gamma \|m\|^{-\tau} .
\end{cases}
\end{equation}
%In both cases the defined functions satisfy  \eqref{syst_case1}.
%$$
%\tf_m=(e(\beta, m)-1)^{-1}\phi_m.
%$$

%the solutions are defined and estimated by using the simultaneously Diophantine assumption on the translation part of the action.  The solution $h_m$, $\tf_m,  \tg_m$ to system \eqref{case3},
%as well as the estimates, can be obtained as in Moser's paper \cite{M} for circle maps (or in  \cite{DF}, \cite{WX} or \cite{P}, for higher dimensional torus). We formulate this case in the proposition below, and for completeness we describe the construction of solutions to the system \eqref{case3}. 

Then we have the following 

\begin{proposition}\label{l_Mosers_trick}
Let $\(a,b\)$ be a $(\ga,\tau)$-Diophantine parabolic affine action, and let a map $p: \ZZ^2 \mapsto C^\infty(\mathbb T^d)$, be given. 
%Denote $f= p(1,0)$ and $ g= p(0,1)$, $\phi=Lp((1,0), (0,1))$. 
For $m\in \cC_1$, let $h_m$ be defined as in \eqref{h_in_C1}. Then 
$$
|h_m| \le \ga \max\{\|f\|_r, \|g\|_r\} \, |m |^{-r+\tau},
$$
%$$
%|\tf_m|,  |\tg_m| \le \ga \|  Lp((1,0), (0,1)) \|_{r}\, |m |^{-r+\tau}.
%$$
and for any $(s, t)\in \mathbb \Z^2$,  the number $(\tp(s,t))_m$ defined   by formula \eqref{def_p_tilde}, which in this case has the form:
$$
(\tp(s,t))_m = h_m (e(m, \alpha_{s,t})-1) -(p(s,t))_m,
$$
satisfies 
$$
 |(\tp(s,t))_m|\le c \max \{\|Lp((s,t),(1,0))\|_r, \|Lp((s,t),(0,1))\|_r \} |m|^{-r+\tau}.
 $$
 \end{proposition}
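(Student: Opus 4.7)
\medskip

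\noindent\textbf{Proof proposal.} The key simplification is that on $\cC_1$ the dual action is trivial: $\bar A^s\bar B^t m = m$ for every $(s,t)\in\ZZ^2$, so the Fourier coefficient of any twisted coboundary is diagonal, namely
\[
(\partial_{s,t} h)_m \;=\; h_m\bigl(e(m,\alpha_{s,t})-1\bigr).
\]
Thus the cohomological equations become pointwise: at each $m\in\cC_1$ we are simply trying to solve $h_m(e(m,\alpha)-1)=f_m$ and $h_m(e(m,\beta)-1)=g_m$ simultaneously. Since $(\alpha,\beta)$ is simultaneously Diophantine on the identity factor, at least one of $|1-e(m,\alpha)|$ and $|1-e(m,\beta)|$ exceeds $\gamma|m|^{-\tau}$, so we can pick whichever equation has the larger small divisor and define $h_m$ as in \eqref{h_in_C1}. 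In that case $|h_m|\le \gamma^{-1}|m|^{\tau}|f_m|$ (respectively with $g_m$), which combined with the Fourier decay $|f_m|\le \|f\|_r|m|^{-r}$ gives the bound on $|h_m|$.

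For the error bound on $(\tp(s,t))_m$ the point is Moser's trick: the commutation relation controls the mismatch between the two pointwise equations. Suppose, for definiteness, that $|1-e(m,\alpha)|\ge\gamma|m|^{-\tau}$, so that $h_m = f_m/(e(m,\alpha)-1)$. Computing at mode $m\in\cC_1$ the Fourier coefficient of $Lp((s,t),(1,0))=\partial_{s,t}p(1,0)-\partial_{1,0}p(s,t)$ gives
\[
\bigl(Lp((s,t),(1,0))\bigr)_m \;=\; f_m\bigl(e(m,\alpha_{s,t})-1\bigr)\;-\;(p(s,t))_m\bigl(e(m,\alpha)-1\bigr).
\]
Dividing by $(e(m,\alpha)-1)$ and using the definition of $h_m$, the right-hand side is exactly $h_m(e(m,\alpha_{s,t})-1)-(p(s,t))_m$, which by \eqref{def_p_tilde} is $(\tp(s,t))_m$. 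Hence
\[
(\tp(s,t))_m \;=\; \frac{\bigl(Lp((s,t),(1,0))\bigr)_m}{e(m,\alpha)-1},
\]
so $|(\tp(s,t))_m|\le \gamma^{-1}\|Lp((s,t),(1,0))\|_r|m|^{-r+\tau}$. In the symmetric case $|1-e(m,\beta)|\ge\gamma|m|^{-\tau}$ the identical computation with $g$ in place of $f$ and $(0,1)$ in place of $(1,0)$ yields the bound with $Lp((s,t),(0,1))$. Taking the maximum of the two possibilities produces the desired estimate with $c=\gamma^{-1}$.

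Once this algebraic identity is in place there is essentially no obstacle: the proof is purely pointwise in $m$, uses no double sums, and avoids any of the polynomial drift analysis needed for $\cC_2$ and $\cC_3$. The only mild subtlety is making sure that the SDC condition is being applied on the correct torus — the maximal identity factor — which is exactly the content of condition $(1)$ in Definition \ref{rotation vectors} and is what guarantees the dichotomy in the choice of defining equation for $h_m$. All constants depend only on $\gamma$, $\tau$ and $r$, and the loss of derivatives is the fixed amount $\tau$, consistent with the statement.
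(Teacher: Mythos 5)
Your proposal is correct and follows essentially the same route as the paper: the same dichotomy from the SDC condition to define $h_m$, the same algebraic identity $(Lp((s,t),(1,0)))_m=(e(m,\alpha)-1)(\tp(s,t))_m$ (resp.\ with $\beta$, $g$ and $(0,1)$ in the other case), and the same division by the good small divisor to get the error bound. The only cosmetic difference is that you write the constant as $\gamma^{-1}$, which is in fact the correct form of the bound the paper's statement records (somewhat loosely) as $\gamma$.
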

\begin{proof} 
%By assumption, the projection of the pair $(\alpha, \beta)$ to the identity factor of the action $\(a,b \)$ is $(\gamma,\tau)$-simultaneously Diophantine. 
%This directly implies the desired estimates for $h_m$, $\tf_m$ and $\tg_m$. 
%Namely, if  $|1-e(\alpha, m)|\ge \gamma \|m\|^{-\tau}$, we define 
%$$
%\begin{aligned}
%h_m:&=  (e(\alpha, m)-1)^{-1}f_m, \\
%\tf_m:&=0,\\ 
%\tg_m:&=(e(\alpha, m)-1)^{-1}\phi_m.
%\end{aligned}
%$$

%If  $m$ does not satisfy the above condition, then $|1-e(\beta, m)|\ge \gamma \|m\|^{-\tau}$, and we define 
%$$
%\begin{aligned}
%h_m:&=  (e(\beta, m)-1)^{-1}g_m, \\
%\tf_m:&=(e(\beta, m)-1)^{-1}\phi_m, \\
%\tg_m:&= 0.
%\end{aligned}
%$$
Suppose first that $m$ is such that $|1-e(\alpha, m)|\ge \gamma \|m\|^{-\tau}$, in which case $h_m=  (e(\alpha, m)-1)^{-1}f_m$. Then 
$$
\begin{aligned}
\left( Lp((s,t), (1,0)) \right)_m =&\left( \partial_{s,t}  f\right)_m  - \left(  \partial_{1,0} p(s,t) \right)_m \\
=&(e(\a_{s,t},m)-1) \, f _m -  (e(\a,m )-1)  \left( p(s,t)\right)_m = \\ 
=&(e(\a_{s,t},m)-1)  \,  h_m(e(\alpha, m)-1) -  (e(\a,m )-1)  \left( p(s,t)\right)_m = \\
=&(e(\a,m)-1)  \,  \left(  e(\a_{s,t},m)-1)  h_m-  \left( p(s,t)\right)_m   \right) \\
= & (e(\a,m)-1) (\tp(s,t))_m . \\
 \end{aligned}
$$
Estimate $|1-e(\alpha, m)|\ge \gamma \|m\|^{-\tau}$ implies the result. The case when $h_m=  (e(\be, m)-1)^{-1}g_m$ is treated in the same way.

Directly from the definition of $h_m$ and from the SDC-condition on $\alpha$ and $\beta$ we obtain the bound for $|h_m|$:
$$
\begin{aligned}
|h_m|\le &\max \{ |e(\be, m)-1|^{-1}, |e(\alpha, m)-1|^{-1}\} \max\{ |f_m|, |g_m|\} \\
\le  &\gamma |m|^{\tau} |m|^{-r} \max\{ \|f\|_r, \|g\|_r\}.
\end{aligned}
$$
\end{proof}
 
It is straightforward that Proposition \ref{l_Mosers_trick} implies Proposition \ref{prop_main_estimate_detail} in case $m\in \cC_{1} $.

%%%%%%%%%%%%%%%%%%%%%%%%%%%%%%%
%%%%%%%%%%%%%%%%%%.  Cases 2-3
%%%%%%%%%%%%%%%%%%%%%%%%%%%%

\subsubsection{Proof of Proposition \ref{prop_main_estimate_detail} in the case $m\in\cC_2$}\label{s-case2}
%\subsubsection{ Formal definition of the conjugacy}
 Let $\(a,b\)$ be an unlocked parabolic affine action, where $a$ is step-2. 
Let $m\in \cC_2(k,l)$, i.e.,  at least one of $\bA m$ and $\bB m$ is different from $m$, and there exists 
$(k,l)\in \Z^2\setminus \{ 0\}$ such that $\bA^k\bB^lm = m$ (thus $k\hA m + l\hB m=0$).
%We have
%$a^kb^l x= A^k B^l x + \akl=x+\akl$. 
By Lemma \ref{lemma.hA2},  $\cC_2(k,l)$ can be divided into two sub-cases: 

%\begin{itemize}
%\item[
$\cC_2'(k,l)$: \  $\bA m \neq m$;

%\item[
$\cC_2''(k,l)$: \ $\bA m = m$, while $\bB m \neq m$.  Note that in this case, by Lemma \ref{lemma.hA2}, we have $\bB^2 m =m$. 
%\end{itemize}

\medskip

\noindent The sets $\cC_2'(k,l)$ and $\cC_2''(k,l)$ are invariant under the action of $\(A,B\)$ due to the commutativity of the action.
Consider  a map $p: \ZZ^2 \mapsto C^\infty(\mathbb T^d)$.
Denote
$f= p(1,0)$ and $ g= p(0,1)$.

For $m\in\cC_2'(k,l)$ we define:
\begin{equation}\label{def_h_Case2}
 h_m=
\begin{cases}
\Sigma^{+,A}_m (f), & m \in \cM(A), \\
\Sigma^{-,A}_m (f), & m\in \cN(A).
\end{cases}
\end{equation}

For $m\in\cC_2''(k,l)$
we define:
\begin{equation}\label{def_h_B}
 h_m=
\begin{cases}
\Sigma^{+,B}_m (g), & m \in \cM(B), \\
\Sigma^{-,B}_m (g), & m\in \cN(B).
\end{cases}
\end{equation}
To understand our choice for $h_m$, think of $C^{\infty}$-functions $h$ and $f$ satisfying $h\circ a-h=f$. Then the Fourier coefficients are related by 
$$
(h\circ a-h)_m ={h_{\bA m}} \la_m^{(1)}-h_m = f_m.
$$
Iterating this equality by $\bA$ either in the positive or in the negative direction while multiplying by appropriate constants, one obtains a telescopic sum equal to $h_m$ as given in formula  \eqref{def_h_Case2}.  
The following proposition is the main statement of this section. 
It is straightforward that it
%Proposition \ref{l_est_case2}      
implies Proposition \ref{prop_main_estimate_detail} in case $m\in \cC_{2} $.

\def \dd{{dr}}

\begin{proposition}\label{l_est_case2} Assume that $\(a,b\)$ is an unlocked $(\ga,\tau)$-Diophantine parabolic affine action, where $a$ is step-2. 
Consider  a map $p: \ZZ^2 \mapsto C^\infty(\mathbb T^d)$,
 denote
$f= p(1,0)$ and $ g= p(0,1)$. 
Given $m\in \cC_2(k,l)$ for some $(k,l) \in \Z^2 \setminus \{ 0\}$, 
define $h_m$  as above. 

Then there exists $\kappa=\kappa (\ga,\tau, A, B ) >0$ such that
$$
\begin{aligned}
&|h_m|  \leq   \kappa \max \{ \|f \|_r , \|g \|_r  \}\, |m|^{-r+1}, \\
\end{aligned}
$$
and for any $(s, t)\in \mathbb \Z^2$,  the number $(\tp(s,t))_m$  defined   by formula \eqref{def_p_tilde}, i.e.,
\begin{equation*}
\tp(s,t)_m = h_{\bA^s\bB^tm} e(\bA^s\bB^tm,\a_{s,t})-h_m - (p(s,t))_m,
\end{equation*} 
satisfies:
\begin{equation}\label{est_tp_case2}
\begin{aligned}
| (\tp(s,t))_m|\leq  
\kappa (|s|+|t|)^{\dd }   
\max \{ &\|Lp((1, 0),(k,l))\|_r, \|Lp((0,1),(k,l))\|_r, \\
&\| Lp((1,0),(s,t))\|_r , \| Lp((0,1),(s,t))\|_r  \} |m|^{-r+\tau+2}.
\end{aligned}
\end{equation}
In addition, $|k|+|l|\le c |m|$ for a certain $c=c(A,B)>0.$
%In particular,
%$$
%\tp(k,l)_m:= p(k, l)_m - (e(m , \a_{s,t} )-1)h_{m} = 0.
%$$
\end{proposition}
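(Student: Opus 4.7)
My plan is to split the analysis according to whether $m$ lies in $\cC_2'(k,l)$ (the case $\bA m\ne m$) or $\cC_2''(k,l)$ (the case $\bA m=m$, $\bB m\ne m$), treating them symmetrically. The subsidiary inequality $|k|+|l|\le c|m|$ is exactly Lemma \ref{l_tech_reson}. For the bound on $|h_m|$, in the sub-case $\cC_2'(k,l)$ the definition \eqref{def_h_Case2} falls directly under Proposition \ref{c_est_easy}(3) applied to $f$, since $\bA$ is step-$2$ and the sum $\Sigma^{+,A}_m(f)$ (respectively $\Sigma^{-,A}_m(f)$) is one-sided, the appropriate sign being chosen by whether $m\in\cM(A)$ or $m\in\cN(A)$. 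In the sub-case $\cC_2''(k,l)$, the unlocked hypothesis together with Lemma \ref{lemma.hA2} (contrapositive $\hA m=0\Rightarrow\hB^2 m=0$) forces $\bB$ to act as a step-$2$ operator on $m$; applying Proposition \ref{c_est_easy}(3) with $B$ in place of $A$ and $g$ in place of $f$ yields the desired $\|g\|_r$-bound.

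The core of the proof is the estimate on $\tp(s,t)_m$. I would first treat the distinguished case $(s,t)=(k,l)$, where the resonance relation $\bA^k\bB^l m=m$ collapses \eqref{def_p_tilde} to
\begin{equation*}
\tp(k,l)_m \;=\; \bigl(e(m,\alpha_{k,l})-1\bigr)\,h_m - (p(k,l))_m .
\end{equation*}
The $(\gamma,\tau)$-Diophantine condition on $m$ from Definition \ref{definition resonance} lets us treat $(e(m,\alpha_{k,l})-1)$ as a divisor of size at least $\gamma|m|^{-\tau}$, so it suffices to show that the right-hand side is of order $|m|^{-r+1}$ (times a resonance factor $|k|+|l|\le c|m|$) multiplied by $\|Lp((1,0),(k,l))\|_r$, or the $(0,1)$-analog in the $\cC_2''$ case. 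To produce such a bound I would apply the one-sided obstruction operator $\Sigma^{\pm,A}_m$ to the cocycle identity $\partial_{k,l}f=\partial_{1,0}p(k,l)+Lp((k,l),(1,0))$ (which is the very definition of $Lp$), using crucially that $\bA^k\bB^l$ fixes the entire $\bA$-orbit of $m$ so that the twists $e(\bA^j m,\alpha_{k,l})=e(m,\alpha_{k,l})\,e(j\hA m,\alpha_{k,l})$ admit a clean expansion. The telescoping then produces on one side the diagonal contribution $(e(m,\alpha_{k,l})-1)h_m-(p(k,l))_m$, and on the other a sum of Fourier coefficients of $Lp((k,l),(1,0))$ along the orbit, which Proposition \ref{c_est_easy}(3) bounds by $\kappa\,\|Lp((k,l),(1,0))\|_r|m|^{-r+1}$. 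Division by $(e(m,\alpha_{k,l})-1)$ then yields the claimed $|m|^{-r+\tau+2}$ decay.

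For a general $(s,t)$ I would reduce to the previous case by invoking the further cocycle identity $\partial_{s,t}f=\partial_{1,0}p(s,t)+Lp((s,t),(1,0))$ (or its $(0,1)$-analog when $\bA m=m$), and applying the obstruction operator along the $\bA$-orbit of $m$. This step absorbs the $(s,t)$-dependence into the additional $Lp((1,0),(s,t))$ or $Lp((0,1),(s,t))$ norms appearing in \eqref{est_tp_case2}; the polynomial factor $(|s|+|t|)^{dr}$ comes from the coarse bound $|\bA^s\bB^t\bA^j m|\le c(|s|+|t|)^{d-1}|\bA^j m|$ governing both the length of the finite orbit segments involved and the Fourier decay of $(p(s,t))_{\bA^j m}$. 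The sub-case $m\in\cC_2''(k,l)$ is handled by the mirror argument, swapping the roles of $\bA$ and $\bB$, of $f$ and $g$, and with resonance pair $(1,0)$.

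The principal technical obstacle is the clean telescoping of $\Sigma^{\pm,A}_m(\partial_{k,l}f)$: the twist $e(j\hA m,\alpha_{k,l})$ is non-constant along the orbit, so isolating the diagonal contribution $(e(m,\alpha_{k,l})-1)h_m$ from a coboundary-type tail of $Lp$-errors requires the step-$2$ structure of $\bA$ (which makes $\bA^j m=m+j\hA m$ affine in $j$) and a careful treatment of the switch point $\bar m$ at which the definition \eqref{def_h_Case2} changes branch. Ensuring that every leftover term is absorbed into one of the four $Lp$-norms of \eqref{est_tp_case2}, rather than into the ambient $\|f\|_r$ or $\|g\|_r$, is the technical heart of the argument.
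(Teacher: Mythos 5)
Your bound on $|h_m|$ and the auxiliary inequality $|k|+|l|\le c|m|$ match the paper's argument (Proposition \ref{c_est_easy}(3), Lemma \ref{l_tech_reson}), and your telescoping for the distinguished element $(s,t)=(k,l)$ is correct in substance (modulo sign conventions, which the paper itself treats loosely): applying a one-sided sum $\Sigma^{\pm,A}_m$ to $Lp((k,l),(1,0))=\partial_{k,l}f-\partial_{1,0}p(k,l)$ expresses $\tp(k,l)_m$ as a one-sided orbit sum of Fourier coefficients of $Lp((k,l),(1,0))$, bounded by $\kappa\|Lp((1,0),(k,l))\|_r|m|^{-r+1}$. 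Note, however, that no small divisor is needed for this particular element, so your subsequent ``division by $(e(m,\akl)-1)$'' is attached to a quantity that does not require it.

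The genuine gap is at the switch points, which is exactly where the quantity that does require the divisor is never bounded in your proposal. Estimate \eqref{est_tp_case2} is claimed for all $(s,t)$, in particular for $(s,t)=(1,0)$; at a lowest point $m=\bm$ the number $\tp(1,0)_m$ is precisely the full two-sided obstruction $\Sigma^{A}_m(f)$ (the forward and backward branches of \eqref{def_h_Case2} disagree there by exactly this amount), and one-sided telescoping gives nothing for it: the coboundary term $\partial_{1,0}p(k,l)$ only disappears under the full two-sided sum, which yields $(e(m,\akl)-1)\Sigma^{A}_m(f)=\Sigma^{A}_m\bigl(Lp((1,0),(k,l))\bigr)$ as in \eqref{eq_sum_C2}, and it is here --- and only here --- that the Diophantine condition on the resonance is used, to divide and obtain $|\Sigma^{A}_m(f)|\le\kappa\|Lp((1,0),(k,l))\|_r|m|^{-r+1+\tau}$; this is the paper's Lemma \ref{l_formal_f_tildeC2}. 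You flag the switch point as a ``technical obstacle'' but supply no estimate for it. Without it, your reduction for general $(s,t)$ fails whenever $m$ and $\bA^s\bB^t m$ lie on opposite sides of a switch (the forward sum along the orbit of $\bA^s\bB^t m$ then differs from $\pm h_{\bA^s\bB^t m}$ by the uncontrolled obstruction), and the case $(s,t)=(1,0)$, $m=\bm$ is not covered at all, since $Lp((1,0),(1,0))=0$ gives no information and only the resonance pair can help. Once the obstruction bound at $\bm$ is in place, the passage to general $(s,t)$ is the paper's Lemma \ref{l_formal_p_tilde}: bound $\tf_n$ on the whole invariant set (it vanishes off the lowest points), write $(\partial_{1,0}\tp(s,t))_m=(Lp((1,0),(s,t)))_m-(\partial_{s,t}\tf)_m$, and solve the twisted difference equation along the $\bA$-orbit via Proposition \ref{c_est_easy}(2); this is where the factor $(|s|+|t|)^{dr}$ and the additional loss of one power of $|m|$ come from.
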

%{eq_sum_C2}

\begin{proof}
%[Proof of Proposition \ref{l_est_case2}] 
Let us present the proof of Proposition \ref{l_est_case2} modulo certain lemmas, that are proved below.  
First consider $m\in \cC_2'(k,l)$; the arguments for $m\in \cC_2''(k,l)$ are similar. 

(i) Define $h_m$ by  \eqref{def_h_Case2}. The estimate for $|h_m|$ follows from Proposition \ref{c_est_easy} part {\it (3)}.

(ii) We start by proving estimate \eqref{est_tp_case2} for $(s,t)=(1, 0)$. This is done in Lemma \ref{l_formal_f_tildeC2}. 
Namely, we observe in that Lemma that the error term 
$\tf_m=\tp(1, 0)_m = h_{\bA m} \lambda_m^{(1)} - h_m - f_m$, vanishes for all $m$ except for those $m$ that are  lowest in norm on their $\bA$ orbit. In the latter case, we will derive from the commutation relation formula \eqref{eq_sum_C2} that we repeat here $$
 ( e(m, \akl)-1)\, \tf_{m}= \Sigma^{A}_{m} \left( Lp((1,0), (k,l) )\right) .
$$ 
After that, the right-hand side is bounded above by the norm of $Lp((1,0), (k,l)$ (because $m$ is lowest on its orbit), while the term $| e(m, \akl)-1|$ is bounded below by the Diophantine condition for the resonances. It is here that the Diophantine conditions on the resonances play a crucial role:   this condition, combined with formula \eqref{eq_sum_C2}, implies:
$$
{ |\tilde f_{m}|} \leq c \|Lp((1,0), (k,l) )  \|_r |m|^{-r+1+\tau}.
$$

(iii) Use step (ii) above to prove estimate \eqref{est_tp_case2} for all $(s,t)$.  Lemma  \ref{l_formal_p_tilde} 
derives the estimates on $(\tilde p(s,t))_m$ for any $(s,t)$ from those on $\tf_m=(\tilde p(1,0))_m$ (or on $\tg_m=(\tilde p(0,1))_m$, which will be relevant for $m\in \cC_2''(k,l)$).
We use Lemma  \ref{l_formal_p_tilde} with $\cK =c \|Lp((1,0), (k,l) )  \|_r$ and 
$\rho={-r+1+\tau}$ to get  \eqref{est_tp_case2} for all $(s,t)$. 

The arguments for $m\in \cC_2''(k,l)$ are similar:  define $h_m$ by \eqref{def_h_B} and estimate $|h_m|$ with the help of part {\it (3)} of Proposition  \ref{c_est_easy}; 
estimate $|\tg_m|:= |(\tp(0, 1)_m)|$  via $\|Lp((0,1), (k,l) )  \|_r$ with the help of Lemma \ref{l_formal_f_tildeC2}, and use it  instead of $|\tf_m|$ to get formula \eqref{est_tp_case2} for all $(s,t)$. 
\end{proof}

\bigskip 

The following lemma provides the proof for item (ii) above.  Below we write $m= \bm$ to say that $m$ is the lowest (in norm) point on its $\bA$ orbit, and $m\neq \bm$ otherwise. 
\begin{lemma}\label{l_formal_f_tildeC2} Assume that $\(a,b\)$ is an unlocked $(\ga,\tau)$-Diophantine parabolic affine action, where $a$ is step-2.  Let $m\in \cC_2(k,l)$.
Denote $f= p(1,0)$, $\tilde f_m=(\tp(1,0))_m$, $ g= p(0,1)$, $\tg_m=(\tp(0,1))_m$, %$\phi=Lp((1,0), (0,1))$, 
and let $h_m$ be as in \eqref{def_h_Case2}, \eqref{def_h_B}. 
Then there exists a constant $c=c(r, A,B)>0$ such that 
for $m= \bm$ we have:
$$
\begin{cases} 
{| \tilde f_m|} \leq c \|Lp((1,0), (k,l) )  \|_r |m|^{-r+1+\tau} & \text{if } m\in \cC_2'(k,l), \\
{ |\tilde g_m|} \leq c \|Lp((0,1), (k,l) )  \|_r |m|^{-r+1+\tau} & \text{if } m\in \cC_2''(k,l).
\end{cases}
$$
For $m\neq \bm$ we have  $\tf_m=\tg_m=0$.  
\end{lemma}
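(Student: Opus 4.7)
The plan is to compute $\tilde f_m$ and $\tilde g_m$ directly from the one-sided partial-sum definition of $h_m$, to pinpoint the single Fourier mode at which the telescoping fails (namely the lowest point $\bar m$ of the $\bar A$-orbit, or of the $\bar B$-orbit in the case $\cC_2''$), and at that mode to invoke a commutativity-driven identity that rewrites the residual obstruction $\Sigma^A_m(f)$ in terms of $\Sigma^A_m(Lp((1,0),(k,l)))$.

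First I would treat $m\in\cC_2'(k,l)$ with $m\neq\bar m$. Since $m$ is not the lowest point on its $\bar A$-orbit, both $m$ and $\bar A m$ lie in the same region ($\cM(A)$ or $\cN(A)$), so $h_m$ and $h_{\bar A m}$ are computed by the same one-sided partial sum of $f$. Using the multiplicativity $\lambda_m^{(k+1)}=\lambda_m^{(1)}\lambda_{\bar A m}^{(k)}$, an elementary telescoping gives $h_{\bar A m}\lambda_m^{(1)}-h_m=-f_m$, hence $\tilde f_m=0$. The vanishing of $\tilde g_m$ at the same mode is more subtle: commutativity implies that $\bar B$ maps $\bar A$-orbits to $\bar A$-orbits, so $h_{\bar B m}$ is again a one-sided partial sum of $f$, now along the $\bar A$-orbit of $\bar B m$; pairing the two telescopings and using the resonance $\bar A^k\bar B^l m = m$ reduces $\tilde g_m$ to a telescoping sum of Fourier coefficients of $Lp((0,1),(1,0))$ that vanishes away from switch points. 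The case $m\in\cC_2''(k,l)$, $m\neq\bar m$, is symmetric with the roles of $(A,f)$ and $(B,g)$ swapped.

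For $m=\bar m\in\cC_2'(k,l)$ the switch between $\cM(A)$ and $\cN(A)$ at $\bar A^{-1}m$ leaves a residual equal, up to a $\lambda$-twist, to the full two-sided sum $\Sigma^A_m(f)$. The heart of the proof is then the algebraic identity
$$
(e(m,\alpha_{k,l})-1)\,\tilde f_m \;=\; \Sigma^A_m\bigl(Lp((1,0),(k,l))\bigr),
$$
which I would derive by expanding $Lp((1,0),(k,l))=\partial_{1,0}p(k,l)-\partial_{k,l}f$, applying $\Sigma^A_m$, and using: (i) formal telescoping of $\Sigma^A_m\circ\partial_{1,0}$ to zero, legitimate because $m=\bar m$ makes the two-sided sum absolutely convergent by Proposition \ref{c_est_easy}(1); (ii) the resonance $\bar A^k\bar B^l m=m$ combined with the commutativity $\bar A^k\bar B^l\bar A^{k'}=\bar A^{k'}\bar A^k\bar B^l$ and with the primal commutation relation $\tilde A\beta=\tilde B\alpha$, which together yield $\Sigma^A_m(\partial_{k,l}f)=(e(m,\alpha_{k,l})-1)\,\Sigma^A_m(f)$; and finally the identification of $\tilde f_m$ at the switching point with the appropriate twist of $\Sigma^A_m(f)$.

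Once the identity is in hand, closing the estimate is routine: the Diophantine condition on resonances (Definition \ref{definition resonance}) gives $|e(m,\alpha_{k,l})-1|\ge \gamma\,|m|^{-\tau}$, which inverts the factor on the left, while Proposition \ref{c_est_easy}(3) bounds the right-hand side by $c\,\|Lp((1,0),(k,l))\|_r\,|m|^{-r+1}$; dividing gives the claimed bound on $|\tilde f_m|$. The analogous bound on $|\tilde g_m|$ for $m\in\cC_2''(k,l)$ follows by the symmetric construction. The main obstacle is establishing the algebraic identity displayed above: it forces us to reconcile the one-sided construction of $h$ with the two-sided obstruction $\Sigma^A$, simultaneously exploiting the resonance, the commutativity of the affine generators, and the cocycle defect $Lp$. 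Once that identity is in place, every remaining step is standard.
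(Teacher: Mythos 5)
Your handling of the substantive case $m=\bm$ is exactly the paper's argument: you apply $\Sigma^A_m$ to $Lp((1,0),(k,l))=\partial_{1,0}p(k,l)-\partial_{k,l}f$, use the resonance $\bA^k\bB^lm=m$ to write $(\partial_{k,l}f)_{m'}=f_{m'}(e(m',\akl)-1)$ with $e(\cdot,\akl)$ constant along the $\bA$-orbit, telescope away the $\partial_{1,0}$-term (legitimate since $m=\bm$ gives absolute convergence), identify the result with $\tf_m$, and then divide by $e(m,\akl)-1$ using the Diophantine condition on resonances while bounding the right-hand side by part (3) of Proposition \ref{c_est_easy}; this is precisely identity \eqref{eq_sum_C2} and the estimates of the paper, and your symmetric treatment of $\cC_2''$ with $(B,g)$ also matches.

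Two caveats. First, for $m\in\cC_2'(k,l)$, $m\ne\bm$, you also try to establish $\tg_m=0$, and the mechanism you sketch does not work: after pairing the two telescopings you are left with sums of Fourier coefficients of the cocycle defect $Lp((1,0),(0,1))$ along the orbit, and these do not vanish away from switch points (they are quadratically small, not zero). This cross-vanishing is in fact not true in general, is not proved in the paper, and is not needed: the vanishing claim is meant sub-case by sub-case ($\tf$ on $\cC_2'$, $\tg$ on $\cC_2''$), and the downstream Lemma \ref{l_formal_p_tilde} only requires a bound on one of $(\tp(1,0))_m$, $(\tp(0,1))_m$ on the invariant set, producing the estimate for the other generator from that. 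Second, a bookkeeping slip: if your telescoping yields $h_{\bA m}\lambda_m^{(1)}-h_m=-f_m$, then the definition $\tf_m=h_{\bA m}\lambda_m^{(1)}-h_m-f_m$ gives $\tf_m=-2f_m$, not $0$; exact vanishing off the switch point requires fixing the sign/weight convention in \eqref{def_h_Case2} (the paper is equally loose here, and only the value at the switch point matters for the final bound), but as written your two displayed facts are mutually inconsistent.
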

 
\begin{proof} 
Assume that $m\in \cC_2'(k,l)$, the case $m\in \cC_2''(k,l)$ being similar. 
By the definition of $h_m$, we have:
$\tf_m = h_{\bA m} \lambda_m^{(1)} - h_m - f_m$, and formally we can express:
$$
\tf_m=
\begin{cases}
\Sigma^{A}_m (f), & m =\bm, \\
0, & \text{otherwise}.
\end{cases}
$$
Assume that  $m= \bm$.
Using the definition of $h_m$,
we get:
$$
\begin{aligned}
(Lp((1,0), (k,l) ))_m & = (\partial_{1,0} p(k,l) )_m  - (\partial_{k,l} f )_m   \\
& = (\partial_{1,0} p(k,l) )_m  -  ( e(m, \akl)-1) f_m   .
\end{aligned}
$$
Note that, by the commutativity of $a^kb^l$ and $a$, we get the relation: $A\akl=\akl$. Therefore, the term $ ( e(m, \akl)-1)$ is not changed when $m$ moves along the $\bA$-orbit.

Take the weighted sum $\Sigma^A_m$ on both sides and recall that for $m=\bm$ we have $ \Sigma^{A}_m (f) =\tf_m$:
\begin{equation}\label{eq_sum_C2}
\Sigma^{A}_m \left( Lp((1,0), (k,l) )\right) =  ( e(m, \akl)-1) \, \Sigma^{A}_m(f) =  ( e(m, \akl)-1)\, \tf_m.
\end{equation}

Since $Lp((1,0), (s,t))\in C^r$, we can use Proposition \ref{c_est_easy}, which gives us:
$$
|\Sigma^{A}_m (Lp((1,0), (k,l) ))|\leq  \|Lp((1,0), (k,l) ) \|_r |m|^{-r+1} .
$$ 
Using the Diophantine assumption on resonances, we conclude:
$$
|\tf_m|\leq \|Lp((1,0), (k,l) ) \|_r |m|^{-r+1} | e(m, \alpha)-1|^{-1} \leq  \|Lp((1,0), (k,l) ) \|_r |m|^{-r+1+\tau}. 
$$ \end{proof}

\bigskip

The following lemma derives the estimates on $(\tilde p(s,t))_m$ for any $(s,t)$ from those on $\tf_m=(\tilde p(1,0))_m$ or $\tg_m=(\tilde p(0,1))_m$, providing the details for item (iii).

%Here we show that, for $m\in \cC_2 \cup \cC_3$, the crucial step is to obtain the estimates for $\tf_m=(\tilde p(1,0))_m$ or $\tg_m=(\tilde p(0,1))_m$. When this is done, the commutativity of the action implies the desired estimates for $(\tp(s,t))_m$ for all $(s,t)\in  \ZZ^2 $.
\begin{lemma}\label{l_formal_p_tilde}  Assume that $\(a,b\)$ is an unlocked parabolic affine action, and $a$ is step-2. 
% Let  $m\in \cC_2(k,l)$ for some $(k,l) \in \Z^2 \setminus \{ 0\}$, $m=\bm$; let $0\leq \rho \leq r$.
Consider  a map $p: \ZZ^2 \mapsto C^\infty(\mathbb T^d)$, and let $\{h_m\}_{m\in \mathcal U}$ be given, where $\mathcal U$ is some $\(\bar A, \bar B\)$-invariant set.  Define for every $(s,t)\in  \ZZ^2 $,  $\{(\tilde p(s,t))_m\}_{m\in \mathbb Z}$ via 
\begin{equation*}
(\tp(s,t))_m = h_{\bA^s\bB^tm} e(\bA^s\bB^tm,\a_{s,t})-h_m - (p(s,t))_m.
\end{equation*} 
Suppose that there exists $\cK>0$ and $0< \rho\leq r$, %\color{red} (dependence of K and rho??) \color{black} 
such that  we have 
$$
\text{either} \quad |(\tp(1,0))_m| \leq \cK  |m|^{-\rho} \text{ for all } m\in \mathcal U, \quad \text{or}  \quad |(\tp(0,1))_m| \leq \cK  |m|^{-\rho}\text{ for all } m\in \mathcal U.
$$
Then there exists a constant $c=c(A,B)>0$ such that for any $(s,t)\in \Z^2$ we have:
\begin{equation}\label{est_tp_C3}
 |(\tp(s,t))_m |\le c (|s|+|t|)^{d r} 
 \left( \max  \{ \|Lp((s, t),(1,0))\|_r , \|Lp((s, t),(0,1))\|_r \} +\cK \right) \, |m|^{-\rho+1}.
\end{equation}
\end{lemma}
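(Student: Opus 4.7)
My plan is to derive, via the commutator relation between $\partial_{1,0}$ and $\partial_{s,t}$, a cohomological equation for $\zeta_m:=\tp(s,t)_m$ of the form treated in Proposition \ref{c_est_easy}(2), and then invoke that proposition. Without loss of generality I assume the first alternative of the hypothesis, namely $|\tp(1,0)_m|\le\cK|m|^{-\rho}$ for all $m\in\mathcal U$; the case of $\tp(0,1)$ is entirely symmetric, with the roles of $a$ and $b$ exchanged and $Lp((s,t),(1,0))$ replaced by $Lp((s,t),(0,1))$, so that the final bound emerges from taking the maximum of these two $Lp$ norms.

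The starting point is the commutativity of the coboundary operators $\partial_{1,0}\partial_{s,t}h=\partial_{s,t}\partial_{1,0}h$, which holds since $a$ and $a^sb^t$ commute as affine maps. Applying this to the identity $\tp(r,q)=\partial_{r,q}h-p(r,q)$ yields
$$
\partial_{1,0}\tp(s,t)-\partial_{s,t}\tp(1,0)=-Lp((1,0),(s,t))=Lp((s,t),(1,0)),
$$
which at Fourier index $m\in\mathcal U$ reads
$$
\zeta_{\bA m}\la_m-\zeta_m=\xi_m,\qquad\xi_m:=\tp(1,0)_{\bA^s\bB^t m}\,e(\bA^s\bB^t m,\a_{s,t})-\tp(1,0)_m+Lp((s,t),(1,0))_m.
$$
This is precisely the equation treated in Proposition \ref{c_est_easy}(2); it remains to bound $|\xi_m|$ uniformly in $m$.

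To estimate $|\xi_m|$, I would use the polynomial growth of the unipotent matrices $\bA,\bB$: both $\bA^{\pm s}\bB^{\pm t}$ have operator norm bounded by $C(A,B)(1+|s|+|t|)^d$, so $|\bA^s\bB^t m|\ge c(A,B)(1+|s|+|t|)^{-d}|m|$. Combining this with the hypothesis on $\tp(1,0)$, the standard Fourier bound $|Lp((s,t),(1,0))_m|\le\|Lp((s,t),(1,0))\|_r|m|^{-r}$, and the assumption $\rho\le r$, one obtains
$$
|\xi_m|\le C(A,B)(1+|s|+|t|)^{dr}\bigl(\cK+\|Lp((s,t),(1,0))\|_r\bigr)|m|^{-\rho}.
$$
Since $\mathcal U$ is $\bA$-invariant and (in the intended applications, e.g.\ $\mathcal U=\cC_2'(k,l)$ or $\cC_3$) satisfies $\bA m\neq m$ on $\mathcal U$, so that the one-sided telescopic summation of Proposition \ref{c_est_easy}(1)--(2) is available, Proposition \ref{c_est_easy}(2) applied to the equation $\zeta_{\bA m}\la_m-\zeta_m=\xi_m$ produces
$$
|\tp(s,t)_m|\le c(A,B)(1+|s|+|t|)^{dr}\bigl(\cK+\|Lp((s,t),(1,0))\|_r\bigr)|m|^{-\rho+1},
$$
which is the desired bound. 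I do not anticipate any serious obstacle; the only delicate point is the careful bookkeeping of the polynomial factor $(1+|s|+|t|)^{dr}$, which simply records the at-most-polynomial loss incurred when $\bB^{-t}\bA^{-s}$ is applied to $m$.
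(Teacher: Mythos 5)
Your proposal is correct and follows essentially the same route as the paper's own proof: commute the coboundary operators to get $\partial_{1,0}\tp(s,t)=\partial_{s,t}\tp(1,0)\pm Lp$, bound the right-hand side using the hypothesis on $\tp(1,0)$, the $C^r$ bound on $Lp$, and the polynomial growth $\|\bA^{-s}\bB^{-t}\|\le C(|s|+|t|)^d$, and then conclude by the telescoping estimate of Proposition \ref{c_est_easy}(2). The only differences are cosmetic (your signs are the cleaner version of the paper's computation, $\la_m$ should be $\la_m^{(1)}=e(\bA m,\a)$, and your remark that $\bA m\neq m$ on $\mathcal U$ is needed for Proposition \ref{c_est_easy} is exactly the implicit hypothesis under which the paper applies the lemma).
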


\begin{proof} Suppose first that for all $m$ in an $\bA$-invariant set $\mathcal U$ we have $|(\tp(1,0))_m| \leq \cK  |m|^{-\rho}$. 
Denote  $f_m:= (p(1,0))_m$ and  $\tf_m:= (\tp(1,0))_m$ for brevity.

For the sequence $\{ h_m\}$ we formally have: 
$$\partial_{s,t} (\partial_{k,l}  h )_m=\partial_{k,l} (\partial_{s,t}  h)_m.
$$ 
To see this notice that
any smooth function $H$ one can verify, using only the commutativity relation $ab=ba$, that $\partial_{s,t} \partial_{k,l} H =\partial_{k,l} \partial_{s,t} H $. 
Hence, under the commutativity condition, the Fourier coefficients of a smooth function $H$ satisfy for each $m$: 
$\partial_{s,t} (\partial_{k,l} H)_m=\partial_{k,l} (\partial_{s,t} H)_m$. This implies the desired relation for the sequence of numbers $\{ h_m\}$
(this relation can be also verified directly).

By the definition of the set of numbers $(h_m)$,
$$
\begin{aligned}
(Lp((1,0), (s,t)))_m &=(\partial_{1,0} p(s,t))_m - (\partial_{s,t} f )_m  \\
&=(\partial_{1,0} p(s,t))_m - (\partial_{s,t} \partial_{1,0} h -\partial_{s,t} \tf )_m  \\
&=    ( \partial_{1,0}  \,       ( p(s,t) - \partial_{s,t}  h  )    )_m  +  (\partial_{s,t} \tf )_m \\
&= (\partial_{1,0} \, \tilde p(s,t))_m    +  (\partial_{s,t} \tf )_m      .
\end{aligned}
$$
Hence, 
$$ 
(\partial_{1,0} \, \tilde p(s,t))_m  =  (Lp((1,0), (s,t)))_m- (\partial_{s,t} \tf )_m .
$$
Since $Lp((1,0), (s,t))\in C^r$ and $\rho \leq r$, we have 
$$|(Lp((1,0), (s,t)))_m|\leq \|Lp((1,0), (s,t) ) \|_r |m|^{-\rho}.
$$
Let us estimate $(\partial_{s,t} \tf )_m= \tf_{\bA^{s}\bB^{t}  m} -\tf_m$.  
To bound $|\tf_{\bA^{s}\bB^{t}  m}|$ notice that, since the linear part of the action $\(a, b\)$ is parabolic, for some constant $c_0=c_0(A,B)$ we have:
$$
\| \bA^{-s} \bB^{-t} \|\leq c_0 (|s|+|t|)^d.
$$  
Hence, for any $(s, t)$ we  have:
$$
|m| \leq \| \bA^{-s} \bB^{-t}  \|\, | \bA^{s}\bB^{t} m |  \leq c_0  (|s|+|t|)^d |\bA^{s}\bB^{t}  m|,
$$ 
and thus $|\bA^{s}\bB^{t}  m|^{-\rho} \leq c_1 (|s|+|t|)^{dr} |m|^{-\rho}$, and therefore 
$$
\begin{aligned}
|(\partial_{s,t} \tf )_m |\leq & |\tf_{\bA^s\bB^t m}| + |\tf_{m}| \leq   \cK  ( |\bA^s\bB^t m|^{-\rho} +|m|^{-\rho}) \\
&\leq c_2 \cK   (|s|+|t|)^{d r}   |m|^{-\rho}
\end{aligned}
$$
for some  $c_1, c_2 >0$ only depending on $(A,B)$. Finally, we obtain:
$$ 
\begin{aligned}
|(\partial_{1,0} \, \tilde p(s,t))_m|  \leq &| (Lp((1,0), (s,t)))_m+ (\partial_{s,t} \tf )_m|   \\
\leq &\left(   \|Lp((1,0), (s,t) ) \|_r + c_2 \cK  (|s|+|t|)^{d r} \right)  \, |m|^{-\rho} .
\end{aligned}
$$
By Proposition \ref{c_est_easy} (2),  this implies that
$$ 
\begin{aligned}
|(\tp(s,t))_m|  \leq  &c_3   \left(   \|Lp((1,0), (s,t) ) \|_r  +c_1 \cK  (|s|+|t|)^{d r} \right) \, |m|^{-\rho+1} \\
 \leq &c  (|s|+|t|)^{d r}  \left(   \|Lp((1,0), (s,t) ) \|_r  + \cK   \right) \, |m|^{-\rho+1}
\end{aligned}
$$
for some $c=c(A,B)>0$. The case  $|(\tp(0,1))_m| \leq \cK  |m|^{-\rho}$  is similar, we just have to use $\partial_{0,1}$,  $g_m:= (p(0,1))_m$ and $\tg_m:= (\tp(0,1))_m$ instead of $\partial_{1,0}$, $f_m$ and $\tf_m$, respectively.
 \end{proof}

\bigskip

\color{black}

%%%%%%%%%%%%%%%%%%%%%%%%%%%%%
%%%%%%%%%%%%%%%%%%%%%%%%%%%%%
%%%%%%%%%%%%%%%%%%%%%%%%%%%%%
%%%%%%%%%%%%%%%%%%%%%%%%%%%%%
%%%%%%%%%%%%%%%%%%%%%%%%%%%%%
%%%%%%%%%%%%%%%%%%%%%%%%%%%%%%%%
%%%%%%%%%%%%.  SECTION CASE 3
\subsubsection{Proof of Proposition \ref{prop_main_estimate_detail} in the case $m\in\cC_3$} \label{s_case3}

%Starting from here, the rest of  \S{Fourier} is devoted to the proof of Proposition \ref{prop_main_estimate_detail}  in the case $m\in\cC_3$.
%%%%%%%%%%%%%%%%%%%%%%%%%%%%%
%%%%%%%%%%%%%%%%%%%%%%% Extra notations

%%%%%%%%%%%%%%%%%%%%%%%%%%%%%
%%%%%%%%%%%%%%%%%%%%% Formal definition

Let $m\in \cC_3$; denote $f=p(0,1)$. 
%As a motivation, think of $C^{\infty}$-functions $h$ and $f$ satisfying $h\circ a-h=f$. Then the Fourier coefficients are related by 
%$$
%(h\circ a-h)_m ={h_{\bA m}} \la_m^{(1)}-h_m = f_m.
%$$
%Iterating this equality by $\bA$, one obtains an expression for $h_m$ as in \eqref{def_h}.  
We define $ h_m$ by
\begin{equation}\label{def_h}
 h_m=
\begin{cases}
\Sigma^{+,A}_m (f), & m \in \cM(A), \\
\Sigma^{-,A}_m (f), & m\in \cN(A).
\end{cases}
\end{equation}

\bigskip

\begin{proposition}\label{prop_C3}
Assume that $\(a,b\)$ is unlocked step-$S$ parabolic affine action, where $a$ is step-2.   
Let a map $p: \ZZ^2 \mapsto C^\infty(\mathbb T^d)$ be given. For $m\in \cC_3$, define  $h_m$ by \eqref{def_h}. 

There exists a constant 
$\kappa=\kappa(\gamma , r, A,B)$ such that
\begin{equation}\label{est_h_C3}
 |h_m| \le  \kappa \|f\|_r  | m |^{-r+1}, 
\end{equation}
and, defining for each $(s,t)$ the number  $(\tp(s,t))_m$  as in formula \eqref{def_p_tilde}, i.e.,
\begin{equation*}\label{C3_def_tp}
(\tp(s,t))_m = h_{\bA^s\bB^tm} e(\bA^s\bB^tm,\a_{s,t})-h_m - (p(s,t))_m,
\end{equation*}
for $\eta=0.99/S$, for any $ r>8/ \eta $,  we have the estimate:
\begin{equation}\label{est_tp_C33}
 |\tp(s,t)_m|\le \kappa (|s|+|t|)^{d r}   \max \{  \|Lp((1,0),(0,1))\|_r , \|Lp((s, t),(1,0))\|_r \}  \, |m|^{-\eta r+ 9}.%\quad \text{where} \quad d_1= d(\eta r -{\blue 8}).
\end{equation}
\end{proposition}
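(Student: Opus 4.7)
The plan is to establish the two estimates separately. The bound \eqref{est_h_C3} on $|h_m|$ is essentially immediate: since $f \in C^r$, its Fourier coefficients satisfy $|f_n| \leq \|f\|_r |n|^{-r}$, and the definition \eqref{def_h} expresses $h_m$ as a one-sided partial sum along the $\bA$-orbit of $m$, taken in the ``good'' direction dictated by whether $m \in \cM(A)$ or $m \in \cN(A)$. Since $\bA$ is step-2, Proposition \ref{c_est_easy}(3) directly yields $|h_m| \le \kappa \|f\|_r |m|^{-r+1}$.

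For \eqref{est_tp_C33}, I first handle the case $(s,t) = (1,0)$. By construction of $h_m$ via a one-sided sum in the good direction, a telescoping argument shows that $(\tp(1,0))_m = 0$ for every $m$ which is not the lowest point $\bm$ on its $\bA$-orbit. For $m = \bm$ the error equals, up to sign, the full two-sided obstruction $\Sigma^A_{\bm}(f)$. The parabolic higher rank trick then converts this obstruction into a double sum involving $\phi := Lp((1,0),(0,1))$. Starting from the identity $\phi = \partial_{1,0} g - \partial_{0,1} f$ and taking Fourier coefficients at $\bA^k \bB^l m$, summing in $k \in \Z$ with weights $\lambda^{(k)}_m$ causes the $\partial_{1,0} g$ contribution to telescope to zero, while summing the remaining identity in $l$ in one direction with weights $\mu^{(l)}_m$ makes the $\partial_{0,1} f$ contribution telescope to $\pm \Sigma^A_{\bm}(f)$. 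This yields the representation
$$
\Sigma^A_{\bm}(f) = \pm \sum_{l \ge 0} \sum_{k \in \Z} \phi_{\bA^k \bB^l m}\, \lambda^{(k)}_m\, \mu^{(l)}_m,
$$
together with the analogous expression for $l \le -1$. Proposition \ref{lemma.double.sums.case3} guarantees that at least one of the two choices of sign of $l$ delivers $\sum_{k,l} |\bA^k \bB^l m|^{-r} \leq c\, |m|^{-\eta r + 8}$. Combined with $|\phi_n| \leq \|\phi\|_r |n|^{-r}$, this gives
$$
|(\tp(1,0))_{\bm}| \;\leq\; \kappa \, \|Lp((1,0),(0,1))\|_r \, |\bm|^{-\eta r + 8},
$$
which, together with the vanishing at $m \ne \bm$, establishes \eqref{est_tp_C33} for $(s,t)=(1,0)$.

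For general $(s,t)$, I would apply Lemma \ref{l_formal_p_tilde} with the set $\mathcal U$ being $\cC_3$, taking $\mathcal K = \kappa \|Lp((1,0),(0,1))\|_r$ and $\rho = \eta r - 8$. That lemma upgrades the pointwise control of $(\tp(1,0))_m$ to a control of $(\tp(s,t))_m$, at the cost of an additive $\|Lp((s,t),(1,0))\|_r$ contribution and a polynomial factor $(|s|+|t|)^{dr}$, which yields the claimed estimate. The main obstacle is the passage from the single-sum obstruction to the double-sum representation and the subsequent control of the latter: unlike in the partially hyperbolic setting of \cite{DK}, convergence of the inner $k$-sum is not provided by geometric growth but relies on the step-2 structure of $\bA$, and the choice of ``good direction'' in $l$ together with the resulting orbit decay requires the full strength of Proposition \ref{lemma.double.sums.case3}, which itself depends essentially on $\(A,B\)$ being unlocked.
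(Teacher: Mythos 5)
Your proposal is correct and follows essentially the same route as the paper: the bound on $|h_m|$ via Proposition \ref{c_est_easy}(3), the vanishing of $(\tp(1,0))_m$ off the lowest point $\bm$ and its identification with the full obstruction $\Sigma^A_{\bm}(f)$, the conversion of that obstruction into the double sums of $\phi=Lp((1,0),(0,1))$ as in \eqref{formula_Sigma} estimated by Proposition \ref{lemma.double.sums.case3}, and finally Lemma \ref{l_formal_p_tilde} with $\cK=c\|Lp((1,0),(0,1))\|_r$ and $\rho=\eta r-8$ to pass to general $(s,t)$. The only difference is that you sketch the telescoping derivation of \eqref{formula_Sigma}, which the paper asserts directly.
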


%%%%%%%%%%%%%%%%%%%%%%%%%%%%%%%%
%%%%%%%%%%%%%.   Estimates by $\eps$, Case 3
%%%%%%%%%%%%%%%%%%%%%%%%%%%%%%%%

\begin{proof} The proof of this proposition is done in 3 steps similarly to that of Proposition \ref{l_est_case2}. Steps (i) and  (iii) of the proof rely on the same lemmas (applied with slightly different constants). The important difference lies in the proof of step (ii) that relies on Proposition  \ref{lemma.double.sums.case3}  on the control of the double sums along the dual orbit of a lowest point on an $\bar A$-orbit. Here are the steps:

(i) Estimate \eqref{est_h_C3}  follows from part {\it (3)} of Proposition \ref{c_est_easy}. 

%The proof of estimate  \eqref{est_tp_C3}  is divided into two cases:  

(ii) Based on Proposition \ref{lemma.double.sums.case3}, we will show the following: 

\begin{lemma}\label{l_formal_f_tilde} Assume that $\(a,b\)$ is unlocked step-$S$ parabolic affine action, where $a$ is step-2, and let $m\in \cC_3$.  As before, denote $f= p(1,0)$, $\tilde f_m=(\tp(1,0))_m$,  and let $h_m$ be as in \eqref{def_h}. 
Then 
 there exists a constant $c=c(r, A,B)>0$ such that 
for $\eta=0.99/S$, if $m= \bm$ (i.e.,  $m$ is the lowest point on its $\bA$ orbit), we have:
\begin{equation}\label{est_f_tildeC3}
|\tilde f_m| \leq c  \|Lp((1,0),(0,1))\|_r  \, |m|^{-\eta r +  8};  
\end{equation}
if $m\neq \bm$, then $\tf_m=0$. 
\end{lemma}
(iii) Lemma \ref{l_formal_f_tilde}, followed by Lemma \ref{l_formal_p_tilde} with $\cK=c \|Lp((1,0),(0,1))\|_r$ and $\rho={-\eta r+ 8}$, implies \eqref{est_tp_C33} for arbitrary $(s,t)$.
\end{proof}
Thus it only remains now to show Lemma \ref{l_formal_f_tilde}.

%%%%%%%%%%%%%%%%%%%%%%%%%%%%%%%%
%%%%%%%%%%%%%.   Control of the double sums
%%%%%%%%%%%%%%%%%%%%%%%%%%%%%%%%

\begin{proof}[Proof of Lemma \ref{l_formal_f_tilde}.]   By the definition of $h_m$ we have: 
\begin{equation}\label{def_f_tilde}
{ \tilde f_m} =h_{\bA m} e(\bA m,\a)-h_m - { f_m }= 
\begin{cases}
-\Sigma^A_m (f),  & m= \bm ,\\
0, & m\neq \bm. \\
\end{cases}
\end{equation}
Assume that $m= \bm$. 
 Denote $\phi:=Lp((1,0), (0,1))$ for brevity.
Note that for any  $m\in \cC_3$
the definition of $\phi$ implies directly that the  following holds in terms of the formal power series:  
\begin{equation}\label{formula_Sigma}
\Sigma_m^A (f) =
\sum_{l\geq 0}  \sum_{k\in \Z} \phi_{\bar A^{k}\bar B^l m}\la^{(k)}_{m}\mu^{(l)}_{ m} = 
-\sum_{l\leq -1}  \sum_{k\in \Z}  \phi_{\bar A^{k}\bar B^l m}\la^{(k)}_{ m}\mu^{(l)}_{ m} .
\end{equation}

Since $\phi\in C^r$, for each $m$, its $m$-th Fourier coefficient satisfies $|\phi_{m}| \leq  \|\phi \|_r |m |^{-r}$. Hence, 
$$
| \Sigma_m^A (f) | \leq
\|\phi \|_r  \sum_{l\geq 0}  \sum_{k\in \Z} |\bar A^{k}\bar B^l m |^{-r} = 
\|\phi \|_r  \sum_{l\leq -1}  \sum_{k\in \Z} |\bar A^{k}\bar B^l m |^{-r} .
$$ 
The desired estimate  \eqref{est_f_tildeC3} in this case follows directly  from the estimate of the above double sums, namely it is proved in Proposition \ref{lemma.double.sums.case3} that for $r$ sufficiently large
there exists a constant $c=c(r, A,B)>0$ such that at least one of the following holds:
$$
\sum_{k\in \Z} \sum_{l\geq 0} |\bA^k \bB^l m |^{-r} \leq c |m|^{-\eta r + 8},
\quad
\text{ } 
\quad
\sum_{k\in \Z} \sum_{l< 0} |\bA^k \bB^l m |^{-r} \leq c |m |^{-\eta r +  8}.
$$\end{proof}
%%%%%%%%%%%%%%%%%%%%%%%%%%%%%%%
%%%%%%%%%%%%%%%%%%%%%%%%%%%%%%%
%%%%%%%%%%%%%%%%%%%%%%%%%%%%%%%
%%%%%%%%%%%%%%%%%%%%%%%%%%%%%%%   \subsection{Technical estimates }
%%%%%%%%%%%%%%%%%%%%%%%%%%%%%%%
%%%%%%%%%%%%%%%%%%%%%%%%%%%%%%%
%%%%%%%%%%%%%%%%%%%%%%%%%%%%%%%

\subsection{Application of Proposition \ref{prop_main_estimate} to truncated functions and vector fields}\label{s_end_prop}

 %In this section we apply Proposition \ref{prop_main_estimate} to truncated functions and vector fields. 
 The application of Proposition \ref{prop_main_estimate}  to truncated functions is direct. Application to truncated vector fields requires an iteration process as a consequence of the fact that the linear part of the unperturbed action is parabolic. Similar iterative procedure has been used before in \cite{D}, \cite{DK2}.

\subsubsection{Truncated functions}
For a general smooth function $v$ on $\mathbb T^d$ (or a vector field) and for $N\in \mathbb N$ the truncation $T_Nv$ is obtained by cutting off the Fourier series of $v$  with index $\|n\|\ge N$. The residue operator is defined as $R_N:=Id-T_N$. 

For a map  $p: \mathbb Z^2\to C^\infty(\mathbb T^d)$, and $N\in \mathbb N$, define the truncation $T_Np$ by  $(T_Np)(s,t)= T_Np(s,t)$, and   $R_Np:= p-T_Np$.

Then the operators $T_N$ and the residue operators $R_N:=Id-T_N$ satisfy the following estimates for all $N\in \mathbb N$ and all $0<r\le r'$:

\begin{equation}\label{truncation_estimates}
\begin{aligned}
\|T_Nv\|_{r'}&\le C_{r, r'} N^{r'-r+d}\|v\|_r, \\
\|R_Nv\|_{r}&\le C_{r, r'} N^{r-r'+d}\|v\|_{r'}.
\end{aligned}
\end{equation}
Observe that  $ave(T_Nv)= ave(v)$.

%Let  
%\begin{equation}\label{R'}
%\res_N= \{(k, l): C(|k|+|l|)\le N\}.
%\end{equation} 

%We will use the following notation for any $p:\ZZ^2 \to C^\infty(\TT^d)$:  

 %\begin{align*}\|p\|_{r, N}&:=\max\{  \| p(k,l)\|_{r}: (k, l)\in \res_N\},\\
 %\|Lp(*, (s,t))\|_{r, N}&:=\max\{  \|Lp((k, l), (s,t)\|_{r}:\, (k,l)\in \res_N\}.
%\end{align*}

\color{black}

The following statement is a direct application of our main technical result,  Proposition \ref{prop_main_estimate}, to the truncations. 

\begin{PProp}\label{...}
Let $\(a,b\)$ be an unlocked $(\ga,\tau)$-Diophantine parabolic step-$S$ affine action, where $a$ is step-2, and let $r>0$.  
There exist constants $\etan=\etan (S)>0$,  $\sigman=\sigman(\(a,b\))$ and  $\kappa=\kappa(r,\(a,b\))$ such that 
 for any $p: \ZZ^2 \mapsto C^\infty(\mathbb T^d)$ there exists $V: \mathbb Z^2\to \mathbb R$ such that for every fixed $N\in \mathbb N$ and the truncation $q=T_Np$, there exist $h\in C^\infty(\mathbb T^d)$ and  $\tilde q:\mathbb Z^2\to C^\infty(\mathbb T^d)$ satisfying  
$$q(s, t)= \partial_{s, t} h +\tilde q(s,t) + V (s, t), \,\, (s,t)\in \mathbb Z^2,$$
and the following estimates hold:
\begin{equation}\label{hqV}
 \begin{aligned}
&\|h\|_{r}\leq  \kappa \|q\|_{r+\sigman},\\
% \, \max\{  \| q(k,l)\|_{r+\sigma}: (k, l)\in \mathcal R'_N\},\\
&\|\tq(s,t)\|_r\leq \|q(s,t)\|_r+  \kappa(|s|+|t|)^{rd}\|q\|_{r+\sigma},\\
&\|\tq(s,t)\|_{r} \leq  \kappa (|s|+|t|)^{d \rn} \|Lq\|_{\etan r+\sigman,N},
\\
& V(s,t)=ave(p(s, t))=ave (q(s, t)),
\end{aligned}
\end{equation}
where $\rn:= \etan r+\sigman$.
%where $d_1= d(\etan r -8)$. \color{black}
\end{PProp}

Note that in the above proposition,  $h$ and $\tilde q$ depend on $N$. 
 
%\color{black}

\color{black}

\subsubsection{Truncated vector fields}\label{vf}
For a map  $\bfp: \mathbb Z^2\to  \rm Vect^\infty(\mathbb T^d)$ and $N\in \mathbb N$, define the truncation $T_N \bfp$ by  $(T_N \bfp)(s,t)= T_N \bfp(s,t)$, and   $R_N \bfp:= \bfp-T_N \bfp$. We have the same estimates for the operators $T_N$ and $R_N$ on vector fields as in \eqref{truncation_estimates}.

For  $\bfh\in \rm Vect^\infty(M)$  we define  
$$
D_{s,t}: \bfh\mapsto \bfh\circ \rho(s,t)-\rho_0(s,t)\bfh,
$$ 
where $(s,t)\in \mathbb Z^2$. The operator $\bfL$ is then defined by 
\begin{equation}\label{linL}
(\bfL{ \bfp})((k,l), (s,t))= D_{s, t}\bfp(k,l) -D_{k,l}\bfp{(s,t) }
\end{equation}
for any $(k, l), (s,t) \in \mathbb Z^2$.

%We will use essentially the same notation for vector field valued maps $\bfp$ and operators $\bfL\bfp$,  as introduced for functions in \S{p}. 

Recall that the set $\res_N$ is defined in \eqref{R'}. For $\bfp: \ZZ^2\to\rm Vect^\infty(\mathbb T^d)$ and $r\ge 0$ let
\begin{align*}\|\bfp\|_{r}&:=\max\{  \| \bfp(1,0)\|_{r}, \bfp(0,1)\|_{r}\}, \\
 \|\bfL\bfp\|_{r, N}&:=\max\{  \|\bfL\bfp((1,0), (k, l))\|_{r}, \|\bfL\bfp((0,1), (k, l))\|_{r}:\, (k,l)\in \res_N\}.
  %\|\bfL\bfp(*, *)\|_{r, N}&:=\max\{  \|\bfL\bfp((k, l), (s,t)\|_{r}:\, (k,l), (s,t)\in \res_N\}.\color{black}
\end{align*}

%The proposition below is the counterpart of Proposition \label{...} for truncated vector fields. It follows from the parabolicity of  $\(a, b\)$,  via an inductive application of  \label{...}.

%{\bpurple Proposition D should have the same layout as Proposition C}
\begin{PProp}\label{vector fields}
Let $\(a,b\)$ be an unlocked step-$S$ parabolic  $(\ga,\tau)$-Diophantine  affine action, where $a$ is step-2, and let $r>0$.  There exist constants $\etan=\etan (S)>0$,  $\sigman=\sigman(\(a,b\))$ and  $\kappa=\kappa(r,\(a,b\))$ such that 
 for any $\bfp: \ZZ^2 \mapsto \rm Vect^\infty(\mathbb T^d)$ there exists $\bfV: \mathbb Z^2\to \mathbb R^d$ such that for every fixed $N\in \mathbb N$ and the truncation $q=T_Np$, there exist $\bfh\in \rm Vect^\infty(\mathbb T^d)$ and  $\tilde{\bfq}:\mathbb Z^2\to \rm Vect^\infty(\mathbb T^d)$ satisfying
$$\bfq(s,t)= D_{s,t}\bfh+\tilde{\bfq}(s,t)+ \bfV(s,t),  \,\, (s,t)\in \mathbb Z^2,$$
%and the following estimates hold for $r\ge 0$ and a constant $\kappa$ depending on $r$ and the action $\(a, b\)$:
and the following estimates hold
\begin{equation}\label{hqVVV}
 \begin{aligned}
&\|\bfh\|_{r}\leq  \kappa \|\bfq\|_{r+\sigman},\\
% \, \max\{  \| q(k,l)\|_{r+\sigma}: (k, l)\in \mathcal R'_N\},\\
&\|\tilde{\bfq}(s,t)\|_r\leq \|\bfq(s,t)\|_r+  \kappa(|s|+|t|)^{rd}\|\bfq\|_{r+\sigma},\\
&\tilde{\bfq}(s,t)\|_{r} \leq  \kappa (|s|+|t|)^{d \rn} \|\bfL\bfq\|_{\etan r+\sigman,N},
\\
&\|\bfV(1,0)\|\le \|\bfp(1,0)\|_0;\, \,  \|\bfV(0,1)\|\le \|\bfp(0,1)\|_0,
\end{aligned}
\end{equation}
where $\rn:= \etan r+\sigman$.
%where $d_1= d(\etan r -8)$. \color{black}
\end{PProp}

Note that in the above Proposition,  $\bfh$ and $\tilde{\bfq}$ depend on $N$.

\begin{proof}

Since $A$ and $B$ are commuting unipotent matrices, there exists a basis in $\mathbb R^d$ in which both of them are upper triangular. We choose this basis to represent $\bff$, $\bfg$ and $\bfh$ in coordinate form. Also, we use this basis to define the norm of an arbitrary $\bfh$: if $\bfh=(h_1, \dots, h_d)$ in the chosen coordinates, then we define $\|\bfh\|_r=\max\{\|h_i\|_r: \, i=1, \dots, d\}$. For an element $(s,t)\in \ZZ^2$, let $\mathcal A^{(s,t)}$ denote the matrix $A^sB^t$ in the chosen basis.   Notice that the matrix $\mathcal A^{(s,t)}$ is upper triangular. Moreover, it has a polynomial growth of coefficients with respect to $(s, t)$, so we can assume that any element of the matrix $\mathcal A^{(s,t)}$ has size at most $C(|s|+|t|)^S$, where $C$ is a constant depending on  $A$ and $B$,  and $S$ is the step of the action. 

Recall that in the beginning of this section,  for $(s,t)\in \ZZ^2$, we defined  the operator 
$$D_{s, t}\bfh:=\bfh\circ (a^sb^t)-A^sB^t\bfh.$$ {In the chosen basis, this operator will have the form}
 \begin{equation}\label{D} 
 D_{s, t}\bfh=\bar\partial_{s,t}\bfh + \mathcal A^{(s,t)}\bfh,
 \end{equation}
 where $\bar \partial_{s, t}$ denotes the "diagonal" operator acting on $\bfh$ coordinatewise by the operator $\partial_{s, t}$:
 $$ 
 D_{s,t}=\begin{pmatrix}
\pelta_{s,t}& a_{12}^{s,t} &a_{13}^{s,t} &\cdots& a_{1d}^{s,t}\\
0&\pelta_{s,t}&a_{23}^{s,t} &\cdots& a_{2d}^{s,t}\\
\dots&\dots&\dots&\dots&\dots\\
0&0&0&0&\pelta_{s,t}
\\
\end{pmatrix}.
$$
This upper triangular form allows us to construct $\bfh$  inductively, starting from its last coordinate, and then continuing upwards to the first coordinate. At each step of this inductive procedure, we use all the previously constructed coordinates of $\bfh$.

To keep track of the estimates during the induction, it is convenient to consider the operator $L$ in the upper triangular form as well. Thus, for any $\bfq$ we let $\bfq(s, t)=(q_1(s,t), \dots, q_d(s,t))$  be the coordinates of the vector field $\bfq(s, t)$ in the chosen basis, and introduce the following operator:\begin{equation} \label{Lop}  \bfL\bfq((k, l), (s,t))=
\begin{pmatrix}
\pelta_{k,l}& a^{k,l}_{12} &\cdots& a^{k,l}_{1d}\\
0&\pelta_{k,l} &\cdots& a^{k,l}_{2d}\\
\dots&\dots&\dots&\dots\\
0&0&0&\pelta_{k,l}
\\
\end{pmatrix} \begin{pmatrix}
q_1(s,t)\\
q_2(s,t)\\
\dots\\
q_d(s,t)
\\
\end{pmatrix}-
\begin{pmatrix}
\pelta_{s,t}& a_{12} ^{s,t}&\cdots& a_{1d}^{s,t}\\
0&\pelta_{s,t} &\cdots& a_{2d}^{s,t}\\
\dots&\dots&\dots&\dots\\
0&0&0&\pelta_{s,t}
\\
\end{pmatrix} \begin{pmatrix}
q_1(k, l)\\
q_2(k, l)\\
\dots\\
q_d(k, l)
\\
\end{pmatrix}.
\end{equation}

\begin{comment}
This gives us that the $i$-th component of $\bfL\bfq((k, l), (s,t))$ ($i=1, \dots, d$) looks like this:

 \begin{equation}\label{L}
\begin{aligned}
(\bfL\bfq((k, l), (s,t)))_d&=\pelta_{k, l}q_d(s,t) - \pelta_{s,t} q_d({k,l})= Lq_{d}((k, l), (s,t))),\\
(\bfL\bfq((k, l), (s,t)))_{d-1}&= Lq_{d-1}((k, l), (s,t)))+ a_{d-1, d}^{s,t}q_d(k, l)-a^{k,l}_{d-1, d}q_d(s,t),\\
&\dots\\
(\bfL\bfq((k, l), (s,t)))_{1}&= Lq_{1}((k, l), (s,t)))+ \sum_{j=2}^da_{1, j}^{s,t}q_{j}(k, l)-\sum_{j=2}^da^{k,l}_{j, d}q_j(s,t).\\
\end{aligned}
\end{equation}
\end{comment}

Given $\bfq(s, t)$, we first take out all of its averages and call the vector of averages by
$$
\bfV(s, t)=(ave(q_1(s, t)), \dots, ave(q_d(s, t))).
$$ 
Now we can work with $\bfq(s, t)$ assuming that its averages are 0. Then the last estimate of Proposition \ref{vector fields} follows directly from the definition of $\bfV(1,0)$ and $\bfV(0,1)$.

Let us proceed with the inductive construction of $\bfh$. We start with the last coordinate $q_d$. By applying Proposition \ref{...} to $q_d$ we obtain $h_d$ and $\tilde q_d$ such that:
\begin{equation}\label{hqVV}
 \begin{aligned}
 &q_d(s,t)=\partial_{s, t} h_d+\tilde{q}_d(s,t),\\
&\|h_d\|_{r}\leq  \kappa \|q\|_{r+\sigman}\le  \|\bfq\|_{r+\sigman},\\
%&\|\tilde{q}_d(s,t)\|_{r} \leq \kappa \, (|s|+|t|)^d\|Lq(*, (s,t)\|_{\eta r+\sigma, N}.
&\|\tilde{ q_d}(s,t)\|_{r} \leq \kappa  (|s|+|t|)^{d\rn}\|{Lq_d}\|_{{\etan r+\sigman}, N}\le \kappa  (|s|+|t|)^{d\rn}\|{\bfL\bfq}\|_{{\etan r+\sigman}, N}.
\end{aligned}
\end{equation}

Now we turn to constructing $h_{d-1}$. 
First, define $h_d'(s, t):= a^{s,t}_{d-1, d}h_d$ for any $(s,t)\in \ZZ^2$ and observe that 
\begin{equation} \label{hprime} \|h_d'(s, t)\|_r\le |a^{s,t}_{d-1, d}|\|h_d\|_r\le \kappa (|s|+|t|)^d \| q_d\|_{r+\sigma}.\end{equation}

Since we have put \eqref{D} in the upper triangular form, we have to show the following.

 \begin{lemma}\label{step1} There exist $h_{d-1}$ and $\tilde{q}_{d-1}$ such that 
 \begin{align*} &(q_{d-1}-h_d')(s,t)= \partial_{s,t} h_{d-1} +\tilde{q}_{d-1}(s,t),\\
&\|h_{d-1}\|_{r}\leq  \kappa \|\bfq\|_{r+2\sigman},\\
&\|\tilde{q}_{d-1}(s,t)\|_r\le  %\kappa  (|s|+|t|)^dN^d \|\bfL\bfq(*, (s,t))_{d-1}\|_{\eta' r+\sigma',N},
 \kappa (|s|+|t|)^{d\rn}  N^{d_2} \|\bfL\bfq\|_{\etan' r+\sigman',  N}, 
 \end{align*}
where $d_2:=\max\{d, d\bar r\}$, 
$\etan':=\etan^2$ and $\sigman':= \etan\sigman +\sigman$.
\end{lemma}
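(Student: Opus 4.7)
The strategy is to apply Proposition \ref{...} to the auxiliary function-valued map $p_{d-1} : \mathbb{Z}^2 \to C^\infty(\T^d)$ defined by
$$p_{d-1}(s,t) := q_{d-1}(s,t) - h_d'(s,t) = q_{d-1}(s,t) - a_{d-1,d}^{s,t}\, h_d.$$
First I would normalize $h_d$ so that it has zero mean, which is harmless because the estimates \eqref{hqVV} are for $C^r$ norms with $r>0$. Since the Fourier-by-Fourier construction of $h_d$ only involves values of $q_d$ at points on the dual orbits whose norm is at least that of the seed (the "good" direction of summation is always the one away from the lowest point), $h_d$ inherits the $N$-truncation of $q_d$, and so $p_{d-1}$ is itself $N$-truncated. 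Moreover, combining $ave(\bfq)=0$ with $ave(h_d)=0$ gives $ave(p_{d-1}(s,t))=0$, so the constant vector produced by Proposition \ref{...} vanishes, and one gets an identity of the form $p_{d-1}(s,t)=\partial_{s,t}h_{d-1}+\tilde q_{d-1}(s,t)$ as required.

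The key step — and the place where the upper-triangular structure of $\bfL$ and the commutativity enter — is to express $Lp_{d-1}$ in terms of $\bfL\bfq$ and the previously-controlled error $\tilde q_d$. A direct computation from the definition gives
$$Lp_{d-1}((k,l),(s,t)) = Lq_{d-1}((k,l),(s,t)) + a_{d-1,d}^{k,l}\,\partial_{s,t} h_d - a_{d-1,d}^{s,t}\,\partial_{k,l} h_d.$$
Substituting $\partial_{s,t} h_d = q_d(s,t) - \tilde q_d(s,t)$ from the first line of \eqref{hqVV}, and reading off from the matrix form \eqref{Lop} that
$$Lq_{d-1}((k,l),(s,t)) + a_{d-1,d}^{k,l}\, q_d(s,t) - a_{d-1,d}^{s,t}\, q_d(k,l) = (\bfL\bfq((k,l),(s,t)))_{d-1},$$
one arrives at the cocycle-type identity
$$Lp_{d-1}((k,l),(s,t)) = (\bfL\bfq((k,l),(s,t)))_{d-1} + a_{d-1,d}^{s,t}\,\tilde q_d(k,l) - a_{d-1,d}^{k,l}\,\tilde q_d(s,t).$$
The obstruction to $p_{d-1}$ being a cocycle for the last cohomological equation is thereby controlled by a single entry of $\bfL\bfq$ up to a term involving the already quadratically small $\tilde q_d$.

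The rest is bookkeeping. The first assertion follows from the first line of \eqref{hqVV} applied to $p_{d-1}$ together with $\|p_{d-1}(\cdot)\|_{r+\sigman}\le \|\bfq\|_{r+\sigman}+C\|h_d\|_{r+\sigman}\le \kappa\|\bfq\|_{r+2\sigman}$. For the error, the third line of \eqref{hqVV} gives
$$\|\tilde q_{d-1}(s,t)\|_r \le \kappa (|s|+|t|)^{d\rn}\,\|Lp_{d-1}\|_{\etan r+\sigman,N};$$
inserting the cocycle identity, using $|a_{d-1,d}^{s,t}|\le C(|s|+|t|)^S$ and $|s|+|t|\le N/C$ on $\res_N$, and applying the third line of \eqref{hqVV} a second time at regularity $\etan r+\sigman$ to bound $\|\tilde q_d(k,l)\|_{\etan r+\sigman}$ by $\kappa N^{d(\etan(\etan r+\sigman)+\sigman)}\,\|\bfL\bfq\|_{\etan^2 r+\etan\sigman+\sigman,N}$, one arrives at the claim with $\etan'=\etan^2$, $\sigman'=\etan\sigman+\sigman$, and a suitable $N$-exponent $d_2$ absorbing the polynomial factors.

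The main obstacle — and the point of the argument — is the cocycle-type identity for $Lp_{d-1}$: it is precisely the upper-triangularity of $\bfL$ together with the fact that $h_d$ was chosen to solve the last-coordinate equation up to the quadratically small $\tilde q_d$ that makes the non-cocycle correction coming from $h_d'$ combine with $Lq_{d-1}$ into a single component of $\bfL\bfq$ plus terms that are absorbed by the inductive control on $\tilde q_d$. Once this step is in hand, the same mechanism cascades upwards from coordinate $d-2$ down to coordinate $1$, each iteration introducing an extra factor of $\etan$ (and an additive $\sigman$) in the regularity exponents — this is precisely what is needed to assemble Proposition \ref{vector fields}.
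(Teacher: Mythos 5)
Your proof is correct and takes essentially the same route as the paper's: you form the auxiliary map $p_{d-1}=q_{d-1}-h_d'$, derive the same identity expressing $Lp_{d-1}$ as the $(d-1)$-st component of $\bfL\bfq$ up to terms in the previously controlled $\tilde q_d$, and then apply Proposition \ref{...} with the same bookkeeping giving $\etan'=\etan^2$ and $\sigman'=\etan\sigman+\sigman$ (the precise power of $N$ is immaterial, as in the paper). Your additional remarks that $h_d$ has zero average and is $N$-truncated (so that $p_{d-1}$ is truncated and the constant term from Proposition \ref{...} vanishes) only make explicit points the paper leaves implicit.
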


\begin{proof} 
By substituting the approximation of  $q_d$ of \eqref{hqVV} into the line $d-1$ of the operator of \eqref{Lop}, we get 
\begin{align*}
&\bfL\bfq((k, l), (s,t))_{d-1} =\\
&= Lq_{d-1}((k, l), (s,t)))+ a_{d-1, d}^{s,t}(\partial_{k,l} h_d+\tilde{q}_d(k,l))-a^{k,l}_{d-1, d}(\partial_{s,t} h_d+\tilde{q}_d(s,t)) \\
&=L(q_{d-1}-h_d')((k, l), (s,t)))+ a_{d-1, d}^{s,t}\tilde{q}_d(k,l)-a^{k,l}_{d-1, d}\tilde{q}_d(s,t).
\end{align*}
%\max_{(k, l)\in  \mathcal R'_N}\{ \| q_d(k,l)\|_{r+\sigma}\}.$$

%In the above estimate as well as in all the others which follow, the constant $\kappa$ will keep changing, while the changes will only depend on the initial action $\(a, b\)$. 

From this equality  and the estimates for $\tilde{q}_d$ in \eqref{hqVV},  we obtain the following estimate for all $(k, l), (s,t)\in \res_N$:
\begin{equation}\label{1.stepL}
\begin{aligned}
&\|L(q_{d-1}-h_d')((k, l), (s,t)))\|_r\le \\
&\le \|\bfL\bfq((k, l), (s,t)))_{d-1}\|_r + |a_{d-1, d}^{s,t}|\|\tilde{q}_d(k,l)\|_r+|a^{k,l}_{d-1, d}|\|\tilde{q}_d(s,t)\|_r\\
%&\le \|\bfL\bfq((k, l), (s,t)))_{d-1}\|_r+\kappa (|s|+|t|)^d(|k|+|l|)^d (\|{Lq_d}((*, (k,l))\|_{{\eta r+\sigma}, N} + \|{Lq_d}((*, (s,t))\|_{{\eta r+\sigma}, N}).\\
&\leq \|\bfL\bfq((k, l), (s,t)))_{d-1}\|_r+\kappa (|k|+|l|)^{d_2}(|s|+|t|)^{d_2}  \|Lq_d\|_{{\etan r+\sigman}, N},
%\max_{(k, l)\in  \mathcal R'_N}\{\|{Lq_d}((k,l), (s,t))\|_{{\eta r+\sigma}}\}\\
\end{aligned} 
\end{equation}
where $d_2=\max\{d, d\rn\}$.
By taking the maximum of $\|L(q_{d-1}-h_d')((k, l), (s,t)))\|_r$ for $(k, l)\in \res_N$ and $(s,t)\in \{(1,0), (0, 1)\}$, we get from inequatity \eqref{1.stepL}: 
\begin{equation}\label{1.stepL'}
\|L(q_{d-1}-h_d')\|_{r,N}\le \|\bfL\bfq_{d-1}\|_{r,N}+ \kappa N^{d_2}\|Lq_d\|_{\etan r+\sigman, N}.
\end{equation}
Now we apply Proposition \ref{...} to $q_{d-1}-h_d'$, and find $h_{d-1}$ such that 
\begin{align*} 
&\|h_{d-1}\|_r \leq \\
&\leq   \kappa \| q_{d-1}-h_d'\|_{r+\sigma}= \kappa  \max\{\|q_{d-1}(1,0)-h_d'(1,0)\|_{r+\sigma}, \|q_{d-1}(0,1)-h_d'(0,1)\|_{r+\sigma}\}\\
&\le \kappa (\| q_{d-1}\|_{r+\sigma}+ \|h_d'\|_{r+\sigma})\le \kappa( \| q_{d-1}\|_{r+\sigma}+\|q_d\|_{r+2\sigma})\\
&\le \kappa \, \|\bfq\|_{r+2\sigma},
\end{align*}
where we used the estimate for $\|h_d'\|_{r+\sigma}$ from  \eqref{hprime}.

For the new error in coordinate $d-1$ we use the estimate obtained in Proposition \ref{...}  and \eqref{1.stepL'} to obtain 
\begin{align*}
&\|\tilde{q}_{d-1}(s,t)\|_r\le  \kappa (|s|+|t|)^{dr} \|L(q_{d-1}-h_d')\|_{{\etan r+\sigman}, N}\\
%& \le  \kappa \,  (\|\bfL\bfq(*, (s,t))_{d-1}\|_{\eta r+\sigma,N}+
%N^{d_2}(\|{Lq_d}((*, *)\|_{{\eta (\eta r+\sigma)+\sigma}, N} + \|{Lq_d}((*, (s,t))\|_{{\eta (\eta r+\sigma)+\sigma}, N})\\
&\le  \kappa  (|s|+|t|)^{d\rn} (\|\bfL\bfq_{d-1}\|_{\etan r+\sigman,N}+ \kappa N^{d_2}\|Lq_d\|_{\etan (\etan r+\sigman)+\sigman, N})\\
&\le \kappa (|s|+|t|)^{d\rn}  N^{d_2} \|\bfL\bfq\|_{\etan' r+\sigman',  N},\\
\end{align*}
where $\etan':=\etan^2$ and $\sigman':= \etan\sigman +\sigman$.
\black\end{proof}

The rest of the inductive construction goes along the same lines. Namely, assume that $h_d$, $\tilde{q}_{d}$, $h_{d-1}$, $\tilde{q}_{d-1},\dots h_i$, $\tilde{q}_{i}$ have all been constructed, and assume that they satisfy the following estimates for every $j=i, \dots, d$ and $(s, t)\in  \ZZ^2$:
\begin{align*}
\|h_j\|_r&\le \kappa \| \bfq\|_{r+(d-j+2)\sigma},\\
\|\tilde{q}_{j}(s,t)\|_r&\le  \kappa (|s|+|t|)^{d\rn} N^{(j+1)d_2}\|\bfL\bfq\|_{\eta_{j+1}r+\sigma_{j+1}, N},
\end{align*}
where $\eta'_{j+1}:= \eta\cdot \eta_j$,  $\sigma_{j+1}:= \eta\sigma_{j}+\sigma$  and  $d^{(j+1)}:=\max\{d, d^{(j)}\}$. 

In $(i-1)$-st equation in \eqref{Lop} we substitute all the $q_j$ coordinates with $q_j(s, t)= \partial_{s,t}h_j+\tilde q_j(s, t)$.    Then the $(i-1)$-st equation in \eqref{Lop} becomes: 
\begin{equation*}\label{L_i}
\begin{aligned}
\bfL&\bfq((k, l), (s,t)))_{i-1}= Lq_{i-1}((k, l), (s,t)))+ \sum_{j=i}^da^{s,t}_{1, j}q_{j}(k, l)-\sum_{j=i}^da^{k,l}_{j, d}q_j(s,t)\\
&= Lq_{i-1}((k, l), (s,t)))+ \sum_{j=i}^da^{s,t}_{1, j}(\partial_{k,l}h_j+\tilde q_j(k, l))-\sum_{j=i}^da^{k,l}_{j, d}(\partial_{s,t}h_j+\tilde q_j(s,t))\\
&= L(q_{i-1}-h'_i)((k, l), (s,t))) + \sum_{j=i}^da^{s,t}_{1, j}\tilde q_j(k, l)-\sum_{j=i}^da^{k,l}_{j, d}\tilde q_j(s,t),
\end{aligned}
\end{equation*}
where we defined $h'_i(s, t):=\sum_{j=i}^da^{s,t}_{j, d}h_j$.

Now we do the same as in Lemma \ref{step1}: we apply Proposition \ref{...} to $q_{i-1}-h'_i$ to obtain $h_{i-1}$ and $\tilde q_{i-1}$, such that 
$$
q_{i-1}-h'_i= \partial_{s,t} h_{i-1}+ \tilde q_{i-1}.
$$ 
The same procedure as in  Lemma \ref{step1} gives:
\begin{align*}
\|h_j\|_r&\le \kappa \| \bfq\|_{r+(d-i+2)\sigman},\\
\|\tilde{q}_{i-1}(s,t)\|_r&\le  \kappa (|s|+|t|)^{d\rn}\, N^{(i+1)d_2}\|{\bf L}\bfq\|_{{\etan_{i+1}r+\sigman_{i+1}}, N},
\end{align*}
where $\eta'_{i+1}:= \etan\cdot \etan_i$,  $\sigman_{i+1}:= \etan\sigma_{i}+\sigman$. 
\color{black}

After passing through all the $d$ steps we redefine the constants $\sigman$ and  $\etan$. Namely, will have a multiplicative loss $\etan:= \etan^d$ and an additive loss  $\sigman:=\max\{\etan^d\sigman + d\cdot \sigman, (d+2)\sigman\}$ of the number of the derivatives for the newly constructed $\tilde \bfq$ with respect to $\bfL\bfq$. In other words, for every  $j=1, \dots, d$, we have with the newly constructed $\sigman$ and $\etan$ the following estimates:
\color{black}
\begin{align*}
\|h_j\|_r&\le \kappa \| \bfq\|_{r+{\sigman}},\\
\|\tilde{q}_{j}(s,t)\|_r&\le  \kappa \,(|s|+|t|)^{d\rn} N^{D_1}\|{\bf L}\bfq\|_{\etan r+\sigman, N},
\end{align*}
where  $D_1=D_1(d, r)$. \color{black}
By defining $\bfh= (h_1, \dots h_d)$ and $\tilde \bfq=(\tilde q_1, \dots, \tilde q_d)$ we obtain the estimates claimed in the Proposition \ref{vector fields} for $\bfh$ and $\tilde\bfq(s,t)$.  
 
The "linear" estimate for $\|\tilde{\bfq}(s,t)\|_r$ follows directly from the fact that by construction ${\bfq}(s,t)= D_{s,t} \bfh+\tilde{\bfq}(s,t)$, so the estimate for $\bfh$ implies:
\begin{align*}
\|\tilde{\bfq}(s,t)\|_r&\le  \|D_{s,t} \bfh\|_r+\|{\bfq}(s,t)\|_r\le (|s|+|t|)^{dr}\|\bfh\|_r+\|{\bfq}(s,t)\|_r \\
&\le \kappa (|s|+|t|)^{dr}\| \bfq\|_{r+\sigma}+\| \bfq(s,t)\|_{r}.
\end{align*}
By denoting $D=d(d+1)$, we obtain the estimates claimed in the Proposition \ref{vector fields}.
\end{proof}

\subsection{Proof of Proposition \ref{Main iteration step}}
\label{proof_of_main_iteration_step}

Recall that in our setup for a perturbation $\(F, G\)$ of the action $\(a, b\)$, we define the map $\bfp: \mathbb Z^2\to \rm Vect^{\infty}(\mathbb T^d)$ by $\bfp(k, l):= F^kG^l-a^kb^l$, $(k, l)\in \mathbb Z^2$. With  the notations $\bfp(1,0)= \bff$ and $\bfp(0,1)= \bfg$, the action $\tilde\rho$ is generated by the maps  $a+\bff$ and $b+\bfg$. 

Let $N$ be fixed, and let $\bfq_N=T_N\bfp$ be the truncation of $\bfp$. 
%Notice that in the notation of Proposition \ref{Main iteration step}, $\bfp(1,0)= \bff$, $\bfp(0,1)=\bfg$, and $\Delta_r=\max\{\|\bff\|_r, \|\bfg\|_r\}$. 
First we define $\bfV(k, l)= -ave (\bfp(k, l))$. (With a little abuse of notation) denote $\bfV=\bfV(1, 0):= -ave(\bff)$ and $\bfW=\bfV(0, 1)= -ave(\bfg)$. It is then clear that the last estimate in \eqref{main-est} holds. 

Now we apply Proposition \ref{vector fields} to $\bfp$ and its truncation $\bfq_N=T_N\bfp$ in order  to obtain $\bfh_N$.   Then as in Proposition \ref{vector fields} we define $\tilde \bfq_N(k, l):=\bfq_N(k, l)-D_{k, l}\bfh_N +\bfV(k,l)$, and so  
\begin{equation}\label{tildep}
\tilde \bfp_N(k,l):=\bfp(k, l)-D_{k, l}\bfh_N +\bfV(k,l)=\tilde \bfq_N+ R_N\bfp.
\end{equation}
Finally, we let  
$$
\tilde \bff_N=\tilde\bfp_N(1,0), \quad \tilde \bfg_N=\tilde \bfp_N(0,1).
$$ 
%By assumption, the linear part $\rho_0$ of $\rho$ is unipotent and abelian,  therefore there exists a basis in $\mathbb R^d$ in which  the linear part  $\rho_0$  is upper triangular with 1's on the diagonal. 
%{\bpurple Let  $$(\bfL{ \bfp})(\bar n_1, \bar n_2)= D_{\bar n_1}\bfp(\bar n_1) -D_{\bar n_2}\bfp{(\bar n_1) }$$}
%for any two $\bar n_1, \bar n_2\in \mathbb Z^2$. We will also use notation {\bpurple $\bfp(a)=\bfp(1,0)$,   $\bfp(b)=\bfp(0,1)$ since we fixed the generators $a$ and $b$ of the action. }
 We begin by estimating $\bfp(k,l)=(a+\bff)^k\circ (b+\bfg)^l - a^kb^l$. 

\begin{lemma} \label{p(k, l)} For $(k, l)\in \ZZ^2$,  
\begin{equation} \label{pkl}
\| \bfp(k,l)\|_{r}\leq C_r (|k|+|l|)^{3d} \max\, \{ \| \bff\|_{r}, \| \bfg\|_{r}  \}.
\end{equation}

\end{lemma}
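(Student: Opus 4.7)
The plan is to write $F^kG^l-a^kb^l$ as a telescoping sum in which each summand carries exactly one factor of $\bff$ or $\bfg$, and then to bound each summand using standard composition estimates in $C^r$. I will first do the case $k,l\ge 0$; the case of negative exponents will be identical after noting that $a^{-1}$ and $b^{-1}$ are again step-$2$ and step-$S$ parabolic affine maps.

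\emph{Step 1 (Telescoping).} I decompose
\[
F^kG^l-a^kb^l=\bigl(F^k-a^k\bigr)\circ G^l+a^k\circ\bigl(G^l-b^l\bigr),
\]
and then expand the differences telescopically,
\[
F^k-a^k=\sum_{i=0}^{k-1}a^{i}\circ\bff\circ F^{k-1-i},\qquad G^l-b^l=\sum_{j=0}^{l-1}b^{j}\circ\bfg\circ G^{l-1-j}.
\]
This yields a sum of exactly $k+l$ terms, each of the form $a^i\circ\bff\circ F^{k-1-i}\circ G^l$ or $a^k\circ b^j\circ\bfg\circ G^{l-1-j}$.

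\emph{Step 2 (Polynomial growth of linear iterates).} Since $a$ is step-$2$, $A^i=\Id+i\widetilde A$, so $a^i(x)=x+i\widetilde A x+i\alpha$ and hence $\|a^i\|_{C^r}\le C_r(1+|i|)$ for every $r$. Since $b$ is step-$S$ with $S\le d$, the binomial expansion $B^l=\sum_{j=0}^{S-1}\binom{l}{j}\widetilde B^{j}$ gives $\|b^l\|_{C^r}\le C_r(1+|l|)^{d-1}$. We may assume $\max\{\|\bff\|_1,\|\bfg\|_1\}\le 1$ (otherwise \eqref{pkl} is trivial upon enlarging $C_r$), and then a straightforward induction based on the composition estimate
\[
\|u\circ v\|_r\le C_r\bigl(\|u\|_r(1+\|v\|_1)^r+\|u\|_1\|v\|_r\bigr)
\]
(see e.g.\ \cite[Thm.~A.8]{Hormander}) shows that the iterates of the perturbed maps satisfy $\|F^i\|_{C^r}\le C_r(1+|i|)$ and $\|G^l\|_{C^r}\le C_r(1+|l|)^{d-1}$, possibly after slightly enlarging the constants.

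\emph{Step 3 (Estimating each telescoping summand).} Applying the composition estimate twice to a summand of the form $a^i\circ\bff\circ F^{k-1-i}\circ G^l$, using the bounds of Step~2 and the obvious bound $\|a^i\|_1\le C(1+|i|)$, one obtains
\[
\|a^i\circ\bff\circ F^{k-1-i}\circ G^l\|_r\le C_r(1+|k|+|l|)^{2d-1}\,\|\bff\|_r,
\]
and analogously for the summands containing $\bfg$. Summing the $k+l$ terms produced by Step~1 gives
\[
\|\bfp(k,l)\|_r\le C_r(|k|+|l|)^{2d}\max\{\|\bff\|_r,\|\bfg\|_r\}\le C_r(|k|+|l|)^{3d}\max\{\|\bff\|_r,\|\bfg\|_r\},
\]
which is the desired bound.

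\emph{Main obstacle.} The only non-routine point is the uniform polynomial-in-$(k,l)$ control of $\|F^i\|_{C^r}$ and $\|G^l\|_{C^r}$, since a naive iteration of the composition estimate a priori gives exponential growth in the exponent. This is resolved by writing $F^i=a^i+(F^i-a^i)$ and estimating the remainder inductively via the telescoping identity of Step~1; the remainder is then controlled by $\|\bff\|_r$ times a polynomial in $|i|$, and the step-$2$ (respectively step-$S$) property of the linear parts is precisely what prevents exponential blow-up. With this in hand the bound \eqref{pkl} follows, with the exponent $3d$ absorbing the polynomial factors from both the linear iterates and the composition estimates.
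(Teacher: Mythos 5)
Your route is the same as the paper's: telescope $F^kG^l-a^kb^l$ into $|k|+|l|$ terms, each containing a single factor of $\bff$ or $\bfg$ precomposed with iterates of the perturbed maps and multiplied by unipotent linear blocks of polynomial size, then finish with composition estimates. One correction to Step 1: the coefficients in the telescoping are the \emph{linear} parts, not the affine maps. Indeed $a^i\circ F^{k-i}-a^{i+1}\circ F^{k-1-i}=A^i\bigl(\bff\circ F^{k-1-i}\bigr)$ and $a^k\circ G^l-a^k\circ b^l=A^k\bigl(G^l-b^l\bigr)$, the translation parts cancelling; as written, your identities with $a^i\circ\bff\circ\cdots$ and $a^k\circ(G^l-b^l)$ are off by constant vectors of size $O(|k|+|l|)$. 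This is exactly why the paper writes the coefficients $s_j,t_j$ as products of type $A^iB^j$.

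The genuine gap is in Step 2. The reduction ``assume $\max\{\|\bff\|_1,\|\bfg\|_1\}\le 1$, otherwise \eqref{pkl} is trivial upon enlarging $C_r$'' is not legitimate: the left-hand side depends nonlinearly on $(\bff,\bfg)$, the inequality is not homogeneous in the perturbation, and you cannot rescale a fixed diffeomorphism. Moreover, under the sole assumption $\|\bff\|_1\le 1$ the claimed bound $\|F^i\|_{C^r}\le C_r(1+|i|)$ is false in general: by the chain rule $\|F^i\|_{C^1}$ is controlled by products of $\|DF\|$ along orbits, and a unit-size perturbation can produce a hyperbolic periodic point (compare the standard-map perturbation used in Proposition \ref{prop.rankone}), giving exponential growth in $i$. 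Your proposed induction does not close either: setting $u_i=\|F^i-a^i\|_1$, the telescoping only yields a recursion of the form $u_i\le C i^3+C i\sum_{j<i}u_j$, whose solutions grow super-polynomially. What makes the lemma usable (and what the paper's own proof uses silently when it writes $F^j=a^j+\xi_j$ and treats $\xi_j$ as harmless) is that it is applied in the KAM step only for $(k,l)\in\res_N$, i.e. $|k|+|l|\lesssim N$, under the smallness assumption $N^{D}\Delta_{1}<1$ of Proposition \ref{iterative_step}, which keeps $\|\xi_j\|_1=O(1)$ throughout that range. With such a hypothesis added (or with Step 2 restated as a bound valid for the range of $(k,l)$ actually used), your Steps 2--3 go through, modulo the fact that your $r$-uniform exponent in Step 3 — like the paper's — quietly absorbs the $r$-dependent powers of $\|DF^j\|$ coming from the chain rule.
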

\begin{proof}

Developing the expression for $\bfp(k,l)$ as $\bfp(k,l)=(a+\bff)^k\circ (b+\bfg)^l-a^kb^l= a(a+\bff)^{k-1}\circ (b+\bfg)^l +\bff\circ (a+\bff)^{k-1}\circ (b+\bfg)^l -a^kb^l$ and continuing inductively, one gets 
$$
\begin{aligned}
\bfp(k,l)&=\sum_{j=0}^{k-1} s_j \, \bff\circ (a+\bff)^{j} +\sum_{j=0}^{l-1} t_j \, \bfg\circ (a+\bff)^{k} \circ (b+\bfg)^{j}\\
&=\sum_{j=0}^{k-1} s_j \, f\circ (a^j+\xi_j ) +\sum_{j=0}^{l-1} t_j \, g\circ (a^{k}b^j+\eta_j). 
\end{aligned}
$$
Here $s_j$, $t_j$ are appropriate compositions of type $A^iB^j$ with $i\leq k$, $j\leq l$.  
The sums contain $|k|+|l|$ terms, each coefficient can be estimated by $\|A^kB^l\|\leq C(|k|+|l|)^S$. 
The $r$-norm of each of these terms can be estimated by 
$c  \|A^kB^l \|^r \max\, \{ \| \bff\|_{r}, \| \bfg\|_{r}  \} \leq cr(|k|+|l|)^S \max\, \{ \| \bff\|_{r}, \| \bfg\|_{r}  \}$.  Hence, 
$\| \bfp(k,l)\|_{r}\leq C_r (|k|+|l|)(|k|+|l|)^{2S} \max\, \{ \| \bff\|_{r}, \| \bfg\|_{r}  \} \leq C_r (|k|+|l|)^{3S} \max\, \{ \| \bff\|_{r}, \| \bfg\|_{r}  \}$.  Since $S\le d$, we can then bound this from above by  $C_r (|k|+|l|)^{3d} \max\, \{ \| \bff\|_{r}, \| \bfg\|_{r}  \}$.
\end{proof}

 Let us now move to proving that the main estimates  
 \eqref{main-est} in of Proposition \ref{Main iteration step} hold for $\bfh_N$, $\tilde \bff_N$ and $\tilde \bfg_N$. 
 We start with $\bfh_N$. Directly from the first estimate in  \eqref{hqVVV} of Proposition \ref{vector fields}, and truncation estimate  \eqref{truncation_estimates}   it follows that:
\begin{equation*}
\begin{aligned}
\|\bfh_N\|_{r}&\leq  \kappa \|\bfq_N\|_{r+\sigma} \le  \kappa \, N^\sigma \|\bfp\|_{r}.\\
%&\le  \kappa  \,N^{D}  N^{\sigma+d}  \Delta_r \max\{ (|k|+|l|)^{3d} : (k, l)\in \res_N\} \le \kappa  \,N^{D+\sigma+4d} \Delta_r.
\end{aligned} \label{hhN}
\end{equation*}

The "linear" estimate for $\tilde \bff_N$ and $\tilde \bfg_N$ (the third estimate in Proposition \ref{Main iteration step}) follows directly from the estimate for $\bfh_N$:
$$\| \tilde \bff_N \|_r= \|\tilde \bfp_N(1,0)\|_r\le \|\bfp_N(1,0)-D_{1,0}\bfh_N +\bfV\|_r\le C_rN^{D'} \Delta_r.$$
Estimate for $\tilde \bfg_N= \tilde \bfp_N(0,1)$ follows exactly in the same way.

Now we will use the fact that $a^kb^l+ \bfp(k, l)$ is a commutative action, in order to obtain the quadratic estimate for the error 
$\tilde\bfq$. This is done in two steps: the first one is to to show that $\|\bfL\bfp\|_{r,N}$ is quadratic by using the fact that 
$a^kb^l+ \bfp(k, l)$ is a commutative action. 
The second one is to compare $\|\bfL\bfq \|_{r,N}$ to $\|\bfL\bfp \|_{r,N}$. 

%For this purpose we need to bound $\|\bfL\bfq(\cdot , *)\|_{r, N}$ (where $\bfq=\bfq_N$). 

%We start with  estimating $\|\bfL\bfp \|_{r,N}$ by using the fact that  is by assumption {\it a commutative action}. % $\|\tilde\bfL\bfp (*, (1,0))\|_{r,N}$ and $\|\tilde\bfL\bfp (*, (0,1))\|_{r,N}$.  

Recall that the operator $\bfL$ acts on $\bfp$ by the formula 
$\bfL{ \bfp}((k,l), (s,t))= D_{s, t}\bfp(k,l) -D_{k,l}\bfp{(s,t) }$
for any $(k, l), (s,t) \in \mathbb Z^2$.

%We start with bounding  $\|\bfL\bfp (*, (1,0))\|_{r,N}$ and $\|\bfL\bfp (*, (0,1))\|_{r,N}$.  
 
\begin{lemma}\label{bfp} For any $N\in \mathbb N$ and $r\ge 0$ the following holds: 
$$
\|\bfL\bfp \|_{r,N}\le C_r N^{4d} \Delta_{r+1}\Delta_0.
$$ 
\end{lemma}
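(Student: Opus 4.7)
The key idea is to exploit commutativity of $F$ and $G$ (and hence of all elements $F^kG^l$) to derive a closed-form expression for $\bfL\bfp((k,l),(s,t))$ as a genuinely quadratic functional of $\bfp$. First I would use the commutation $F^kG^l\circ F^sG^t = F^sG^t\circ F^kG^l$ together with the defining identity $F^kG^l = a^kb^l+\bfp(k,l)$ to expand both sides. Using the linearity of $A^kB^l$ and the fact that $a^kb^l\circ a^sb^t = a^{k+s}b^{l+t}$, the affine terms cancel, leaving the identity
\begin{equation*}
A^kB^l\,\bfp(s,t) + \bfp(k,l)\circ F^sG^t \;=\; A^sB^t\,\bfp(k,l) + \bfp(s,t)\circ F^kG^l.
\end{equation*}
Rearranging and writing $\bfp(k,l)\circ F^sG^t = \bfp(k,l)\circ a^sb^t + [\bfp(k,l)\circ F^sG^t-\bfp(k,l)\circ a^sb^t]$ (and similarly on the other side), I recognize $D_{s,t}\bfp(k,l)$ and $D_{k,l}\bfp(s,t)$, which yields
\begin{equation*}
\bfL\bfp((k,l),(s,t)) \;=\; \Phi(s,t;k,l) - \Phi(k,l;s,t),
\end{equation*}
where $\Phi(k,l;s,t) := \bfp(k,l)\circ F^sG^t - \bfp(k,l)\circ a^sb^t$.

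The second step is to estimate each $\Phi$ via the standard tame composition-difference inequality (see e.g.\ the Appendix of \cite{DF}):
\begin{equation*}
\|\bfu\circ(\varphi+\bfw) - \bfu\circ\varphi\|_r \;\le\; C_r\bigl(\|\bfu\|_{r+1}\|\bfw\|_0 (1+\|D\varphi\|_0)^{r+1}+\|\bfu\|_1\|\bfw\|_r\bigr),
\end{equation*}
applied with $\bfu=\bfp(k,l)$, $\varphi=a^sb^t$ and $\bfw=\bfp(s,t)$ (and symmetrically). Composition with the affine map $a^sb^t$ contributes only a polynomial factor in $|s|+|t|$, since the linear part $A^sB^t$ has polynomial growth of order at most $d$. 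Inserting Lemma \ref{p(k, l)}, which gives $\|\bfp(k,l)\|_\rho\le C_\rho(|k|+|l|)^{3d}\Delta_\rho$, and using Lemma \ref{l_tech_reson} to bound $|k|+|l|\le N/C$ for $(k,l)\in\res_N$, every factor becomes a power of $N$ times either $\Delta_0,\Delta_1,\Delta_r$ or $\Delta_{r+1}$. Collecting terms produces an upper bound of the form
\begin{equation*}
\|\bfL\bfp((1,0),(k,l))\|_r \;\le\; C_r N^{4d}\bigl(\Delta_{r+1}\Delta_0+\Delta_1\Delta_r\bigr),
\end{equation*}
and the symmetric estimate for $(0,1)$ in place of $(1,0)$. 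A final application of the standard convexity (interpolation) inequality $\Delta_1\Delta_r\le C_r\Delta_0\Delta_{r+1}$ absorbs the second term and yields the claimed bound.

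The formal steps are routine once the identity for $\bfL\bfp$ is in place; the only minor technical care needed is bookkeeping the polynomial factors in $(|k|+|l|)$ that arise from composing with $a^sb^t$ and from Lemma \ref{p(k, l)}, and making sure that for $(k,l),(s,t)\in\res_N$ all such factors are absorbed into a single power $N^{4d}$ by Lemma \ref{l_tech_reson}. The main conceptual point, and the only place commutativity enters, is the derivation of the quadratic identity for $\bfL\bfp$ in the first paragraph: without commutativity, $\bfL\bfp$ would be merely linear in $\bfp$ and no such improvement would be possible.
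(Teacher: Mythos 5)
Your proposal follows essentially the same route as the paper: commutativity of $\(F,G\)$ is used to rewrite $\bfL\bfp((k,l),(s,t))$ as a genuinely quadratic expression in $\bfp$ built from differences $\bfp(k,l)\circ F^sG^t-\bfp(k,l)\circ a^sb^t$, which is then estimated by the standard composition-difference inequalities together with Lemma \ref{p(k, l)} and the bound $C(|k|+|l|)\le N$ for $(k,l)\in\res_N$. The only deviations — the sign/arrangement of the identity and your extra $\Delta_1\Delta_r$ term, which you remove by the interpolation inequality $\Delta_1\Delta_r\le C_r\Delta_0\Delta_{r+1}$ instead of using the $(r+1,0)/(0,r+1)$ form of the composition estimate as the paper does — are cosmetic.
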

\begin{proof}
First, we notice that the commutativity of the action $a^kb^l+\bfp(k, l)$ implies that 
$$
(a^kb^l+ \bfp(k, l))\circ (a^sb^t+\bfp(s,t))=(a^sb^t+\bfp(s,t)))\circ (a^kb^l+ \bfp(k, l)).
$$ 
Therefore, the operator $\bfL$, besides its linear form, has also a non-linear expression (on the right below), in particular: 
\begin{equation}
\begin{aligned}
&D_{(s,t)}\bfp(k, l)-D_{k, l}\bfp(s,t)=\bfL\bfp ((k,l), (s,t))\\
&= \bfp(k, l)(a^sb^t+\bfp(s,t))- \bfp(k, l)(s,t) +  \bfp(s,t)(a^kb^l+\bfp(k,l))- \bfp(s,t)(a^kb^l).\\
\end{aligned}
\end{equation}

%From the linear expression for $\bfL\bfp ((k,l), (1,0))$ above on the left, by using Lemma \ref{basic_estimates} we have the "linear" estimate: 
%\begin{equation}\label{linLest}
%\begin{aligned}
%&\|\bfL\bfp ((k,l), (1,0))\|_r\le \|D_{k,l}\bfp(a)\|_r+\|D_{a}\bfp(k,l)\|_r\\
%&\le C_r N^d \|\bfp(a)\|_r + C_rN^d \max\{ \|\bfp(a)\|_r,  \|\bfp(b)\|_r\}\le   C_r N^d \Delta_r.
%\end{aligned}
%\end{equation}
%\color{red}
From the non-linear expression for $\bfL\bfp ((k,l), (s,t))$ and the classical estimates for compositions (\cite{Hormander} Theorem A.8) we obtain the "quadratic" estimate: 
 \begin{equation*}\label{quadLest}
\begin{aligned}
\|\bfL\bfp ((k,l), (s,t))\|_r&=\|\bfp(k, l)(a^sb^t+\bfp(s,t))- \bfp(k, l)(a^sb^t)\|_r  \\
&+ \|\bfp(s,t)(a^kb^l+\bfp(k,l))- \bfp(s,t)(a^kb^l)\|_r\\
&\le C_r N^d (\|\bfp(k, l)\|_{r+1}\|\bfp(s,t)\|_0+\|\bfp(k, l)\|_{0}\|\bfp(s,t)\|_{r+1})\\
&\le C_r N^{d} (|k|+|l|)^{3d} (|s|+|t|)^{3d}\Delta_{r+1}\Delta_0.\
\end{aligned}
\end{equation*}
Recall that  $\|\bfL\bfp\|_{r, N}:=\max\{  \|\bfL\bfp((1,0), (k, l))\|_{r}, \|\bfL\bfp((0,1), (k, l)\|_{r}:\, (k,l)\in \res_N\}$ and that for $ (k,l)\in \res_N$ we have that  $C(|k|+|l|)\le N$. 
From the above inequality, by taking  maximum over $(k,l)\in \res_N$, we get the required estimate for  $\|\bfL\bfp\|_{r, N}$.
\color{black} 
\end{proof}

Next we will use the following basic estimate of the norm of the operator $D_{k,l}$:  
%from Lemma \ref{dkl} in the Appendix.

\begin{lemma}\label{dkl} For  $ (k,l)\in \res_N$ and $r\ge 0$ 
$$
\|D_{k,l}\bfp(s,t)\|_r\le C_r N^{dr} \|\bfp(s,t)\|_r .
$$
\end{lemma}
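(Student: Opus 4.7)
The plan is to directly estimate the two pieces of $D_{k,l}\bfp(s,t) = \bfp(s,t)\circ (a^kb^l) - A^kB^l\,\bfp(s,t)$ via the triangle inequality, exploiting the polynomial growth of the linear part and the affine nature of $a^kb^l$.

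First I would note the size constraint on $(k,l) \in \res_N$: by the definition \eqref{R'} we have $C(|k|+|l|) \leq N$, hence $|k|+|l| \leq N/C$. Combined with the fact that $A^kB^l$ is a product of unipotent matrices of bounded step $S \leq d$, there is a constant $C=C(A,B)$ with
$$
\|A^kB^l\| \leq C(|k|+|l|)^S \leq C N^S \leq CN^d,
$$
and the same polynomial bound applies to the translation part $\alpha_{k,l}$ and to derivatives of $a^kb^l$ (which are all the same linear matrix $A^kB^l$ and zero, respectively, since $a^kb^l$ is affine).

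For the composition term, I would apply the standard Faà di Bruno / chain-rule composition estimate (e.g.\ \cite[Theorem~A.8]{Hormander}) to the affine map $a^kb^l$. Because all derivatives of the map $a^kb^l$ of order $\geq 2$ vanish and the first-order derivative is $A^kB^l$, the composition estimate collapses to
$$
\|\bfp(s,t)\circ a^kb^l\|_r \;\leq\; C_r\,\|A^kB^l\|^r\,\|\bfp(s,t)\|_r \;\leq\; C_r\,N^{dr}\,\|\bfp(s,t)\|_r.
$$
For the linear term, we simply have $\|A^kB^l\,\bfp(s,t)\|_r \leq \|A^kB^l\|\,\|\bfp(s,t)\|_r \leq C N^d\|\bfp(s,t)\|_r$.

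Adding these two bounds via the triangle inequality and absorbing the dominant $N^{dr}$ factor (since $r \geq 1$ is the relevant range; for $r=0$ both bounds are of the same order $N^d$) yields the claimed estimate $\|D_{k,l}\bfp(s,t)\|_r \leq C_r N^{dr}\|\bfp(s,t)\|_r$. There is no genuine difficulty here — this is a routine bookkeeping lemma that packages the cost of "evaluating" $\bfp(s,t)$ along a group element $(k,l)\in\res_N$ uniformly in $N$, and it will be combined with the quadratic bound on $\|\bfL\bfp\|_{r,N}$ from Lemma \ref{bfp} to produce the final quadratic control on $\tilde\bfq_N$ needed in the proof of Proposition \ref{Main iteration step}.
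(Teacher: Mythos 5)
Your argument is correct and is essentially the paper's proof: compose with the affine map $a^kb^l$, use that its linear part $A^kB^l$ has norm polynomially bounded in $|k|+|l|$, and invoke $C(|k|+|l|)\le N$ for $(k,l)\in\res_N$; in fact you are slightly more careful than the paper, whose proof silently ignores the subtracted term $A^kB^l\,\bfp(s,t)$. One quibble: your parenthetical about $r=0$ is not quite accurate (there the composition term costs $O(\|\bfp(s,t)\|_0)$ while the linear term costs $N^{d}\|\bfp(s,t)\|_0$, so the exponent $dr$ is too small for $r<1$), but this looseness is shared by the paper's own statement and is harmless downstream, where the exponent is absorbed into constants such as $d_3=\max\{3d,dr\}$ in Lemma \ref{bfq}.
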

\begin{proof}
To show this bound we only need to observe that for any affine map $a: x\to Ax + \alpha$ and any function $\phi$ the norm $\|\phi \circ  a\|_r$ is bounded above by $\|A\|^r\|\phi\|_r$. Since in the operator $D_{k,l}$ we compose with $a^kb^l$, which has linear part $A^kB^l$, and the norms of matrices $A^kB^l$ are (up to a positive constant) bounded by $(|k|+|l|)^d$, then $ (k,l)\in \res_N$ directly implies the required estimate, because for $ (k,l)\in \res_N$, $C(|k|+|l|)\le N$ by  Lemma \ref{l_tech_reson}.
\end{proof}

%Now we estimate $\bfL$ applied on the {\it truncated} vector fields valued maps, i.e we need estimate for $\|\bfL\bfq(\cdot, \|_{r, N}$.

\begin{lemma}\label{bfq} For any $N\in \mathbb N$ and $r'\ge r\ge 0$ the following holds: 

$$\|\bfL\bfq\|_{r,N}\le  C_{r} N^{r+5d} \Delta_{1}\Delta_0 + C_{r, r'} N^{r-r'+2d_2} \Delta_{r'},$$ 
where $d_2=d_2(r):=max\{d, dr\}$. 

\end{lemma}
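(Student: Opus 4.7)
The plan is to decompose $\bfq = T_N\bfp = \bfp - R_N\bfp$ and exploit the linearity of $\bfL$ in its argument to write
$$
\bfL\bfq = \bfL\bfp - \bfL(R_N\bfp).
$$
These two summands will supply respectively the quadratic term $N^{r+5d}\Delta_1\Delta_0$ (coming from the commutativity of $F$ and $G$) and the smoothing term $N^{r-r'+2d_2}\Delta_{r'}$ (coming from the truncation estimates \eqref{truncation_estimates}).

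For the residual term $\bfL(R_N\bfp)$, I will use purely linear bounds. For $(s,t)\in\{(1,0),(0,1)\}$ and $(k,l)\in\res_N$, Lemma \ref{dkl} gives $\|D_{k,l}R_N\bfp(s,t)\|_r \le C_r N^{dr}\|R_N\bfp(s,t)\|_r$, while $\|D_{s,t}R_N\bfp(k,l)\|_r \le C\|R_N\bfp(k,l)\|_r$ (here $\|A^sB^t\|$ is bounded independently of $N$). Then \eqref{truncation_estimates} combined with Lemma \ref{p(k, l)} bounds $\|R_N\bfp(k,l)\|_{r}$ and $\|R_N\bfp(s,t)\|_r$ by $C_{r,r'}N^{r-r'+O(d)}\Delta_{r'}$, using $\|\bfp(k,l)\|_{r'}\le C_{r'}(|k|+|l|)^{3d}\Delta_{r'}\le C_{r'}N^{3d}\Delta_{r'}$ for $(k,l)\in\res_N$. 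Assembling everything and using $d_2=\max\{d,dr\}$ to absorb the $dr$ factor arising from Lemma \ref{dkl} produces the $N^{r-r'+2d_2}\Delta_{r'}$ contribution.

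For the quadratic term $\bfL\bfp$, I start from Lemma \ref{bfp}, which gives $\|\bfL\bfp\|_{r,N} \le C_r N^{4d}\Delta_{r+1}\Delta_0$ thanks to the commutation relation of $F$ and $G$. The crucial step is to replace the factor $\Delta_{r+1}$ by $N^r \Delta_1$ modulo a smoothing correction that can be absorbed into the other term. I will achieve this by splitting $\bff = T_N\bff + R_N\bff$ (and analogously for $\bfg$): Bernstein's inequality applied to the truncated part gives $\|T_N\bff\|_{r+1} \le CN^{r}\|T_N\bff\|_1 \le CN^{r}\Delta_1$, while \eqref{truncation_estimates} applied to the residual gives $\|R_N\bff\|_{r+1} \le C_{r,r'}N^{r-r'+d+1}\Delta_{r'}$. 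Combining yields $\Delta_{r+1} \le C_r N^r\Delta_1 + C_{r,r'}N^{r-r'+d+1}\Delta_{r'}$. Multiplying by $\Delta_0$ and the $N^{4d}$ prefactor produces a contribution of the form $C_r N^{r+5d}\Delta_1\Delta_0$ plus another term which is absorbed into the $N^{r-r'+2d_2}\Delta_{r'}$ part, after noting that $\Delta_0$ is bounded.

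Combining the bounds for $\bfL\bfp$ and $\bfL(R_N\bfp)$ yields the claimed estimate. The main obstacle is the careful bookkeeping of the various powers of $N$: the factor $(|k|+|l|)^{3d}\le CN^{3d}$ from Lemma \ref{p(k, l)} when $(k,l)\in\res_N$, the composition factor from Lemma \ref{dkl} yielding $N^{dr}$ (which forces the presence of $d_2$ rather than just $d$ in the second term), and the Bernstein trade-off $N^r$ between one derivative and one factor of $N$, all need to be tracked together. Once this bookkeeping is done, the structural ingredients (linearity of $\bfL$, the quadratic estimate from commutativity in Lemma \ref{bfp}, truncation estimates, and Bernstein's inequality on Fourier truncations) are already at our disposal and the proof is straightforward.
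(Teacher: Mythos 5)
Your proposal is correct in substance and starts from the same decomposition as the paper: write $\bfL\bfq=\bfL\bfp-\bfL(R_N\bfp)$ and treat the two pieces separately, with the residual piece handled exactly as in the paper (linear form of $\bfL$, Lemma \ref{dkl}, Lemma \ref{p(k, l)}, the bound $|k|+|l|\le CN$ for $(k,l)\in\res_N$, and the truncation estimates \eqref{truncation_estimates}). Where you diverge is the quadratic piece: the paper splits the function $\bfL\bfp((k,l),(s,t))$ itself as $T_N\bfL\bfp+R_N\bfL\bfp$ and applies Lemma \ref{bfp} at regularity $0$ to the truncated part, paying a factor $N^{r+d}$ through \eqref{truncation_estimates} and thereby converting $\Delta_{r+1}\Delta_0$ into $\Delta_1\Delta_0$; you instead apply Lemma \ref{bfp} at regularity $r$ and then trade $\Delta_{r+1}$ for $N^{r+O(d)}\Delta_1$ plus a tail by truncating $\bff$ and $\bfg$ themselves (Bernstein, or directly \eqref{truncation_estimates}). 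Both routes produce a bound of the required shape, and your bookkeeping of the powers of $N$ is at the same level of precision as the paper's own proof, which in fact ends with the exponent $d_3=\max\{3d,dr\}$ rather than the stated $2d_2$; in the application only the existence of some fixed power of $N$ matters. Two small points you should make explicit: first, your tail contribution comes out as $N^{r-r'+O(d)}\Delta_0\Delta_{r'}$ rather than $N^{r-r'+O(d)}\Delta_{r'}$, so to match the stated form you either keep the extra factor $\Delta_0$ (harmless, indeed stronger, in the KAM iteration) or invoke $\Delta_0\lesssim 1$, which is not among the hypotheses of the lemma; second, the estimate $\|R_N\bff\|_{r+1}\le C\,N^{r+1-r'+d}\Delta_{r'}$ from \eqref{truncation_estimates} requires $r'\ge r+1$, whereas the lemma is stated for all $r'\ge r$ — this never matters downstream, where $r'>D$ is large compared to $r$, but it deserves a sentence since your interpolation of $\Delta_{r+1}$ is the one step that genuinely needs it.
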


\begin{proof}

%First observe that from the linear expression for $\bfL\bfp ((k,l), (s,t))$, by using Lemma \ref{basic_estimates} we have the following "linear" estimate for $(k, l), (s,t)\in \res_N$: 
%\color{green}
%\begin{equation}\label{linLest} 
%\begin{aligned}
%&\|\bfL\bfp ((k,l), (s,t))\|_r\le \|D_{k,l}\bfp(s,t)\|_r+\|D_{s,t}\bfp(k,l)\|_r\\
%&\le C_r N^{d_1} \|\bfp(s,t)\|_r + C_rN^{d_1} \max\{ \|\bfp(k,l)\|_r\}\le   C_r N^{d_2} \Delta_r,
%\end{aligned}
%\end{equation}
%where $d_2:=\max{d, d_1}$
%\color{red}

Because $\bfL$ is a linear operator we have the following: 
\begin{equation}\label{linLL}
\bfL\bfp ((k,l), (s,t))=\bfL (T_N\bfp) ((k,l), (s,t)) + \bfL (R_N\bfp) ((k,l), (s,t)).
\end{equation}
The term $ \bfL (R_N\bfp) ((k,l), (s,t))$ for $ (k,l)\in \res_N$ and $(s,t)\in \{(1,0), (0,1)\}$ is estimated by using the estimates on the truncation operators, \eqref{truncation_estimates},  the linear form \eqref{linLL} of $\bfL$ and Lemmas \ref{dkl} and \ref{p(k, l)} (again, recall that for $ (k,l)\in \res_N$ we have that $C(|k|+|l|)\le N$):
\begin{equation*}
\begin{aligned}
 \|\bfL (R_N\bfp) ((k,l), (s,t))\|_r&\le \|D_{s,t} R_N\bfp(k,l)-D_{k,l} R_N\bfp(s,t)\|_r\\
 &\le C_r(|s|+|t|)^{dr}\|R_N\bfp(k, l)\|_r+C_r(|k|+|l|)^{dr}\|R_N\bfp(s,t)\|_r \\
 &\le C_{r,r'}(N^{r-r'}\|\bfp(k, l)\|_{r'}+ N^{dr} N^{r-r'}\|\bfp(s,t)\|_r) \\
&\le C_{r,r'}(N^{r-r'+3d}\|\bfp\|_{r'}+ N^{r-r'+dr}\|\bfp\|_{r'})\\
&\le C_{r,r'}N^{r-r'+d_3}\|\bfp\|_{r'},
\end{aligned}
\end{equation*}
where $d_3=d_3(r):=\max\{3d, dr)\}$.

Using this and \eqref{linLL}, as well as \eqref{truncation_estimates} and Lemma \ref{bfp}, we obtain for $0\le r''\le r\le r'$ we have:
\begin{equation*}
\begin{aligned}
\| \bfL\bfq ((k,l), (s,t)) \|_r&\le \|\bfL\bfp ((k,l), (s,t))\|_r+ \|\bfL(R_N\bfp) ((k,l), (s,t))\|_r\\
&\le \|T_N\bfL\bfp ((k,l), (s,t))\|_r+ \|R_N\bfL\bfp ((k,l), (s,t))\|_r\\
&\,\, \,\quad  + \|\bfL(R_N\bfp) ((k,l), (s,t))\|_r\\
&\le C_{r, r''} N^{r-r''+d} \|\bfL\bfp ((k,l), (s,t))\|_{r''}+ C_{r, r'}N^{r-r'+d} \|\bfL\bfp ((k,l), (s,t))\|_{r'} \\
&\,\, \,\quad +C_{r,r'}N^{r-r'+d_3} \Delta_{r'}\\
&\le  C_{r, r''} N^{r-r''+5d} \Delta_{r''+1}\Delta_0 + C_{r,r'}N^{r-r'+d_3} \Delta_{r'}, \\
\end{aligned}
\end{equation*}
% \color{red}
%For the last inequality above we used for the first term \eqref{quadLest} and in the second term, the estimate \eqref{linLest} for $\bfL$. 
where we applied the estimate   $C(|k|+|l|)\le N$ that is valid for all $ (k,l)\in \res_N$. 
Now by setting $r''=0$, and by taking the maximum over $ (k,l)\in \res_N$ and $(s,t)\{(1,0), (0,1)\}$, we obtain the required estimate for $\|\bfL\bfq\|_{r, N}$. 
\end{proof}

%\color{red}

%and that  $\tilde \bff_N=\tilde \bfq_N(a)+ R_N\bfp(a)$ and $\tilde \bfg_N=\tilde \bfq_N(b)+ R_N\bfp(a)$.
 From the third estimate in  Proposition \ref{vector fields} and Lemma \ref{bfq}, for some fixed constant $D>0$, for any $r'>0$ and for a fixed $d_2=d_2(\sigma)$, we have: 
\begin{equation*}
\begin{aligned}
\|\tilde \bfq_N(1,0)\|_0&\le \kappa N^D \|{\bf Lq}_N\|_{\sigma, N}\\
&\le  \kappa N^{D+5d+\sigma} \Delta_{1}\Delta_0+C_{r'}N^{{D}-r'+\sigma+ d_3} \Delta_{r'}\\
& \le  \kappa N^{D'} \Delta_{1}\Delta_0+C_{r'}N^{-r'+D'} \Delta_{r'},
\end{aligned}
\end{equation*}
where we define $D'=D+5d+d_3+\sigma$. Notice that $d_3=d_3(\sigma)$, and consequently $D'$, depends only on the action $\(a,b\)$ and $d$.
Recall that in \eqref{tildep} we defined $\tilde \bfp_N=\tilde \bfq_N+ R_N\bfp$ for every $N$.  
By combining the above estimate and the truncation estimates \eqref{truncation_estimates} 
for the operator $R_N$, we get a similar bound, with possibly new constants $\kappa$ and $C_{r'}$:
\begin{equation*}
\|\tilde \bff_N\|_0=\|\tilde \bfp_N(1,0)\|_0\le \kappa N^{D'} \Delta_{1}\Delta_0+C_{r'}N^{-r'+D'} \Delta_{r'}.
\end{equation*}
Finally, we declare the new $D$ to be the $D'$. 
Estimates for $\tilde \bfg_N= \tilde \bfp_N(0,1)$ are proved in exactly the same way. This completes the proof of all the estimates claimed in Proposition \ref{Main iteration step}. \hfill $\Box$

\bigskip

\noindent {\sc \bleu1 Acknowledgment.} The authors are grateful to Livio Flaminio and Giovanni Forni for stimulating discussions on the subject. The first author was supported by the Swedish Research Council grant VR 2019-04641. The second author was supported by the NSF grant DMS-2101464.

\bigskip


\begin{thebibliography}{AAA}
\bibitem[A]{A} V. I. Arnol'd. Proof of a theorem of A. N. Kolmogorov on the invariance
of quasi-periodic motions under small perturbations. Russian Math. Surveys,
18(5):9--36, 1963.

\bibitem[CG]{CG} J-P Conze, Y. Guivarc'h, Ergodicity of group actions and spectral gap, applications to random walks and Markov shifts, DCDS, 2013, Volume 33, Issue 9: 4239-4269.

\bibitem[CF]{CF} S. Cosentino, L. Flaminio, Equidistribution for higher-rank Abelian actions on Heisenberg nilmanifolds, JMD 2015, Volume 9: 305--353.

\bibitem[D]{D}  D. Damjanovi\'c, Abelian actions with globally hypoelliptic leafwise Laplacian and rigidity,  Journal d'Analyse Mathematique, Volume 129, Issue 1, 139--163 (2016).



\bibitem[DF]{DF} D. Damjanovi\'c, B. Fayad, On Local Rigidity of Partially Hyperbolic Affine
$\ZZ^k$ Actions, J. Reine Angew. Math., 751 (2019), 1--26
\newblock


{\bibitem[DFS]{DFS_nontame} D. Damjanovi\'c, B. Fayad, M. Saprykina, 
 On the absence of tame solutions of the cohomological equation above parabolic transformations. {\it In preparation.} 
\newblock } 

\bibitem[DK]{DK} D. Damjanovi\'c, A. Katok, Local Rigidity of Partially Hyperbolic Actions of  $\ZZ^k$ and $\mathbb R^k$, $k\ge 2$.
I. KAM method and actions on the Torus, Annals of Mathematics, Vol. 172, No. 3, 1805--1858
(2010).
\newblock

\bibitem[DK2]{DK2} D. Damjanovi\'c, A. Katok, Local Rigidity of Parabolic Homogeneous Actions: I. A Model Case, 
Journal of Modern Dynamics, Vol. 5, no. 2, 203--235 (2011).
\newblock

\bibitem[DK3]{DK3} D. Damjanovi\'c, A. Katok, Local Rigidity of Partially Hyperbolic Actions. II: The Geometric Method and Restrictions of Weyl Chamber Flows on $SL(n, \mathbb R)/\Gamma$, International Mathematics Research Notices, Volume 2011, Issue 19, 2011, Pages 4405--4430,
\newblock



\bibitem[DT]{DT} D. Damjanovi\'c, J. Tanis, Transversal local rigidity of discrete abelian actions on Heisenberg 
nilmanifolds, Ergodic Theory and Dynamical Systems, (2021) 1--41.


\bibitem[FF]{FF} L. Flaminio, G. Forni, On the cohomological equation for nilflows, 2007, Volume 1, Issue 1: 37-60.

\bibitem[GK]{GK} M. Guysinsky and A. Katok, Normal forms and invariant geometric structures for dynamical
systems with invariant contracting foliations, Math. Res. Letters, 5, (1998), 149-163.


\bibitem[H]{Herman} M. R. Herman,  Sur la conjugaison diff\'erentiable des diff\'eomorphismes du cercle a des
rotations, Inst. Hautes \'Etudes Sci. Publ. Math., 49 (1979), 5--233.

\bibitem[H\"or]{Hormander} L. H\"ormander, The boundary problems of physical geodesy, Archive for Rational Mechanics and Analysis
62 (1976), no. 1, 1--52.

%\bibitem[KH]{KH} Anatole Katok, Boris Hasselblatt,  Introduction to the modern theory of dynamical systems, Cambridge university press 1995
%\newblock 


\bibitem[KR]{KatokRobinson} A Katok, E. A. Robinson Jr, Cocycles, cohomology and combinatorial constructions in ergodic theory
 - 2001
\newblock 

\bibitem[KS]{KS} A. Katok, R. Spatzier, Trudy Matematicheskogo Instituta imeni V.A. Steklova, 1997, Volume 216, Pages 292--319.		 	


\bibitem[L]{L} V. F. Lazutkin, KAM Theory and Semiclassical Approximations to Eigenfunctions, Ergeb. Math.
Grenzgeb. 24, Springer-Verlag, New York, 1993.

\bibitem[M]{M} J. Moser, On Commuting Circle Mappings and Simultaneous Diophantine Approximations, Math. Z., 205:1 (1990), 105--121.
\newblock

\bibitem[NT]{NT} V. Nitica, A. T\"or\"ok, Local rigidity of certain partially hyperbolic actions of product type, 
Ergodic Theory and Dynamical Systems 21 (2001), 1213--1237.

\bibitem[P]{P} B. Petkovic, Classification of Perturbations of Diophantine $\ZZ^m$ Actions on Tori of Arbitrary Dimension, 
Regul. Chaotic Dyn., 2021, Volume 26, Issue 6, Pages 700--716

\bibitem[P2]{P2} B. Petkovic, PhD Thesis, KTH 2022.  

%\bibitem[R]{R} D. Ravotti,  Mixing for suspension flows over skew-translations and time-changes of quasi-abelian filiform nilflows, Ergodic Theory and Dynamical Systems , Volume 39 , Issue 12 , December 2019 , pp. 3407--3436.
%\newblock

%\bibitem[S]{S} S. Sandfeldt

\bibitem[VW]{VW} K.Vinhage Z. Wang, Local rigidity of higher rank homogeneous abelian actions: a complete solution via the geometric method, Geometriae Dedicata volume 200, pages385--439 (2019).

\bibitem[W1]{W1} Zhenqi Jenny Wang. Local rigidity of partially hyperbolic actions. J. Mod. Dyn., 4(2):271--
327, 2010.

\bibitem[W2]{W2}   Zhenqi Jenny Wang. New cases of differentiable rigidity for partially hyperbolic actions:
symplectic groups and resonance directions. J. Mod. Dyn., 4(4):585--608, 2010.

\bibitem[ZW]{ZW}Zhenqi Wang, Local rigidity of parabolic algebraic actions, arXiv:1908.09181. 

\bibitem[W3]{W3}Zhenqi Wang, Local rigidity of higher rank partially hyperbolic algebraic actions,  arXiv:1103.3077


\bibitem[WX]{WX} A. Wilkinson and J. Xue, Rigidity of some abelian-by-cyclic solvable group actions on  $\mathbb T^N$. 
Commun. Math. Phys. 376 (2020), 1223--1259.

%\bibitem[ZW]{ZW}Zhenqi Wang, Local rigidity of parabolic algebraic actions, arXiv:1908.09181. 

\end{thebibliography}
\end{document}